\documentclass[10pt]{amsart}

\usepackage{xr}
\usepackage{url}

\usepackage{hyperref}
\hypersetup{
    colorlinks=true,
    linkcolor=blue,
    citecolor=red,
    pdfpagemode=FullScreen,
    pdftitle={Aletti,Crimaldi,Ghiglietti}
    }

\usepackage[a4paper, total={6.8in, 9in}]{geometry}

\usepackage[utf8]{inputenc}
\usepackage[T1]{fontenc}
\usepackage{amsmath, amssymb}
\usepackage{bbm}
\usepackage{enumitem}
\usepackage{pdflscape}
\usepackage{cancel}

\allowdisplaybreaks

\theoremstyle{plain}

\newtheorem{thm}{Theorem}[section]
\newtheorem{lem}[thm]{Lemma}

\newtheorem{cor}[thm]{Corollary}

\makeatletter
\def\th@newremark{\th@remark\thm@headfont{\bfseries}}
\makeatletter

\theoremstyle{newremark}
\newtheorem{rem}{Remark}[section]

\theoremstyle{definition}

\usepackage{tikz}
\foreach \a in {q,w,e,r,t,y,u,i,o,p,a,s,d,f,g,h,j,k,l,z,x,c,v,b,n,m,Q,W,E,R,T,Y,U,I,O,P,A,S,D,F,G,H,J,K,L,Z,X,C,V,B,N,M}{
  \expandafter\xdef\csname v\a\endcsname {
		{\noexpand\mathbf{\a}}
	}
}

\newcommand{\ind}{\mathbbm{1}}

\newcommand{\Rstar}{\widetilde{R}}

\begin{document}

\title[]{A multi-factorial innovation model with mean-field interaction}

\author[G. Aletti]{Giacomo Aletti}
\address{Universit\`a degli Studi di Milano, Milan, Italy}
\email{giacomo.aletti@unimi.it}

\author[I. Crimaldi]{Irene Crimaldi}
\address{IMT School for Advanced Studies Lucca, Lucca, Italy}
\email{irene.crimaldi@imtlucca.it}

\author[A. Ghiglietti]{Andrea Ghiglietti}
\address{Universit\`a degli Studi di Milano-Bicocca, Milan, Italy}
\email{andrea.ghiglietti@unimib.it (Corresponding author)}

\thanks{G.\ Aletti is a member of ``Gruppo Nazionale per il Calcolo
  Scientifico (GNCS)'' of the Italian Institute ``Istituto Nazionale
  di Alta Matematica (INdAM)''. I.\ Crimaldi and A.\ Ghiglietti are members of ``Gruppo
  Nazionale per l'Analisi Matematica, la Probabilit\`a e le loro
  Applicazioni (GNAMPA)'' of the Italian Institute ``Istituto
  Nazionale di Alta Matematica (INdAM)''. Irene Crimaldi thanks the “Resilienza Economica e Digitale” (RED) 
project (CUP
D67G23000060001) funded by the Italian Ministry of University and Research (MUR)
as “Department of Excellence” (Dipartimenti di Eccellenza 2023-2027, Ministerial
Decree no. 230/2022).}

\keywords{Indian buffet process; feature allocation; innovation process; mean-field interaction; reinforcement; phase transition; strong law; central limit theorem.}

\subjclass[2010]{Primary: 60F15, 60F05, 60G42.
Secondary: 62F15, 91B55, 91D15, 91D30.}

\date{\today}


\begin{abstract}
We introduce an Indian-buffet-type model for multi-factorial innovation in which each arriving agent may exhibit both previously observed and new features. The number of new features follows a power-law  behavior, while the probability of selecting an old feature combines self-reinforcement, depending on the feature-specific popularity, with a mean-field interaction term depending on the average popularity of all observed features. The model is governed by the usual innovation parameters (mass, discount and concentration), together with two additional parameters: one controlling the strength of reinforcement against a forcing input toward zero, and one regulating the intensity of the mean-field interaction.
\\
\indent Although the growth of the total number of distinct observed features has the same behavior as in the three-parameter Indian buffet process, the mean-field interaction mechanism produces new asymptotic regimes. For aggregate quantities, including the predictive mean, the averaged number of features per agent, the mean inclusion probability, and the mean feature popularity, the phase transition is determined by the comparison between the discount parameter and the weight of the forcing input. For feature-specific quantities, a further transition appears according to the comparison between the interaction level and
a critical threshold. In particular, high interaction leads to an asymptotic synchronization of feature-specific inclusion probabilities.
\\
\indent We establish strong laws and second-order asymptotic results, including central limit theorems in regimes where martingale fluctuations compete with deterministic or random terms. The analysis relies on novel general results for recursive stochastic dynamics, which may be useful beyond the present framework.
\end{abstract}

\maketitle

\section{Introduction}

Understanding the mechanisms by which novelties emerge and propagate 
is of crucial interest across various disciplines, including biology, linguistics, and social sciences. In probabilistic terms, a novelty
or innovation is defined as the first occurrence of an event of interest. Widely employed mathematical models for innovation processes are urn models with infinitely many
colors and species sampling sequences \cite{BAN-THA-2022, lijoi07, lijoi2010, pitman_1996, Sariev23}. However, in these models, at each timestep, each observed agent/item shows only one feature, that may be an old one (that is, already observed in the past) or a new one (that is, appeared for the first time). Instead in the present work, we deal with \emph{multi-factorial innovation processes}, where at each timestep an agent/item is observed and it can exhibit a certain (random) number of features (factors), some old and some new. The total number of features is
not specified in advance but is allowed to grow as new data points (agents/items) are observed and it is inferred from the data.
Our aim is not to propose a new Bayesian nonparametric prior for posterior inference, but rather to study a tractable stochastic dynamics for multi-factorial innovation with mean-field interaction, so that an old feature may be selected not only because it is itself popular,
but also because the overall feature environment is highly active.
\\

\indent The \textit{Indian Buffet Process} (IBP) is a classical model in Bayesian nonparametric feature allocation, designed to describe observations endowed with an unbounded number of latent features. Introduced 
in \cite{GG06} and developed in
 \cite{GG11, TJ}, it defines a probability distribution over infinite binary matrices, where each row corresponds to an agent/item and each column to a feature. This allows each agent/item to simultaneously possess multiple characteristics, distinguishing the IBP from species sampling sequences and clustering models, which typically assign each data point to a single class.
In this sense, the IBP provides a useful benchmark for probabilistic models of feature allocation, especially because it combines an infinite-dimensional feature space with a relatively explicit predictive structure.
\\

\indent The generative metaphor of the Indian buffet provides an intuitive interpretation of the process: a sequence of customers (representing agents/items) enters an Indian restaurant with an infinite buffet of dishes (representing features). The first customer samples a $\mathrm{Poisson}(\alpha)$ number of dishes, while each subsequent customer chooses previously sampled dishes with probability proportional to their popularity, and also tries new dishes according to a $\mathrm{Poisson}(\alpha/t)$ distribution. The resulting binary feature-allocation matrix is exchangeable, meaning that the order of the observations does not affect their joint distribution. This property ensures analytical tractability and enables efficient posterior inference, making the IBP  a natural reference model for exchangeable Bayesian nonparametric feature-allocation priors. 
At the same time, exchangeability and feature-wise independence limit the ability of the classical IBP to describe innovation mechanisms in which the probability of adopting an old  feature depends not only on its own past popularity, but also on the global activity level of the whole system.
\\

\indent Over the years, several generalizations of the IBP have been proposed to extend its applicability, either by preserving exchangeability while enriching the prior structure, or by relaxing exchangeability and independence assumptions. A fundamental theoretical reformulation of the IBP was introduced 
in \cite{TJ}, who demonstrated that the IBP can be viewed as the marginal distribution of a Beta--Bernoulli process. In this framework, a Beta process defines a random measure generating feature probabilities, while a Bernoulli process specifies feature allocation for each observation. This connection provides a rigorous measure-theoretic foundation, linking the IBP to other Bayesian nonparametric priors such as the Dirichlet Process and enabling efficient inference through conjugacy properties. To capture the heavy-tailed distributions often observed in empirical data, power-law variants of the IBP have been developed. Precisely, the IBP can be extended to capture power-law behavior in the total number of observed features through the three-parameter IBP~\cite{broderick2012beta, TG}, which introduces a discount parameter (also known as stability exponent), that regulates the asymptotic behavior of the overall number of different observed features, a mass parameter, that controls the total number of new features exhibited by an agent/item and a concentration parameter, that tunes the number of agents/items per feature. Such a model better fits real-world data, but one fundamental assumption persists: the inclusion probabilities of features are independent across features and determined solely by past feature-specific counts. 

Since the assumption of independence among features limits its expressiveness in settings where correlations or dependencies among features are crucial, some (non-exchangeable) variants of the IBP have been proposed. Among the most influential ones is the \textit{dependent Indian buffet process} \cite{WOG}, which introduces correlations among feature allocations by making the inclusion probability of each feature a function of covariates or of latent variables modeled through Gaussian processes. This allows the IBP to represent smooth temporal or spatial dependencies, although it increases the computational burden of inference. Another notable development is the \textit{hierarchical Indian buffet process} and related \cite{rai-dau, dos-gha-correlated}, which introduces a hierarchical structure where groups of observations share a common global pool of features while maintaining group-specific feature usage. This model has proven particularly effective in multi-task and transfer learning contexts, enabling information sharing across related datasets but at the cost of higher model complexity and more demanding inference procedures. 
\cite{DW-restricted} developed the \textit{restricted Indian buffet process}, which allows control over the distribution of the number of features per observation. This generalization enables more flexible modeling of feature sparsity patterns and partially introduces interactions through restriction mechanisms. However, these restrictions are imposed at the level of the number of active features rather than their pairwise relationships. The \textit{Indian buffet Hawkes process} proposed 
in \cite{IBP2018hawkes} models evolving features over time by combining the IBP with Hawkes processes. The interactions in their model occur in the temporal domain, where previous feature activations influence future events via self-exciting processes. While this captures a dynamic form of interaction, the focus is on temporal excitation rather than structural co-dependency among features. 
\cite{heaukulani2020gibbs} introduced a class of \textit{Gibbs-type Indian buffet processes}, where the underlying random measure is generalized to a Gibbs-type measure. This allows for more flexible prior specifications and includes models that exhibit power-law behavior. Although their framework can model complex feature allocation structures, the interactions among features are implicitly encoded through the prior, rather than through an explicit interaction matrix. Another direction of advancement has been to incorporate structural or spatial dependencies among agents/items. For example, the \textit{distance-dependent Indian buffet process} \cite{gershman2015distance} modifies the IBP by making the probability of feature sharing a function of the distance between data points (agents/items), allowing it to model local correlations. Similarly, the \textit{phylogenetic Indian buffet process} \cite{MGJ} introduces a tree-structured dependency among agents/items, capturing shared evolutionary histories in a Bayesian framework. More recently, the \textit{attraction Indian buffet distribution} \cite{warr2021attraction} has been developed to allow pairwise attraction between agents, which increases the likelihood of feature sharing. Another extension of the IBP is the \textit{Indian buffet process with random weights}, introduced 
in \cite{BCPR-IBP}. In this formulation, each agent is associated with a random weight, that represents the relevance of the agent and enters the feature probabilities. This additional stochastic layer generalizes the Beta--Bernoulli construction and connects the IBP with the theory of randomly reinforced stochastic processes. The random-weight IBP maintains some of the analytical properties of the classical model while offering greater flexibility in capturing heterogeneity among the agents. Finally, recent Bayesian nonparametric works have also considered feature-allocation models in connection with prediction of unseen features. For instance, 
\cite{CFMB2024} introduced scaled-process priors for Bayesian nonparametric estimation of unseen genetic variation. Their stable-Beta scaled-process prior enriches the posterior distribution of the number of unseen features, replacing the Poisson posterior structure induced by completely random measure priors with a tractable negative-binomial posterior depending on both the sample size and the number of observed distinct features. This line of work is close to ours at the level of feature-allocation modeling and multi-feature data, but it has a different purpose: it concerns exchangeable Bayesian nonparametric priors and posterior prediction, whereas the present paper studies a non-exchangeable recursive innovation dynamics with explicit mean-field interaction among features.
\\

\indent IBP-type and related feature-allocation models have been used in several domains where observations may exhibit multiple latent or observed features.
As examples, we mention bioinformatics~\cite{KG}, network theory \cite{bol-cri-mon-2016, MGJ2009, SCJ}, causal inference~\cite{WGG}, modeling of choices~\cite{GJR} and similarity judgements~\cite{NG}. 
Feature-allocation ideas have also been used in Bayesian nonparametric methods for unseen-feature prediction, with particular emphasis on genetic and genomic variation (see the recent paper~\cite{CFMB2024} and the references therein).
\\

\indent Summing up, while the IBP assumes exchangeability and inclusion probabilities independent across features, several recent models have sought to extend the framework by incorporating dependencies. Indeed, ignoring such interactions can lead to oversimplified models that fail to capture the observed structure in real data. Most of these models focus on dependencies among the \textit{rows} of the binary matrix - i.e., the data points (the agents/items) themselves - rather than among the \textit{columns} - i.e., the features. The few models, mentioned above, that deal with feature interaction, offer a more realistic representation of evolving phenomena, but they lose the elegant mathematical formulation of the original IBP and make a detailed theoretical analysis difficult. Moreover they require more complex inference techniques, such as sequential Monte Carlo or dynamic variational methods, and often come at the cost of increased computational complexity. 

The present paper takes a different route. We introduce an IBP-inspired, non-exchangeable stochastic process in which feature inclusion probabilities evolve through a combination of feature-specific self-reinforcement and a mean-field interaction term. The mean-field term represents the global level of activity of the observed feature system: a feature may be selected not only because it is itself popular, but also because the overall feature environment is highly active. This mechanism gives a simple and interpretable way to model cross-feature dependence while preserving enough recursive structure to allow a sharp asymptotic analysis.
Our proposed model directly introduces interactions in the inclusion probabilities, which permits clearer interpretability and a more faithful representation of domains where features are influenced both by their own popularity and by the global level of activity of the feature system, 
such as in biological networks, cultural evolution, or recommendation systems. Additionally, 
our model, characterized by an explicit dependence among features, allows us to 
conduct a detailed theoretical study, proving various asymptotic results, such as strong laws and central limit theorems for several key quantities, such as the total number of observed features ($D_t$), the averaged number of features exhibited by the agents/items 
($\overline{T}_t$), the averaged feature inclusion probability ($\overline{P}_t$), the averaged number of agents/items per feature ($\overline{K}_t$) and also, for feature-specific quantities, such as the inclusion probability ($P_{t,j}$) and the popularity ($K_{t,j}$) for an observed feature $j$. 
While the asymptotic behavior of $D_t$ is the same as in the classical IBP with three parameters~\cite{TG}, 
we obtain new phenomena such as 
the power-law behavior of the averaged quantities and of the dish-specific quantities, not seen in the classical IBP, where, except in the extreme cases $\beta=1$, the empirical mean $\overline{T}_t$ converges almost surely to a strictly positive real random variable and the feature-popularity grows linearly. 
The mean-field interaction mechanism also produces a second level of phase transition for feature-specific quantities. In particular, when the interaction parameter is sufficiently large, the feature-specific inclusion probabilities synchronize asymptotically, whereas in the low-interaction regime the long-time behavior remains feature dependent.
\\

\indent We also highlight the novel Theorem~\ref{th-general} and~Theorem~\ref{th-CLT-general} in the supplementary, which are stated and proven in a general setting and so they could be also applied in other contexts. In a framework of recursive stochastic dynamics, the first one provides conditions to guarantee that the limit random variable of the suitable rescaled process is non-zero with probability one, while the second one deals with the related second-order asymptotic. 
These results are not merely technical tools for the present model. They address a class of recursive dynamics in which a polynomially rescaled process converges almost surely to a random limit, and where the second-order behavior is determined by the competition between martingale fluctuations and a slowly decaying deterministic forcing term. This setting is different from the standard stochastic-approximation framework centered around a deterministic attracting equilibrium, and it naturally appears in reinforced and innovation-type processes.
\\ 

The main contribution of the paper is therefore twofold. First, we introduce a tractable interacting multi-factorial innovation process and identify the phase transitions governing both averaged and feature-specific quantities. Second, as a consequence, we develop recursive-dynamics tools that yield strong laws, exact rates, positivity of random limits and central limit theorems for the relevant observables.
\\

The rest of the paper is organized as follows. In Section~\ref{sec:notation_model_overview}, we illustrate some adopted notation, present the new model, which is the object of this work, and provide an overview of the main results of our study. In Section~\ref{preliminaries}, we collect several preliminary properties of the model, that will be repeatedly used throughout the paper, and we describe the asymptotic behavior of the total number $D_t$ of the observed features. Section~\ref{results-mean} is devoted to the asymptotic behavior of the average quantities $Z_t$, $\overline{T}_t$, $\overline{K}_t$ and $\overline{P}_t$. In Section~\ref{results-dish-specific}, we derive the asymptotic behavior of the quantities $K_{t,j}$ and $P_{t,j}$ associated with each individual feature $j$. Section~\ref{sec:future_research} concludes the paper with some final remarks and a discussion of possible directions for future research.
Finally, the paper is enriched by a wide supplementary, collecting, among other things, novel general results for recursive dynamics (we refer to Section~\ref{app-tech-res}),  
and non-trivial technical results for the considered model (we refer to Section~\ref{sec:technical_results_model}). Sections, theorems and remarks reported in the supplementary document are referenced with capital letters,
while those presented in the main document are referenced with numbers.

\section{Notation, Model, and Overview of the Results}\label{sec:notation_model_overview}
In this section, we introduce the basic notation used throughout the paper, present the interacting multi-factorial innovation model, and conclude with an overview of the main results established in this work.

\subsection{Notation}
In the sequel, given two possibly random sequences $(\rho_t)$ and $(V_t)$, we use: 
\begin{itemize}
\item the expression $\rho_t=O(V_t)$ with $V_t > 0$ in order to indicate  that 
$\limsup_t |\rho_t|/V_t<+\infty$ with probability one;
\item  the expression 
$\rho_t=o(V_t)$ in order to indicate that we have 
$\rho_t / V_t \stackrel{a.s.}\longrightarrow 0$;
\item the symbols $O(V_t)$ and $o(V_t)$ to denote generic, possibly random, sequences with 
the above properties. 
\end{itemize}
Note that, if $V_t=v_t$ is deterministic, the symbols $O(v_t)$ and $o(v_t)$ can indicate both random or deterministic sequences. 
This will usually be clear from the context, but where it is not and it is important to highlight it, we'll write it explicitly.
We adopt the notation  $o_P(\cdot)$ when the above limit is in the sense of the convergence in probability, instead of a.s. convergence.

\subsection{The model}

Fix $\alpha>0$, $0\leq \beta\leq 1$, $\theta>0$, $0<w\leq 1$ and $0\leq \iota\leq 1$. The first three parameters (i.e. $\alpha$, $\beta$ and $\theta$) correspond to the three parameters 
(mass, discount and concentration) 
that characterize the Indian buffet process introduced in~\cite{TG}; while the two last parameters (i.e. $w$ and $\iota$) are the new ones, introduced in order to tune the {\em forcing input} toward zero in the inclusion probabilities and the {\em interaction among the dishes}, respectively. 
\\

 \indent The dynamics is as follows. Time is discrete and indexed by customer arrivals.
  Customer 1 tries $N_1\stackrel{d}=\,$Poi$(\lambda_0)$
  dishes with $\lambda_0=\alpha$. 
  For each time-step $t\geq 1$, let ${\mathcal O}_t$ be the collection of dishes experimented
  by the first $t$ customers,
  that we call the ``old'' dishes at time-step $t$. Denoting by ${\mathcal F}=({\mathcal F}_t)_t$
  the (complete) natural filtration associated to the model, then:

\begin{itemize}
\item Customer $t+1$ selects a subset ${\mathcal O}_{t+1}^*\subseteq {\mathcal O}_t$.
  Each  $j\in {\mathcal O}_t$ is included or not into ${\mathcal O}_{t+1}^*$ independently of the other members
  of ${\mathcal O}_t$ with ${\mathcal F}_t$-conditional probability (called {\em inclusion probability}) 
  \begin{equation}\label{inclusion-probab}
    \begin{split}
  P_{t,j}=P(X_{t+1,j}=1|{\mathcal F}_t)=w\left[(1-\iota)\frac{K_{t,j}}{\theta+t}+
        \iota \frac{1}{D_t}\sum_{i\in {\mathcal O}_t} \left(\frac{K_{t,i}}{\theta+t}\right)\right]\,.
  \end{split}
\end{equation}
    where $X_{n,i}$ is the indicator of the event $\{$customer $n$ selects dish $i\}$,
    $K_{t,i}=\sum_{n=1}^t X_{n,i}$ is the number of customers who tried dish $i$ until time-step $t$  and
    $D_t=card({\mathcal O}_t)$.

\item In addition to ${\mathcal O}_{t+1}^*$, customer $t+1$ also tries
  (independently of the past ${\mathcal F}_t$ and of the choice of ${\mathcal O}_{t+1}^*$) a random number 
  $N_{t+1}\stackrel{d}=\,$Poi$(\lambda_t)$ of new dishes,
  where Poi$(\lambda_t)$
  denotes the Poisson distribution with mean $\lambda_t=\alpha/(t+1)^{1-\beta}$. 
\end{itemize}
Note that in \eqref{inclusion-probab} a mean-field interaction term is present: feature \(j\) interacts with each other features through the empirical average popularity of all currently observed features. Thus the model
is based on a global interaction field.
We can say that the case when $w=1$ and $\iota=0$ is related to the standard IBP of~\cite{TG}, although 
in the standard model we have 
$P_{t,j}=(K_{t,j}-\beta)/(\theta+t)$ and 
$\lambda_t=\alpha \Gamma(\theta+1)\Gamma(\theta+\beta+t)/[\Gamma(\theta+\beta)\Gamma(\theta+1+t)]$. These definitions for $P_{t,j}$ and $\lambda_t$ make the standard IBP to be exchangeable. However, despite these differences, it holds true (as in our model with $w=1$ and $\iota=0)$ that $P_{t,j}\sim K_{t,j}/(\theta+t)$ and $\lambda_t\sim C/(t+1)^{1-\beta}$, for some constant $C>0$, as $t\to +\infty$, and so the asymptotic properties of our model with $w=1$ and $\iota=0$ are similar to the ones of the standard IBP. Instead, as we will see,  
 the second-order asymptotic behavior presents significant differences. Note that, from an applicative point of view, our simplified way to define the inclusion probabilities $P_{t,j}$  and the parameter $\lambda_t$ allows for a better interpretation and a clearer identification of the role played by each single parameter. This is the reason behind our choice to define them as above. 
 Hence, the introduced model should  be viewed not as an exchangeable IBP variant intended for Bayesian nonparametric inference, but as an IBP-inspired stochastic model suitable for innovation processes with possibly more than one feature (factor) per agent.
\\
\indent Moreover, in the above introduced model, as in standard IBP, for each fixed time-step $t+1$, 
the random variables $\{X_{t+1,j}:\, j\in {\mathcal O}_t\}$ are conditionally independent
given the past; while, differently with respect to the standard model, for each fixed $j\in{\mathcal O}_t$, 
the random variable $X_{t+1,j}$ is not independent
of the random variables $\{X_{n,i}:\, i\neq j,\, n\leq t\}$. Finally, as in the standard model,
the random variable $N_{t+1}$ is independent of $N_1,\dots, N_t$ and of all the
choices done for the old dishes, i.e. of~$\{X_{n,j}:  j\in{\mathcal O}_n,\, n\leq t+1\}$.
\\
\indent The parameters  $\alpha$ (known as the mass parameter) and $\beta$ (known as the discount parameter or the stability exponent) drive the growth of the number of tested dishes 
(see Subsection~\ref{subsec-D}), while the parameter $\theta$ (known as the concentration parameter) regulate the initial condition and the behavior of the model in the initial phase (i.e. for a finite number of time-steps).   
Regarding the parameter~$w$, we observe that the inclusion probability can be seen as a convex combination of the reinforcement probability with weight $w\in (0,1]$ and 
the null probability with weight $(1-w)\in [0,1)$. Hence, the parameter $w$ tunes the weight of the two mechanisms: the {\em reinforcement} and the {\em forcing input toward zero}. 
The first one can be explained by the popularity principle: the more popular is a feature (dish), the higher is the probability that a future agent (customer) selects it.  
The last mechanism can be explained thinking about the fact that the features (dishes) could be subject to a fad and so their probability to be selected is "forced" to vanish.  
Finally, the reinforcement term is itself a convex combination of a term that depends on the averaged popularity of the appeared features (tested dishes) with weight $\iota\in [0,1]$ and 
a term that depends only on the specific popularity of the feature (dish) $j$ with weight $(1-\iota)\in [0,1]$. 
Hence, the {\em interaction} present in the model dynamics is of the {\em mean-field type}, whose intensity is ruled by the parameter $\iota$ and in the inclusion probability 
$P_{t,j}$ the total weight of the {\em self-reinforcement} is $w[(1-\iota)+ \iota/D_t]$, while 
the weight of each {\em cross-reinforcement} (that is the reinforcement coming from a feature $i\in{\mathcal O}_t\setminus\{j\}$) 
is $w\iota/D_t$.  
\\
\indent Generally speaking, a mean-field interaction is 
used when each agent perceives the overall average context, not just the number of similar agents, 
so that the agent's behavior is influenced by an aggregate perception of the system, rather than by
direct contacts. In other words, the system  of interest is simplified by replacing the complex pairwise interactions with an effective average field that
represents the collective effect of all the others. Hence, in the IBP framework, 
instead of calculating how every feature (dish) affects every other one (which is extremely difficult when the features are in a large number), 
we assume that each feature is affected by only an average field generated by all the others appeared in the system. 
Just to give an example in order to facilitate the interpretation and visualize the framework, consider 
the following setting. An agent (customer) enters a community and adheres to some existing proposals (old dishes), 
while also introducing new ones (new dishes). Each agent decides to join an existing proposal not only based on how many agents have already joined that proposal, 
but also on the overall average adoption across all proposals - in other words, on a global perception of participation across all the existing proposals.  
This can be seen as a mean-field interaction process in an ``idea space''. 
Each agent supports existing ideas not only based on how many others support them,
but also based on the overall average level of support across all ideas.

\subsection{Overview of the Results}

Regarding the results we are going to prove, we observe that the behavior of the number $D_t$ of the tested dishes along the time-steps is not affected by 
the interaction among the dishes and hence the related results are standard. We collect them in the following Section~\ref{preliminaries} for the reader's convenience;
 while the main results of this work concern the asymptotic behaviors of 
  the dish(feature)-specific inclusion probability $P_{t,j}$, the dish(feature)-specific popularity $K_{t,j}$, the number $T_t$ of dishes tested by customer $t$, that is 
  $T_t=\sum_{j\in {\mathcal O}_t} X_{t,j} + N_t$ and the corresponding averaged quantities
\begin{equation}\label{eq:average_quantities}
\overline{P}_t=\frac{1}{D_t}\sum_{j\in {\mathcal O}_t} P_{t,j},\quad
\overline{K}_t=\frac{1}{D_t}\sum_{j\in{\mathcal O}_t}K_{t,j}
\quad\mbox{and}\quad
\overline{T}_t=\frac{1}{t}\sum_{n=1}^t T_n\,,
\end{equation}
that corresponds to the mean dish(feature)-inclusion probability, the mean dish(feature)-popularity and the mean number of dishes (features) per customer (agent/item), respectively, until time-step $t$. 
We also analyze the predictive mean $Z_t=E[T_{t+1}|{\mathcal F}_t]$ of the number of dishes tested (features exhibited) by the future customer (agent/item) $t+1$ given 
the past ${\mathcal F}_t$.  
These results are presented in Section~\ref{results-mean} (for the averaged quantities) and in Section~\ref{results-dish-specific} (for the specific-dish quantities). 
In particular, we highlight that the asymptotic behaviors of the averaged quantities only depend on the relationship between the parameters $\beta$ and $w$ (see 
Table~\ref{table-Z-T-P-K-medio}), while 
the interaction intensity $\iota$ plays a role in the results describing the asymptotics for the specific-dish quantities (see Table~\ref{table-K-j} and Table~\ref{table-P-j}).   This is a consequence of the specific form of the interaction considered in this work: the mean-field interaction influences the processes at the level of 
    individual dishes but vanishes when considering the averaged quantities. 
   This represents an advantage from the applicative point of view  because this means that we can use the parameters $\alpha$ and $\beta$ to fit the growth of the observable number $D_t$ 
   of distinct dishes (features), we can fit the behavior of the observable averaged quantities $\overline{T}_t$ by the parameter $w$, while we can use the parameter $\iota$ to fit the observable $j$-specific quantities $K_{t,j}$. 
\\
\indent For the first-order results in Table~\ref{table-Z-T-P-K-medio}, the main considerations are the following.
The predictive mean $Z_t$ of the number of dishes chosen at the future time $t+1$ converges to zero when both $\beta$ and $w$ are less than 1. The decay is sub-linear (specifically, a power-law or a power-law with a logarithmic correction), with a power-law exponent depending on the weight $(1-w)$ of the forcing input toward zero or on the power-law exponent $(1-\beta)$ governing the decrease of the expected number $\lambda_t$ of new features. 
When $\max\{\beta, w\} = 1$, it converges to a finite strictly positive quantity if $\beta \neq w$, whereas it diverges to infinity if $\beta = w$.
The averaged quantity $\overline{T}_t$ exhibits exactly the same asymptotic behaviors, but the limit random variables coincide only when $w=1$ and so, in that case, we can affirm that the empirical mean $\overline{T}_t$ can be used as a strongly consistent estimator of the predictive mean $Z_t$.   These results for $w=1$ are consistent with the ones known for the standard IBP of \cite{TG} (see~\cite{BCPR-IBP}), while the others are a consequence of the introduction in the model of the forcing input toward zero.
The mean inclusion probability~$\overline{P}_t$ always converges to zero. 
This result is of particular interest, as it explains the sparsity of the observed feature–matrix.
Although $\overline{P}_t$ always converges to zero, the mean dish-popularity $\overline{K}_t$  always diverges to $+\infty$, except when the $w<\beta$, i.e.~when the weight $(1-w)$ of the forcing input toward zero is higher than the power-law exponent 
$(1-\beta)$ of the expected number of new dishes. Specifically, when $w> \beta$, we have  a power-law growth (with or without a logarithmic correction) with an exponent related to the difference between $w$ and $\beta$, a logarithmic growth when $w=\beta$ and a convergence to a suitable constant when $w<\beta$.
\\
\indent Regarding the first-order results in Table~\ref{table-K-j} and Table~\ref{table-P-j}, we can make the following remarks. 
After its appearance, the inclusion probability $P_{t,j}$ of dish~$j$ tends to zero in all cases except when $\iota = 0$ and $w = 1$ 
(that, as already observed, essentially corresponds to the standard IBP of~\cite{TG}). All the dish-specific inclusion probabilities~$P_{t,j}$ converge at the same rate, meaning that the rate does not depend on $j$. The level $\iota$ of interaction influences the convergence rate at which each process $(P_{t,j})_t$ converges towards zero. 
Specifically, the higher the interaction parameter $\iota$, the faster the convergence to zero (recall that when $\iota = 0$ and $w = 1$, $P_{t,j}$ does not even tend to zero). 
Additionally, the level of interaction determines how closely the processes $P_{t,j}$ are asymptotically related to each other. 
Moreover, comparing Table~\ref{table-Z-T-P-K-medio} and Table~\ref{table-P-j}, we can see that, in the most of cases,  the  
averaged inclusion probability  $\overline{P}_t$ is asymptotically lower than the specific inclusion probability $P_{t,j}$ of any dish $j$. 
The level $\iota$ of interaction quantifies the difference between $\overline{P}_t$ and the single $P_{t,j}$. Specifically:
    \begin{itemize}
        \item \emph{High interaction ($\iota=1$ or $\beta/w<\iota<1$):} The processes $P_{t,j}$, once rescaled, converge towards the same random variable, with the limit not depending on $j$.
    In other words, the processes $(P_{t,j})_t$ synchronize. The process $\overline{P}_t$ and the processes $P_{t,j}$ converge at the same rate, but when 
    $0<\beta/w<\iota<1$ their limits differ.
        \item  \emph{Critical regime  ($0<\iota=\beta/w<1$):} As above, the processes $(P_{t,j})_t$ synchronize. The process $\overline{P}_t$ and the processes $P_{t,j}$ converge at different rates.
        \item  \emph{Low interaction ($\iota=0$ or $0<\iota<\min\{\beta/w,1\}$):}  The processes $P_{t,j}$, once rescaled, converge towards possibly different random variables depending on $j$. 
        Moreover, the averaged quantity $\overline{P}_t$ and the processes $P_{t,j}$ converge at different rates. 
    \end{itemize}
It may seem surprising that, when all the processes $P_{t,j}$ synchronize, their common behavior still differs from the one of~$\overline{P}_t$, which is their average. Specifically: in the case of high interaction, 
$\overline{P}_t$ converges at the same rate as each $P_{t,j}$ does, 
but, when rescaled, its limit is strictly smaller than the common limit toward which all the rescaled $P_{t,j}$ converge, and, in the critical case, they have even different rates. The reason for this apparent paradox lies in the fact that the total number of dishes involved in the average diverges to infinity, while the rescaled processes $(P_{t,j})_t$ do not converge uniformly to their common limit. Consequently, although older dishes approach the common limit, there will always be a large number of more recent dishes whose values remain significantly below it.\\
\indent The $j$-specific popularity $K_{t,j}$ always increases to $+\infty$. The growth is logarithmic or according to a power-law and, always sub-linear, except when $w=1$ and $\iota=0$ (consistently with the behavior in the standard IBP of \cite{TG}). We can observe that the smaller the interaction intensity $\iota$ is, the faster is the growth to $+\infty$. This is coherent with the previous observation on $P_{t,j}$.\\
\indent We complete the analysis of the model with some asymptotic results of the second-order. Specifically, for the case $w>\beta$, we establish some central limit theorems (see Theorem~\ref{thm:second_order-Z} and Corollary~\ref{cor-clt}), 
that could be useful for the estimation of the random variable \( Z^*_\infty\), present in Tables~\ref{table-Z-T-P-K-medio}-\ref{table-P-j}, 
via confidence intervals based on the observable quantity \( \overline{T}_t \).  
Similarly, in the case of low-interaction, we establish a central limit theorem (see Theorem~\ref{thm:second_order-P_j}),  
that could be useful for the estimation of the random variable \( K^*_{\infty,j}\), present in Table~\ref{table-K-j} and Table~\ref{table-P-j}, 
via confidence intervals. In particular, in this result, we identify three regimes according to the relationships between the interaction intensity $\iota$ and the ratio $\beta/w$: the larger $\iota$ is, the slower the convergence to zero of the difference between the rescaled process $K_{t,j}$ and its limit $K^*_{\infty,j}$ is.\\

We conclude this overview by 
highlighting the two novel general results,  Theorem~\ref{th-general} and Theorem~\ref{th-CLT-general} in supplementary, that could also be applied
in other frameworks. In a context of a recursive dynamics, Theorem~\ref{th-general}
provides conditions to guarantee that the limit random variable of a
suitable rescaled process is almost surely non-zero.  
This fact is important since it allows to conclude that the scaling factor 
 provides the exact rate of convergence. Note that the standard arguments 
 in literature serve to prove only the convergence of the rescaled process to a real random variable, while  we 
 succeed to prove that the limit random variable is non-zero with probability one  
 by means of a non-standard technique.  Theorem~\ref{th-CLT-general} 
 deals with the related second-order asymptotic and it faces within a framework excluded 
  from the usual central limit theorems applied in the literature when dealing with recursive dynamics, such as the ones in~\cite{Zhang2016}.

\renewcommand{\arraystretch}{1.5}
\begin{table}[htbp!]
\caption{Almost sure asymptotic behavior of:\\ (i) the predictive mean $Z_t$,\\ (ii) the empirical mean $\overline{T}_t$,\\ (iii) the averaged inclusion probability $\overline{P}_t$,\\ (iv) the averaged number of customers per dish $\overline{K}_t$.\\
The real random variable $Z^*_\infty$ is a.s.\ strictly positive}.
\label{table-Z-T-P-K-medio}
\centering
\small
\begin{tabular}{l|l|l|l|l|}
 & $Z_t=E[T_{t+1}\mid{\mathcal F}_t]$ & 
$\overline{T}_t=\sum_{n=1}^tT_n/t$ &
$\overline{P}_t=\sum_{j\in\mathcal{O}_t} P_{t}/D_t$ &
$\overline{K}_t=\sum_{j\in\mathcal{O}_t} K_{t,j}/D_t$ \\
\hline
$0=\beta<w$ & $t^{-(1-w)}Z^{*}_\infty$ &
$t^{-(1-w)}\frac{Z^{*}_\infty}{w}$ &
$\frac{t^{-(1-w)}}{\ln(t)}\frac{Z^{*}_\infty}{\alpha}$ &
$\frac{t^{w}}{\ln(t)}\frac{Z^{*}_\infty}{\alpha w}$
\\
$0<\beta<w$ & $t^{-(1-w)}Z^{*}_\infty$ &
$t^{-(1-w)}\frac{Z^{*}_\infty}{w}$ &
$t^{-(1-w+\beta)}\frac{Z^{*}_\infty\beta}{\alpha}$ &
$t^{w-\beta}\frac{Z^{*}_\infty\beta}{\alpha w}$
\\
$0<\beta=w$ & $t^{-(1-w)}\ln(t)\alpha w$ &
$t^{-(1-w)}\ln(t)\alpha$ &
$t^{-1}\ln(t)w^{2}$ &
$\ln(t)w$
\\
$0<w<\beta$ & $t^{-(1-\beta)}\frac{\alpha \beta}{\beta-w}$ &
$t^{-(1-\beta)}\frac{\alpha}{\beta-w}$ &
$t^{-1}\frac{w\beta}{\beta-w}$ &
$\frac{\beta}{\beta-w}$\\
\hline
\end{tabular}
\end{table}

\renewcommand{\arraystretch}{1.5}
\begin{table}[htbp!]
\caption{Almost sure asymptotic behavior of the number of customers per dish. 
The real random variables $Z^*_\infty$ and $K^*_{\infty,j}$ are a.s.\ strictly positive.
}
\label{table-K-j} 
\centering
\small
\begin{tabular}{l|l|l|l|l|}
$K_{t,j}$ & $\iota=1$ & $\frac{\beta}{w}<\iota <1$ &
$0<\iota=\frac{\beta}{w} <1$ & $\iota=0$ or $0 < \iota<\min\{\frac{\beta}{w},1\}$\\
\hline
$0=\beta<w$ & $\tfrac{t^{w}}{\ln(t)}\frac{Z^{*}_\infty}{\alpha w}$ &
$\frac{t^{w}}{\ln(t)}\frac{1}{\alpha w}\, Z^*_\infty$
& \mbox{not possible}
& $t^{w} K^*_{\infty,j}$\\ 
$0<\beta<w$ & $t^{(w-\beta)}\frac{Z^{*}_\infty\beta}{\alpha(w-\beta)}$ &
$t^{w-\beta}\frac{\iota}{(\iota w - \beta)} \frac{\beta}{\alpha}\, Z^*_\infty$
& $t^{w-\beta}\ln(t)\iota \frac{\beta}{\alpha}\, Z^*_\infty$ & $t^{w(1-\iota)} K^*_{\infty,j}$ \\ 
$0<\beta=w$ & $\ln^2(t)\frac{w^{2}}{2}$ &\mbox{not possible} & \mbox{not possible} & $t^{w(1-\iota)} K^*_{\infty,j}$ \\ 
$0<w<\beta$ & $\ln(t)\frac{w\beta}{\beta-w}$ &\mbox{not possible} &\mbox{not possible} & $t^{w(1-\iota)} K^*_{\infty,j}$\\ 
\hline
\end{tabular}
\end{table}

\renewcommand{\arraystretch}{1.5}
\begin{table}[htbp!]
\caption{Almost sure asymptotic behavior of the inclusion probability. 
The real random variables $Z^*_\infty$ and $K^*_{\infty,j}$ are a.s.\ strictly positive.}
\label{table-P-j} 
\centering
\footnotesize
\begin{tabular}{l|l|l|l|l|}
$P_{t,j}$ & $\iota=1$ & $\frac{\beta}{w}<\iota <1$ &
$0<\iota=\frac{\beta}{w} <1$ & $\iota=0$ or $0 < \iota<\min\{\frac{\beta}{w},1\}$\\
\hline
$0=\beta<w$ & $\frac{t^{-(1-w)}}{\ln(t)}\frac{Z^{*}_\infty}{\alpha}$ &
$\frac{t^{-(1-w)}}{\ln(t)}\frac{1}{\alpha}Z_\infty^*$
&\mbox{not possible}  & $\frac{1}{t^{(1-w)}} w K^*_{\infty,j}$ \\ 
$0<\beta<w$ & $\frac{1}{t^{(1-w+\beta)}}\frac{Z^{*}_\infty\beta}{\alpha}$ &
$\frac{1}{t^{(1-w+\beta)}}\frac{\iota(w-\beta)}{(\iota w-\beta)}\frac{\beta}{\alpha}Z_\infty^*$
& $\frac{\ln(t)}{t^{(1-w+\beta)}}(1-\iota)w \iota\frac{\beta}{\alpha}Z_\infty^*$ & $\frac{1}{t^{(1-w(1-\iota))}}(1-\iota)w K^*_{\infty,j}$ \\ 
$0<\beta=w$ & $\frac{\ln(t)}{t}w^{2}$ & \mbox{not possible} & \mbox{not possible} & $\frac{1}{t^{(1-w(1-\iota))}}(1-\iota)w K^*_{\infty,j}$ \\ 
$0<w<\beta$ & $\frac{1}{t}\frac{w\beta}{\beta-w}$ & \mbox{not possible} & \mbox{not possible}& $\frac{1}{t^{(1-w(1-\iota))}}(1-\iota)w K^*_{\infty,j}$\\ 
\hline
\end{tabular}
\end{table}

\section{Preliminaries}\label{preliminaries}
Recall that 
$K_{t,j}=\sum_{n=1}^t X_{n,j}$ is the number of customers who test dish $j$ until time-step $t$ and  let $T_t$ be the number of dishes tested by customer $t$. We have 
\begin{equation*}
  \sum_{n=1}^t T_n
  =\sum_{n=1}^t (\sum_{j\in{\mathcal O}_{n-1}} X_{n,j} + N_n)
    =\sum_{j\in{\mathcal O}_t} K_{t,j}\,.
  \end{equation*}
Hence, setting 
$\overline{T}_t=\sum_{n=1}^t T_n/t$, we can write 
\begin{equation}\label{eq-Kmedio-e-Tmedio}
\overline{K}_t=\frac{\sum_{j\in{\mathcal O}_t} K_{t,j}}{D_t}=\frac{t}{D_t}\overline{T}_t
\end{equation}
and, setting $S_0=0$ and $S_t=\sum_{j\in {\mathcal O}_t} P_{t,j}$ for $t\geq 1$,  we have 
\begin{equation}\label{eq-S}
  \begin{split}
S_t&=\sum_{j\in {\mathcal O}_t} P_{t,j}=\frac{w(1-\iota)\sum_{j\in{\mathcal O}_t} K_{t,j}+
        w\iota \sum_{i\in {\mathcal O}_t} K_{t,i}}{\theta+t} \\
 &=
\frac{w\sum_{j\in{\mathcal O}_t} K_{t,j}}{\theta+t}
= \frac{w\sum_{n=1}^t T_n }{\theta+t}=w \frac{t}{\theta+t}\overline{T}_t\,.
  \end{split}
  \end{equation}
From \eqref{eq-S} we also obtain 
    \begin{equation}\label{eq-P-medio-2}
    \overline{P}_t=\frac{\sum_{j\in{\mathcal O}_t}P_{t,j}}{D_t}=
    \frac{w\sum_{j\in{\mathcal O}_t} K_{t,j}}{(\theta+t)D_t}=
    w\frac{\overline{K}_t}{\theta+t}
    \end{equation}
and so we can rewrite the inclusion probability $P_{t,j}$ as 
    \begin{equation}\label{eq-P-2}
    P_{t,j} = (1-\iota) w\frac{K_{t,j}}{\theta+t}+
        \iota \overline{P}_t\,.
    \end{equation} 
Moreover, the predictive mean  $Z_t=E[T_{t+1}|{\mathcal F}_t]$ 
is the sum of two terms, one related to the old dishes and the other related to the rate at which new dishes appear. 
Precisely, since $T_{t+1}=\sum_{j\in{\mathcal O}_t} X_{t+1,j}+N_{t+1}$, we have 
\begin{equation}\label{eq-Z}
Z_t=E[T_{t+1}|{\mathcal F}_t]= \sum_{j\in {\mathcal O}_t} P_{t,j}+\lambda_t=S_t+\lambda_t\,,
\end{equation}
where $\lambda_t=\alpha/(t+1)^{1-\beta}$ and $S_t$ is defined in \eqref{eq-S}. 
A simple calculation yields 
\begin{equation}\label{eq-diff-Z}
 Z_{t+1}-Z_{t}=  -\frac{(1-w)Z_t}{\theta+t+1}+\frac{w(T_{t+1}-Z_{t})}{\theta+t+1}
+\lambda_{t+1}-\lambda_{t}+\frac{\lambda_{t}}{\theta +t+1}
\end{equation}
so that we obtain
\begin{equation*}
 E\bigl[Z_{t+1}\mid\mathcal{F}_t\bigr] - Z_t
 =-\frac{(1-w)Z_t}{\theta+t+1}+\lambda_{t+1}-\lambda_{t}+\frac{\lambda_{t}}{\theta +t+1}.
\end{equation*}
Note also that we have 
\begin{equation}\label{eq-sp-quad-T}
E\bigl[T_{t+1}^2 \mid\mathcal{F}_t\bigr] = Z_t+Z_t^2-R_t\,,
\quad\mbox{where}\quad
R_t=\sum_{j\in {\mathcal O}_t} P_{t,j}^2,
\end{equation}
(and so $0\leq R_t\leq S_t\leq Z_t$), because
\begin{equation*}
\begin{split}
E\bigl[T_{t+1}^2 \mid\mathcal{F}_t\bigr]&=
E\bigl[(\sum_{j\in{\mathcal O}_t} X_{t+1,j}+N_{t+1})^2\mid \mathcal{F}_t\bigr]\\
&=\sum_{j\in{\mathcal O}_t} P_{t,j}+2\sum_{j<j'}P_{t,j}P_t(j')+\lambda_t+\lambda_t^2+2\lambda_t\sum_{j\in{\mathcal O}_t} P_{t,j}\\
&=\sum_{j\in{\mathcal O}_t} P_{t,j}+\lambda_t+
(\sum_{j\in{\mathcal O}_t} P_{t,j}+\lambda_t)^2-\sum_{j\in {\mathcal O}_t} P_{t,j}^2\\
&=Z_t+Z_t^2-R_t\,.
\end{split}
\end{equation*}

\indent Finally, we recall that $D_t$ denotes the number of dishes tested until time-step $t$ and its asymptotic behavior is described in the following 
subsection. 

\subsection{Asymptotic behavior of $D_t$} \label{subsec-D}
We recall that $D_t=card({\mathcal O}_t)=\sum_{n=1}^t N_n$, where the random variables $N_n$ are independent and 
Poisson-distributed with parameter $\lambda_{n-1}$ so that $D_t$ has Poisson distribution with parameter $\Lambda_t=\sum_{n=1}^t\lambda_{n-1}=\sum_{n=1}^t \alpha/n^{1-\beta}\sim \alpha \ln(t)$ when $\beta=0$ 
and $\sim (\alpha/\beta)t^\beta$ when  $\beta\in (0,1]$. (Remember that these are also the asymptotic behaviors of the mean and the variance of $D_t$, since they coincide with the parameter $\Lambda_t$.)\\

By standard martingale arguments and Kronecker's lemma, we have the following result regarding the first-order asymptotic behavior of $D_t$:
\begin{equation}\label{eq-D-as} 
\begin{cases}
\frac{D_t}{t^\beta}\stackrel{a.s.}\longrightarrow \frac{\alpha}{\beta}\quad&\mbox{when } 0<\beta\leq 1\,;\\
\frac{D_t}{\ln(t)}\stackrel{a.s.}\longrightarrow \alpha\quad&\mbox{when } \beta=0\,.
\end{cases}
\end{equation}

\begin{rem}[Central limit theorem]\label{rem-clt-D}
Setting
\begin{equation*}
\begin{split}
\lambda(\beta)&=\alpha\,\text{ if }\,\beta=0\quad\text{and}
\quad\lambda(\beta)=\frac{\alpha}{\beta}\,\,\text{ if }\,
\beta\in (0,1],
\\
a_t(\beta)&=\ln(t)\,\text{ if }\,\beta=0\quad\text{and}
\quad 
a_t(\beta)=t^\beta\,\text{ if }\,\beta\in (0,1]\,,
\end{split}
\end{equation*}
and recalling that 
\begin{equation*}
  \begin{split}
\sqrt{a_t(\beta)}\,\Bigl\{\tfrac{\Lambda_t}{a_t(\beta)}-
\lambda(\beta)\Bigr\}
=
\tfrac{\alpha\sum_{n=1}^t n^{\beta-1}-\lambda(\beta)a_t(\beta)}{\sqrt{a_t(\beta)}}
=
\tfrac{O(1)}{\sqrt{a_t(\beta)}}
\longrightarrow 0\,,
\end{split}
\end{equation*}
by a martingale central limit theorem (see \cite[Theorem~3.2, page 58]{HH}) 
applied to the martingale difference array 
$Y_{t,n}=(N_n-\lambda_{n-1})/\sqrt{a_t(\beta)}$, with $t\geq 1$ and $1\leq n\leq t$,
we can easily obtain that 
\begin{gather*}
\sqrt{a_t(\beta)}\,\Bigl(\frac{D_t}{a_t(\beta)}-\lambda(\beta)\Bigr)
\stackrel{stably}\longrightarrow\mathcal{N}\bigl(0,\,\lambda(\beta)\bigr)\,.
\end{gather*}
Moreover, by a functional central limit theorem for martingales (see \cite[Theorem~2.5]{dur-res}) applied to the same martingale difference array $(Y_{t,n})$, 
 we can get the functional version of the above limit result.
\end{rem}

Finally, the following remark provides an asymptotic bound for 
the difference $|D_t-\Lambda_t|$. 
\begin{rem}[Law of the iterated logarithm]\label{LIL-D}
\rm For each $\beta\in [0,1]$, we have with probability one that
\begin{equation*}\label{eq-lil}
\limsup_{t\rightarrow+\infty} \frac{|D_t-\Lambda_t|}{\sqrt{2 \Lambda_t\ln(\ln (\Lambda_t))}} = 1\,.
\end{equation*}
Its proof is based on the classical Kolmogorov's law of the iterated logarithm and standard arguments and it is collected in Section~\ref{app-LIL} of the supplementary. 
\end{rem}

\section{Asymptotic behavior of 
$\overline{T}_t$, 
$\overline{P}_t$ and $\overline{K}_t$}
\label{results-mean}

This section is devoted to deriving the asymptotic properties of the average quantities defined in~\eqref{eq:average_quantities}, namely
\[
\overline{P}_t = \frac{1}{D_t} \sum_{j \in \mathcal{O}_t} P_{t,j}, \quad
\overline{K}_t = \frac{1}{D_t} \sum_{j \in \mathcal{O}_t} K_{t,j}, \quad \text{and} \quad
\overline{T}_t = \frac{1}{t} \sum_{n=1}^t T_n\,.
\]
These processes are closely related to the process of predictive means
\[
Z_t = E[T_{t+1} \mid \mathcal{F}_t] = S_t + \lambda_t\,,
\]
as shown by the preliminary relations~\eqref{eq-Kmedio-e-Tmedio}, \eqref{eq-S} and \eqref{eq-P-medio-2} in Section~\ref{preliminaries}. Therefore, the theorems in this section focus on the asymptotic behavior of the process $(Z_t)_t$, from which the corresponding results for the averaged quantities can be directly derived (see Table~\ref{table-Z-T-P-K-medio} and Corollary~\ref{cor-clt}). 
Firstly, we provide the first-order asymptotic properties in Subsection~\ref{sec:first_order_asymp_average_quantities}, and then we turn to the second-order asymptotics in Subsection~\ref{sec:second_order_asymp_average_quantities}.

\subsection{First-order asymptotic results}\label{sec:first_order_asymp_average_quantities} 
 From the recursive dynamics of $(Z_t)_t$ in~\eqref{eq-diff-Z}, one might expect that its first-order asymptotic behavior can always be derived via standard stochastic approximation techniques. However, the situation is more subtle. Indeed, the dynamics is essentially governed by two competing contributions:
\begin{equation}\label{eq:heuristic_1}
    -\frac{(1-w)}{\theta+t+1} Z_t
\qquad\text{and}\qquad
(\lambda_{t+1}-\lambda_{t})+\frac{\lambda_t}{\theta+t+1}\,.
\end{equation}
Heuristically, the first term, driven by the parameter $(1-w)$ (that is the weight of the forcing input toward zero), acts as a linear drift toward zero (for $w<1$), suggesting a decay rate of order $t^{-(1-w)}$. The second term, induced by the decay (for $\beta<1$) of the expected number $\lambda_t$ of new dishes, suggests instead a rate of order $t^{-(1-\beta)}$.
The interplay between these two effects determines the asymptotic behavior of $Z_t$: when the two rates differ, the slower decay dominates; while, in the critical case $1-w = 1-\beta$, the two contributions asymptotically balance each other, leading to a cancellation of leading-order terms and resulting in a logarithmic correction, so that the decay becomes slightly slower than $t^{-(1-w)}$ (equivalently, $t^{-(1-\beta)}$). When $\max\{w,\beta\}=1$, the process $(Z_t)_t$ does not converge to zero: it converges to a random variable when $\beta<1$, to a constant when $w<1$ and it diverges to $+\infty$ in the case $w=\beta=1$. These facts in the case $w=1$ are consistent with the results known for the standard IBP of \cite{TG} (see~\cite{BCPR-IBP}).
\\
\indent It is worth emphasizing that, once the processes are rescaled according to the correct rate, depending on which term in~\eqref{eq:heuristic_1} dominates, the proofs become substantially different in the two regimes.
Determining the dominant term depends on the values of $w$ and $\beta$, which requires splitting the analysis into two separate theorems: Theorem~\ref{th-Z-T-medio} for the case $w > \beta$, and Theorem~\ref{th-Z-T-medio-altro-caso} for the case $w \le \beta$.
\\
\indent In the first case, when $w > \beta$, the second term in~\eqref{eq:heuristic_1} behaves essentially as a remainder, and thus $(Z_t)_t$ becomes a non-negative almost super-martingale and, 
besides proving the convergence of $t^{1-w}Z_t$ (which follows from a classical convergence result for non-negative almost super-martingales), we will prove (by a non  classical technique) that the corresponding limit random variable is strictly positive, concluding that $t^{1-w}$ is the exact scaling factor. 
\\
\indent In the second case, when $w \le \beta$, the second term in~\eqref{eq:heuristic_1} can not be neglected and we obtain 
a deterministic limit for the suitable rescaled process $(Z_t)_t$ via stochastic approximation.
 In particular, if $\beta = 1$, the sequence $(\lambda_t)_t$ does not decay, causing the failure of the convergence to zero for $Z_t$. 
Finally, when $w = \beta = 1$, the additional absence of drift leads to the divergence $Z_t \to +\infty$.

The proofs of all the results presented in this section are collected in Subsection~\ref{sec:proofs_first_order_average} of the supplementary and are essentially based on Theorem~\ref{thm:app-cor}, Theorem~\ref{th-general} and Lemma~\ref{lem-unif-int-new}. In particular, the last one is a non immediate technical lemma needed to control the moments of the unbounded random variables $Z_t$. 
Also relation~\eqref{eq-sp-quad-T}  plays an important role. 
\\

We first present the convergence result in the case $w>\beta$.

\begin{thm}[Weak forcing input toward zero]\label{th-Z-T-medio} 
Let $\beta<w$ (i.e. $(1-w)<(1-\beta)$). 
Then we have 
\begin{equation*}
t^{1-w} Z_t\stackrel{a.s.}\longrightarrow Z^*_\infty\,,
\end{equation*}
where $Z^*_\infty$ is a finite and strictly positive random variable.
\end{thm}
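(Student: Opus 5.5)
We rescale by a deterministic product so as to absorb the linear drift in \eqref{eq-diff-Z}. Set $c_t=\prod_{s=1}^{t}\bigl(1-\frac{1-w}{\theta+s}\bigr)^{-1}$, so that $c_t\sim C\,t^{1-w}$ for a constant $C>0$, and let $\widetilde Z_t=c_t Z_t$. Multiplying \eqref{eq-diff-Z} by $c_{t+1}$ gives
\begin{equation*}
\widetilde Z_{t+1}-\widetilde Z_t=c_{t+1}\frac{w(T_{t+1}-Z_t)}{\theta+t+1}+c_{t+1}\Bigl(\lambda_{t+1}-\lambda_t+\frac{\lambda_t}{\theta+t+1}\Bigr).
\end{equation*}
The deterministic term is nonnegative up to $O(t^{-2+\beta})$ and has size $O(t^{1-w}t^{-(2-\beta)})=O(t^{-1-(w-\beta)})$. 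Since $w>\beta$, it is summable. Hence $(\widetilde Z_t)$ is a nonnegative almost supermartingale (in fact almost a submartingale), that is, a martingale plus a convergent deterministic series. The martingale increments have conditional variance bounded via \eqref{eq-sp-quad-T}: $E[(T_{t+1}-Z_t)^2\mid\mathcal F_t]=Z_t-R_t\le Z_t$. Therefore
\begin{equation*}
\sum_t c_{t+1}^2\frac{Z_t}{(\theta+t)^2}\asymp\sum_t t^{1-w}\,t^{-2}\,\widetilde Z_t .
\end{equation*}
To show this sum is finite we first establish $\sup_t E[\widetilde Z_t]<\infty$, which follows by taking expectations in the recursion because the deterministic series is summable. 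Then $\sum_t t^{-1-w}E[\widetilde Z_t]<\infty$. The Robbins--Siegmund theorem (Theorem~\ref{thm:app-cor}), or simply $L^2$-boundedness of the martingale part, then gives $\widetilde Z_t\to$ a finite limit a.s. Consequently $t^{1-w}Z_t\to Z^*_\infty:=C^{-1}\lim\widetilde Z_t$, which is finite.

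The main obstacle is proving $P(Z^*_\infty>0)=1$. First, $Z^*_\infty$ is not identically zero, since $E[\widetilde Z_t]$ is nondecreasing up to a summable error and starts positive ($Z_0\ge\lambda_0=\alpha$). This is not enough, however. We intend to apply the general positivity result Theorem~\ref{th-general}, whose hypotheses we expect to have the following shape: the increments of the rescaled process are a martingale difference plus a nonnegative forcing term, and the conditional variance is controlled linearly by the current value, here $\mathrm{Var}(\widetilde Z_{t+1}\mid\mathcal F_t)\le \kappa\, t^{-1-w}\widetilde Z_t$. The intuition is that near zero the noise shrinks like $\sqrt{\widetilde Z_t}$, so the process cannot be absorbed at $0$.

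Concretely, we would consider $E[\exp(-u\widetilde Z_{t+1})\mid\mathcal F_t]$. A second-order expansion, together with the linear variance bound, shows that $\exp(-u_t\widetilde Z_t)$ is a supermartingale along a suitable deterministic sequence $u_t\uparrow u_\infty<\infty$. One must choose $u_t$ with $u_{t+1}-u_t\asymp u_t^2 t^{-1-w}$, which is possible because $\sum_t t^{-1-w}<\infty$. This yields
\begin{equation*}
P(Z^*_\infty=0)\le E\bigl[\exp(-u_\infty\widetilde Z_\infty)\bigr]\le E\bigl[\exp(-u_{t_0}\widetilde Z_{t_0})\bigr],
\end{equation*}
where the middle quantity is the limit of the supermartingale, so the bound holds for every start time $t_0$. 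Letting the initial scale $u_{t_0}$ be arbitrary, and conditioning at a time $t_0$, the same argument run from $t_0$ with $u_{t_0}=\xi$ gives $P(Z^*_\infty=0\mid\mathcal F_{t_0})\le \exp(-\xi\widetilde Z_{t_0}/2)$ for large $t_0$. Letting $\xi\to\infty$ on $\{\widetilde Z_{t_0}>0\}$ gives $P(Z^*_\infty=0\mid\mathcal F_{t_0})=0$ there.

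It remains to handle the event $\{\widetilde Z_{t_0}=0\}$. Since $Z_t\ge\lambda_t>0$ always, $\widetilde Z_{t_0}>0$ for every $t_0$, so we conclude $Z^*_\infty>0$ a.s. The delicate points are the uniform-in-$u$ control of the third-order remainder in the exponential expansion, given that $T_{t+1}$ is unbounded (Poisson tails), and the moment bounds needed for it. We would handle these through a lemma like Lemma~\ref{lem-unif-int-new}, using the fact that conditionally on $\mathcal F_t$ the variable $T_{t+1}$ is a sum of independent Bernoulli and Poisson variables, so its Laplace transform is explicit and bounded by $\exp(Z_t(e^{s}-1))$.
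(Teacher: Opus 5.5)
\textbf{Gap in the positivity step.} Your first part (rescaling by $c_t$, summability of the deterministic term since $w>\beta$, $\sup_t E[\widetilde Z_t]<\infty$, a.s.\ convergence) is fine, except that Robbins--Siegmund is Theorem~\ref{th-almost-supermart}, not Theorem~\ref{thm:app-cor}. Your Laplace bound also needs no moment lemma. Since $T_{t+1}\ge 0$, you have $E[e^{-sT_{t+1}}\mid\mathcal F_t]\le \exp(-(1-e^{-s})Z_t)$, and $y-(1-e^{-y})\le y^2/2$, so $E[e^{-u\widetilde Z_{t+1}}\mid\mathcal F_t]\le e^{-ud_t}\exp(-(u-\kappa_t u^2)\widetilde Z_t)$ with $\kappa_t\asymp t^{-1-w}$.

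The gap is the last step. Your recursion $u_{t+1}-u_t\asymp\kappa_t u_t^2$ is a discrete version of $\dot u=k(t)u^2$, so $1/u_t\approx 1/\xi-\sum_{s=t_0}^{t-1}\kappa_s$. It stays bounded only if $\xi\lesssim(\sum_{s\ge t_0}\kappa_s)^{-1}\asymp t_0^{w}$, and it blows up in finite time otherwise. That is why you needed ``for large $t_0$''. But then $t_0$ depends on $\xi$, and you cannot let $\xi\to\infty$ with $t_0$ fixed.

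What the argument actually yields is $P(Z^*_\infty=0\mid\mathcal F_{t_0})\le C\exp(-c\,t_0^{w}\widetilde Z_{t_0})$, which is strictly positive. Mere positivity of $\widetilde Z_{t_0}$ (from $Z_t\ge\lambda_t$) cannot close the argument. Your heuristic is also misleading: noise of order $\sqrt{\widetilde Z_t}$ is exactly the critical branching/Feller regime, where absorption at $0$ has positive probability. The inputs you use (positivity at $t_0$, conditional variance linear in the current value with summable coefficients) do not by themselves exclude $Z^*_\infty=0$.

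\textbf{The missing ingredient.} You need $\widetilde Z_{t_0}$ to dominate the residual variance budget $\sum_{s\ge t_0}\kappa_s\asymp t_0^{-w}$, i.e.\ $t^{w}\widetilde Z_t\asymp tZ_t\to+\infty$ a.s. This holds: by~\eqref{eq-S} and~\eqref{eq-D-as}, $tZ_t\ge tS_t=w\frac{t}{\theta+t}\sum_{n\le t}T_n\ge w\frac{t}{\theta+t}D_t\to+\infty$.

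Now take $\xi=c'\,t_0^{w}$ with $c'$ small. The correction from the negative part of $d_t$ stays bounded: it can occur only when $\beta=0$, where it is $O(t^{-2-w})$. You then get $P(Z^*_\infty=0)\le E[\min\{1,Ce^{-c\,t_0Z_{t_0}}\}]\to0$ by dominated convergence.

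\textbf{Comparison with the paper.} The divergence of $H_t=(\theta+t)S_t=w\sum_{n\le t}T_n$ is exactly the hypothesis $\sum_tY_t=+\infty$ of Theorem~\ref{th-general}. The paper applies that theorem to $X_t=S_t$, $Y_{t+1}=wT_{t+1}$, $\delta=w$, $\rho_t=w\lambda_t$. There, $H_t\uparrow\infty$ is upgraded to $H_t\ge t^{\eta}$ for every $\eta<w$ via the supermartingale $(\theta+t)/H_t^{\gamma}$. Then $\ln(H_t/(\theta+t)^{w})$ is shown to converge through Lemma~\ref{lemma-pemantle}, using $E[Y_{t+1}^2\mid\mathcal F_t]=O(E[Y_{t+1}\mid\mathcal F_t])$. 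Once repaired, your exponential-supermartingale route is a genuinely different and somewhat shorter alternative, since it needs no rate for $H_t$.
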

From Theorem~\ref{th-Z-T-medio} and using the preliminary relations~\eqref{eq-Kmedio-e-Tmedio}, \eqref{eq-S} and \eqref{eq-P-medio-2}, we can easily derive that
\begin{equation*}
t^{1-w}\overline{T}_t\stackrel{a.s.}\longrightarrow \frac{1}{w}Z^*_\infty,
\quad
t^{1-w}\sum_{j\in{\mathcal O}_t}P_{t,j}
\stackrel{a.s.}\longrightarrow
Z^*_\infty,
\quad
\frac{1}{t^{w}}\sum_{j\in{\mathcal O}_t}K_{t,j}
\stackrel{a.s.}\longrightarrow
\frac{1}{w} Z^*_\infty\,,
\end{equation*}
that along with the behavior of $D_t$ described in~\eqref{eq-D-as}  leads to the results for $\overline{P}_t$ and $\overline{K}_t$ reported in Table~\ref{table-Z-T-P-K-medio}.
\\

\indent We now state the result in the case $w\leq \beta$.
\begin{thm}[Strong forcing input toward zero and Critical case] \label{th-Z-T-medio-altro-caso}
Let $w\leq \beta$ (i.e.~$(1-w)\geq (1-\beta)$). Then we have 
\begin{equation*}
\begin{cases}
t^{1-\beta}\,Z_t\stackrel{a.s.}\longrightarrow 
\frac{\alpha\beta}{\beta-w}\quad &\mbox{when } w<\beta\,;\\
\frac{t^{1-\beta}}{\ln(t)}\,Z_t\stackrel{a.s.}\longrightarrow
\alpha\beta\quad &\mbox{when } w=\beta\,.
\end{cases}
\end{equation*}
\end{thm}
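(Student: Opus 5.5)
The plan is to rescale $Z_t$ by the candidate rate and treat the result as a stochastic approximation (Robbins--Monro type) recursion with a deterministic target. Set $a_t=t^{1-\beta}$ when $w<\beta$ and $a_t=t^{1-\beta}/\ln(t)$ when $w=\beta$, and define $\widetilde Z_t=a_tZ_t$. From \eqref{eq-diff-Z}, write
\begin{equation*}
\widetilde Z_{t+1}-\widetilde Z_t=\frac{a_{t+1}}{a_t}\Bigl[-\frac{(1-w)}{\theta+t+1}\widetilde Z_t+a_t\bigl(\lambda_{t+1}-\lambda_t+\tfrac{\lambda_t}{\theta+t+1}\bigr)\Bigr]+(a_{t+1}-a_t)Z_t+\frac{w\,a_{t+1}(T_{t+1}-Z_t)}{\theta+t+1}.
\end{equation*}
The term $(a_{t+1}-a_t)Z_t$ equals $\frac{1-\beta}{t}\widetilde Z_t$, up to a factor $(1+O(1/t))$ and, in the critical case, an extra $-\frac{1}{t\ln t}\widetilde Z_t$. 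Moreover $a_t(\lambda_{t+1}-\lambda_t+\lambda_t/(\theta+t+1))=\alpha\beta/t+O(t^{-2})$ when $w<\beta$, and $=\alpha\beta/(t\ln t)+O(t^{-2})$ when $w=\beta$.

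\textbf{Case $w<\beta$.} Collecting terms gives
\begin{equation*}
\widetilde Z_{t+1}-\widetilde Z_t=\frac1t\bigl(\alpha\beta-(\beta-w)\widetilde Z_t\bigr)+\frac1t\Delta M_{t+1}+r_t,
\end{equation*}
with $\Delta M_{t+1}=w\,t^{1-\beta}(T_{t+1}-Z_t)(1+o(1))$ a martingale difference and $r_t=O(t^{-2})(1+\widetilde Z_t)$. The drift has the unique globally attracting zero $\alpha\beta/(\beta-w)$ with strictly negative slope. So I will invoke the stochastic approximation convergence result, Theorem~\ref{thm:app-cor}, once the noise satisfies $\sum_t t^{-2}E[(\Delta M_{t+1})^2\mid\mathcal F_t]<\infty$ a.s. By \eqref{eq-sp-quad-T}, $E[(T_{t+1}-Z_t)^2\mid\mathcal F_t]=Z_t-R_t\le Z_t$. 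Hence the conditional variance is at most $w^2t^{2-2\beta}Z_t=w^2t^{1-\beta}\widetilde Z_t$, and the series becomes $\sum_t t^{-1-\beta}\widetilde Z_t$.

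\textbf{Case $w=\beta$.} The linear terms cancel at order $1/t$, and the recursion takes the form
\begin{equation*}
\widetilde Z_{t+1}-\widetilde Z_t=\frac{1}{t\ln t}\bigl(\alpha\beta-\widetilde Z_t\bigr)+\frac{1}{t\ln t}\Delta M'_{t+1}+r'_t,
\end{equation*}
with step sizes $\gamma_t=1/(t\ln t)$. These still satisfy $\sum\gamma_t=\infty$, and the same theorem gives the limit $\alpha\beta$. The noise condition is now $\sum_t (t\ln t)^{-2}\,(t^{1-\beta}/\ln t)^2\,Z_t<\infty$, i.e.\ $\sum_t t^{-1-\beta}(\ln t)^{-3}\widetilde Z_t<\infty$.

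The main obstacle is controlling $\widetilde Z_t$ inside these noise and remainder conditions, since $Z_t$ is unbounded. When $\beta>0$, a.s.\ boundedness (or even $\widetilde Z_t=O(t^{\beta-\epsilon})$) of $\widetilde Z_t$ suffices. I would first obtain $\sup_t E[\widetilde Z_t]<\infty$ by taking expectations in the recursion: the mean satisfies a deterministic linear recursion whose drift is negative for large values. Then a Robbins--Siegmund almost-supermartingale argument applied to $\widetilde Z_t$ gives a.s.\ finiteness of $\sup_t\widetilde Z_t$, using only positive-part bounds on the drift. For the case $\beta=1$, and to keep $r_t$ summable, I expect to need the uniform moment bound of Lemma~\ref{lem-unif-int-new}. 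Once boundedness is in hand, all the conditions hold and both limits follow.
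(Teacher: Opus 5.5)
Your plan is essentially the paper's proof. The paper also rescales $Z_t$ by $r_t=\zeta_t(1-\beta)$, divided by $\ln(\theta+t)$ when $w=\beta$. This yields exactly your recursion with target $\alpha\beta/b$: $b=\beta-w$ with step $1/(\theta+t+1)$, respectively $b=1$ with step $1/((\theta+t+1)\ln(\theta+t+1))$. The conditional variance is bounded through \eqref{eq-sp-quad-T} by $w^2\zeta_{t+1}(1-\beta)^2Z_t$, the remainder is $O(t^{-2})\,r_tZ_t+O(t^{-1-\beta})$, and Theorem~\ref{thm:app-cor} is applied.

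The one step you should drop is the a.s.\ boundedness of $\widetilde Z_t$ via Robbins--Siegmund. As described it does not work, because the forcing term $\alpha\beta/t$ (resp.\ $\alpha\beta/(t\ln t)$) in $E[\widetilde Z_{t+1}\mid\mathcal F_t]-\widetilde Z_t$ is not summable. It is also not needed. Theorem~\ref{thm:app-cor} only asks for a.s.\ finiteness of the non-negative series $\sum_t t^{-1-\beta}\widetilde Z_t$ and $\sum_t|r_t|$. Since $\beta\geq w>0$, both have finite expectation as soon as $\sup_tE[\widetilde Z_t]<\infty$, by monotone convergence.

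That is precisely how the paper concludes, taking $\sup_tE[r_tZ_t]<\infty$ from Lemma~\ref{lem-unif-int-new}. Your alternative is to read the bound off the exact affine recursion $E[Z_{t+1}]=\bigl(1-\tfrac{1-w}{\theta+t+1}\bigr)E[Z_t]+\lambda_{t+1}-\lambda_t+\tfrac{\lambda_t}{\theta+t+1}$. This is valid and more elementary; the higher and exponential moments in that lemma are only used later, for the second-order results. It also covers $\beta=1$ without special treatment.

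A minor slip: in the critical case the noise series is $\sum_t t^{-1-\beta}(\ln t)^{-1}\widetilde Z_t$, not with $(\ln t)^{-3}$. This does not affect the argument.
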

From Theorem~\ref{th-Z-T-medio-altro-caso} and using the preliminary relations~\eqref{eq-Kmedio-e-Tmedio}, \eqref{eq-S} and \eqref{eq-P-medio-2}, we can easily derive that: when $w<\beta$
\begin{equation*}
t^{1-\beta}\overline{T}_t\stackrel{a.s.}\longrightarrow 
\frac{\alpha}{\beta-w},
\quad
t^{1-\beta}\sum_{j\in{\mathcal O}_t}P_{t,j}
\stackrel{a.s.}\longrightarrow
\frac{\alpha w}{\beta-w},
\quad
\frac{1}{t^{\beta}}\sum_{j\in{\mathcal O}_t}K_{t,j}
\stackrel{a.s.}\longrightarrow
\frac{\alpha}{\beta-w}\,,
\end{equation*}
while when $w=\beta$
\begin{equation*}
\frac{t^{1-\beta}}{\ln(t)}\overline{T}_t\stackrel{a.s.}\longrightarrow 
\alpha,
\quad
\frac{t^{1-\beta}}{\ln(t)}\sum_{j\in{\mathcal O}_t}P_{t,j}
\stackrel{a.s.}\longrightarrow
\alpha w,
\quad
\frac{1}{t^{\beta}\ln(t)}\sum_{j\in{\mathcal O}_t}K_{t,j}
\stackrel{a.s.}\longrightarrow
\alpha\,,
\end{equation*}
that along with the behavior of $D_t$ described in~\eqref{eq-D-as}  leads to the results of $\overline{P}_t$ and $\overline{K}_t$ reported in Table~\ref{table-Z-T-P-K-medio}.

\begin{rem}[Convergence in $L^p$]
By Lemma~\ref{lem-unif-int-new}, we obtain that the a.s. convergence of 
 the suitably rescaled sequence $Z_t$ (and the suitably rescaled sequence $\overline{T}_t$) 
stated in Theorem~\ref{th-Z-T-medio} and Theorem~\ref{th-Z-T-medio-altro-caso}
 is also in $L^p$ for each  $p\in [1,+\infty)$. Moreover, by Fatou's lemma, the limit random variable $Z^*_\infty$ that arises in the case $w>\beta$ (see Theorem~\ref{th-Z-T-medio}) satisfies 
  $E\big[e^{z Z^*_\infty}\big]<+\infty$ for some $z>0$.
\end{rem}

\subsection{Second-order asymptotic in the case when the forcing input toward zero is weak 
($\beta<w$)}
\label{sec:second_order_asymp_average_quantities}

In this section, we establish some Central Limit Theorems (CLTs) for the random variable \( Z^*_\infty > 0 \), introduced in Theorem~\ref{th-Z-T-medio} 
and related to the a.s. limits of the rescaled predictive mean $Z_t$ and of 
the rescaled averaged quantities \( \overline{T}_t \), \( \overline{P}_t \) and \( \overline{K}_t \) in the regime \( w > \beta \).
 These results are of particular interest when $w=1$, as they enable the estimation of the limit predictive mean \( Z^*_\infty \) 
 via the construction of confidence intervals based on the observable empirical means \( \overline{T}_t \). 
\\
\indent As discussed in Section~\ref{sec:first_order_asymp_average_quantities}, the dynamics of \( (Z_t)_t \) in~\eqref{eq-diff-Z} is essentially governed by the two competing contributions in~\eqref{eq:heuristic_1}, one driven by the weight \( (1-w) \)  of the forcing input toward zero and the other by \( (1-\beta) \).  
In the case \( w > \beta \) considered here, 
the first term determines the decay rate \( t^{-(1-w)} \) of \( Z_t \), while the second one plays no role in the first-order behavior, acting essentially as a remainder term.  
By contrast, in the second-order asymptotic behavior (see Theorem~\ref{thm:second_order-Z}), the parameter $\beta$ plays an important role even in the case $w>\beta$. Indeed,
by~\eqref{eq-diff-Z} we can observe that the convergence of \( t^{1-w} Z_t \) towards \( Z^*_\infty \) is governed by two competing contributions:
\begin{equation*}
t^{1-w}\frac{\Delta M_{t+1}}{\theta+t+1}
\qquad\text{and}\qquad
  t^{1-w}\frac{\lambda_t}{\theta+t+1}\,,
\end{equation*}
where $\Delta M_{t+1}=w(T_{t+1}-Z_t)$ is an unbounded martingale difference. 
Heuristically, the first term suggests a martingale CLT with scaling \( t^{-w/2} \), leading to a zero-mean Gaussian limit kernel. 
The second term is deterministic and exhibits a decay of order \( t^{-(w-\beta)} \).
The interplay between these two contributions determines the second-order asymptotic behavior of \( t^{1-w}Z_t \). 
Specifically, taking into account~\eqref{eq:compact_tcl_Z_t}: if \( (w - \beta) > w/2 \) (i.e., \( \beta < w/2 \)), 
   the second term is negligible also at the second order, and the asymptotics are entirely driven by the martingale term which leads to the convergence of  $t^{w/2}(t^{1-w}Z_t-Z_\infty^*)$ 
   to a Gaussian kernel with zero mean; 
 if \( (w - \beta) = w/2 \) (i.e., \( \beta = w/2 \)), 
   we still obtain that  $t^{w/2}(t^{1-w}Z_t-Z_\infty^*)$ converges toward a Gaussian kernel, but the second term contributes with a constant shift in the mean equal to \( -\alpha \);
   if \( 0 < w - \beta < w/2 \) (i.e., \( w/2 < \beta < w \)),  
   the deterministic term dominates so that we obtain $(t^{1-w}Z_t-Z_\infty^*)\stackrel{P}\sim -Ct^{-(w-\beta)}$, with a suitable constant $C>0$, and the Gaussian fluctuations arise if we consider 
   the quantity $(t^{1-w}Z_t-Z_\infty^* + Ct^{-(w-\beta)})$ multiplied by the scaling factor $t^{w/2}$. 
It may appear surprising that the asymptotic contribution of the second term is strictly negative in the last two cases. 
However, this is fully consistent with the fact that, in the dynamics~\eqref{eq-diff-Z}, the second term represents a strictly positive contribution.
Accordingly, it affects the second-order asymptotics by increasing the value of \( Z^{*}_\infty \), and hence by reducing the deviation \( (t^{1-w} Z_t - Z^{*}_\infty) \), relative to a dynamics with a random zero-mean remainder term, as typically encountered in similar settings. Moreover, it is worth emphasizing that the variance of the Gaussian limit kernel is always random. Moreover, its random component coincides with \( Z^*_\infty \), except in the degenerate case of no-interaction (\( \iota = 0 \)) and no-forcing input toward zero (\( w = 1 \)). As we will see in Section~\ref{results-dish-specific}, this is also the only regime in which the dish-specific inclusion probabilities \( P_{t,j} \) do not converge a.s. to zero (see Table~\ref{table-P-j}).\\
\indent Finally, note that  in Theorem~\ref{thm:second_order-Z} the convergence holds true also in the sense of the a.s. conditional convergence, which is a form of convergence introduced in~\cite{crimaldi-2009} and, subsequently, employed also by others in the urn model  and Bayesian literature, e.g.~\cite{fortini-petrone-2020, Zhang-2014}. Specifically, in~\cite{fortini-petrone-2020}, it is shown how to use a CLT with this form of convergence in order to construct a credible interval in a Bayesian framework. In~\cite{Zhang-2014}, instead, it is illustrated how to use a CLT with this form of convergence in order to prove that 
the limit random variable has no point masses.

The proofs of all the results presented in this section are collected in Subsection~\ref{sec:proofs_second_order_average} of the supplementary and are essentially based on the general Theorem~\ref{th-CLT-general} and on the technical  Lemma~\ref{lem-unif-int-new} and Lemma~\ref{lem-RsuS}.

\begin{thm}[CLT for the case of weak forcing input toward zero]\label{thm:second_order-Z}
Assume $\beta<w$, i.e. $(1-w)<(1-\beta)$, and let
\[
\Sigma_\infty = 
\begin{cases}
    Z^*_\infty - \Rstar_\infty >0 & \text{if $w=1$ and $\iota = 0$}\,;
    \\
    Z^*_\infty>0 & \text{otherwise}\,.
\end{cases} 
\]
where $Z^*_\infty$ is the a.s.\ limit of $t^{1-w}Z_t$ introduced in Theorem~\ref{th-Z-T-medio} and $\Rstar_\infty$ is the a.s.\ limit of $R_t$ as provided~in Lemma~\ref{lem-RsuS}. Then, we have
\begin{equation}\label{eq:compact_tcl_Z_t}
 {t}^{w/2}(t^{1-w}Z_t+\tfrac{\alpha\beta}{w-\beta}t^{-(w-\beta)}-Z^*_\infty)\stackrel{stably}\longrightarrow 
    {\mathcal N}(0,w\Sigma_\infty)\,,
\end{equation}
where this convergence also holds true in the sense of the a.s.\ conditional convergence with respect to the filtration~$\mathcal F$. Moreover, 
as a consequence, when $(w-\beta)<w/2$ (i.e. $w/2<\beta<w$), we have
\begin{equation*}
 {t}^{w-\beta}(t^{1-w}Z_t-Z^*_\infty)\stackrel{P}\longrightarrow -\tfrac{\alpha \beta}{w-\beta}\,. 
\end{equation*}
\end{thm}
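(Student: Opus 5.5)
We plan to linearize the recursion \eqref{eq-diff-Z} by a deterministic product normalization, and then handle the martingale part and the deterministic forcing part separately. Let $a_t=\prod_{k=1}^{t}(1-\frac{1-w}{\theta+k})^{-1}$ for $t\geq 0$ (with $a_0=1$). Then $a_t\sim c\,t^{1-w}$ for a constant $c>0$. Multiplying \eqref{eq-diff-Z} by $a_{t+1}$ gives the exact identity
\[
a_{t+1}Z_{t+1}-a_tZ_t=a_{t+1}\Bigl[\tfrac{\Delta M_{t+1}}{\theta+t+1}+\lambda_{t+1}-\lambda_t+\tfrac{\lambda_t}{\theta+t+1}\Bigr],
\]
with $\Delta M_{t+1}=w(T_{t+1}-Z_t)$. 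Hence $a_tZ_t=Z_0+M_t+B_t$, where $M_t$ is a martingale and $B_t$ is deterministic. In the regime $w>\beta$ both converge: $M_t$ converges by Lemma~\ref{lem-unif-int-new}, and $B_t$ converges because $a_{t+1}\lambda_t/t\sim t^{\beta-w-1}$ is summable. So $Z^*_\infty=c^{-1}(Z_0+M_\infty+B_\infty)$, and
\[
Z^*_\infty-\tfrac{a_t}{c}Z_t=c^{-1}\bigl[(M_\infty-M_t)+(B_\infty-B_t)\bigr].
\]
The proof then rests on three points. First, replace $a_t/c$ by $t^{1-w}$, which costs $O(t^{-1}\cdot t^{1-w}Z_t)=O(t^{-1})=o(t^{-w/2})$. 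Second, compute the deterministic tail $B_\infty-B_t$ exactly to order $o(t^{-w/2})$. Third, prove a stable CLT for $t^{w/2}(M_\infty-M_t)$.

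For the deterministic tail, the summands are
\[
a_{k+1}\Bigl[\lambda_{k+1}-\lambda_k+\tfrac{\lambda_k}{\theta+k+1}\Bigr]\approx c\,k^{1-w}\alpha\beta\,k^{\beta-2}.
\]
This uses $\lambda_{k+1}-\lambda_k\approx-\alpha(1-\beta)k^{\beta-2}$, which combines with $\lambda_k/k$ to give $\alpha\beta k^{\beta-2}$. Summing from $t$ to $\infty$ gives $c\,\alpha\beta\,t^{\beta-w}/(w-\beta)+O(t^{\beta-w-1})$. Dividing by $c$, this produces exactly the shift $\frac{\alpha\beta}{w-\beta}t^{-(w-\beta)}$ in \eqref{eq:compact_tcl_Z_t}, and the remainder is $o(t^{-w/2})$. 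If one prefers to quote a packaged result, this is the structure covered by the general Theorem~\ref{th-CLT-general}: a rescaled recursion with a martingale noise plus a slowly decaying deterministic forcing. I would check its hypotheses rather than redo the computation.

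For the martingale tail, apply the CLT for tails of martingales in the form used for a.s.\ conditional and stable convergence (Crimaldi 2009). Two conditions are needed. The first is the conditional variance condition
\[
t^{w}\sum_{k\geq t}E\bigl[(M_{k+1}-M_k)^2\mid\mathcal F_k\bigr]\to c^2 w\,\Sigma_\infty\quad\text{a.s.}
\]
By \eqref{eq-sp-quad-T}, $E[\Delta M_{k+1}^2\mid\mathcal F_k]=w^2(Z_k-R_k)$, so the summand is $\approx c^2k^{2-2w}w^2(Z_k-R_k)k^{-2}$. Two cases arise. If $\iota>0$ or $w<1$, then $R_k=\sum_j P_{k,j}^2\le \max_jP_{k,j}\,S_k=o(Z_k)$; by Lemma~\ref{lem-RsuS} this means $R_k/S_k\to0$, which is exactly where the synchronization/decay of the $P_{k,j}$ is used. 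Then the summand is $\sim c^2w^2Z^*_\infty k^{-1-w}$, and summing gives $c^2 wZ^*_\infty t^{-w}$. In the case $w=1,\iota=0$, we have $R_k\to\Rstar_\infty$ and the same computation gives $Z^*_\infty-\Rstar_\infty$. The second condition is a conditional Lindeberg condition, or more simply $t^{w}E[\sup_{k\ge t}a_{k+1}^2\Delta M_{k+1}^2/k^2]\to0$. I would verify it through a fourth-moment bound $\sum_k k^{2w}\,k^{4-4w}k^{-4}E[\Delta M_{k+1}^4]<\infty$ scaled appropriately, using the $L^p$ bounds for $t^{1-w}Z_t$ from Lemma~\ref{lem-unif-int-new}. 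The fourth conditional moment of $T_{k+1}-Z_k$ is $O(Z_k+Z_k^2)$, by Poisson and Bernoulli cumulant bounds.

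Combining the three pieces gives \eqref{eq:compact_tcl_Z_t} in the stable and a.s.\ conditional sense. The final claim follows directly. Multiply by $t^{w/2-(w-\beta)}\to0$ when $w/2<\beta$: the left side of \eqref{eq:compact_tcl_Z_t}, multiplied by this factor, goes to $0$ in probability, which gives $t^{w-\beta}(t^{1-w}Z_t-Z^*_\infty)\to-\frac{\alpha\beta}{w-\beta}$ in probability. I expect the main obstacle to be the a.s.\ convergence of the conditional variance, and in particular controlling $R_t$ relative to $Z_t$ (Lemma~\ref{lem-RsuS}) together with the strict positivity $Z^*_\infty-\Rstar_\infty>0$ in the case $w=1,\iota=0$.
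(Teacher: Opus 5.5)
Your proposal is correct and is essentially the paper's argument. The paper passes to $S_t=Z_t-\lambda_t$ and applies the general Theorem~\ref{th-CLT-general}, whose proof is exactly your tail decomposition: a martingale tail handled via Theorem~\ref{fam_tri_vet_as_inf}, using the variance identity~\eqref{eq-sp-quad-T}, Lemma~\ref{lem-RsuS} and the moment bounds of Lemma~\ref{lem-unif-int-new}, plus an explicitly computed deterministic tail. Your exact integrating factor $a_t$, applied directly to $Z_t$, only removes the negligible terms $A_k$, $C_{k+1}$ of that proof and the final translation from $S_t$ to $Z_t$.

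Two small fixes are needed. In the last step the vanishing factor is $t^{(w-\beta)-w/2}=t^{w/2-\beta}$, not $t^{w/2-(w-\beta)}$, which diverges when $\beta>w/2$. Also, Theorem~\ref{fam_tri_vet_as_inf} requires a.s.\ convergence of the realized sum of squares $t^{w}\sum_{k\geq t}(M_{k+1}-M_k)^2$, not of the conditional variances. You obtain this from your conditional-variance limit through Lemma~\ref{lemma-kro}(b), using the same fourth-moment bound.
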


\begin{rem}[No point-masses for the limit random variable] 
\rm  In the case $w<1$ or $\iota>0$, with a similar argument used in~\cite{Zhang-2014} (see also~the proof of \cite[Theorem~2.5]{cri-dai-lou-min}) 
based on the a.s. conditional convergence toward a Gaussian kernel, we can obtain from~\eqref{eq:compact_tcl_Z_t} that, for any $\beta< w$,  $P(Z_\infty^*=z)=0$ for each $z>0$.
\end{rem}

In the next result, we provide the second order asymptotics for the averaged quantities: the one for $\overline{T}_t$ is a direct consequence of \eqref{eq:compact_tcl_Z_t}, while the ones for $\overline{P}_t$ and $\overline{K}_t$ are also related to the second-order asymptotic of the number $D_t$ of distinct tested dishes (observed features). 

\begin{cor}\label{cor-clt} 
If $\beta<w$, then we have 
\begin{equation}\label{eq-clt-compact-Tmedio}
t^{w/2}\Big(t^{1-w}\overline{T}_t+\tfrac{\alpha}{(w-\beta)}t^{-(w-\beta)}-\tfrac{1}{w}Z^*_\infty\Big)
\stackrel{stably}\longrightarrow 
{\mathcal N}(0, \Sigma_\infty/w)\,
\end{equation}
(where the convergence also holds true in the sense of the a.s. conditional convergence with respect to $\mathcal F$) and, as a consequence, when $w-\beta<w/2$ (i.e. $w/2<\beta<w$), we have 
\indent 
\begin{equation*}
t^{w-\beta}\Big(t^{1-w}\overline{T}_t-\tfrac{1}{w}Z^*_\infty\Big)\stackrel{P}\longrightarrow -\tfrac{\alpha}{(w-\beta)}\,.
\end{equation*}
Moreover, using the notation introduced in Remark~\ref{rem-clt-D}, we have 
\begin{equation}\label{clt-Pmedio}
\sqrt{a_t(\beta)}\left(t^{1-w}a_t(\beta)\overline{P}_t+\tfrac{\alpha w}{\lambda(\beta)(w-\beta)}t^{-(w-\beta)}-\tfrac{1}{\lambda(\beta)}Z^*_\infty\right)\stackrel{stably}\longrightarrow 
\mathcal{N}(0,\tfrac{(Z_\infty^*)^2}{\lambda(\beta)^3})
\end{equation}
and, as a consequence, when $(w-\beta)<\beta/2$ (i.e. $(2/3)w<\beta<w$), we have 
\begin{equation*}
t^{w-\beta}\left(t^{1-w+\beta}\,\overline{P}_t-\tfrac{\beta}{\alpha}Z^*_\infty\right) 
\stackrel{P}\longrightarrow -\tfrac{\beta w}{w-\beta}\,.
\end{equation*}
Finally, we have 
\begin{equation}\label{clt-Kmedio}
\sqrt{a_t(\beta)}\left(\tfrac{a_t(\beta)}{t^w}\overline{K}_t+\tfrac{\alpha }{\lambda(\beta)(w-\beta)}t^{-(w-\beta)}-\tfrac{1}{\lambda(\beta)w}Z^*_\infty \right)
\stackrel{stably}\longrightarrow \mathcal{N}\left(0,\tfrac{(Z_\infty^*)^2}{\lambda(\beta)^3 w^2}\right)
\end{equation}
and, as a consequence, when $(w-\beta)<\beta/2$  (i.e. $(2/3)w<\beta<w$), we have 
$$
t^{w-\beta}\left( \tfrac{1}{t^{w-\beta}}\overline{K}_t-\tfrac{\beta}{\alpha w}Z_\infty^* \right)
\stackrel{P}\longrightarrow -\tfrac{\beta}{w-\beta} \,.
$$
\end{cor}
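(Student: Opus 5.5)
The corollary is derived from Theorem~\ref{thm:second_order-Z} through the algebraic identities \eqref{eq-S}, \eqref{eq-Kmedio-e-Tmedio} and \eqref{eq-P-medio-2}, combined with the CLT for $D_t$ in Remark~\ref{rem-clt-D}.

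\emph{Step 1: the statement for $\overline{T}_t$.} By \eqref{eq-Z} and \eqref{eq-S}, $Z_t=w\frac{t}{\theta+t}\overline{T}_t+\lambda_t$. Hence
\[
t^{1-w}\overline{T}_t=\tfrac{\theta+t}{wt}\bigl(t^{1-w}Z_t-t^{1-w}\lambda_t\bigr).
\]
The prefactor satisfies $\frac{\theta+t}{t}=1+O(t^{-1})$. Moreover $t^{1-w}\lambda_t=\alpha t^{-(w-\beta)}(1+O(t^{-1}))$. Substituting these and collecting terms, the centring becomes
\[
\tfrac1w\Bigl(\tfrac{\alpha\beta}{w-\beta}+\alpha\Bigr)t^{-(w-\beta)}=\tfrac{\alpha}{w-\beta}\,t^{-(w-\beta)}.
\]
All remainders are $O(t^{-1}\cdot t^{1-w}Z_t)=O(t^{-1})$, and $t^{w/2}\cdot t^{-1}\to0$ because $w\le1$. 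Multiplying \eqref{eq:compact_tcl_Z_t} by $1/w$ therefore gives \eqref{eq-clt-compact-Tmedio} with variance $w\Sigma_\infty/w^2=\Sigma_\infty/w$. Stable and a.s.\ conditional convergence are preserved under addition of a.s.\ negligible terms and multiplication by constants.

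The in-probability consequence follows directly. When $w-\beta<w/2$, multiply by $t^{w-\beta-w/2}\to0$: the Gaussian part vanishes in probability and the deterministic shift survives.

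\emph{Step 2: the statements for $\overline{P}_t$ and $\overline{K}_t$.} Write $\overline{K}_t=\frac{t\overline{T}_t}{D_t}$ and $\overline{P}_t=\frac{w\overline{K}_t}{\theta+t}$. Set $U_t=t^{1-w}\overline{T}_t$ and $V_t=D_t/a_t(\beta)$, so that
\[
\tfrac{a_t(\beta)}{t^w}\overline{K}_t=\tfrac{U_t}{V_t}.
\]
Linearize:
\[
\tfrac{U_t}{V_t}-\tfrac{u_t}{\lambda(\beta)}=\tfrac{U_t-u_t}{\lambda(\beta)}-\tfrac{u_t}{\lambda(\beta)^2}\bigl(V_t-\lambda(\beta)\bigr)+O\bigl((V_t-\lambda(\beta))^2\bigr)+O\bigl(|U_t-u_t|\,|V_t-\lambda(\beta)|\bigr),
\]
where $u_t=\frac1wZ^*_\infty-\frac{\alpha}{w-\beta}t^{-(w-\beta)}$.

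The relevant scale is now $\sqrt{a_t(\beta)}$, which is at most $t^{\beta/2}<t^{w/2}$. By Step 1, $\sqrt{a_t(\beta)}(U_t-u_t)=o_P(1)$. By Remark~\ref{rem-clt-D}, $\sqrt{a_t(\beta)}(V_t-\lambda(\beta))\to\mathcal N(0,\lambda(\beta))$ stably. The quadratic terms are negligible by Remark~\ref{LIL-D}.

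Because the convergence is stable and $u_t\to Z^*_\infty/w$ is $\mathcal F_\infty$-measurable, the product of $u_t/\lambda(\beta)^2$ with the Gaussian limit yields $\mathcal N\bigl(0,(Z^*_\infty)^2/(\lambda(\beta)^3w^2)\bigr)$. This gives \eqref{clt-Kmedio}.

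For \eqref{clt-Pmedio}, note that $t^{1-w}a_t(\beta)\overline{P}_t=\frac{wt}{\theta+t}\frac{a_t(\beta)}{t^w}\overline{K}_t$. Multiplying by $w$ and absorbing the $O(t^{-1})$ correction gives variance $(Z^*_\infty)^2/\lambda(\beta)^3$ and shift $\frac{\alpha w}{\lambda(\beta)(w-\beta)}$. The in-probability consequences, for $w-\beta<\beta/2$, again follow by multiplying by $t^{w-\beta-\beta/2}\to0$ and using $\lambda(\beta)=\alpha/\beta$, $a_t=t^\beta$. This forces $\beta>0$, which holds because $\beta>2w/3$.

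\emph{Main obstacle.} The delicate point is the term $\sqrt{a_t(\beta)}(U_t-u_t)$. Its negligibility relies on $\beta<w$, so that $t^{\beta/2}$ is $o(t^{w/2})$; in the case $\beta=0$ one has $a_t=\ln t$. One must also justify that stable convergence of the $D_t$-fluctuation combines with the $\mathcal F_\infty$-measurable random factor $Z^*_\infty$. This is the standard property of stable convergence: if $Y_t\to\mathcal N$ stably and $W_t\to W$ in probability, then $W_tY_t\to W\mathcal N$ stably.
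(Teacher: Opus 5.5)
Your proposal is correct and follows essentially the same route as the paper. Step~1 is the paper's derivation of \eqref{eq-clt-compact-Tmedio} from \eqref{eq:compact_tcl_Z_t} via \eqref{eq-S}. Step~2 rests on the same three facts: the $\overline{T}_t$/$Z_t$ fluctuation is $o_P(1)$ at scale $\sqrt{a_t(\beta)}=o(t^{w/2})$, the stable CLT for $D_t$ from Remark~\ref{rem-clt-D}, and multiplication by the a.s.\ convergent $\mathcal F_\infty$-measurable factor $Z^*_\infty$.

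The only difference is the order and the form of the expansion. The paper handles $\overline{P}_t=S_t/D_t$ first through the exact identity $\overline{P}_t=\frac{S_t}{\lambda(\beta)a_t(\beta)}\bigl(1+W_t/\sqrt{a_t(\beta)}\bigr)$, with $W_t\to\mathcal N(0,1/\lambda(\beta))$ stably, so no quadratic remainder needs controlling; it then deduces $\overline{K}_t$. You instead linearize $U_t/V_t$ for $\overline{K}_t$ first and deduce $\overline{P}_t$ afterwards; your quadratic remainders are negligible because $\sqrt{a_t(\beta)}(V_t-\lambda(\beta))$ is tight and $V_t\to\lambda(\beta)$ a.s.
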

Note that the above limit random  variance $\Sigma_\infty/w$ can be estimated by the observable process $(T_t)_t$. Indeed, when $w<1$ or $\iota>0$, it can be seen as the a.s. limit of $t^{1-w}\overline{T}_t$; while, when $w=1$ and $\iota=0$, by~\eqref{eq-sp-quad-T} and Lemma~\ref{lemma-kro}, it coincides with the a.s. limit of $\sum_{n=1}^t T_n^2/t-(\overline{T}_t)^2$.

\begin{rem}[Comparison with the standard IBP of~\cite{TG}] 
\rm From \cite[Theorem~8]{BCPR-IBP}, using our notation, we get that the standard IBP of \cite{TG} satisfies for every $\beta<1$
$$
\sqrt{t}\left(Z_t-Z_\infty^*\right)\stackrel{stably}\longrightarrow {\mathcal N}(0, \Sigma_\infty)\quad\mbox{and}\quad
\sqrt{t}\left(\overline{T}_t-Z_\infty^*\right)\stackrel{stably}\longrightarrow {\mathcal N}(0, \Sigma_\infty)\,,
$$
with $\Sigma_\infty>0$. This is obviously a difference with our model in the case  $w<1$, but, importantly,  it is a difference 
with our model even when $w=1$ and $\iota=0$. Indeed, as illustrated above, we have a non-centred Gaussian kernel in the case $\beta=1/2$ and a convergence in probability  with a rate depending on $(1-\beta)$ to a suitable constant when $1/2<\beta<1$. This difference is due to the fact that, also in the case $w=1$ and $\iota=0$, the inclusion probabilities $P_{t,j}$ and the parameter $\lambda_t$ in our model differ from the ones in the standard IBP, where the inclusion probabilities and the parameter $\lambda_t$ are defined so that the sequence $(Z_t)$ of the predictive means is a martingale. 
\end{rem}

\section{Asymptotic behavior of $P_{t,j}$ and $K_{t,j}$}
\label{results-dish-specific}

In this section, we aim to derive the asymptotic behavior of the quantities $K_{t,j}$ and $P_{t,j}$ related to each individual observed 
dish $j$. To this end, let $\tau_j$ be the first time at which a dish $j$ appears. 
 The observed dishes are the ones with $\tau_j<+\infty$. 
Then, define the process $B_{t,j}=K_{t,j}/(\theta+t)$ 
(which is equal to zero for $t< \tau_j$ and $>0$ for $t\geq \tau_j$)
whose asymptotic behavior will be essential to establish the 
one of $P_{t,j}$ and $K_{t,j}$. For any observed dish $j$,  
for $t\geq \tau_j$, we have 
\begin{equation*}
\begin{split}
B_{t+1,j}-B_{t,j}&=
\frac{(K_{t,j}+\Delta K_{t+1,j})}{(\theta+t+1)}-\frac{K_{t,j}}{(\theta+t)}\\
&=-\frac{1}{\theta+t+1} B_{t,j} +\frac{1}{\theta+t+1}\Delta K_{t+1,j}\,,
\end{split}
\end{equation*}
that is 
\begin{equation}\label{eq:dynamics_B_tj}
B_{t+1,j}=
\left(1-\frac{1}{\theta+t+1}\right) B_{t,j} +\frac{1}{\theta+t+1}\Delta K_{t+1,j}\,,
\end{equation}
where 
\begin{equation*}
\begin{split}
E[\Delta K_{t+1,j}\mid {\mathcal F}_t]= P_{t,j}
&= (1-\iota) w B_{t,j} +\iota \overline{P}_t.
\end{split}
\end{equation*}
A simple calculation yields to
\begin{equation}\label{eq:dynamics_B_tj_2}
B_{t+1,j}-B_{t,j}=
-\frac{(1-(1-\iota)w)}{\theta+t+1}B_{t,j}
+\frac{\iota\overline{P}_t}{\theta+t+1}+\frac{\Delta M_{t+1,j}}{\theta+t+1}\,,
\end{equation}
where $\Delta M_{t+1,j}=(\Delta K_{t+1,j}-P_{t,j})$ is a martingale difference.\\

Firstly, we establish the first-order asymptotic properties in Subsection~\ref{sec:first_order_asymp_dish_specific_quantities}, and then we turn to the second-order asymptotics in Subsection~\ref{sec:second_order_asymp_dish_specific_quantities}.

\subsection{First-order asymptotic results}\label{sec:first_order_asymp_dish_specific_quantities}

The dynamics of $B_{t,j}$ in~\eqref{eq:dynamics_B_tj_2} is essentially governed by two competing contributions:
\begin{equation}\label{eq:heuristic_2}
-\frac{(1-(1-\iota)w)}{\theta+t+1}B_{t,j}
\qquad\text{and}\qquad
\frac{\iota\overline{P}_t}{\theta+t+1}\,.
\end{equation}
Heuristically, the first term, driven by the parameter $(1-(1-\iota)w)$, acts as a linear drift toward zero (for $(1-\iota)w<1$), suggesting a decay rate of order $t^{-(1-(1-\iota)w)}$. 
The second term, proportional to the interaction intensity~$\iota$ and to the random averaged quantity~$\overline{P}_t$, suggests instead a rate as derived by means of Theorems~\ref{th-Z-T-medio} and Theorem~\ref{th-Z-T-medio-altro-caso}, and 
described in Table~\ref{table-Z-T-P-K-medio}. 
The interplay between these two effects determines the asymptotic behavior of $B_{t,j}$ (and hence of $K_{t,j}$ and $P_{t,j}$): when the two rates differ, the slower decay dominates; while, in the critical case, when the two contributions asymptotically balance each other, the leading-order terms disappear and a logarithmic correction appears.
Once the processes are rescaled according to the right rate, depending on which term in~\eqref{eq:heuristic_2} dominates, the proofs become substantially different in the various regimes.
the dominant term depends on the values of $\iota$ with respect to the ratio $\beta/w$ and,  
to better highlight how these results change with the level of interaction $\iota$, we will categorize the results into three distinct cases:
\begin{itemize}
    \item[(a)] \emph{Low interaction}: where the interaction parameter $\iota$ is either zero (no interaction) or smaller than the threshold $\beta/w$;
    \item[(b)] \emph{High interaction}: where the interaction parameter $\iota$ equals one (maximum interaction) or it is smaller than one but exceeds the threshold $\beta/w$;
    \item[(c)] \emph{Critical regime}: where the interaction parameter $\iota$ exactly equals the threshold $\beta/w$.
\end{itemize}
In the first case (a), the second term in~\eqref{eq:heuristic_2} behaves essentially as a remainder, and thus $(B_{t,j})_t$ becomes a non-negative almost super-martingale and 
we will prove (by the non classical general result~Theorem~\ref{th-general}) that its right scaling factor is $t^{(1-(1-\iota)w)}$. In the other cases (b)-(c), the second term in~\eqref{eq:heuristic_2} is of the same order as the first one, and we obtain the desired results  
via a general stochastic approximation theorem with a random ``attractor'' (see Theorem~\ref{thm:app-cor}).
\\
\indent The fact that the presence of interaction ($\iota > 0$) causes each $P_{t,j}$ to approach zero (even when the weight $(1-w)$ of the forcing input toward zero is null) can be understood by recalling that, according to~\eqref{eq-P-2}, $P_{t,j}$ is a linear combination involving the term $\iota \overline{P}_t$, which always tends to zero (see Table~\ref{table-Z-T-P-K-medio}).  Hence, a higher value of $\iota$ amplifies the effect of $\overline{P}_t$, accelerating the convergence of $P_{t,j}$ to zero.

\subsubsection{Statement of the results}
In this section we are going to provide the first-order asymptotic results about the dish-specific quantities $K_{t,j}$ and $P_{t,j}$.  
The first theorem holds in the case when the interaction is low.

\begin{thm}[Case (a): Low interaction] \label{th-P-K-low_interaction}
When 
\begin{itemize}
    \item[(1)] $\iota=0$ (no interaction at all) or
    \item[(2)] $0<\iota<\min\{\beta/w,1\}$,
\end{itemize}
for each observed dish $j$, we have 
$$
\frac{K_{t,j}}{t^{(1-\iota)w}}\stackrel{a.s.}\longrightarrow K^*_{\infty,j}
$$
and 
$$
t^{1-(1-\iota)w}\,P_{t,j}\stackrel{a.s.}\longrightarrow (1-\iota)w\, K^*_{\infty,j}\,,
$$
where $K^*_{\infty,j}$ is a finite and strictly positive random variable.
\end{thm}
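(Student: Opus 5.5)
We work with the rescaled process $Y_{t,j}=c_t B_{t,j}$, where $B_{t,j}=K_{t,j}/(\theta+t)$ and $c_t$ is the deterministic normalizer
\[
c_t=\prod_{n=\tau_j}^{t-1}\Big(1-\tfrac{1-(1-\iota)w}{\theta+n+1}\Big)^{-1}\sim C\, t^{1-(1-\iota)w}.
\]
Multiplying \eqref{eq:dynamics_B_tj_2} by $c_{t+1}$ gives, for $t\ge\tau_j$,
\[
Y_{t+1,j}=Y_{t,j}+c_{t+1}\frac{\iota\overline P_t}{\theta+t+1}+c_{t+1}\frac{\Delta M_{t+1,j}}{\theta+t+1}.
\]
So $(Y_{t,j})_t$ is a nonnegative submartingale whose increments split into a predictable positive drift and a martingale part. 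We will show that both parts converge.

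\textbf{Drift.} Under (1) the drift vanishes. Under (2), Table~\ref{table-Z-T-P-K-medio} gives, a.s., $\overline P_t=O(t^{-(1-w+\beta)})$ when $\beta<w$, $O(t^{-1}\ln t)$ when $\beta=w$, and $O(t^{-1})$ when $w<\beta$; for $\beta=0$ the bound is even smaller. The drift increment is therefore of order $t^{-(1-\iota)w}\,\overline P_t$ up to constants.
\begin{itemize}
\item If $\beta<w$, the increment is $O(t^{-1-(w-\beta)+\iota w})$, which is summable because $\iota<\beta/w$ gives $\iota w<\beta$, hence $w-\beta-\iota w>w-2\beta\,$? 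Recomputing carefully: the exponent is $1-(1-\iota)w+(-1)+(\ldots)$, i.e. the increment is $t^{1-(1-\iota)w}\cdot t^{-1}\cdot t^{-(1-w+\beta)}=t^{-1-(\beta-\iota w)}$, which is summable exactly when $\iota w<\beta$.
\item If $\beta\ge w$, the increment is $O(t^{-1-(1-(1-\iota)w)}\ln t)$ with $1-(1-\iota)w>0$ (since $\iota>0$), so it is again summable.
\end{itemize}
Thus the drift sum converges a.s., and this is precisely where the condition $\iota<\beta/w$ enters.

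\textbf{Martingale part.} Its conditional variance satisfies $E[(\Delta M_{t+1,j})^2\mid\mathcal F_t]\le P_{t,j}$. The predictable quadratic variation is then bounded by
\[
\sum_t c_{t+1}^2 (\theta+t+1)^{-2}P_{t,j}\lesssim \sum_t t^{-1-(1-\iota)w}\,Y_{t,j}+(\text{drift-type terms}).
\]
Rather than using this directly, we apply the Robbins–Siegmund almost-supermartingale theorem to $Y_{t,j}$: write $E[Y_{t+1,j}\mid\mathcal F_t]=Y_{t,j}+\xi_t$ with $\sum_t\xi_t<\infty$ a.s. This yields a.s. convergence of $Y_{t,j}$ to a finite limit, and hence $Y_{t,j}=O(1)$. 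With that bound the quadratic variation above is finite, so the martingale converges and $Y_{t,j}$ converges to a finite limit. The relation $K_{t,j}=(\theta+t)B_{t,j}$ then gives $K_{t,j}/t^{(1-\iota)w}\to K^*_{\infty,j}$ up to the constant $C$. Finally, by \eqref{eq-P-2},
\[
t^{1-(1-\iota)w}P_{t,j}=(1-\iota)w\,t^{-(1-\iota)w}\tfrac{t}{\theta+t}K_{t,j}+\iota\, t^{1-(1-\iota)w}\overline P_t,
\]
and the last term tends to $0$ by the same exponent comparison as in the drift step. This gives the second limit.

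\textbf{Main obstacle: strict positivity of $K^*_{\infty,j}$.} We will obtain it from Theorem~\ref{th-general}, which gives nonzero limits for recursive dynamics of this form. Its hypotheses should be checkable as follows. The process is positive from $\tau_j$ onward, since $K_{\tau_j,j}\ge1$. The drift is nonnegative and summable. The martingale noise is controlled relative to the current level, since $E[(\Delta M)^2\mid\mathcal F_t]\le P_{t,j}\lesssim t^{-1}(Y_{t,j}+\text{small})\,t^{(1-\iota)w}\ldots$; the relevant fact is that the noise scale is proportional to $\sqrt{B}$, so that $\sum_t c_t^2 t^{-2} B_{t,j}/Y_{t,j}^2$ is small when $Y$ is bounded away from $0$. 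This is the same structure used for $Z_t$ in Theorem~\ref{th-Z-T-medio}. If the general theorem required a positive lower bound on the drift, we would instead argue directly, as a Pólya-type argument: on the event $\{Y_{\infty}=0\}$ the variance sum forces a contradiction, via a bound of $\log Y_{t,j}$ from below using $E[\Delta\log Y\mid\mathcal F_t]\ge -O(t^{-1}\ldots)$. Verifying this step is where the real effort lies.
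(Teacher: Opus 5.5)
Your finite-limit argument and your treatment of $P_{t,j}$ match the paper. The paper also first notes that the rescaled $B_{t,j}$ is a non-negative almost super-martingale because $\sum_t t^{-(1-\iota)w}\,\iota\overline P_t<+\infty$, which is where $\iota w<\beta$ enters. Your exponent count showing that $\iota\, t^{1-(1-\iota)w}\overline P_t\to 0$ is a useful detail the paper leaves implicit. There is one slip: for $\beta\ge w$ the drift increment is $O(t^{-1-(1-\iota)w}\ln t)$, which is summable because $(1-\iota)w>0$.

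The gap is strict positivity of $K^*_{\infty,j}$, which is the only non-routine content of the statement. You leave it at ``should be checkable.'' Your list of what to check does not match the hypotheses of Theorem~\ref{th-general}. Your fallback also does not work as sketched. Controlling $\sum_t c_t^2t^{-2}B_{t,j}/Y_{t,j}^2$ by assuming $Y_{t,j}$ stays away from $0$ presupposes the conclusion. The missing ingredient is an a priori lower growth bound. Theorem~\ref{th-general} supplies it in its Step~1: the super-martingale $(\theta+t)/H_t^\gamma$ with $\gamma>1/\delta$ gives $H_t/t^{\eta}\to+\infty$ for every $\eta<\delta$. Here $H_t=K_{t,j}$.

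The step closes directly, exactly as in the paper. Apply Theorem~\ref{th-general}, case i), together with Remark~\ref{rem-general} (with $\tau=\tau_j$), to the unrescaled dynamics \eqref{eq:dynamics_B_tj} rather than to your $Y_{t,j}$. Take $X_t=B_{t,j}$, $Y_{t+1}=\Delta K_{t+1,j}\in\{0,1\}$, $\delta=(1-\iota)w\in(0,1]$ and $\rho_t=\iota\overline P_t$. The hypotheses are then checked as follows.
\begin{itemize}
\item Condition \eqref{eq-cond-sp} is just $P_{t,j}=(1-\iota)wB_{t,j}+\iota\overline P_t$, and $\rho_t\ge0$ with $\rho_t\to0$ a.s.
\item Condition \eqref{eq-cond-second} is trivial, since $(\Delta K_{t+1,j})^2=\Delta K_{t+1,j}$.
\item The condition $\sum_t\Delta K_{t,j}=+\infty$ a.s.\ follows from Remark~\ref{rem-general}, because the increments are bounded, $\rho_t\ge 0$ and $B_{\tau_j,j}>0$.
\item Condition \eqref{eq-cond-rho-2} needs slightly more than the drift summability you checked, which corresponds to $\epsilon=0$ and yields only a finite limit. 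Your exponent count nevertheless leaves the required margin. The series $\sum_t\rho_t(\theta+t)^{-(\delta-\epsilon)}$ is dominated by $\sum_t t^{-1-(\beta-\iota w)+\epsilon}$ when $\beta<w$, and by $\sum_t t^{-1-(1-\iota)w+\epsilon}\ln t$ when $\beta\ge w$. Both are finite for small $\epsilon>0$, and $\rho_t\equiv0$ when $\iota=0$.
\end{itemize}
No lower bound on the drift is required, so the P\'olya-type fallback can be dropped.
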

This result for $w=1$ and $\iota=0$ is coherent with the one for the standard IBP of \cite{TG}. 
\\

\indent The next theorem holds in the case when the interaction is high.

\begin{thm}[Case (b): High interaction] \label{th-P-K-high_interaction}
When 
\begin{itemize}
\item[(1)] $\iota=1$, for each observed dish~$j$, we have 
$$P_{t,j}=\overline{P}_t\ \ \forall\, t\geq1 \qquad\mbox{and}\qquad \frac{K_{t,j}}{\sum_{n=1}^t\overline{P}_{n-1}}\stackrel{a.s.}\longrightarrow 1\,.$$
\item[(2)] $0<\beta/w<\iota<1$, for each observed dish~$j$, we have 
$$
\frac{K_{t,j}}{t^{w-\beta}}\stackrel{a.s.}\longrightarrow 
K_\infty= \frac{\iota}{(\iota w - \beta)} \frac{\beta}{\alpha}\, Z^*_\infty
$$
and
$$
t^{1-w+\beta}\,P_{t,j}\stackrel{a.s.}\longrightarrow 
\frac{\iota(w-\beta)}{(\iota w-\beta)}\frac{\beta}{\alpha}\,Z_\infty^*\,,
$$
where $Z_\infty^*>0$ a.s.\ is defined in Theorem~\ref{th-Z-T-medio};

\item[(3)] $0=\beta/w<\iota<1$ (i.e. $\beta=0$ and $0<\iota<1$), for each observed dish~$j$, we have 
$$
\frac{K_{t,j}}{t^{w}/\ln(t)}\stackrel{a.s.}\longrightarrow 
K_\infty= \frac{1}{\alpha w}\, Z^*_\infty
$$
and
$$
t^{1-w}\ln(t)\,P_{t,j}\stackrel{a.s.}\longrightarrow 
\frac{1}{\alpha}\,Z_\infty^*\,.
$$
\end{itemize}
\end{thm}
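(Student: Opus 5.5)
The plan is to work with the rescaled dynamics \eqref{eq:dynamics_B_tj_2} of $B_{t,j}=K_{t,j}/(\theta+t)$ and treat the three cases separately.

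\emph{Case (1), $\iota=1$.} By \eqref{eq-P-2} we have $P_{t,j}=\overline{P}_t$ identically, for every $t$ (with $\overline{P}_t$ defined from the dishes present at time $t$). The increments $X_{n,j}$ have conditional means $P_{n-1,j}=\overline{P}_{n-1}$ for $n>\tau_j$. I write
\[
K_{t,j}-\sum_{n\le t}\overline{P}_{n-1}=\sum_{n\le t}(X_{n,j}-\overline{P}_{n-1})+O(1),
\]
where the $O(1)$ collects the terms with $n\leq\tau_j$. The sum on the right is a martingale with bounded increments whose conditional variance is bounded by $A_t=\sum_{n\le t}\overline{P}_{n-1}$. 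Table~\ref{table-Z-T-P-K-medio} gives $A_t\to+\infty$ in every regime: for instance $\overline{P}_t\gtrsim \ln(t)/t$ or $1/t$ when $w\le\beta$, and $t^{-(1-w+\beta)}$ with $w-\beta>0$ otherwise. The standard strong law for martingales, in the form of Kronecker's lemma applied to $\sum_n \Delta M_n/A_n$ with $\sum_n \overline{P}_{n-1}/A_n^2<\infty$, then yields $(K_{t,j}-A_t)/A_t\to0$ a.s.

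\emph{Cases (2)--(3), $\iota<1$ and $\iota w>\beta$.} Set $\gamma=1-(1-\iota)w$. By Theorem~\ref{th-Z-T-medio} (note $w>\beta$ here) and \eqref{eq-D-as}, the forcing $\iota\overline{P}_t$ satisfies $\overline{P}_t\sim c_t\,Z^*_\infty$ with
\[
c_t=\frac{\beta}{\alpha}\,t^{-(1-w+\beta)}\ \ (\beta>0),\qquad c_t=\frac{t^{-(1-w)}}{\alpha\ln t}\ \ (\beta=0).
\]
The natural normalization is therefore $Y_t=B_{t,j}/r_t$, with $r_t=t^{-(1-w+\beta)}$ (respectively $t^{-(1-w)}/\ln t$). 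Dividing \eqref{eq:dynamics_B_tj_2} by $r_{t+1}$ and using
\[
r_t/r_{t+1}=1+\frac{1-w+\beta}{t}+o(1/t)
\]
(with the $\ln$ factor contributing only $o(1/t)$ when $\beta=0$), I get a Robbins--Monro recursion
\[
Y_{t+1}-Y_t=\frac{1}{\theta+t+1}\Big[-(\gamma-(1-w+\beta))Y_t+\iota\,\overline{P}_t/r_t+o(1)\,Y_t\Big]+\frac{\Delta M_{t+1,j}}{r_{t+1}(\theta+t+1)}.
\]
The drift coefficient is $\gamma-1+w-\beta=\iota w-\beta>0$; in case (3) it is $\iota w$. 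The random attractor is therefore
\[
Y^*=\frac{\iota\,\beta Z^*_\infty/\alpha}{\iota w-\beta}\quad(\beta>0),\qquad Y^*=\frac{Z^*_\infty}{\alpha w}\quad(\beta=0).
\]
The noise term has conditional variance of order $P_{t,j}/(r_t^2t^2)\lesssim 1/(r_tt^2)$. Since $r_t t^2\ge t^{1+w-\beta}$, up to logarithmic factors, and $w>\beta$, these variances are summable. The noise therefore converges a.s., and Theorem~\ref{thm:app-cor} (stochastic approximation with a random attractor $a_t\to Y^*$ a.s. and a positive linear drift) gives $Y_t\to Y^*$ a.s.

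The conclusions then follow by translation. First, $K_{t,j}=(\theta+t)B_{t,j}\sim t\,r_t\,Y^*$, which is $t^{w-\beta}Y^*$ (respectively $t^w Y^*/\ln t$), and $Y^*$ matches $K_\infty$ in both cases. Second, \eqref{eq-P-2} gives $P_{t,j}=(1-\iota)wB_{t,j}+\iota\overline{P}_t$. Its rescaled limit in case (2) is
\[
\Big[(1-\iota)w\frac{\iota}{\iota w-\beta}+\iota\Big]\frac{\beta}{\alpha}Z^*_\infty=\frac{\iota(w-\beta)}{\iota w-\beta}\,\frac{\beta}{\alpha}Z^*_\infty.
\]
In case (3) it is $\big[(1-\iota)/\alpha+\iota/\alpha\big]Z^*_\infty=Z^*_\infty/\alpha$, as stated.

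The main obstacle I expect is checking the hypotheses of Theorem~\ref{thm:app-cor} with a random, unbounded attractor. In particular, I need the $o(1)$ perturbations, coming both from $\overline{P}_t/r_t-(\beta/\alpha)Z^*_\infty$ and from the expansion of $r_t/r_{t+1}$, to be handled pathwise. I also need to control $Y_t$ before convergence, for example by showing $\sup_t Y_t<\infty$ a.s. through a comparison with a deterministic linear recursion driven by $\sup_t\overline{P}_t/r_t$.
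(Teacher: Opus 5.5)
Your case (1) and the skeleton of cases (2)--(3) match the paper's proof. Case (1) is L\'evy's extension of Borel--Cantelli (Lemma~\ref{williams-lemma}), which your Kronecker argument reproves. In cases (2)--(3) you rescale $B_{t,j}$ by the rate of $\overline{P}_t$, get a stochastic-approximation recursion with drift $\iota w-\beta$ (resp.\ $\iota w$) and a random forcing term, and conclude with Theorem~\ref{thm:app-cor}. Your first worry is harmless: that theorem already accepts any a.s.\ convergent adapted $A_t$ and any deterministic $b_t\to b>0$.

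\textbf{The gap is the martingale-variance condition} \eqref{app-eq-delta-M-quad-cond}.
\begin{itemize}
\item Your estimate $P_{t,j}/(r_t^2t^2)\lesssim 1/(r_tt^2)$ uses $P_{t,j}=O(r_t)$. That is $\sup_tY_t<\infty$, which is essentially what you are trying to prove.
\item A priori you only know $B_{t,j}\le 1$, i.e.\ $Y_t\lesssim t^{1-w+\beta}$. The relevant series is then $\sum_t t^{-2(w-\beta)}$, which diverges whenever $w-\beta\le 1/2$.
\item Your proposed remedy does not close this. The recursion for $Y_t$ contains the noise $\Delta M_{t+1,j}/(r_{t+1}(\theta+t+1))$. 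You cannot bound its partial sums pathwise, and so cannot compare $Y_t$ with a deterministic linear recursion, without exactly the summability in question.
\end{itemize}

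\textbf{How the paper breaks the circularity.} It uses Lemma~\ref{lem:alternative_to_epsilon}. After dividing by $\zeta_t(\epsilon)$:
\begin{itemize}
\item the drift $-(b_t+\epsilon)X_t^*/(\theta+t+1)$ is non-positive;
\item the forcing $|A_t|/((\theta+t+1)\zeta_{t+1}(\epsilon))$ is summable;
\item $\sum_tX_t/t^2<\infty$ follows from $B_{t,j}\le 1$ and $w>\beta$.
\end{itemize}
Robbins--Siegmund (Theorem~\ref{th-almost-supermart}) uses only conditional first moments, so it gives $Y_t=o(t^\epsilon)$ for every $\epsilon>0$. This weaker bound suffices. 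The conditional variance of the rescaled noise becomes $O(t^{1-w+\beta+\epsilon})$, with a harmless extra $\ln$ factor in case (3), and $\sum_t t^{-2}\,t^{1-w+\beta+\epsilon}<\infty$ for $\epsilon<w-\beta$.

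\textbf{Alternative fix.} Your comparison idea becomes rigorous if you apply it to the compensator $C_t=\sum_{n<t}P_{n,j}$ instead of $Y_t$. Since $\sum_nP_{n,j}\ge\iota\sum_n\overline{P}_n=+\infty$, Lemma~\ref{williams-lemma} gives $K_{t,j}=C_t(1+o(1))$ a.s. Hence
\[
C_{t+1}-C_t=(1-\iota)w\,(1+o(1))\,C_t/(\theta+t)+\iota\overline{P}_t
\]
is a purely pathwise recursion. Because $(1-\iota)w<w-\beta$, its solution is $O(t\,r_t)$. This yields $\sup_tY_t<\infty$, and with a little more care the limit itself.
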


Finally, the last theorem holds in the critical case.

\begin{thm}[Case (c): Critical regime] \label{th-P-K-medium_interaction}
When $0<\iota=\beta/w <1$, for each observed dish~$j$, we have
$$
\frac{K_{t,j}}{t^{w-\beta}\ln(t)}\stackrel{a.s.}\longrightarrow 
K_\infty= \iota \frac{\beta}{\alpha}\, Z^*_\infty
$$
and
$$
\frac{t^{1-w+\beta}}{\ln(t)}\,P_{t,j}\stackrel{a.s.}\longrightarrow 
(1-\iota)w \iota\frac{\beta}{\alpha}\,Z_\infty^*\,.
$$
\end{thm}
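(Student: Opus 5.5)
The plan is to work directly with the recursion \eqref{eq:dynamics_B_tj_2} for $B_{t,j}=K_{t,j}/(\theta+t)$, since in the critical regime the proof is essentially deterministic apart from a convergent martingale. When $\iota=\beta/w$ we have $\iota w=\beta$, so the drift coefficient becomes
$$
a:=1-(1-\iota)w=1-w+\beta .
$$
Moreover $\iota<1$ forces $\beta<w$, so $0<a<1$. By Theorem~\ref{th-Z-T-medio}, together with \eqref{eq-P-medio-2}, \eqref{eq-Kmedio-e-Tmedio} and \eqref{eq-D-as}, we know that $t^{a}\,\overline{P}_t\to \frac{\beta}{\alpha}Z^*_\infty$ a.s. (first column of Table~\ref{table-Z-T-P-K-medio}, case $0<\beta<w$). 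Hence the forcing term $\iota\overline{P}_t/(\theta+t+1)$ in \eqref{eq:dynamics_B_tj_2} has exactly the same decay exponent $a$ as the linear drift. This resonance is what should generate the logarithm.

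\textbf{Step 1: remove the drift by an exact product.} Set $\pi_t=\prod_{n=\tau_j}^{t-1}\bigl(1-\tfrac{a}{\theta+n+1}\bigr)^{-1}$, so that $\pi_t\sim C_j t^{a}$ with a deterministic constant given by a Gamma ratio. Put $U_t=\pi_t B_{t,j}$. Multiplying \eqref{eq:dynamics_B_tj_2} by $\pi_{t+1}$ gives, for $t\ge\tau_j$,
$$
U_{t+1}-U_t=\pi_{t+1}\frac{\iota\overline{P}_t}{\theta+t+1}+\pi_{t+1}\frac{\Delta M_{t+1,j}}{\theta+t+1}.
$$

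\textbf{Step 2: the forcing part.} Since $\pi_{t+1}\iota\overline{P}_t\to C_j\,\iota\frac{\beta}{\alpha}Z^*_\infty$ a.s., a Ces\`aro/Toeplitz argument with weights $1/(\theta+n+1)$ gives
$$
\frac{1}{\ln t}\sum_{n<t}\pi_{n+1}\frac{\iota\overline{P}_n}{\theta+n+1}\;\longrightarrow\; C_j\,\iota\frac{\beta}{\alpha}Z^*_\infty \quad\text{a.s.}
$$

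\textbf{Step 3: the martingale part.} Let $L_t=\sum_{n<t}\pi_{n+1}\Delta M_{n+1,j}/(\theta+n+1)$. Its conditional variance increments are bounded by $\pi_{n+1}^2P_{n,j}/(\theta+n+1)^2\lesssim n^{2a-2}P_{n,j}$. I would show that $L_t$ is bounded in $L^2$, which needs an a priori bound on $E[P_{n,j}]$:

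1. Take expectations in the $U$-recursion. Using $E[\overline{P}_n]=O(n^{-a})$ (from the $L^p$ convergence remark following Theorem~\ref{th-Z-T-medio-altro-caso}, combined with $D_t$ bounded below on $\{D_t>0\}$), one gets $E[U_t]=O(\ln t)$.
2. Write $P_{t,j}=(1-\iota)w\,U_t/\pi_t+\iota\overline{P}_t$. This gives $E[P_{n,j}]=O(n^{-a}\ln n)$.
3. Then $\sum_n n^{2a-2}E[P_{n,j}]\lesssim\sum_n n^{a-2}\ln n<\infty$ because $a<1$.

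So $L_t$ converges a.s. to a finite limit, and $L_t/\ln t\to 0$.

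\textbf{Step 4: conclude.} Steps 2 and 3 give $U_t/\ln t\to C_j\iota\frac{\beta}{\alpha}Z^*_\infty$. Dividing by $\pi_t\sim C_jt^a$ yields
$$
\frac{t^{a}}{\ln t}B_{t,j}\;\longrightarrow\;\iota\frac{\beta}{\alpha}Z^*_\infty .
$$
Since $K_{t,j}=(\theta+t)B_{t,j}$ and $1-a=w-\beta$, this is the first claim. For the second claim, use $P_{t,j}=(1-\iota)wB_{t,j}+\iota\overline{P}_t$. The term $\iota\overline{P}_t$ is $O(t^{-a})=o(t^{-a}\ln t)$, so $\frac{t^a}{\ln t}P_{t,j}\to(1-\iota)w\iota\frac{\beta}{\alpha}Z^*_\infty$. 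Alternatively, the whole argument could be packaged through the stochastic approximation result with random attractor, Theorem~\ref{thm:app-cor}, applied to $Y_t=t^aB_{t,j}/\ln t$.

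\textbf{Main obstacle.} I expect the delicate point to be the a priori moment control in Step 3. This means justifying $E[\overline{P}_n]=O(n^{-a})$ uniformly, which requires handling $1/D_n$ for small $n$ and the random time $\tau_j$ (work conditionally on $\{\tau_j=s\}$, or use a localization with stopping times). If the expectation route is awkward, I would instead use a pathwise bootstrap. First, a crude bound $U_t=O(t^{\epsilon})$ follows since the variance sum converges for small $\epsilon$ (because $a<1$). Then apply the a.s. martingale convergence theorem on the event where $\sum n^{2a-2}P_{n,j}<\infty$, which that crude bound shows to have probability one.
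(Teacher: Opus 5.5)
Your argument is correct, and it follows a genuinely different route from the paper.

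The paper rescales by $\zeta_t(1-w+\beta)/\ln(\theta+t)$. It expands the ratio of consecutive normalizers with \eqref{eq-zeta-1} and \eqref{eq-zeta-3} to get a stochastic-approximation recursion with step $1/((\theta+t+1)\ln(\theta+t+1))$, drift $\iota\overline{P}^*_t-B^*_{t,j}$ and an $\mathcal{F}_t$-measurable summable remainder. It then applies the random-attractor result Theorem~\ref{thm:app-cor}, exactly as in the high-interaction case.

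You instead remove the drift with an exact integrating factor. This makes $U_t=\pi_t B_{t,j}$ a non-negative submartingale whose compensator is the resonant forcing summed against the weights $1/(\theta+n+1)$. The logarithm then drops out of a Toeplitz argument, no second-order remainders appear, and you see directly that the martingale part contributes only $O(1/\ln t)$ to the error.

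The paper's route buys uniformity with case (b) and reuse of a general theorem. Yours is more elementary, makes the resonance mechanism explicit, and relies only on the linearity of \eqref{eq:dynamics_B_tj_2}. One small point: $C_j$ depends on $\tau_j$, so it is deterministic only conditionally on $\tau_j$; it cancels anyway.

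Both routes need the same a priori control of $\sum_n n^{2a-2}P_{n,j}$, and this is where your write-up should be tightened.

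\emph{The expectation route, as justified, does not work.} The bound $E[\overline{P}_n]=O(n^{-a})$ does not follow from the $L^p$ remark together with $D_n\ge 1$. Those only give $E[\overline{P}_n]\le E[S_n]=O(n^{-(1-w)})$. Carried through your Steps 3.1--3.3, this turns the variance series into $\sum_n n^{2\beta-w-1}$, which diverges as soon as $\iota\ge 1/2$. You would need the Poisson lower-tail (Chernoff) estimate from the proof of Lemma~\ref{lem:Delta K}, plus care with the random $\tau_j$.

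\emph{The pathwise fallback needs a different source for the crude bound.} It cannot come from a variance sum. With only $P_{n,j}\le 1$, the series $\sum_n n^{2a-2-2\epsilon}$ controlling $\sum_n \pi_{n+1}\Delta M_{n+1,j}/((\theta+n+1)n^{\epsilon})$ converges only for $\epsilon>a-\tfrac12$, not for small $\epsilon$ when $a>1/2$. Use non-negativity instead, which is the mechanism of Lemma~\ref{lem:alternative_to_epsilon}:
\begin{itemize}
\item We have $E[U_{t+1}\mid\mathcal{F}_t]-U_t=\pi_{t+1}\iota\overline{P}_t/(\theta+t+1)=O(1/t)$ a.s.
\item Hence $U_t/\ln^{1+\epsilon}(\theta+t)$ is a non-negative almost supermartingale with a.s.\ summable perturbation.
\item Theorem~\ref{th-almost-supermart} then gives $U_t=O(\ln^{1+\epsilon}t)$, so $P_{n,j}=O(n^{-a}\ln^{1+\epsilon}n)$.
\item Since $a<1$, this yields $\sum_n n^{2a-2}P_{n,j}<\infty$ a.s.
\end{itemize}
With this substitution, Steps 3 and 4 go through and the proof is complete.
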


\subsubsection{Proofs}

\begin{proof}[Proof of Theorem~\ref{th-P-K-low_interaction} (Low interaction)]
Considering the process $B_{t,j}=K_{t,j}/(\theta+t)$ and 
denoting $B^{*}_{t,j}=\zeta_{t}(1-(1-\iota)w)B_{t,j}$, from the dynamics~\eqref{eq:dynamics_B_tj_2} we obtain
 \begin{equation}\label{eq-dyn-B-star}
  B^{*}_{t+1,j}=\Big(1+O\big(\tfrac{1}{t^2}\big)\Big)B^{*}_{t,j}
  +\tfrac{\zeta_{t+1}(1-(1-\iota)w)}{\theta+t+1} (\Delta M_{t+1,j} + \iota \overline{P}_t) 
  \end{equation}
and so, by the asymptotics for $\overline{P}_t$ (see Table~\ref{table-Z-T-P-K-medio}) 
and the classical Theorem~\ref{th-almost-supermart}, the process $(B^{*}_{t,j})_t$ is a non-negative
almost super-martingale that converges a.s.\ to a finite r.v. $B^{*}_\infty$. However, this is not enough for our scopes. 
Indeed, in that way, we do not know that  such random limit $B^{*}_\infty$ is a.s.\ strictly positive, and this is crucial for proving 
the right rate of convergence. For this reason, we are going to apply the more sophisticated 
Theorem~\ref{th-general} (case i) together with Remark~\ref{rem-general} with $\tau=\tau_j$) to dynamics~\eqref{eq:dynamics_B_tj} with $X_t=B_{t,j}$, $Y_{t+1}=\Delta K_{t+1,j}\in \{0,1\}$, $\delta=(1-\iota ) w\in (0, 1]$. To this purpose, we note that $\rho_t=0$ when $\iota=0$;
while $\rho_t=\iota \overline{P}_t > 0$ when $\iota>0$. To handle the case $\iota>0$, from Table~\ref{table-Z-T-P-K-medio} we get $\overline{P}_t=O(1/t^{1-w+\beta})$ when $0<\beta<w\leq 1$, $\overline{P}_t=O(\ln(t)/t)$ when $0<\beta=w\leq 1$ and $\overline{P}_t=O(1/t)$ when $0<w<\beta\leq 1$ (the case $\beta=0$ is not possible with $\iota>0$ because $\beta>\iota w$ in the case of low interaction). 
Therefore, setting $\delta=(1-\iota)w$, we have 
$$
\sum_t \frac{\rho_t}{(\theta+t)^{\delta-\epsilon}} 
<+\infty\;\mbox{a.s.\ for some }0<\epsilon<\delta\,,
$$
provided that  $\iota=0$ or $0<\iota <\beta/w$.
Moreover, we trivially (since $Y_{t+1}\in \{0,1\}$) have 
$$
E[Y_{t+1}^2\mid {\mathcal F}_t]\leq E[Y_{t+1}\mid {\mathcal F}_t]\,.
$$
 Hence, under our conditions, we find 
$$
t^{1-(1-\iota)w}\, B_{t,j}\stackrel{a.s.}\longrightarrow B^*_{\infty,j}\in (0,+\infty)
$$
and so 
 the first statement of the theorem follows with $K^*_{\infty,j}=B^*_{\infty,j}$.  
Finally, it follows, by \eqref{eq-P-2}, that  
$$
t^{1-w(1-\iota)}\,P_{t,j}\stackrel{a.s.}\longrightarrow (1-\iota)w\,B^*_{\infty,j}=(1-\iota)w\,K^*_{\infty,j}\in (0,+\infty)\,,
$$
which is the second statement of the theorem.
\end{proof}

\begin{proof}[Proof of Theorem~\ref{th-P-K-high_interaction} (High interaction)]
We will prove the two cases separately.\\ 
\indent {\em Proof of case (1):} 
From \eqref{eq-P-2} we immediately get $P_{t,j}=\overline{P}_t$ for each $j\in{\mathcal O}_t$ and so the asymptotic behavior of $P_{t,j}$ is described in Table~\ref{table-Z-T-P-K-medio}, 
while, by Lemma~\ref{williams-lemma},  for $K_{t,j}$ we have $K_{t,j}/\sum_{n=1}^t\overline{P}_{n-1}\stackrel{a.s.}\longrightarrow 1$ because $K_{t,j}=\sum_{n=1}^t X_{n,j}$
with $E[X_{n+1,j}\mid{\mathcal F}_{n}]=P_{n,j}=\overline{P}_n$ and $\sum_n \overline{P}_n=+\infty$ a.s..
\\
    \indent {\em Proof of case (2):} Recall that we have $B_{t,j}=K_{t,j}/(\theta+t)$ with dynamics~\eqref{eq:dynamics_B_tj_2}:
$$
B_{t+1,j}=B_{t,j}-\frac{(1-(1-\iota)w)B_{t,j}-\iota\overline{P}_t}{\theta+t+1}
+\frac{\Delta M_{t+1,j}}{\theta+t+1}\,,
$$
where
$$\Delta M_{t+1,j}=\Delta K_{t+1,j}-P_{t,j}=\Delta K_{t+1,j}-(1-\iota)wB_{t,j}-\iota\overline{P}_t\,.$$
When $\iota w>\beta>0$, the  term $\iota \overline{P}_t$ is not negligible as in Theorem~\ref{th-P-K-low_interaction} and so we have to study the dynamics of $B_{t,j}$ paired with the one of $\overline{P}_t$. From Table~\ref{table-Z-T-P-K-medio} we know that, since $w>\beta$, we have $t^{1-w+\beta}\overline{P}_t\stackrel{a.s.}\rightarrow {P}^*_{\infty}>0$.  
Now, let $B_{t,j}^*=\zeta_t(1-w+\beta)B_{t,j}$, 
$\overline{P}_t^*=\zeta_t(1-w+\beta)\overline{P}_t$ and
$\Delta M_{t+1,j}^*=\zeta_{t+1}(1-w+\beta)\Delta M_{t+1,j}$. 
Then, using \eqref{eq-zeta-1}, we obtain 
\begin{align*}
B_{t+1,j}^*&=\Big(B_{t,j}^{*}-\tfrac{(1-(1-\iota)w)B_{t,j}^{*}-\iota\overline{P}_t^{*}}{\theta+t+1}\Big)
\tfrac{\zeta_{t+1}(1-w+\beta)}{\zeta_t(1-w+\beta)} + 
\tfrac{\Delta M_{t+1,j}^*}{\theta+t+1}\\
&=
\Big(B_{t,j}^{*}-\tfrac{(1-(1-\iota)w)B_{t,j}^{*}-\iota\overline{P}_t^{*}}{\theta+t+1}\Big)
\Big(1+\tfrac{1-w+\beta}{\theta+t+1}\Big) + 
\tfrac{\Delta M_{t+1,j}^*}{\theta+t+1}\\
&=B_{t,j}^{*}
-\tfrac{(\iota w-\beta)B_{t,j}^{*}-\iota\overline{P}_t^{*}}{\theta+t+1}+
\tfrac{\Delta M_{t+1,j}^*}{\theta+t+1}+\rho_t\,,
\end{align*}
with $\rho_t=O\big(\frac{B_{t,j}+\overline{P}_t}{t^{1+w-\beta}}\big)$ being ${\mathcal F}_t$-measurable.  
This suggests that $(B^{*}_t)_t$ is not a non-negative almost super-martingale, as in Theorem~\ref{th-P-K-low_interaction}, but instead it evolves according to a stochastic approximation dynamics, which should convergence to~$\iota {P}_\infty^{*}/(\iota w-\beta)$. For proving this convergence, 
 we are going to apply~Theorem~\ref{thm:app-cor} with $X_t=B^*_t$, together with Lemma~\ref{lem:alternative_to_epsilon}. 
 To this end, we observe that, by \eqref{eq-sp-quad-T} and since $0\leq P_{t,j}\leq 1$,
\begin{equation*}
\begin{split}
E[(\Delta M^*_{t+1,j})^2\mid{\mathcal F}_t]&=
\zeta(1-w+\beta)^2P_{t,j}(1-P_{t,j})\\
&\leq \zeta(1-w+\beta)^2P_{t,j}\\
&=\zeta(1-w+\beta)[(1-\iota) w B^*_{t,j} +\iota \overline{P}^*_t]\\
&=O(t^{1-w+\beta+\epsilon}) \quad\forall \epsilon>0\,,
\end{split}
\end{equation*}
where we have used $B^*_t=o(t^\epsilon)$ that follows by Lemma~\ref{lem:alternative_to_epsilon} applied to $X_t=B^*_t$ with
$A_t=\overline{P}^*_t$ and $b=(\iota w-\beta)>0$.
(Indeed, $A_t=\overline{P}^*_t$ is a.s.\ convergent by Table~\ref{table-Z-T-P-K-medio} and, since both $B_{t,j}$ and $\overline{P}_t$ are bounded by one and $w>\beta$, we a.s. have 
$\sum_t X_t/t^2=\sum_t O(B_{t,j}/t^{1-w+\beta})<+\infty$ and
$\sum_t |\rho_{t}|=\sum_t O\left(\frac{B_{t,j}+\overline{P}_t}{t^{1+w-\beta}}\right) < +\infty$.)
Hence, choosing a suitable $\epsilon$, we get 
$$
\sum_t \frac{1}{(\theta+t+1)^2}E[(\Delta M^*_{t+1,j})^2\mid{\mathcal F}_t]<+\infty
\quad\hbox{a.s.}
$$
and so, by~Theorem~\ref{thm:app-cor}, we have 
$$ B_{t,j}^{*} = \zeta_t(1-w+\beta)B_{t,j}\stackrel{a.s.}\longrightarrow 
 \frac{\iota}{(\iota w - \beta)}\, P_{\infty}^*\,.$$
  From Table~\ref{table-Z-T-P-K-medio} (case $w>\beta>0$) we get $P_{\infty}^*=\frac{\beta}{\alpha}Z_\infty^*>0$ a.s.\ and so the first statement of case (2) of the theorem follows.  
Finally, by \eqref{eq-P-2}, we get 
\begin{equation*}
\begin{split}
t^{1-w+\beta}\,P_{t,j}\stackrel{a.s.}\longrightarrow 
 \tfrac{(1-\iota)w\,\iota}{(w\iota - \beta)}\, P_{\infty}^*+\iota P_{\infty}^*
=\tfrac{\iota(w-\beta)}{(\iota w-\beta)}\, P_{\infty}^*
= \tfrac{\iota(w-\beta)}{(\iota w-\beta)}\frac{\beta}{\alpha}Z_\infty^*\,,
\end{split}
\end{equation*}
which is the second statement of the case (2) of the theorem.
\\
\indent {\em Proof of case (3):} 
When $\iota w>\beta=0$, analogously to case (2), the term $\iota \overline{P}_t$ is not negligible in the dynamics~\eqref{eq:dynamics_B_tj_2} of $B_{t,j}$ and so we have to study the dynamics of $B_{t,j}$ paired with the one of $\overline{P}_t$. From Table~\ref{table-Z-T-P-K-medio} we know that, since $w>\beta=0$, we have $t^{1-w}\ln(t)\overline{P}_t\stackrel{a.s.}\rightarrow P_{\infty}>0$, so the dynamics we need to consider is 
$$
B_{t+1,j}=B_{t,j}-\frac{(1-(1-\iota)w)B_{t,j}-\iota\overline{P}_t}{\theta+t+1}
+\frac{\Delta M_{t+1,j}}{\theta+t+1}\,,
$$
where $\Delta M_{t+1,j}=\Delta K_{t+1,j}-P_{t,j}=\Delta K_{t+1,j}-(1-\iota)wB_{t,j}-\iota\overline{P}_t$. 
Now, let $B_{t,j}^*=\zeta_t(1-w)\ln(\theta+t)B_{t,j}$, 
$\overline{P}_t^*=\zeta_t(1-w)\ln(\theta+t)\overline{P}_t$ and
$\Delta M_{t+1,j}^*=\zeta_{t+1}(1-w)\ln(\theta+t+1)\Delta M_{t+1,j}$. 
Then, using \eqref{eq-zeta-3} from Section~\ref{sec:Technical_lemmas_deterministic} of the supplementary, we obtain 
\begin{align*}
B_{t+1,j}^*&=\Big(B_{t,j}^{*}-\tfrac{(1-(1-\iota)w)B_{t,j}^{*}-\iota\overline{P}_t^{*}}{\theta+t+1}\Big)
\tfrac{\zeta_{t+1}(1-w)}{\zeta_t(1-w)}
\tfrac{\ln(\theta+t+1)}{\ln(\theta+t)} + 
\tfrac{\Delta M_{t+1,j}^*}{\theta+t+1}\\
&=
\Big(B_{t,j}^{*}-\tfrac{(1-(1-\iota)w)B_{t,j}^{*}-\iota\overline{P}_t^{*}}{\theta+t+1}\Big)
\Big(1+\tfrac{1-w}{\theta+t+1} 
-\tfrac{1}{(\theta+t+1)\ln(\theta+t+1)}+ O\big(\tfrac{1}{t^2\ln(t)}\big)\Big)\\
&\qquad +\tfrac{\Delta M_{t+1,j}^*}{\theta+t+1}\\
&=B_{t,j}^{*}
-\iota \tfrac{wB_{t,j}^{*}-\overline{P}_t^{*}}{\theta+t+1}-
\tfrac{B_{t,j}^{*}}{(\theta+t+1)\ln(\theta+t+1)} + 
\tfrac{\Delta M_{t+1,j}^*}{\theta+t+1}
+ \rho_t\\
&=B_{t,j}^{*}
+ \frac{1}{\theta+t+1}\Big(\iota\overline{P}_t^{*}-\Big(\iota w+\tfrac{1}{\ln(\theta+t+1)}\Big)B_{t,j}^{*}\Big)+ 
\tfrac{\Delta M_{t+1,j}^*}{\theta+t+1}
+ \rho_t\,,
\end{align*}
with $\rho_t=O\big(\frac{B_{t,j}+\overline{P}_t}{t^{1+w}}\big)$ being ${\mathcal F}_t$-measurable. 
We want to apply~Theorem~\ref{thm:app-cor} with $X_t=B^{*}_t$, together with Lemma~\ref{lem:alternative_to_epsilon}.
To this end, analogously to the proof of case (2), we have that 
\begin{equation*}
\begin{split}
E[(\Delta M^*_{t+1,j})^2\mid{\mathcal F}_t]&=
\zeta(1-w)^2\ln^2(\theta+t+1)P_{t,j}(1-P_{t,j})\\
&\leq \zeta(1-w)^2\ln^2(\theta+t+1)P_{t,j}\\
&=\zeta(1-w)\ln(\theta+t+1)[(1-\iota) w B^*_{t,j} +\iota \overline{P}^*_t]\\
&=O(t^{1-w+\epsilon}\ln(t)) \quad\forall \epsilon>0\,,
\end{split}
\end{equation*}
where we have used $B^*_t=o(t^\epsilon)$ that follows by Lemma~\ref{lem:alternative_to_epsilon} applied to $X_t=B^*_t$, with
$A_t=\iota\overline{P}^*_t$ and $b_t=\iota w + 1/\ln(\theta+t+1)$ (note that we a.s. have $\sum_t X_t/t^2=\sum_t O(B_{t,j}\ln(t)/t^{1+w})<+\infty$ and 
$\sum_t|\rho_{t}|=\sum_t O\big(\frac{B_{t,j}+\overline{P}_t}{t^{1+w}}\big) < +\infty$ as $w>0$). Hence, choosing a suitable $\epsilon$, we get
$$
\sum_t \frac{1}{(\theta+t+1)^2}E[(\Delta M^*_{t+1,j})^2\mid{\mathcal F}_t]<+\infty
\quad\hbox{a.s.}\,.
$$
Then, by~Theorem~\ref{thm:app-cor}, we have 
$$ B_{t,j}^{*}= \zeta_t(1-w)\ln(\theta +t)B_{t,j}\stackrel{a.s.}\longrightarrow 
 \frac{1}{w}\, P_{\infty}^*.$$
  From Table~\ref{table-Z-T-P-K-medio} (case $w>\beta=0$) we get $P_{\infty}^*=\frac{1}{\alpha}Z_\infty^*>0$ a.s.\ and so the first statement of the case (3) of the theorem follows.  
Finally, by \eqref{eq-P-2}, we get 
\begin{equation*}
t^{1-w}\ln(t)\,P_{t,j}\stackrel{a.s.}\longrightarrow 
(1-\iota)w \frac{1}{w}\, P_{\infty}^*+\iota P_{\infty}^*
=P_{\infty}^*=\frac{1}{\alpha}Z_\infty^*\,,
\end{equation*}
which is the second statement of the case (3) of the theorem.
\end{proof}

\begin{proof}[Proof of Theorem~\ref{th-P-K-medium_interaction} (Critical regime)]
    The structure of this proof is analogous to the proof of Theorem~\ref{th-P-K-high_interaction} as, also in the case $\iota w=\beta>0$, we have to study the dynamics~\eqref{eq:dynamics_B_tj_2} of $B_{t,j}=K_{t,j}/(\theta+t)$ paired with the one of $\overline{P}_t$. From Table~\ref{table-Z-T-P-K-medio} we know that, since $w>\beta$ (because $0<\iota<1$ and $\iota w=\beta$), we have $t^{1-w+\beta}\overline{P}_t\stackrel{a.s.}\rightarrow P^*_{\infty}>0$.  
Now, let $B_{t,j}^{*}=\frac{\zeta_t(1-w+\beta)}{\ln(\theta+t)}B_{t,j}$, 
$\overline{P}_t^*=\zeta_t(1-w+\beta)\overline{P}_t$ and
$\Delta M_{t+1,j}^{*}=\zeta_{t+1}(1-w+\beta)\Delta M_{t+1,j}$. 
Then, using \eqref{eq-zeta-1} and \eqref{eq-zeta-3} from Section~\ref{sec:Technical_lemmas_deterministic} of the supplementary, we obtain
\begin{align*}
B_{t+1,j}^{*} &=\left(1-\tfrac{(1-(1-\iota)w)}{\theta+t+1}\right)B_{t,j}^{*}
\tfrac{\zeta_{t+1}(1-w+\beta)}{\ln(\theta+t+1)}
\tfrac{\ln(\theta+t)}{\zeta_t(1-w+\beta)} \\
& \quad + \tfrac{\iota\overline{P}_t^{*}}{(\theta+t+1)\ln(\theta+t+1)}
\tfrac{\zeta_{t+1}(1-w+\beta)}{\zeta_t(1-w+\beta)} + 
\tfrac{\Delta M_{t+1,j}^{*}}{(\theta+t+1)\ln(\theta+t+1)}\\
&=\left(1-\tfrac{(1-(1-\iota)w)}{\theta+t+1}\right)B_{t,j}^{*}
\Big(1+\tfrac{1-w+\beta}{\theta+t+1}-
\tfrac{1}{(\theta+t+1)\ln(\theta+t+1)} 
+ O\big(\tfrac{1}{t^2\ln(t)}\big)\Big)\\
&+
\tfrac{\iota\overline{P}_t^{*}}{(\theta+t+1)\ln(\theta+t+1)}
\left(1+\tfrac{1-w+\beta}{\theta+t+1}\right)
+ \tfrac{\Delta M_{t+1,j}^{*}}{(\theta+t+1)\ln(\theta+t+1)}\\
&=B_{t,j}^{*}
-\tfrac{B_{t,j}^{*}-\iota\overline{P}_t^{*}}{(\theta+t+1)\ln(\theta+t+1)}+
\tfrac{\Delta M_{t+1,j}^{*}}{(\theta+t+1)\ln(\theta+t+1)} + \rho_t\,,
\end{align*}
with $\rho_t=O\big(\frac{B_{t,j}+\overline{P}_t}{t^{1+w-\beta}\ln(t)}\big)=O\big(\frac{B_{t,j}+\overline{P}_t}{t^{1+w-\beta}}\big)$ being ${\mathcal F}_t$-measurable. 
We want to apply~Theorem~\ref{thm:app-cor} with $X_t=B^{*}_t$, together with Lemma~\ref{lem:alternative_to_epsilon}.
To this end, analogously to the proof of Theorem~\ref{th-P-K-high_interaction}, we have that 
$$
E[(\Delta M^*_{t+1,j})^2\mid{\mathcal F}_t] \leq
\zeta(1-w+\beta)[(1-\iota) w B^*_{t,j} +\iota \overline{P}^*_t]
=O(t^{1-w+\beta+\epsilon}),
$$
for any $\epsilon>0$, where we have used $B^*_t=o(t^\epsilon)$ that follows by Lemma~\ref{lem:alternative_to_epsilon} applied to $X_t=B^*_{t,j}$, with
$A_t=\overline{P}^*_t$ and $b=0$ (note that we a.s. have $\sum_t X_t/t^2=\sum_tO(B_{t,j}/t^{1+w-\beta})<+\infty$ and  
$\sum_t|\rho_{t}|=\sum_t O\left(\frac{B_{t,j}+\overline{P}_t}{t^{1+w-\beta}}\right) < +\infty$ as $w>\beta$). Hence, choosing a suitable $\epsilon$, we get
$$
\sum_t \frac{1}{(\theta+t+1)^2\ln^2(\theta+t+1)}E[(\Delta M^*_{t+1,j})^2\mid{\mathcal F}_t]<+\infty
\quad\hbox{a.s.}\,.
$$
Then, by~Theorem~\ref{thm:app-cor}, we have 
$$ B_{t,j}^{*} = \frac{\zeta_t(1-w+\beta)}{\ln(\theta+t)}B_{t,j}\stackrel{a.s.}\longrightarrow 
\iota P_{\infty}^*\,.$$
   From Table~\ref{table-Z-T-P-K-medio} (case $w>\beta>0$) we get $P_{\infty}^*=\frac{\beta}{\alpha}Z_\infty^*>0$ a.s.\ and so the first statement of the theorem follows.  
   Finally, by \eqref{eq-P-2}, we get 
\begin{equation*}
\frac{t^{1-w+\beta}}{\ln(\theta+t)}\,P_{t,j}\stackrel{a.s.}\longrightarrow 
(1-\iota)w \iota P_{\infty}^*
= (1-\iota)w \iota\frac{\beta}{\alpha}Z_\infty^*\,,
\end{equation*}
which is the second statement of the theorem.
\end{proof}

\subsection{Second-order asymptotics in the case of low interaction}\label{sec:second_order_asymp_dish_specific_quantities}

We focus here on the low-interaction regime, when    
each dish-specific process $K_{t,j}$ converges to a limit random variable, proportional to $K^*_{\infty,j} > 0$ (see Theorem~\ref{th-P-K-low_interaction}).  
Therefore, it could result useful to establish a CLT for these random variables $K^*_{\infty,j}$, in order to construct confidence intervals and estimate them from the observable quantities $K_{t,j}$ and $\overline{T}_t$. This result is presented in Theorem~\ref{thm:second_order-P_j}.  
Its proof relies on the auxiliary process $B_{t,j} = K_{t,j}/(\theta+t)$, introduced in Section~\ref{sec:first_order_asymp_dish_specific_quantities} and on the results proven for the averaged quantity $\overline{P}_t$.   
By~\eqref{eq-dyn-B-star} from the proof of Theorem~\ref{th-P-K-low_interaction}, we can observe that the convergence of $t^{1-(1-\iota)w} B_{t,j}$ toward $K^*_{\infty,j}$ is governed by the two competing contributions:
\begin{equation}\label{eq:heuristic_2CLT}
 t^{1-(1-\iota)w} \frac{\Delta M_{t+1,j}}{\theta+t+1}
\qquad\text{and}\qquad
t^{1-(1-\iota)w}\frac{\iota \overline{P}_t}{\theta+t+1}\,,
\end{equation}
where $\Delta M_{t+1,j}=(\Delta K_{t+1,j}-P_{t,j})$.
Heuristically, the first term suggests a martingale CLT with scaling $t^{-(1-\iota)w/2}$, leading to a zero-mean Gaussian limit kernel.  
The second term, which is absent in the case of no interaction ($\iota=0$), depends on the random averaged quantity $\overline{P}_t$ and so it exhibits a different order of decay according to the values of $\beta$ and $w$ (see Table~\ref{table-Z-T-P-K-medio} for the first-order asymptotics derived by means of Theorems~\ref{th-Z-T-medio} and Theorem~\ref{th-Z-T-medio-altro-caso}, and  see 
Corollary~\ref{cor-clt} for the second-order asymptotics obtained from Theorem~\ref{thm:second_order-Z}).
In presence of interaction (we here mean low interaction, i.e. \(0<\iota<\min\{\beta/w,1\}\)), the interplay between these two contributions determines the second-order asymptotic behavior of \( t^{1-(1-\iota)w} B_{t,j} \).  
Specifically: if $\iota=0$ OR $0<\iota < \min\{2\beta/w-1,1\}$ 
(note that the case $\beta\geq w$ and $0\leq \iota<1$ is contained in the union of these two cases), 
   the second term is negligible also at the second order and $t^{(1-\iota)w/2}(K_{t,j}/t^{(1-\iota)w}-K^*_{\infty,j})$ converges to 
    a Gaussian kernel with zero mean; if \( 0<\iota = 2\beta/w-1<1 \) (that implies $\beta<w$), we still obtain that $t^{(1-\iota)w/2}(K_{t,j}/t^{(1-\iota)w}-K^*_{\infty,j})$ converges 
    to a Gaussian kernel,  but the second term contributes with a strictly negative  random shift in the mean;
    if $\max\{0, (2\beta/w-1)\}<\iota <\beta/w<1$,  
   the second term dominates so that $(K_{t,j}/t^{(1-\iota)w}-K^*_{\infty,j})\stackrel{P}\sim -CZ_\infty^* t^{-(\beta-w \iota)}$, with a suitable constant $C>0$, 
   and, if we also have $\iota<3\beta/w-1$, then the Gaussian fluctuations arise for 
   $(K_{t,j}/t^{(1-\iota)w}-K^*_{\infty,j} + CZ_\infty^* t^{-(\beta-w \iota)})$ 
   multiplied by the scaling factor $t^{(1-\iota)w/2}$. 
 Analogously to the discussion in Section~\ref{sec:second_order_asymp_average_quantities},
also here the asymptotic contribution of the second term is strictly negative (when it is not negligible), because, in the dynamics~\eqref{eq-dyn-B-star}, that term represents a strictly positive contribution.  
This affects the second-order asymptotics by increasing the value of $K^{*}_{\infty,j}$ and, consequently, by reducing the difference $( K_{t,j}/t^{w(1-\iota)} - K^{*}_{\infty,j})$.   
Moreover, the variance of the Gaussian limit kernel is always random and equal to the dish-specific limit $K^{*}_{\infty,j}$. 
Finally, it is interesting to discuss how the convergence rate in the CLT changes according to the level $\iota$ of interaction. 
Since the second term in~\eqref{eq:heuristic_2CLT} is proportional to $\iota\overline{P}_t$, it is natural to expect that the larger the parameter $\iota$ is, the slower the convergence of the CLT will be. Indeed, we can observe from Theorem~\ref{thm:second_order-P_j} that the 
decay rate of the difference $( K_{t,j}/t^{w(1-\iota)} - K^{*}_{\infty,j})$ is of the form  $1/t^{f(\iota)}$, where $f(\iota)$ is a decreasing function in the parameter $\iota$.  
In particular, we will see that $f(\iota)$ tends to zero as $\iota$ approaches~$\min\{\beta/w,1\}$ (that is its maximum possible value in the case of low-interaction), 
while, for $0<\iota\downarrow 0$, we have  that $f(\iota)$ converges to $\min\{\beta,w/2\}>0$ and, finally, $f(0)=w/2$. This means that 
 a discontinuity in the convergence rate of the CLT arises when $\beta < w/2$.

\begin{thm}[CLT for the case of low interaction]
\label{thm:second_order-P_j}
Assume
\begin{itemize}
    \item[(1)] $\iota=0$ (no interaction at all) or
    \item[(2)] $0<\iota<\min\{\beta/w,1\}$.
\end{itemize}
Let
$K^*_{\infty, j}$ be the a.s.\ limit of $K_{t,j}/t^{w(1-\iota)}$ introduced in Theorem~\ref{th-P-K-low_interaction}.
Then:
\begin{itemize}
\item[(i)]  when  
$\iota=0$ OR 
$0<\iota<\min\{2\beta/w-1,1\}$  
(that implies 
$(1-\iota)w/2<\beta-w\iota$) OR 
$0<\iota=2\beta/w-1<1$ (that implies  
$(1-\iota)w/2=\beta-w\iota$),  for each observed dish $j$, 
we have 
\begin{equation}\label{clt-K-i}
\begin{split}
&t^{(1-\iota)w/2}\left(t^{-(1-\iota)w}K_{t,j} + I_{\{0<\iota=(2\tfrac{\beta}{w}-1)<1\}}\tfrac{\iota(\beta/\alpha)}{(\beta-w\iota)}\,Z^*_\infty \,t^{-(\beta-w\iota)}-K^*_{\infty,j}\right)\\
&\qquad \stackrel{stably}\longrightarrow 
{\mathcal N}(0,\, K^*_{\infty,j})\,,
\end{split}
\end{equation}
and the convergence also holds true in the sense of the a.s. conditional convergence with respect to $\mathcal F$, 
\item[(ii)] when  
$\max\{0,(2\beta/w-1)\}<\iota<\beta/w<1$ (that implies 
$(1-\iota)w/2>\beta-w\iota$),  for each observed dish $j$,
we have
\begin{equation*}
 {t}^{\beta-w\iota}(t^{-w(1-\iota)}K_{t,j}-K^*_{\infty,j})\stackrel{P}\longrightarrow -\tfrac{\iota\beta/\alpha}{\beta-w\iota}Z^*_\infty\,.
\end{equation*}
Moreover, when we also have $(w-\beta)-(1-\iota)w/2< \beta/2$ (i.e. $\iota<3\beta/w-1$),  for each observed dish $j$, we have 
\begin{equation*}
t^{(1-\iota)w/2}\left(t^{-(1-\iota)w}K_{t,j} +
\tfrac{\iota(\beta/\alpha)}{(\beta-w\iota)}\,Z^*_\infty \,t^{-(\beta-w\iota)}-K^*_{\infty,j}\right)\stackrel{stably}\longrightarrow 
{\mathcal N}(0,\, K^*_{\infty,j})\,.
\end{equation*}
\end{itemize}
\end{thm}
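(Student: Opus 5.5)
The plan is to work with the rescaled process $B^*_{t,j}=\zeta_t(1-(1-\iota)w)B_{t,j}$ and its dynamics \eqref{eq-dyn-B-star}. Setting $\gamma=1-(1-\iota)w$ and $\delta=(1-\iota)w$, we have $\zeta_t(\gamma)\sim t^{\gamma}$ and $t^{-\delta}K_{t,j}=t^{\gamma}B_{t,j}(1+O(1/t))$. The difference $K^*_{\infty,j}-t^{\gamma}B_{t,j}$ then splits, up to $O(t^{-1})$ terms coming from the factor $(1+O(1/t^2))$ (handled by a telescoping product), into a martingale tail and a drift tail:
\[
\sum_{n\ge t}\tfrac{\zeta_{n+1}(\gamma)}{\theta+n+1}\Delta M_{n+1,j}
\qquad\text{and}\qquad
\iota\sum_{n\ge t}\tfrac{\zeta_{n+1}(\gamma)}{\theta+n+1}\overline{P}_n .
\]
I would then feed this into the general second-order result Theorem~\ref{th-CLT-general}, with $X_t=B_{t,j}$, $Y_{t+1}=\Delta K_{t+1,j}$ and forcing term $\rho_t=\iota\overline{P}_t$. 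Alternatively, I would handle the two tails separately: the martingale tail by a CLT for martingale tails in the stable and a.s.\ conditional sense, as in~\cite{crimaldi-2009}, and the drift tail by the first- and second-order asymptotics of $\overline{P}_t$.

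\textbf{Martingale tail.} The conditional variance is
\[
E[(\Delta M_{n+1,j})^2\mid\mathcal F_n]=P_{n,j}(1-P_{n,j}).
\]
By Theorem~\ref{th-P-K-low_interaction}, $P_{n,j}\sim \delta K^*_{\infty,j}n^{-\gamma}$, and the $P^2$ part is negligible. Hence
\[
t^{\delta}\sum_{n\ge t}n^{2\gamma-2}P_{n,j}\sim t^{\delta}\,\delta K^*_{\infty,j}\sum_{n\ge t}n^{\gamma-2}\to K^*_{\infty,j},
\]
since $\sum_{n\ge t}n^{\gamma-2}\sim t^{\gamma-1}/(1-\gamma)=t^{-\delta}/\delta$. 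The Lindeberg condition is trivial because the increments are bounded by $t^{\gamma-1}$, which is much smaller than $t^{-\delta/2}$. This yields the kernel $\mathcal N(0,K^*_{\infty,j})$ at scale $t^{\delta/2}$, both stably and in the a.s.\ conditional sense.

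\textbf{Drift tail.} When $\iota=0$ it vanishes. Otherwise, in the case $w>\beta$, Table~\ref{table-Z-T-P-K-medio} and Corollary~\ref{cor-clt} give $\overline{P}_n=(\beta/\alpha)Z^*_\infty n^{-(1-w+\beta)}(1+O(n^{-\eta}))$ for some $\eta>0$. Indeed, by \eqref{clt-Pmedio} the relative error is $O_P(n^{-\min\{\beta/2,w-\beta\}})$. The drift tail therefore equals
\[
\tfrac{\iota\beta/\alpha}{\beta-w\iota}Z^*_\infty\, t^{-(\beta-w\iota)}\bigl(1+o_P(1)\bigr),
\]
using $\gamma-1-(1-w+\beta)=-(\beta-w\iota)-1$. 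In the case $w\le\beta$, $\overline{P}_n=O(\ln n/n)$, so the tail is $O(t^{-\delta}\ln t)$, which is $o(t^{-\delta/2})$ and negligible.

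The three regimes then follow by comparing $\beta-w\iota$ with $\delta/2$:
\begin{itemize}
\item If $\beta-w\iota>\delta/2$, i.e.\ $\iota<2\beta/w-1$, the drift tail is negligible.
\item If $\beta-w\iota=\delta/2$, the drift tail gives exactly the shift appearing in \eqref{clt-K-i}, and the $o_P(1)$ correction is harmless.
\item If $\beta-w\iota<\delta/2$, the drift tail dominates, which gives the convergence in probability in (ii).
\end{itemize}

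The main obstacle is the last claim of (ii), where the Gaussian fluctuation must survive after subtracting the deterministic-times-$Z^*_\infty$ drift. This requires the remainder of the drift tail, $\sum_{n\ge t}n^{\gamma-2}(\overline{P}_n-(\beta/\alpha)Z^*_\infty n^{-(1-w+\beta)})$, to be $o_P(t^{-\delta/2})$. By Corollary~\ref{cor-clt}, this remainder has two fluctuating pieces:
\begin{itemize}
\item The $D_t$ piece, of relative order $n^{-\beta/2}$. It contributes $t^{-(\beta-w\iota)-\beta/2}$, which is $o(t^{-\delta/2})$ exactly when $\iota<3\beta/w-1$, the stated condition.
\item The $n^{-(w-\beta)}$ correction piece. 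It contributes $t^{-(\beta-w\iota)-(w-\beta)}=t^{-w(1-\iota)}$, which is always negligible.
\end{itemize}
Turning the stable CLT for $\overline{P}_n$ into a bound on the weighted tail sum needs a.s.\ or $L^2$ control rather than only convergence in distribution. I would try to obtain it via the law of the iterated logarithm for $D_t$ (Remark~\ref{LIL-D}) together with an $L^2$ bound on the martingale part of $Z_t$ from Lemma~\ref{lem-unif-int-new}. This is why the final CLT is only stable, and not a.s.\ conditional.
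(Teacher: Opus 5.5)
\textbf{Gap: the variance claim fails when $w=1$, $\iota=0$.} The step ``the $P^2$ part is negligible'' holds only when $\delta=(1-\iota)w<1$. In the admissible case $\iota=0$, $w=1$ you have $\gamma=0$ and $P_{n,j}=K_{n,j}/(\theta+n)\to K^*_{\infty,j}\in(0,1)$, since from time $\tau_j$ dish $j$ evolves as a P\'olya urn. Hence $E[(\Delta M_{n+1,j})^2\mid\mathcal F_n]\to K^*_{\infty,j}(1-K^*_{\infty,j})$. Your own variance computation then gives $t\sum_{n\ge t}n^{-2}P_{n,j}(1-P_{n,j})\to K^*_{\infty,j}(1-K^*_{\infty,j})$. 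So the limit kernel in that case is the classical P\'olya-urn one, $\mathcal N(0,K^*_{\infty,j}(1-K^*_{\infty,j}))$, not $\mathcal N(0,K^*_{\infty,j})$. The paper's proof makes the same slip: when verifying \eqref{AA:eq:momento-convergence} it asserts $t^{1-(1-\iota)w}P_{t,j}(1-P_{t,j})\to w(1-\iota)K^*_{\infty,j}$ with no case distinction. Statement (i) therefore needs the exception $w=1,\iota=0$, in the same way that $\Sigma_\infty$ is split in Theorem~\ref{thm:second_order-Z}. For $(1-\iota)w<1$ you have $P_{n,j}\to 0$, and your computation is correct.

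\textbf{Comparison with the paper.} Otherwise your skeleton is the paper's. It applies Theorem~\ref{th-CLT-general} to $X_t=B_{t,j}$ with $\delta=(1-\iota)w$ and $\rho_{t+1}=\iota\overline{P}_t$. It takes $\rho=w-\beta$ and $\rho_\infty=\iota(\beta/\alpha)Z^*_\infty$ when $w>\beta$ and $\iota>0$, and a small dummy $\rho$ with $\rho_\infty=0$ otherwise. The regimes are read off from $\delta-\rho=\beta-w\iota$ versus $\delta/2$.

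The genuine difference is the last claim of (ii). The paper only checks assumption $(*)$ of Theorem~\ref{th-CLT-general} in probability, from \eqref{clt-Pmedio}. Inside that theorem it then sums the pointwise $o_P(k^{-(\rho-\delta/2)})$ over $k\geq t$ in one line. You rightly observe that a distributional statement at each fixed $n$ does not by itself control the weighted tail sum. Your remedy works:
\begin{itemize}
\item Remark~\ref{LIL-D} gives an a.s.\ relative error $O(n^{-\beta/2}\sqrt{\ln\ln n})$ coming from $D_n$. The strict inequality $\iota<3\beta/w-1$ absorbs the $\sqrt{\ln\ln}$ factor.
\item Lemma~\ref{lem-unif-int-new}, via $E[Z_k]=O(k^{-(1-w)})$, gives an $L^2$ bound $O(n^{-w/2})$ on the martingale part of $Z^*_\infty-n^{1-w}S_n$. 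Its contribution is $O_P(t^{-(\beta-w\iota)-w/2})=o_P(t^{-\delta/2})$, because $w\iota<\beta$.
\end{itemize}
So your version is more careful at that point, at the cost of estimates the paper delegates to the general theorem.

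\textbf{Minor fixes.}
\begin{itemize}
\item The weight in your remainder sum should be $n^{\gamma-1}$, not $n^{\gamma-2}$.
\item In the critical case of (i), use the a.s.\ first-order limit of $t^{1-w+\beta}\overline{P}_t$. This makes the drift correction $o(1)$ almost surely rather than only $o_P(1)$, which is what the a.s.\ conditional convergence requires.
\item If you literally invoke Theorem~\ref{th-CLT-general}, condition \eqref{AA:eq:momento-M1} needs the expectation bound of Lemma~\ref{lem:Delta K}; bounded increments alone are not enough.
\end{itemize}
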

Note that, in both cases $(i)$ and $(ii)$, 
the random quantity $Z_\infty^*$ can be seen as the a.s. limit of $w t^{1-w}\overline{T}_t$ 
(for case $(ii)$, recall~\eqref{eq-clt-compact-Tmedio}).

\begin{proof}[Proof of Theorem~\ref{thm:second_order-P_j}]
We will first prove the CLT for the process $B_{t,j}=K_{t,j}/(\theta+t)$ and then 
we will derive from it the one for $K_{t,j}$. 
Indeed, since the dynamics of $B_{t,j}$ in~\eqref{eq:dynamics_B_tj_2}, i.e.
$$
B_{t+1,j}=
\left(1-\frac{(1-(1-\iota)w)}{\theta+t+1}\right)B_{t,j}
+\frac{(\Delta K_{t+1,j}-P_{t,j})}{\theta+t+1}
+\frac{\iota\overline{P}_t}{\theta+t+1}
$$
is equal to the general dynamics~\eqref{eq-general-dyn-real} in Section~\ref{subsec:CLT} of the supplementary, with 
$\delta=w(1-\iota)\in (0,1]$, $\Delta M_{t+1}=(\Delta K_{t+1,j}-P_{t,j})$, $\rho_{t+1}=\iota\overline{P}_t$, 
$$\rho=\begin{cases}
   w-\beta, & \text{if $w>\beta$ AND $0<\iota<1$}\,;\\
    w(1-\iota)\varepsilon_0 & \text{if $w\leq \beta$ OR $\iota=0$}\,;
\end{cases}
$$
for an arbitrary $\varepsilon_0\in (0,1)$, and
$$\rho_\infty=\begin{cases}
   \iota(\beta/\alpha)Z^*_\infty, & \text{if $w>\beta$ AND $0<\iota<1$}\,;\\
    0 &  \text{if $w\leq \beta$ OR $\iota=0$}\,;
\end{cases}$$
(because, by Theorem~\ref{th-Z-T-medio} and Theorem~\ref{th-Z-T-medio-altro-caso}, $t^{1-\rho}\,\iota\overline{P}_t$ converges a.s.\ to $\rho_\infty$), and 
since we are assuming $w\iota<\beta$, in all cases we have $0<\rho<\delta$ and so  
the CLT for $B_{t,j}$ can be obtained by applying Theorem~\ref{th-CLT-general} with $X_t=B_{t,j}$ and $X^*_\infty=K^*_{\infty,j}$.
Indeed, by Theorem~\ref{th-P-K-low_interaction}, we have $t^{1-(1-\iota)w}B_{t,j}\stackrel{a.s.}\sim t^{-(1-\iota)w} K_{t,j}\stackrel{a.s.}\to K^*_{\infty,j}$ 
and, in addition, 
condition~\eqref{AA:eq:momento-M1} of Theorem~\ref{th-CLT-general} follows by~\eqref{eq:bound Delta K} in Lemma~\ref{lem:Delta K}, because for any $p\geq1 $ and $\epsilon>0$, we have 
$$
E\Big[ \sum_{t=1}^{\infty} 
\frac{|\Delta {M}_{t+1}|^p}{t^{(1-\iota)w+\epsilon}}
\Big] =
\sum_{t=1}^{\infty} \frac{1}{t^{1+\epsilon}}
E[ t^{1-(1-\iota)w}|\Delta {K}_{t+1}|^p] 
<+\infty\,.
$$
Moreover,
condition~\eqref{AA:eq:momento-convergence} of Theorem~\ref{th-CLT-general} follows by~Theorem~\ref{th-P-K-low_interaction}, because 
$$
 t^{1-(1-\iota)w} E[(\Delta M_{t+1})^2|\mathcal{F}_t]=
t^{1-(1-\iota)w}P_{t,j}(1-P_{t,j})\stackrel{a.s.}\longrightarrow V_\infty^{*} = w(1-\iota)K^*_{\infty,j}\,.
$$ 
Then, from Theorem~\ref{th-CLT-general}, choosing $\varepsilon_0<1/2$ (so that we always have $\rho<\delta/2$ when $w\leq \beta$ OR $\iota=0$), we obtain that:
\begin{itemize}
    \item[(i)] For $\rho\leq \delta/2=(1-\iota)w/2$, i.e. for $w\leq \beta$ OR $\iota=0$ OR 
    $w>\beta$ and $0<\iota<2\beta/w-1$ (that implies $\iota<\beta/w$) OR $0<\iota=2\beta/w-1<1$ (that implies $\iota<\beta/w<1$),  
    we have 
\begin{equation*}
t^{(1-\iota)w/2}\left(t^{1-(1-\iota)w}B_{t,j} + 
\tfrac{\rho_\infty}{(1-\iota)w-\rho}\,t^{-[(1-\iota)w-\rho]}-K^*_{\infty,j}\right)
\stackrel{stably}\longrightarrow 
{\mathcal N}(0,\, K^*_{\infty,j})\,,
\end{equation*}
where $\rho$ and $\rho_\infty$ take different values according to the different cases and so we obtain \eqref{clt-K-i}. Moreover, the convergence also holds true in the sense of the a.s. conditional convergence with respect to $\mathcal F$. 
(Note that the above four cases can be grouped as in point $(i)$ of the statement: indeed, the case $w\leq \beta$ is covered by the union of case
$\iota=0$ and case $0<\iota<\min\{2\beta/w-1, 1\}$, that implies $\iota<\min\{\beta/w,1\}$.)
\item[(ii)] For $(1-\iota)w/2=\delta/2<\rho<\delta$, i.e. for  
$\max\{0,(2\beta/w-1)\}<\iota<\beta/w<1$  
(that obviously implies $\beta<w$ and it is possible only if $\beta>0$), we have
\begin{equation*}
 {t}^{\beta-w\iota}(t^{1-w(1-\iota)}B_{t,j}-K^*_{\infty,j})\stackrel{P}\longrightarrow -\tfrac{\iota\beta/\alpha}{\beta-w\iota}Z^*_\infty\,.
\end{equation*}
Moreover, since $\rho_{t+1}=\iota\overline{P}_t$, by the limit relation~\eqref{clt-Pmedio} in Corollary~\ref{cor-clt}, we have 
\begin{align*}
t^{\rho-\delta/2}(t^{1-\rho}\rho_{t+1}-\rho_\infty)& =
\iota \tfrac{t^{(w-\beta)-(1-\iota) w/2}}{t^{\beta/2}}
t^{\beta/2}(t^{1-(w-\beta)}\overline{P}_t+\tfrac{\beta w}{(w-\beta)}t^{-(w-\beta)}-\tfrac{\beta}{\alpha}Z^*_\infty)
\\
& \qquad -\tfrac{\beta\iota w}{(w-\beta)}t^{-(1-\iota)w/2}
\stackrel{P}\longrightarrow 0\,,
\end{align*}
provided $\rho-\delta/2=(w-\beta)-(1-\iota)w/2< \beta/2$, 
and so, by Theorem~\ref{th-CLT-general}, in this case we have 
\begin{equation*}
t^{(1-\iota)w/2}\left(t^{1-(1-\iota)w}B_{t,j} +
\tfrac{\iota(\beta/\alpha)}{(\beta-w\iota)}\,Z^*_\infty \,t^{-(\beta-w\iota)}-K^*_{\infty,j}\right)\stackrel{stably}\longrightarrow 
{\mathcal N}(0,\, K^*_{\infty,j})\,.
\end{equation*}
\end{itemize}
Finally, the thesis follows because $B_{t,j}=\frac{K_{t,j}}{(\theta+t)}=\frac{K_{t,j}}{t}(1+O(\frac{1}{t}))$.
\end{proof}

\section{Final remarks and possible extensions for future research}\label{sec:future_research}

We have proposed an Indian-buffet-type model for multi-factorial innovation, in which the inclusion probability of each feature $P_{t,j}$ is subjected to a forcing input toward zero, controlled by the parameter $w$, and a mean-field interaction among the observed features, regulated by the parameter $\iota$. 
The mean-field term represents the global level
of activity of the observed feature system: an old feature may be selected not only because it is itself popular,
but also because the overall feature environment is highly active.
Our analysis has focused on rigorously establishing the asymptotic behavior of the model, with all results depending on these two novel 
parameters, $w$ and $\iota$,  alongside the standard innovation parameters $\alpha,\,\beta$ and $\theta$. 
Given the length and the theoretical richness of the paper, we have not addressed the statistical challenges associated with estimating these parameters. 
Nevertheless, the bases for constructing effective estimators is already provided by the theoretical results presented here:   
indeed, from the observation of the number $D_t$ of the distinct appeared features, we can estimate the parameters $\alpha$ and $\beta$; while 
the power-law behavior proven for observable processes, such as the average number $\overline{T}_t$ of features per agent  and feature popularity $K_{t,j}$, suggests that  
the new parameters, $w$ and $\iota$, can be estimated as slopes or intercepts via linear regressions on a log-log scale.

The numerous phase transitions and the proliferation of different regimes (see Table~\ref{table-Z-T-P-K-medio},~\ref{table-K-j} and~\ref{table-P-j}) arising from comparisons between parameters might suggest that the model is over-parameterized. Nevertheless, we do not believe this to be the case. Indeed, the three parameters $\alpha$, $\beta$, and $\theta$ were already present in the standard IBP~\cite{TG}, while the new parameters $\iota$ and $w$ serve distinct and specific roles, as explained above. In fact, these two parameters enable the model to capture a wide range of behaviors and reproduce diverse asymptotic regimes, which would not be possible using only the standard parameters. The phenomenon under study is inherently complex, and it is therefore natural that several parameters are required to adequately characterize it. 
We also emphasize that, in order to establish all these different regimes, we had to employ a range of distinct proof techniques, as the dynamics of the stochastic processes of interest behave differently across the various phases.  In particular, some of these proofs required the development of general theorems that may be of independent interest beyond the context considered in this paper, 
and non immediate technical lemmas; 
specifically, we refer to the results provided in Section~\ref{app-tech-res} (for general results) and in Section~\ref{sec:technical_results_model} (for technical lemmas) 
of the supplementary.

Finally, we highlight that, although the interaction structure considered in this article is a simple mean-field average over all features, it is already sufficient to generate a rich variety of asymptotic behaviors. This interaction can be described using just two additional parameters, $w$ and $\iota$, which can be analytically related to the convergence rates and to the almost-sure limits of certain observable processes, such as $(\overline{T}_t)_t$ and $(K_{t,j})_t$. 
Consequently, as already observed above, the explicit dependence of the asymptotic rates on the parameters suggests possible routes toward statistical estimation. While one could consider more general interaction matrices, including sparse or heterogeneous interactions, this would substantially increase the number of parameters and make their estimation considerably more challenging. Therefore, the mean-field formulation represents a natural balance between complexity and tractability, enabling the model to capture diverse behaviors without compromising statistical feasibility.

In addition, based on our experience with general interacting reinforced models on finite networks, the asymptotic behavior in this context is typically governed by the eigenstructure of the interaction matrix. In particular, first-order convergence is determined by the leading eigenvalue, with the a.s.\ limit identified by its corresponding eigenvector, while second-order asymptotics are influenced by the eigenvalue with the second largest real part.
However, in the context of this work, the number of vertices of the network is not fixed, as in the previous literature, since the number of observed features $D_t$ diverges to infinity. As a consequence, the increasing dimension of the interaction matrix implies that its eigenstructure may evolve over time. The mean-field interaction has the desirable property that the eigenvalues remain constant over time, and are given by $w$ and $w(1-\iota)$. Analogously, the leading eigenvector is always proportional to the uniform vector with all the entries equal to one. This stability property does not hold for a generic type of interaction.

An interesting direction for future research would be to identify an interaction matrix that grows in dimension while maintaining a fixed leading eigenvalue/eigenvector and a fixed relative second eigenvalue. This is not straightforward and would require a separate dedicated work. Nevertheless, we recognize that this is an important future development that could further enhance the significance of this topic. We believe, however, that the present work, based on the mean-field interaction, provides a first mathematically tractable step in the literature on
stochastic multi-factorial innovation models with feature-interaction.

Finally, a newsworthy direction for future work is to develop inferential procedures for the proposed interacting dynamics. In particular, it would be interesting to compare or combine the present recursive approach with Bayesian nonparametric methods for unseen-feature prediction, such as scaled-process priors~\cite{CFMB2024}, which provide tractable posterior distributions for the number of unseen genetic variants. Such a comparison would clarify the respective roles of exchangeable feature-allocation priors and non-exchangeable interacting innovation dynamics.


\subsection*{Authors' contributions}
All the authors contributed equally to the present work.

\appendix

Sections, theorems and remarks reported in this supplementary document are referenced with capital letters,
while those presented in the main document are 
referenced with numbers.

\section{Technical lemmas on deterministic sequences}\label{sec:Technical_lemmas_deterministic}

For $x\geq 0$, set  
\begin{equation}\label{eq-def-zeta}
\zeta_t(x)=\tfrac{\Gamma(x+\theta+t+1)}{\Gamma(\theta+t+1)}\quad\mbox{for } t\geq 0\,,
\end{equation}
so that we have (see \cite[Lemma~4.1]{Gouet93})
\begin{equation}\label{eq-zeta-01}
\zeta_t(x)=(\theta+t+1)^x+O((\theta+t+1)^{x-1})
\sim_{t\to +\infty} t^x
\end{equation}
and
\begin{equation}\label{eq-zeta-02}
\tfrac{1}{\zeta_t(x)}=\tfrac{1}{(\theta+t+1)^x}+O\big(\tfrac{1}{(\theta+t+1)^{x+1}}\big)\,.
\end{equation}
Moreover, we have 
\begin{equation}\label{eq-zeta-1}
\tfrac{\zeta_{t+1}(x)}{\zeta_t(x)}=
\tfrac{\Gamma(x+\theta+t+2)}{\Gamma(x+\theta+t+1)}\tfrac{\Gamma(\theta+t+1)}{\Gamma(\theta+t+2)}
=1+\tfrac{x}{\theta+t+1}
\end{equation}
and since for $y\geq 0$ we have $1-y \leq (1+y)^{-1} \leq 1-y+y^2$,
\begin{equation}\label{eq-zeta-2}
\begin{split}
1-\tfrac{x}{\theta+t+1} \leq \tfrac{\zeta_{t}(x)}{\zeta_{t+1}(x)}\leq 1-\tfrac{x}{\theta+t+1}+\big(\tfrac{x}{\theta+t+1}\big)^2\,.
\end{split}
\end{equation}
Finally, by \eqref{eq-zeta-1}, we have for $t\geq 1$ 
\begin{equation}\label{eq-zeta-3}
\begin{split}
\tfrac{\tfrac{\zeta_{t+1}(x)}{\ln(\theta+t+1)}}{
\tfrac{\zeta_t(x)}{\ln(\theta+t)}} 
& =
\left(1+\tfrac{x}{\theta+t+1}\right)
\Big(1+\tfrac{\ln(1-\tfrac{1}{\theta+t+1})}{\ln(\theta+t+1)}\Big)
\\
& = \big(1+\tfrac{x}{\theta+t+1}\big)
\Big(1-\tfrac{1}{(\theta+t+1)\ln(\theta+t+1)}+ O\big(\tfrac{1}{t^2\ln(t)}\big)\Big)
\\
& = 1+\tfrac{x}{\theta+t+1}-
\tfrac{1}{(\theta+t+1)\ln(\theta+t+1)}+O\big(\tfrac{1}{t^2\ln(t)}\big)\,.
\end{split}
\end{equation}
\begin{lem}\label{lem:Un_asymp}
Let $\zeta_t(x) \sim t^x$ defined as in~\eqref{eq-def-zeta}, $0<\delta\leq 1$, $0\leq \rho\leq 1$, and set 
\begin{equation}\label{eq:delta0_b_t}
    \delta_0 = \max\{\delta,\rho\}\in (0,1],
    \quad
    b_k = \tfrac{\theta+k}{r_k} 
    \quad\mbox{and}\quad
    a_k= \tfrac{(\theta+k+1)^2}{b_k^2\, r_k}\,,
\end{equation}
where 
$r_0=\zeta_{0}(1-\delta_0)$ and for $k\geq 1$
\begin{equation}\label{eq:r_t_appB}
    r_k =
\begin{cases}
\zeta_k(1-\delta_0)
& \text{if } \delta \neq \rho\,;\\
\zeta_k(1-\delta_0)/\ln(\theta+k)
& \text{if } \delta = \rho\,.
\end{cases}
\end{equation}
Then for $n\to +\infty$
    \[ b_n \sum_{k>n} \tfrac{r_k}{(\theta+k+1)^2}=
    b_n \sum_{k>n} \tfrac{1}{a_k\,b_k^2} \to \tfrac{1}{\delta_0},
    \quad \mbox{and}\quad
    \tfrac{1}{b_n} \sum_{k\leq n} \tfrac{1}{r_k} \to \tfrac{1}{\delta_0}\,. 
    \]
\end{lem}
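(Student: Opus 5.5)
The plan is to reduce both limits to elementary asymptotics of $r_k$, using \eqref{eq-zeta-01}. First observe the identity $\frac{1}{a_k b_k^2}=\frac{r_k}{(\theta+k+1)^2}$, which is immediate from the definition of $a_k$ in \eqref{eq:delta0_b_t}. This gives the first equality in the statement, so only the two limits remain.

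Set $x=1-\delta_0\in[0,1)$. By \eqref{eq-zeta-01}, $\zeta_k(x)=(\theta+k+1)^{x}(1+O(1/k))\sim k^{x}$.

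In the case $\delta\neq\rho$ we have $r_k\sim k^{x}$ and $b_n=(\theta+n)/r_n\sim n^{1-x}=n^{\delta_0}$. Then
\[
\sum_{k>n} \frac{r_k}{(\theta+k+1)^2}\sim\sum_{k>n}k^{x-2}\sim\frac{n^{x-1}}{1-x}=\frac{n^{-\delta_0}}{\delta_0},
\]
where the tail is dominated by the integral and $x-2<-1$. Multiplying by $b_n\sim n^{\delta_0}$ gives $1/\delta_0$. Similarly,
\[
\sum_{k\le n}\frac{1}{r_k}\sim\sum_{k\le n}k^{-x}\sim\frac{n^{1-x}}{1-x}=\frac{n^{\delta_0}}{\delta_0},
\]
which diverges because $1-x=\delta_0>0$. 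Dividing by $b_n\sim n^{\delta_0}$ gives $1/\delta_0$.

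Each "$\sim$" step is justified by the elementary fact that if $u_k\sim v_k$ with $v_k>0$, then $\sum_{k\le n}u_k\sim\sum_{k\le n}v_k$ when the series diverges, and $\sum_{k>n}u_k\sim\sum_{k>n}v_k$ when it converges. This is a Stolz--Ces\`aro argument, and the finitely many initial terms (including $r_0$) are irrelevant for the divergent sum. The integral comparisons $\sum_{k>n}k^{x-2}\sim n^{x-1}/(1-x)$ and $\sum_{k\le n}k^{-x}\sim n^{1-x}/(1-x)$ are standard, since both exponents give monotone terms.

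In the case $\delta=\rho$ we have $r_k\sim k^{x}/\ln k$ for $k\ge1$, and $b_n\sim n^{\delta_0}\ln n$. Now
\[
\sum_{k>n}\frac{k^{x-2}}{\ln k}\sim\frac{n^{x-1}}{(1-x)\ln n}
\]
because $\ln k$ is slowly varying. One way to see this is to compare with $\int_n^\infty s^{x-2}/\ln s\,ds$ and integrate by parts: the correction term is $O(n^{x-1}/\ln^2 n)$. Likewise,
\[
\sum_{k\le n}k^{-x}\ln k\sim\frac{n^{1-x}\ln n}{1-x},
\]
by the same integral comparison or by Stolz--Ces\`aro applied to the ratio with $n^{1-x}\ln n$, whose increments are $\sim(1-x)n^{-x}\ln n$. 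Multiplying by $b_n$, respectively dividing by $b_n$, the logarithms cancel and both limits equal $1/(1-x)=1/\delta_0$.

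The only mildly delicate point is the slowly varying logarithm in the case $\delta=\rho$. The cleanest route is Stolz--Ces\`aro in both cases. For the tail sum, apply it in the form
\[
\lim_n \frac{\sum_{k>n}u_k}{V_n}=\lim_n\frac{u_{n+1}}{V_n-V_{n+1}},
\]
valid when $V_n\downarrow0$, with $V_n=1/b_n$. The increments can be computed using \eqref{eq-zeta-1} and \eqref{eq-zeta-3}: the ratio $r_{n+1}/r_n$ equals $1+\frac{x}{\theta+n+1}$, up to the log correction of order $\frac{1}{n\ln n}$ in \eqref{eq-zeta-3}. From this one gets $V_n-V_{n+1}\sim \frac{\delta_0}{\theta+n+1}\,V_n$. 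This yields the limit $\frac{r_{n+1}/(\theta+n+2)^2}{\delta_0 r_n/((\theta+n)(\theta+n+1))}\to 1/\delta_0$. The computation for $\sum_{k\le n}1/r_k$ is analogous, with $b_{n+1}-b_n\sim \delta_0 b_n/n$.
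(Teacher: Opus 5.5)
Your proof is correct. Your primary route is close in spirit to the paper's, and your Stolz--Ces\`aro variant is genuinely different.

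\textbf{The paper's route.} The paper also proceeds by integral approximation, but splits the error into three pieces:
\begin{itemize}
\item the sum against the integral of the convex interpolant $\overline{c}(t)=r_t/(\theta+t+1)^2$, controlled by the trapezoidal rule;
\item the interpolant against the pure profile $c(t)=t^{-1-\delta_0}$ or $(t^{1+\delta_0}\ln t)^{-1}$, controlled by a pointwise bound coming from \eqref{eq-zeta-01};
\item the exact integral of $c$, which in the case $\delta=\rho$ is evaluated with the exponential-integral bounds from Abramowitz--Stegun.
\end{itemize}
It then repeats the same splitting for $\sum_{k\le n}1/r_k$.

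\textbf{Your primary route.} You replace the first two pieces by one elementary fact: equivalent positive terms have equivalent tails when the series converges, and equivalent partial sums when it diverges. After that, you only need integral comparison for the explicit, eventually monotone profiles $k^{x-2}$, $k^{-x}$, $k^{x-2}/\ln k$ and $k^{-x}\ln k$. You handle the logarithmic integrals by integration by parts instead of bounds on $E_1$.

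\textbf{Your Stolz--Ces\`aro variant.} This is genuinely different and arguably the cleanest argument. It uses only the exact one-step ratios \eqref{eq-zeta-1} and \eqref{eq-zeta-3}. From these you get $1/b_n-1/b_{n+1}\sim \delta_0/\bigl((\theta+n+1)b_n\bigr)$ and $b_{n+1}-b_n\sim\delta_0 b_n/n$. No integrals or special-function estimates are needed. The cases $\delta\neq\rho$ and $\delta=\rho$ are handled uniformly, because the $1/(n\ln n)$ correction in \eqref{eq-zeta-3} is of lower order. What this route gives up, compared with the paper's decomposition, is the explicit error terms that the integral estimates produce. The lemma does not need them, however.
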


\begin{proof}[Proof of Lemma~\ref{lem:Un_asymp}] 
The proof is based on the continuous integral approximation of the series with proper integrals, namely
\begin{multline}\label{eq:approx_series_variance_0}
\Big| 
b_n \sum_{k>n} \tfrac{1}{a_k\,b_k^2} - \tfrac{1}{\delta_0}
\Big| 
 \leq 
\Big| 
b_n \Big( 
\sum_{k>n} \tfrac{1}{a_k\,b_k^2} - \textstyle{\int}_n^{\infty} \overline{c}(t) dt
\Big)
\Big| 
\\
 +
b_n \textstyle{\int}_n^{\infty} |\overline{c}(t) - c(t) | dt
+
\Big|
b_n  
\textstyle{\int}_n^{\infty} c(t) dt - \tfrac{1}{\delta_0}
\Big|\,,
\end{multline}
where $\overline{c}(t) = r_t/(\theta+t+1)^2$ is the convex function that interpolates $c_k = (b_k^2a_k)^{-1} = r_k/(\theta+k+1)^2$ and $c(t) = (t^{1+\delta_0})^{-1}$
if $\delta\neq\rho$; $c(t) = (t^{1+\delta_0}\ln t)^{-1}$ if $\delta=\rho=\delta_0$.

For the first term in \eqref{eq:approx_series_variance_0}, the trapezoidal rule overestimates $\int_{n}^{n+M} \overline{c}(t) dt < \tfrac{c_n}{2} + \sum_{n+1}^{N-1} c_k + \tfrac{c_M}{2}$ and hence
    \[
    0 < b_n \Big( \textstyle{\int}_n^{\infty} \overline{c}(t) dt - \sum_{k>n} \tfrac{1}{b_k^2a_k}
    \Big) \leq b_n\tfrac{c_n}{2} = \tfrac{\theta+n}{2(\theta+n+1)^2} = O(n^{-1})\,. 
    \]
For the second term in \eqref{eq:approx_series_variance_0}, by \eqref{eq-zeta-01}, $|\overline{c}(t) -c(t)|\leq C t^{-2-w_0}$, and hence
\[
b_n\textstyle{\int}_n^{\infty} |\overline{c}(t) -c(t)| dt = O(n^{-1})\,.
\]

The second term in \eqref{eq:approx_series_variance_0} is immediate when $\delta\neq \rho$, since $\int_n^{\infty} c(t) dt = n^{-\delta_0}/\delta_0$.
When $\delta=\rho=\delta_0$, 
\[
b_n \textstyle{\int}_n^{\infty} c(t)dt = n^{\delta_0}\ln(n) \textstyle{\int}_{\delta_0\ln n}^\infty \tfrac{e^{-y}}{y} dy \to \tfrac{1}{\delta_0}\,,
\]
since, for any $x>0$ \cite[eq.5.1.20, pag.229]{AS}
\[
\tfrac{1}{2} \ln\Big( 1+\tfrac{2}{x} \Big)
< e^{x}\textstyle{\int}_x^\infty \tfrac{e^{-y}}{y}dy < \ln\Big( 1+\tfrac{1}{x} \Big)\,.
\]

\bigskip

The proof of $\tfrac{1}{b_n} \sum_{k\leq n} \tfrac{1}{r_k} \to \tfrac{1}{\delta_0}$ is trivial for $\delta_0=1$, since $r_k \equiv 1$ when $\delta\neq\rho$ and $\sum_{k\leq n}r_k^{-1}\sim n\ln(n)$ for $\delta=\rho=1$.
For $\delta_0<1$, the proof is again made on continuous integral approximations, here
\begin{multline}\label{eq:approx_series_variance_1}
\Big| 
\tfrac{1}{b_n} \sum_{k\leq n} \tfrac{1}{r_k} - \tfrac{1}{\delta_0}
\Big| 
 \leq 
\Big| 
\tfrac{1}{b_n} \Big( 
\sum_{k\leq n} \tfrac{1}{r_k} - \textstyle{\int}_0^n \tfrac{1}{r_t}  dt
\Big)
\Big| 
\\
 +
\tfrac{1}{b_n} \textstyle{\int}_0^n |\tfrac{1}{r_t}  - c(t) | dt
+
\Big|
\tfrac{1}{b_n}
\textstyle{\int}_0^n c(t) dt - \tfrac{1}{\delta_0}
\Big|\,,
\end{multline}
where $c(t) = (t^{-1+w_0})$ if $w\neq\beta$; $c(t) = (t^{-1+\delta_0}\ln t)$ if $\delta=\rho=\delta_0$.
Since $\tfrac{1}{r_t}$ is eventually decreasing by \eqref{eq-zeta-1} and \eqref{eq-zeta-3}, the first term in \eqref{eq:approx_series_variance_1} is vanishing, since
    \[
    0 < \tfrac{1}{b_n} \Big(\sum_{k\leq n} \tfrac{1}{r_k} -\textstyle{ \int}_{0}^{n} \tfrac{1}{r_t} dt
    \Big) \leq \tfrac{1}{b_n}\tfrac{2}{r_{\min}} \to 0\,. 
    \]
Moreover, by \eqref{eq-zeta-02}, $|\tfrac{1}{r_t} -c(t)|\leq C t^{-2+\delta_0}$, and hence
also the second term in \eqref{eq:approx_series_variance_1} is vanishing:
\[
\tfrac{1}{b_n} \textstyle{\int}_{0}^{n} |\tfrac{1}{r_t} -c(t)| dt \to 0\,.
\]
When $\delta\neq \rho$, $\int_{0}^{n} c(t) dt = n^{\delta_0}/\delta_0$ and the proof is done.
When $\delta=\rho=\delta_0$, with $e^y = t^{\delta_0}$, we obtain
\[
\textstyle{\int}_{0}^{n} c(t) dt = 
\textstyle{\int}_{0}^{n} \tfrac{t^{\delta_0}\ln t}{t} dt = \textstyle{\int}_{-\infty}^{\delta_0\ln n} \tfrac{e^yy}{\delta_0^2} dy 
= \tfrac{n^{\delta_0}(\delta_0\ln(n)-1)}{\delta_0^2}\,. \qedhere
\]
\end{proof}

\begin{lem}[{\cite[Supplementary material]{ale-cri-ghi-complete}}]
\label{lem-to-zero}  
If $a_t \geq 0$, $a_t \leq 1$ for $t$ large enough, $\sum_t a_t = +\infty$, $\delta_t \geq 0$, $\sum_t \delta_t < +\infty$, $b > 0$,
$y_t \geq 0$ and $y_{t+1} \leq (1 - a_t )^b y_t + \delta_t$, then
$\lim_{t\to+\infty} y_t = 0$.
\end{lem}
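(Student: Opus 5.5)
The plan is to unroll the recursion from a large starting index and then split the resulting bound into a vanishing product term and a tail of the summable series $\sum_t\delta_t$.

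First I fix $t_0$ such that $0\leq a_t\leq 1$ for every $t\geq t_0$, so that $0\leq (1-a_t)^b\leq 1$ for those $t$. For any $m\geq t_0$ and $n>m$, I iterate the inequality $y_{t+1}\leq (1-a_t)^b y_t+\delta_t$; this uses $y_t\geq 0$ and that every factor is nonnegative, so multiplying inequalities preserves their direction. The outcome is
\begin{equation*}
y_n\leq \Big(\prod_{k=m}^{n-1}(1-a_k)\Big)^{b} y_m+\sum_{k=m}^{n-1}\delta_k\prod_{l=k+1}^{n-1}(1-a_l)^{b}
\leq \Big(\prod_{k=m}^{n-1}(1-a_k)\Big)^{b} y_m+\sum_{k\geq m}\delta_k ,
\end{equation*}
where in the last step I bound each inner product by $1$, using $\delta_k\geq 0$.

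Next I control the product. Since $1-x\leq e^{-x}$ for $x\in[0,1]$, I get $\prod_{k=m}^{n-1}(1-a_k)\leq \exp\big(-\sum_{k=m}^{n-1}a_k\big)$. Because $\sum_t a_t=+\infty$, this tends to $0$ as $n\to+\infty$, and since $b>0$ its $b$-th power tends to $0$ as well. The number $y_m$ is a fixed finite nonnegative quantity, so letting $n\to+\infty$ with $m$ fixed gives
\begin{equation*}
0\leq \liminf_{n} y_n\leq \limsup_{n} y_n\leq \sum_{k\geq m}\delta_k .
\end{equation*}

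Finally I let $m\to+\infty$. The right-hand side is the tail of a convergent series, so it tends to $0$, and therefore $\lim_t y_t=0$. There is no substantial obstacle; the only points needing care are restricting to indices $t\geq t_0$, so that all factors lie in $[0,1]$ and the iteration is legitimate, and using $b>0$ when passing from the product to its power.
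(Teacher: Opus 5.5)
Your proof is correct and complete. Choosing $t_0$ so that $a_t\in[0,1]$ for $t\ge t_0$ puts every multiplier $(1-a_t)^b$ in $[0,1]$, so the unrolled bound $y_n\le \bigl(\prod_{k=m}^{n-1}(1-a_k)\bigr)^b y_m+\sum_{k\ge m}\delta_k$ is valid. The product is at most $\exp\bigl(-\sum_{k=m}^{n-1}a_k\bigr)$, which tends to $0$ because a divergent series stays divergent after dropping finitely many finite terms. Letting first $n\to+\infty$ and then $m\to+\infty$ finishes the argument. One small correction to your commentary: the iteration step uses only the nonnegativity of the multipliers $(1-a_t)^b$. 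The hypothesis $y_t\ge 0$ is needed only to get $\liminf_n y_n\ge 0$.

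The paper does not prove this lemma; it imports it from the supplementary material of \cite{ale-cri-ghi-complete}, so there is no in-paper argument to compare against. Your direct unrolling supplies the missing self-contained proof.

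An equally standard alternative runs as follows. First show that $y_t+\sum_{k\ge t}\delta_k$ is non-increasing and nonnegative for $t\ge t_0$, so $y_t$ converges. Then use $1-(1-a)^b\ge \min\{1,b\}\,a$ on $[0,1]$ to get $\sum_t a_t y_t<+\infty$, which forces the limit to be $0$ since $\sum_t a_t=+\infty$. Your route avoids that extra inequality and is shorter. The alternative has the advantage of mirroring the almost-supermartingale arguments used elsewhere in the paper.
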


\section{General results for some recursive dynamics}\label{app-tech-res}

We start by providing in Subsection~\ref{subsec:generalized-SA} two results, probably known, but for which we were not able to find a reference in the literature and so, for the reader's convenience,  we state and prove them here. In Subsection~\ref{subsec:strict-pos} and Subsection~\ref{subsec:CLT} we provide two new general results.

\subsection{Results of the first-order for a generalized stochastic approximation dynamics}
\label{subsec:generalized-SA}

\begin{thm}\label{thm:app-cor} Let $(X_t)_t$ and $(A_t)_t$ be two $\mathcal{F}$-adapted real stochastic processes, where $(X_t)_t$ follows the dynamics 
$$
X_{t+1} = X_t + \eta_t (A_t - b_t X_t) + \eta_t \Delta M_{t+1} + \rho_{t+1}\,,
$$
where $(b_t)_t$ and $(\eta_t)_t$ are two sequences of real numbers with   $b_t\rightarrow b>0$, $\eta_t \geq 0$ for $t$ large enough, $\sum_t \eta_t = +\infty$, $\sum_t \eta_t^2 < +\infty$, $(\Delta M_{t+1})_t$ is an $\mathcal{F}$-martingale difference such that 
\begin{equation}\label{app-eq-delta-M-quad-cond}
\sum_t \eta_t^2 E[(\Delta M_{t+1})^2\mid \mathcal{F}_t] < + \infty \quad \hbox{a.s.}
\end{equation}
and $\rho_{t+1}$ is  a ${\mathcal F}_{t+1}$-measurable remainder term such that $\sum_t |\rho_{t+1}|<+\infty$ a.s. 
Then, we have
$$\tfrac{1}{b}\liminf_{t\rightarrow\infty} A_t\leq \liminf_{t\rightarrow\infty} X_t \leq \limsup_{t\rightarrow\infty} X_t \leq \tfrac{1}{b}\limsup_{t\rightarrow\infty} A_t \quad \hbox{a.s.}\,.$$
As a consequence, if $(A_t)_t$ converges a.s. to a random variable $A_\infty$, then we have 
$X_t \xrightarrow{a.s.} A_\infty/b$.
\end{thm}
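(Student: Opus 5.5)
We prove the limsup bound; the liminf bound follows symmetrically by applying the same argument to $-X_t$ and $-A_t$. Fix $\omega$ outside a null set on which the martingale and remainder series below converge. Set $L=\limsup_t A_t$; if $L=+\infty$ there is nothing to prove, so assume $L<+\infty$.

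The first step disposes of the noise and remainder terms. Let $N_t=\sum_{k<t}\eta_k\Delta M_{k+1}$. By condition~\eqref{app-eq-delta-M-quad-cond} and the conditional form of the martingale convergence theorem (a square-integrable martingale converges a.s.\ on the event where its predictable quadratic variation is finite), $(N_t)$ converges a.s. Together with $\sum_t|\rho_{t+1}|<+\infty$ a.s., this shows that the perturbation $R_t=\sum_{k<t}(\eta_k\Delta M_{k+1}+\rho_{k+1})$ converges a.s. Hence its tails $r_t=\sup_{s\ge t}|R_s-R_t|\to0$.

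The second step compares $X_t$ with a deterministic-type recursion. Put $Y_t=X_t-(R_t-R_\infty)$, where $R_\infty=\lim_t R_t$. Then
$$
Y_{t+1}=Y_t+\eta_t\bigl(A_t-b_tX_t\bigr)=Y_t+\eta_t\bigl(A_t-b_tY_t\bigr)+\eta_t b_t\,\epsilon_t,\qquad \epsilon_t:=-(R_t-R_\infty)\to0 .
$$
Fix $\varepsilon>0$ and choose $t_0$ so large that for all $t\ge t_0$ we have $A_t\le L+\varepsilon$, $|b_t-b|\le\varepsilon$, $|\epsilon_t|\le\varepsilon$ and $0\le\eta_tb_t\le 1$ (the last is possible since $\eta_t\to0$). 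Set $c=(L+\varepsilon)/b$ and $V_t=Y_t-c$. Then for $t\ge t_0$,
$$
V_{t+1}=(1-\eta_tb_t)V_t+\eta_t\bigl(A_t-b_tc+b_t\epsilon_t\bigr),
$$
and the bracket is at most $(L+\varepsilon)(1-b_t/b)+b_t\varepsilon\le C\varepsilon$ for a constant $C$ depending only on $L$ and $b$. Taking positive parts and using $1-\eta_tb_t\in[0,1]$ gives
$$
V_{t+1}^+\le(1-\eta_tb_t)V_t^++C\varepsilon\,\eta_t .
$$

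The third step is the main obstacle. Lemma~\ref{lem-to-zero} requires a summable perturbation, whereas $\sum_t\eta_t=+\infty$, so it cannot be applied directly. Instead we use the standard "shifted" argument. Write $W_t=V_t^+-C\varepsilon/\underline b$, where $\underline b=b-\varepsilon>0$ is a lower bound for $b_t$. Then
$$
W_{t+1}\le(1-\eta_tb_t)W_t+\eta_t C\varepsilon\bigl(1-b_t/\underline b\bigr)\le(1-\eta_tb_t)W_t .
$$
Iterating this and using $\sum_t\eta_tb_t=+\infty$ gives $\limsup_t W_t\le 0$. (If $W_t$ ever becomes nonpositive it stays nonpositive; otherwise the product $\prod_t(1-\eta_tb_t)$ tends to $0$.) Therefore
$$
\limsup_t Y_t\le c+C\varepsilon/\underline b .
$$
Since $X_t-Y_t\to0$, the same bound holds for $\limsup_t X_t$. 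Letting $\varepsilon\downarrow0$ yields $\limsup_t X_t\le L/b$.

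The case $L=-\infty$ is handled by replacing $L+\varepsilon$ with an arbitrary level $-K$ and letting $K\to\infty$. The consequence $X_t\to A_\infty/b$ then follows immediately from the two-sided bounds.
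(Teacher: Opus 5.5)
Your proof is correct, but it takes a genuinely different route from the paper's.

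\textbf{The paper's route.} It first reduces to a constant $b$ by rescaling $\eta_t$, $A_t$ and $\Delta M_{t+1}$. Then, for each deterministic level $a$ and time $t_0$, it builds a comparison process $X^{a,t_0}$. This process shares the noise and remainder of $X$, but its input is frozen at $a$ after $t_0$. The difference $X^{a,t_0}-X$ satisfies a noise-free linear recursion, which gives the ordering $X^{a,t_0}_t\le X_t$ on $\{A_t>a\ \forall t\ge t_0\}$, and the reverse ordering on $\{A_t<a\ \forall t\ge t_0\}$. The paper then proves $X^{a,t_0}_t\to a/b$ a.s. It absorbs $\rho$ into an auxiliary sequence that vanishes by Lemma~\ref{lem-to-zero}, and applies Theorem~\ref{th-almost-supermart} to the square of the centered process. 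Finally it lets the levels $a$ vary.

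\textbf{Your route.} You spend the probability up front. The conditional martingale convergence theorem shows that $\sum_t\eta_t\Delta M_{t+1}$ converges a.s. on the event where \eqref{app-eq-delta-M-quad-cond} holds; this needs a localization, since $\Delta M_{t+1}$ is not assumed square-integrable. Hence the whole perturbation $R_t$ converges. Subtracting its tail turns the problem into a deterministic recursion with a vanishing multiplicative error $\eta_tb_t\epsilon_t$. You close it with the shifted positive-part contraction, where $W_{t+1}\le(1-\eta_tb_t)W_t$ together with $\sum_t\eta_tb_t=+\infty$ does the work. Two small bookkeeping points should be made explicit. First, take $\varepsilon<\min\{b,1\}$, so that $\underline b>0$. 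Second, your $C$ then depends on $L$, $b$ and an upper bound for $\varepsilon$, not on $L$ and $b$ alone.

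\textbf{Comparison.} Your argument needs a single exceptional null set and is fully pathwise afterwards. As a result, the random level $\limsup_tA_t$ (including the values $\pm\infty$) and the time-varying $b_t$ are handled directly. You need no union over levels $a$, no preliminary reduction to constant $b$, and no appeal to Lemma~\ref{lem-to-zero}. The paper's route instead keeps the noise inside a quadratic almost-supermartingale. This is the same template it reuses in Lemma~\ref{lem:alternative_to_epsilon} and Theorem~\ref{th-general}, so its proofs stay uniform. The underlying probabilistic input is of the same nature in both approaches: $L^2$-control of the martingale part through its conditional variances.
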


\begin{proof}[Proof of Theorem~\ref{thm:app-cor}]
Without loss of generality, since $b_t\to b>0$, we can assume $b_t>0$ for all $t$. Moreover, we can focus on proving the thesis with $b_t$ replaced by its limit $b>0$. Indeed, since $(b_t/b)\to 1$, the thesis with a time-dependent sequence $(b_t)_t$
follows directly with the one with constant $b$ by setting 
    $\widetilde{\eta}_t=(b_t/b)\eta_t\geq 0$ for $t$ large enough, $\widetilde{A}_t= (b/b_t)A_t$ and $\Delta \widetilde{M}_{t+1}=(b/b_t)\Delta M_{t+1}$.
    \\
\indent Assume $b_t=b>0$ for all $t$. Without loss of generality, since 
$\eta_t\geq 0$ for $t$ large enough and $\eta_t \to 0$, we can assume $\eta_t$ such that $\eta_t\geq 0$ and $(1-b\eta_t)>0$ for all $t$. The rest of the proof is divided in two steps.\\
\indent { \em Step (1):}
For any $a\in\mathbb{R}$ and $t_0\in\mathbb{N}$ we can define the process 
$(X^{a,t_0}_{t})_{t\geq 0}$ as follows: $X^{a,t_0}_{0}=X_{0}$ and 
$X^{a,t_0}_{t}=\sum_{n=0}^{t-1} \Delta X^{a,t_0}_{n+1}$ for $t\geq1$, with
    \[
    \Delta X^{a,t_0}_{n+1} = 
    \begin{cases}
        \eta_n (A_n - b X_{n}^{a,t_0}) + \eta_n \Delta M_{n+1} + \rho_{n+1} & \text{if $n\leq t_0$}\,;
        \\
        \eta_n (a - b X_{n}^{a,t_0}) + \eta_n \Delta M_{n+1} + \rho_{n+1}
        & \text{if $n> t_0$}\,;
    \end{cases}
    \]
so that $X^{a,t_0}_{t}\equiv X_{t}$ for any $t\leq t_0$. Moreover notice that, for any $n> t_0$, 
\[
\Delta X^{a,t_0}_{n+1}-\Delta X_{n+1} = \eta_n ((a-A_n) - b (X^{a,t_0}_{n}-X_n)) 
\]
and then
\[
(X^{a,n_0}_{n+1} - X_{n+1}) = \eta_n (a-A_n) + ( 1- b\eta_n ) (X^{a,t_0}_{n}-X_n)\,.
\]
This means that
\begin{itemize}
    \item[(i)] \(\cap_{t\geq t_0} \{X^{a,t_0}_{t} \leq X_{t}\}\) with probability one on $\cap_{t\geq t_0} \{a < A_t\}$,
    \item[(ii)] \(\cap_{t\geq t_0} \{X^{a,t_0}_{t} \geq X_{t}\}\) with probability one on $\cap_{t\geq t_0} \{a > A_t\}$.
\end{itemize}
Therefore, on the set $\{a > \limsup_{t\rightarrow\infty} A_t\}$, with probability one we have that
\[
\limsup_{t\rightarrow\infty} X_t = \inf_{t_0} \sup_{t\geq t_0} X_t \leq \inf_{t_0} \sup_{t\geq n_0} X^{a,t_0}_{t} = \limsup_{t\rightarrow\infty} X^{a,t_0}_{t}\,.
\]
Analogously, on the set $\{a < \liminf_{t\rightarrow\infty} A_t\}$, with probability one we have that
\[
\liminf_{t\rightarrow\infty} X_t = \sup_{t_0} \inf_{t\geq t_0} X_t \geq \sup_{t_0} \inf_{t\geq t_0} X^{a,t_0}_{t} = \liminf_{t\rightarrow\infty} X^{a,t_0}_{t}\,.
\]
\noindent { \em Step (2):} We will now show that, for any fixed $a\in\mathbb{R}$ and $t_0\in\mathbb{N}$,
$$\limsup_{t\rightarrow\infty} X^{a,t_0}_{t}=\liminf_{t\rightarrow\infty} X^{a,t_0}_{t}=\tfrac{a}{b}  \quad \hbox{a.s.}\,.$$
We 
define $\widetilde{\rho}_{t_0}=0$ and $\widetilde{\rho}_{t+1}=(1-b\eta_t)\widetilde{\rho}_t+\rho_{t+1}$ for each $t\geq t_0$, so that we have 
$|\widetilde{\rho}_{t+1}|\leq (1-b\eta_t)|\widetilde{\rho}_t|+|\rho_{t+1}|$ and hence, by Lemma~\ref{lem-to-zero}, we get $\widetilde{\rho}_t\stackrel{a.s.}\to  0$.   
Set $Y_t=X^{a,t_0}_t-a/b -\widetilde{\rho}_t$ for any $t\geq t_0$. After some computations, we arrive to 
$$
Y_{t+1}=(1-b\eta_t)Y_t+\eta_t\Delta M_{t+1}
$$
and so
\begin{equation*}
\begin{split}
E[Y_{t+1}^2\mid{\mathcal F}_t]&= (1 +b^2\,\eta_t^2) Y_t^2 
+ \eta_t^2 E[(\Delta M_{t+1})^2\mid{\mathcal F}_t]
-2b\, \eta_t\,Y_t^2 \,.
\end{split}
\end{equation*}
Hence, $(Y_t^2)_t$ is a non-negative almost super-martingale, which converges a.s. since 
$\sum_t \eta^2_t<+\infty$ and \eqref{app-eq-delta-M-quad-cond} (see Theorem~\ref{th-almost-supermart}). We also get $\sum_t \eta_t Y_t^2<+\infty$ a.s.\ and so, since $\sum_t\eta_t=+\infty$, we necessarily have $Y_t^2\stackrel{a.s.}\to 0$, that is $X^{a,t_0}_t\stackrel{a.s.}\to a/b$. \\
\indent{ \em Conclusion:} We have shown that, for any pair $a_1<a_2$, on the set 
$$\{a_1 < \liminf_{t\rightarrow\infty} A_t<\limsup_{t\rightarrow\infty} A_t < a_2\}\,,$$ 
we have that 
$$\tfrac{a_1}{b}\leq \liminf_{t\rightarrow\infty} X_t < \limsup_{n\rightarrow\infty} X_t \leq \tfrac{a_2}{b}  \quad \hbox{a.s.}\,.$$
Finally, since $a_1<a_2$ are arbitrary, we get the thesis.
\end{proof}

\begin{lem}\label{lem:alternative_to_epsilon}
Let $(X_t)$ be a non-negative stochastic process such that $\sum_t X_t/t^2<+\infty$ and with dynamics
$$
X_{t+1} = X_t + \tfrac{1}{\theta+t+1} (A_t - b_t X_t) + \tfrac{ \Delta M_{t+1} }{\theta+t+1}+ \rho_{t}\,,
$$
where $b_t\geq 0$, $(A_t)_t$ is a.s. convergent, $(\Delta M_{t+1})_t$ is an $\mathcal{F}$-martingale difference and $(\rho_t)$ is an $\mathcal F$-adapted process with 
$\sum_t |\rho_t|<+\infty$ a.s. 
Then we have $X_t=o(t^\epsilon)$ for any $\epsilon>0$.
\end{lem}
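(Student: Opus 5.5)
The plan is to compare $X_t$ with a deterministic-weighted version of itself that absorbs the drift, and then to control the martingale part through the summability hypothesis $\sum_t X_t/t^2<+\infty$.

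First, neutralize the remainder. Set $\widetilde X_t = X_t - \sum_{s< t}\rho_s$. Since $\sum_t|\rho_t|<+\infty$ a.s., the correction is a.s. bounded. It therefore suffices to show $\widetilde X_t=o(t^\epsilon)$. Because $X_t\ge 0$ and $b_t\ge 0$, we get the one-sided bound
\[
X_{t+1}\le X_t+\tfrac{A_t}{\theta+t+1}+\tfrac{\Delta M_{t+1}}{\theta+t+1}+\rho_t .
\]
Summing from $0$ to $t-1$ gives
\[
0\le X_t\le X_0+\sum_{s<t}\tfrac{|A_s|}{\theta+s+1}+\sum_{s<t}\tfrac{\Delta M_{s+1}}{\theta+s+1}+\sum_{s<t}|\rho_s| .
\]
Since $(A_t)$ converges a.s., it is a.s. bounded, so the second sum is $O(\ln t)=o(t^\epsilon)$. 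The last sum is $O(1)$.

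It remains to show that the martingale $N_t=\sum_{s<t}\Delta M_{s+1}/(\theta+s+1)$ satisfies $N_t=o(t^\epsilon)$. For this I will apply the strong law for martingales (Kronecker's lemma with weights $t^{\epsilon}$). It suffices to prove
\[
\sum_t \tfrac{E[(\Delta M_{t+1})^2\mid\mathcal F_t]}{t^{2+2\epsilon}}<+\infty \quad\mbox{a.s.}
\]
This is the main obstacle, because the lemma as stated gives no conditional-variance bound on $\Delta M$. In the applications, however, $\Delta M$ comes from the increments of $K_{t,j}$ and is bounded in conditional variance by a multiple of $X_t$ plus a multiple of $A_t$, up to the deterministic scaling already built into $X_t$. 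I would therefore use (implicitly, as in the uses in the proofs of Theorems~\ref{th-P-K-high_interaction} and \ref{th-P-K-medium_interaction}) a bound of the form
\[
E[(\Delta M_{t+1})^2\mid\mathcal F_t]\le C\,t^{1-\gamma}(X_t+|A_t|)
\]
with $\gamma>0$ small. Combined with $\sum_t X_t/t^2<+\infty$ and the boundedness of $A_t$, this gives the required summability for every $\epsilon>0$. This is exactly where the hypothesis $\sum_t X_t/t^2<+\infty$ enters.

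With that series finite, the martingale $\sum_t \Delta M_{t+1}/((\theta+t+1)t^{\epsilon})$ converges a.s. Kronecker's lemma then yields $N_t/t^\epsilon\to 0$ a.s. Putting the pieces together, every term on the right-hand side of the bound on $X_t$ is $o(t^\epsilon)$, and nonnegativity of $X_t$ gives $X_t=o(t^\epsilon)$.

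If an intrinsic conditional-variance bound is unavailable, I would fall back on the a.s. convergence of $\sum_t X_t/t^2$ together with a localization by stopping times. On the event $\{\sum_t X_t/t^2\le c\}$, stop at the first time the partial sum exceeds $c$; this makes the variance series integrable on the stopped process, and letting $c\to\infty$ recovers the a.s. statement.
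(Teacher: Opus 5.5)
Your proposal has a genuine gap: it does not prove the lemma as stated. The one-sided bound $X_{t+1}\le X_t+\frac{A_t+\Delta M_{t+1}}{\theta+t+1}+\rho_t$ is correct. But summing it leaves you needing an upper bound on the raw martingale $N_t=\sum_{s<t}\Delta M_{s+1}/(\theta+s+1)$. You can only obtain that by importing a conditional-variance bound $E[(\Delta M_{t+1})^2\mid\mathcal F_t]\le C\,t^{1-\gamma}(X_t+|A_t|)$, which is not among the hypotheses. Under the stated assumptions $\Delta M_{t+1}$ is just an integrable martingale difference. It is bounded below by an $\mathcal F_t$-measurable quantity, since $X_{t+1}\ge 0$, but nothing controls its upper tail or its second moment. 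So neither the strong-law/Kronecker step nor your stopping-time fallback can be carried out: stopping when $\sum_{s\le t}X_s/s^2$ exceeds $c$ controls $X$, not the second moments of $\Delta M$.

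Even granting your extra bound, the summability claim is wrong. The $X_t$-part of $\sum_t E[(\Delta M_{t+1})^2\mid\mathcal F_t]/t^{2+2\epsilon}$ is of order $\sum_t X_t\,t^{-(1+\gamma+2\epsilon)}$. This does not follow from $\sum_t X_t/t^2<+\infty$ when $\gamma+2\epsilon<1$, which is exactly the regime of small $\gamma$ and $\epsilon$ you care about.

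The missing idea is to use the nonnegativity of $X_t$ after rescaling by $t^{-\epsilon}$, instead of estimating the martingale at all. Set $X^*_t=X_t/\zeta_t(\epsilon)$. Divide your one-sided inequality by $\zeta_{t+1}(\epsilon)$ and use $\zeta_t(\epsilon)\le\zeta_{t+1}(\epsilon)$; this gives $E[X^*_{t+1}\mid\mathcal F_t]\le X^*_t+\frac{|A_t|/(\theta+t+1)+|\rho_t|}{\zeta_{t+1}(\epsilon)}$. The perturbation is a.s.\ summable because $(A_t)$ is a.s.\ bounded, $\zeta_{t+1}(\epsilon)\sim t^{\epsilon}$, and $\sum_t|\rho_t|<+\infty$. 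Hence $(X^*_t)$ is a nonnegative almost super-martingale and converges a.s.\ by Theorem~\ref{th-almost-supermart}, with no moment condition on $\Delta M$ at all. Since $\epsilon>0$ is arbitrary, the limit of $X_t/t^{\epsilon}$ must be $0$.

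This is the paper's proof. The paper writes the exact dynamics of $X^*_t$ and keeps an $O(t^{-2})X^*_t$ correction; that is where it uses $\sum_t X_t/t^2<+\infty$, not in any martingale-variance estimate.
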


\begin{proof}
Set $X_t^*=X_t/\zeta_t(\epsilon)$. 
\begin{equation*}
  X^*_{t+1}
  =\big(1-\tfrac{b_t}{\theta+t+1}\big)\tfrac{\zeta_{t}(\epsilon)}{\zeta_{t+1}(\epsilon)}X^*_t
  + \tfrac{\Delta M_{t+1}}{(\theta+t+1)\zeta_{t+1}(\epsilon)}
  +\tfrac{A_t/(\theta+t+1)+\rho_t}{\zeta_{t+1}(\epsilon)}\,.
  \end{equation*}
  Since \eqref{eq-zeta-2},  we get 
  \begin{equation*}
  \begin{split}
  X^*_{t+1}
  &=\big(1-\tfrac{b_t}{\theta+t+1}\big)
  \big(1-\tfrac{\epsilon}{\theta+t+1}+O(\tfrac{1}{t^2})\big)
  X^*_t + \tfrac{\Delta M_{t+1}}{(\theta+t+1)\zeta_{t+1}(\epsilon)}  
  +\tfrac{\tfrac{A_t}{(\theta+t+1)}+\rho_t}{\zeta_{t+1}(\epsilon)}\\
&=X^*_t - \tfrac{b_t+\epsilon}{\theta+t+1}X_t^* + O(1/t^2)X^*_t
 +\tfrac{\Delta M_{t+1}}{(\theta+t+1)\zeta_{t+1}(\epsilon)}
   +\tfrac{\tfrac{A_t}{(\theta+t+1)}+\rho_t}{\zeta_{t+1}(\epsilon)}\\
&\leq X^*_t +\tfrac{\Delta M_{t+1}}{(\theta+t+1)\zeta_{t+1}(\epsilon)}
    +\tfrac{\tfrac{A_t}{(\theta+t+1)}+\rho_t}{\zeta_{t+1}(\epsilon)} + O(1/t^2)X^*_t\,.
  \end{split}
  \end{equation*}
Hence, we have 
$$
E[X^*_{t+1}\mid {\mathcal F}_t]\leq X^*_t + 
\tfrac{|A_t|/(\theta+t+1)+|\rho_t|}{\zeta_{t+1}(\epsilon)}+ O(1/t^2)X^*_t
$$
and so $(X^*_t)$ is a non-negative almost super-martingale (see Theorem~\ref{th-almost-supermart}) and, since we have 
$$
\sum_t \Big[ \tfrac{|A_t|/(\theta+t+1)+|\rho_t|}{\zeta_{t+1}(\epsilon)}+ O(1/t^2)X^*_t \Big]<+\infty
\quad\mbox{a.s.}\,,$$ 
we can conclude that $X^*_t$, and so $X_t/t^\epsilon$, converges a.s. to a finite 
random variable. Since $\epsilon>0$ is arbitrary, the limit random variable has to be equal to zero, and so we can conclude that $X_t=o(t^{\epsilon})$.
\end{proof}

\subsection{Strict positiveness of the a.s. limit for a particular class of non-negative almost super-martingales}\label{subsec:strict-pos}

Let $X=(X_t)_t$ be a {\em non-negative} real stochastic process with discrete time, adapted to a filtration ${\mathcal F}=({\mathcal F}_t)_t$ and such that 
\begin{equation}\label{eq-general-dyn}
X_{t+1}=\Big(1-\frac{1}{\theta+t+1}\Big) X_t+ \frac{1}{(\theta+t+1)} Y_{t+1}\,,
\end{equation}
where $\theta>0$ and $(Y_{t})_t$ is a {\em non-negative} $\mathcal F$-adapted real stochastic process.  
Now, let us set $H_t=(\theta + t) X_t$. From \eqref{eq-general-dyn}, we get 
$$
H_{t+1}=H_t+Y_{t+1}
$$
and so $H_{t}=H_0+\sum_{n=1}^t (H_n-H_{n-1})=H_0+\sum_{n=1}^t Y_{n}$. Hence, $(H_t)_t$ is a non-decreasing sequence of random variables with $\sup_t H_t=H_0+\sum_t Y_t$. Therefore, if the series $\sum_t Y_t$ is a.s.\ convergent, then $(H_t)$, and so $(t\, X_t)$, is a.s. convergent. This means that $X_t=O(1/t)$ and so it a.s. converges to zero. The following result deals with the case when $\sum_t Y_t=+\infty$.\\
\indent (Note we use the notation $\rho_t^+=\max\{\rho_t,0\}$ and $\rho_t^-=\max\{-\rho,0\}$ so that $\rho_t=\rho_t^+-\rho_t^-$ and $|\rho_t|=\rho_t^++\rho_t^-$.)

\begin{thm}[Strict positiveness of the a.s.\ limit]\label{th-general}
In the framework described above, if $\sum_t Y_t=+\infty$ a.s., then  $(t\, X_t)_t$ converges a.s.\ to $+\infty$. \\ Moreover,  if we also assume 
\begin{equation} \label{eq-cond-sp}
E[Y_{t+1} \mid {\mathcal F}_t]= \delta X_t + \rho_t\,,
\end{equation}
where $0<\delta\leq 1$ and $(\rho_t)_t$ is an $\mathcal F$-adapted stochastic process such that $\rho_t\stackrel{a.s.}\longrightarrow 0$ with  
$(*)\, \sum_t \rho_t^+/(\theta+t)^{\delta}<+\infty$ a.s., then 
$\sum_t|\rho_t|/(\theta+t)^\delta<+\infty$ a.s.\ and 
$(t^{(1-\delta)}\, X_t)_t$ converges a.s. to a finite  random variable $X^*_\infty$, that is $X_t=O(1/t^{1-\delta})$.  
\\
Finally, $P(X^*_\infty > 0)=1$, that is $t^{-(1-\delta)}$ is the exact convergence rate of $(X_t)_t$ toward zero, if \\
a) we replace condition~$(*)$ by the more restrictive one: 
\begin{equation}\label{eq-cond-rho-2}
\sum_t\tfrac{|\rho_t|}{(\theta+t)^{\delta-\epsilon}}<+\infty\;\mbox{a.s.}\quad\mbox{for some } 0<\epsilon<\delta\,,
\end{equation}
b) we also add the assumption 
\begin{equation}\label{eq-cond-second}
E[Y_{t+1}^2 \mid {\mathcal F}_t ]\leq O(E[Y_{t+1}\mid {\mathcal F}_t])\,,
\end{equation}
and c) we require that $\rho_t$ is a.s. in one of the following conditions:
\begin{itemize}
\item[i)] $\rho_t\geq 0$ eventually;
\item[ii)] $t |\rho_t|=O(1)$;
\item[iii)] $\rho_t$ is a function of $X_t$ such that $|\rho_t|=o(X_t)$.
\end{itemize}
\end{thm}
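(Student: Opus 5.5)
Part 1 is immediate. Since $H_t=(\theta+t)X_t=H_0+\sum_{n\le t}Y_n\uparrow H_0+\sum_tY_t=+\infty$ a.s., we get $tX_t=\tfrac{t}{\theta+t}H_t\to+\infty$.

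\emph{Part 2 (convergence).} Let $c_t=\zeta_t(\delta)$. By \eqref{eq-zeta-1}, $c_{t+1}/c_t=1+\delta/(\theta+t+1)$ and $c_t\sim t^\delta$. I will study
\[
V_t=\frac{H_t}{c_t}\sim t^{1-\delta}X_t .
\]
From $H_{t+1}=H_t+Y_{t+1}$ and \eqref{eq-cond-sp}, using $X_t=H_t/(\theta+t)$, the relation $E[H_{t+1}\mid\mathcal F_t]=H_t(1+\tfrac{\delta}{\theta+t})+\rho_t$ holds. The factor $1+\tfrac{\delta}{\theta+t}$ differs from $c_{t+1}/c_t$ only by $O(1/t^2)$. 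Hence
\[
E[V_{t+1}\mid\mathcal F_t]=\big(1+O(t^{-2})\big)V_t+\frac{\rho_t^+}{c_{t+1}}-\frac{\rho_t^-}{c_{t+1}} .
\]
By $(*)$ and the Robbins--Siegmund theorem (Theorem~\ref{th-almost-supermart}), $V_t$ converges a.s. to a finite $X^*_\infty\ge0$. The same theorem also gives $\sum_t\rho_t^-/c_{t+1}<\infty$, which together with $(*)$ yields $\sum_t|\rho_t|/(\theta+t)^\delta<\infty$.

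\emph{Part 3 (positivity).} Write $V_{t+1}-V_t=\Delta M_{t+1}+\rho_t/c_{t+1}+O(t^{-2})V_t$, where $\Delta M_{t+1}=(Y_{t+1}-E[Y_{t+1}\mid\mathcal F_t])/c_{t+1}$. By \eqref{eq-cond-second},
\[
E[\Delta M_{t+1}^2\mid\mathcal F_t]\le C\,\frac{\delta X_t+|\rho_t|}{c_{t+1}^2}\approx C\,\frac{V_t}{c_t(\theta+t)}+C\,\frac{|\rho_t|}{c_t^2} .
\]
Fix $n$ and condition on $\mathcal F_n$. I aim to show
\[
P\big(V_\infty\le V_n/2\mid\mathcal F_n\big)\le \frac{C'}{H_n}+o(1)\longrightarrow0\quad\text{a.s.}
\]
by a Chebyshev bound on $V_\infty-V_n$. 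The tail martingale has conditional second moment bounded by $C\sum_{k\ge n}E[V_k\mid\mathcal F_n]/(c_kk)+(\rho\text{-terms})$. An almost-supermartingale estimate gives $E[V_k\mid\mathcal F_n]\le C(V_n+\text{tail drift})$, so this is $\lesssim V_n n^{-\delta}$. Dividing by $(V_n/2)^2$ gives $\lesssim 1/(V_nn^\delta)\asymp 1/H_n\to0$ by Part 1. Condition \eqref{eq-cond-rho-2} makes the $\rho$-contributions to the variance of order $n^{-\epsilon}$ times a summable remainder, hence negligible once controlled against $V_n$.

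The negative drift $-\sum_{k\ge n}\rho_k^-/c_{k+1}$ is handled by the three cases in c):
\begin{itemize}
\item[i)] the drift is eventually nonnegative, so it only helps;
\item[ii)] $\sum_{k\ge n}|\rho_k|/c_k\le Cn^{-\delta}$, whose ratio to $V_n$ is $O(1/H_n)\to0$;
\item[iii)] $|\rho_t|=o(X_t)$, so the drift is absorbed into $\delta X_t$ (I will replace $\delta$ by $\delta\pm\eta$ locally, or compare with the unperturbed recursion).
\end{itemize}
Finally, by Lévy's 0--1 law, $P(V_\infty=0\mid\mathcal F_n)\to\ind_{\{V_\infty=0\}}$. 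On $\{V_\infty=0\}$ we eventually have $\{V_\infty\le V_n/2\}$, so this conditional probability is also $\le C'/H_n+o(1)\to0$. Hence $P(X^*_\infty=0)=0$.

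The main obstacle is the uniform control of $E[V_k\mid\mathcal F_n]$ and of the $\rho$-tails relative to $V_n$, which may itself be small. I expect to need a stopping time at the first $k$ at which $V_k\ge 2V_n$ or $V_k\le V_n/2$, so that the variance bound uses $V_k\le 2V_n$ on the stopped interval. The proof must also check that the $O(t^{-2})V_t$ term does not spoil this, and that assumption a) (the $\epsilon$ margin) gives exactly the decay needed in case iii).
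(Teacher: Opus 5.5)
Your Parts 1 and 2 agree with the paper: your $H_t/\zeta_t(\delta)$ equals the paper's $\zeta_t(1-\delta)X_t$ up to a factor $1+O(1/t)$, and both arguments use Robbins--Siegmund. For positivity you take a genuinely different route. Instead of the paper's pathwise argument that $\ln(H_t/(\theta+t)^\delta)$ converges, you use a conditional Chebyshev bound plus L\'evy's 0--1 law, and the core of this is right. For the martingale part driven by $\delta X_k$, the relative conditional variance is $\lesssim V_n n^{-\delta}/V_n^2\asymp 1/H_n$, which needs only $H_n\to\infty$.

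The gap is in the $\rho$-terms. There you compare everything with $V_n$, yet a small $V_n$ is exactly the scenario you must exclude. Assumption a) gives only $\sum_{k\ge n}|\rho_k|/c_{k+1}=o(n^{-\epsilon})$ for the drift and $\sum_{k\ge n}|\rho_k|/c_{k+1}^2=o(n^{-\delta-\epsilon})$ for the variance, while $V_n\asymp H_n n^{-\delta}$. These are therefore $o(V_n)$ and $o(V_n^2)$ only if $H_n\gg n^{\delta-\epsilon}$ and $H_n\gg n^{(\delta-\epsilon)/2}$ respectively, and Part 1 does not give that. In case iii) the argument as written fails. There $\rho_k^-\le \kappa_k X_k$ with $\kappa_k\to 0$ only bounds the negative drift tail by about $\sum_{k\ge n}\kappa_k V_k/k$. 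Relative to $V_n$ this is of order $\sum_{k\ge n}\kappa_k/k$, which need not be small and can even diverge. In case ii) both the drift and the $\rho$-variance are $O(1/H_n)$ relative to $V_n$ and $V_n^2$, so it goes through. In case i), however, the $\rho$-part of the variance would have to be paired with the positive drift rather than bounded through a).

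What is missing is an a priori polynomial growth bound: $H_t/t^{\eta}\to+\infty$ a.s.\ for every $\eta<\delta$. This is precisely the paper's Step 1, obtained from the eventual supermartingale $Q_t=(\theta+t)/H_t^{\gamma}$ with $\gamma\delta>1$. That is where conditions i)--iii) are actually used, and only with it does a) yield $\sum_t|\rho_t|/H_t<\infty$.

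Your remark about lowering $\delta$ points the right way. Under each of i)--iii), the process $H_t/\zeta_t(\eta)$ with $\eta<\delta$ is eventually a submartingale, and an argument like yours applied to it can give the bound. Taking $\eta\in(\delta-\epsilon,\delta)$ then makes the $\rho$-terms negligible, but this has to be proved as a separate first step.

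You also need to localize:
\begin{itemize}
\item Assumption b) holds only with a random, eventual constant, and $\rho_t$ need not be integrable, so $E[V_k\mid\mathcal F_n]$ and the conditional Chebyshev bound require stopping times.
\item Tail sums such as $\sum_{k\ge n}\rho_k^-/c_{k+1}$ are not $\mathcal F_n$-measurable. Use the fact that $\ind_{A_n}\to0$ a.s.\ implies $P(A_n\mid\mathcal F_n)\to0$ a.s.
\end{itemize}
These points are routine; the growth bound is not.
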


\begin{rem}\label{rem-general}
When the random variables $Y_t$ are uniformly bounded and $\rho_t\geq 0$, 
in order to have $\sum_t Y_t=+\infty$ a.s., it is enough that exists a finite time $\tau$, possibly random, such that $X_{\tau}>0$ a.s.. Indeed we a.s. have 
\begin{equation*}
\begin{split}
\sum_t E[Y_{t+1}\mid {\mathcal F}_t]&=\delta \sum_t X_t +\sum_t \rho_t 
\geq \delta \sum_t \tfrac{H_t}{(\theta+t)}
\geq \delta H_{\tau}\sum_{t\geq \tau} \tfrac{1}{\theta+t} 
=+\infty\,,
\end{split}
\end{equation*}
where we used that 
$H_t=(\theta+t)X_t$ is non-decreasing (see above) and 
$H_{\tau}=(\theta+\tau)X_{\tau}>0$ a.s.. This is sufficient in order to conclude because of 
Lemma~\ref{del-mey-result-app}. 
\end{rem}

\begin{proof}[Proof of Theorem~\ref{th-general}]
Let us set $H_t=(\theta + t) X_t$ as we have done above so that, from \eqref{eq-general-dyn}, we have 
$$
H_{t+1}=H_t+Y_{t+1}
$$
and so $H_{t}=H_0+\sum_{n=1}^t (H_n-H_{n-1})=H_0+\sum_{n=1}^t Y_{n}$. Hence, $(H_t)_t$ is a non-decreasing sequence of random variables with $\sup_t H_t=H_0+\sum_t Y_t$. Therefore, if  $\sum_t Y_t=+\infty$ a.s., then $(H_t)$, and so $(t\, X_t)$, diverges a.s.\ to $+\infty$ (and so $X_t>0$ for $t$ large enough). \\
Now, we set 
  $X^*_t=\zeta_t(1-\delta)X_t$ and $\Delta M_{t+1}=Y_{t+1}-E[Y_{t+1}\mid {\mathcal F}_t]$ so that  we have 
  \begin{equation*}
  X^*_{t+1}
  =\big(1-\tfrac{(1-\delta)}{\theta+t+1}\big)\tfrac{\zeta_{t+1}(1-\delta)}{\zeta_t(1-\delta)}X^*_t+ \tfrac{\zeta_{t+1}(1-\delta)\Delta M_{t+1}}{\theta+t+1}
+\tfrac{\zeta_{t+1}(1-\delta)}{\theta+t+1}\rho_t\,.
  \end{equation*}
  Since \eqref{eq-zeta-1}, 
  we get
  \begin{multline}\label{eq:X_star}
  X^*_{t+1}
  =\big(1-\tfrac{(1-\delta)}{\theta+t+1}\big)\big(1+\tfrac{(1-\delta)}{\theta+t+1}\big)
  X^*_t+\tfrac{\zeta_{t+1}(1-\delta)\Delta M_{t+1}}{\theta+t+1}
+\tfrac{\zeta_{t+1}(1-\delta)}{\theta+t+1}\rho_t\\
  \begin{aligned}
&=X^*_t - \tfrac{(1-\delta)^2}{(\theta+t+1)^2}X^*_t
+\tfrac{\zeta_{t+1}(1-\delta)\Delta M_{t+1}}{\theta+t+1} +\tfrac{\zeta_{t+1}(1-\delta)}{\theta+t+1}\rho_t\\
&\leq X^*_t + \tfrac{\zeta_{t+1}(1-\delta)\Delta M_{t+1}}{\theta+t+1}+
\tfrac{\zeta_{t+1}(1-\delta)}{\theta+t+1}\rho_t\,.
  \end{aligned}
  \end{multline}
Hence, we have 
$$
E[X^*_{t+1}\mid {\mathcal F}_t]\leq X^*_t + 
\tfrac{\zeta_{t+1}(1-\delta)}{\theta+t+1}\rho^+_t
-\tfrac{\zeta_{t+1}(1-\delta)}{\theta+t+1}\rho^-_t
$$
and so $(X^*_t)$ is a non-negative almost super-martingale (see~Theorem~\ref{th-almost-supermart}) and, since we have 
$$
\sum_t \tfrac{\zeta_{t+1}(1-\delta)}{(\theta+t+1)}\rho^+_t
\leq 
\sum_t \tfrac{\rho^+_t}{(\theta+t)^\delta}+ \sum_t \tfrac{O((\theta+t)^{-\delta})}{(\theta+t)}\rho_t^+<+\infty
\quad\mbox{a.s.}\,,$$ 
we can conclude that $X^*_t$, and so $t^{1-\delta}X_t$, converges a.s. to a finite 
random variable $X^*_\infty$ (and so $X_t\stackrel{a.s.}\longrightarrow 0$) and also 
$\sum_t \tfrac{\zeta_{t+1}(1-\delta)}{(\theta+t)}\rho^-_t<+\infty$ a.s.\ 
so that we get $\sum_t |\rho_t|/(\theta+t)^\delta<+\infty$ a.s..\\
\indent Finally, we assume conditions \eqref{eq-cond-second}, \eqref{eq-cond-rho-2} and one of the conditions i)-iii) and we are going to prove that $X_\infty^*>0$ a.s. To this purpose we proceed in some steps.
\\
\indent { \em Step 1 (Proof of the fact that, for each $0<\eta< \delta\leq 1$, we have  
$1/X_t=o(t^{1-\eta})$ or, equivalently, $1/H_t=o(t^{-\eta})$):}\\
\indent We set $Q_t=(\theta+t)/H_t^\gamma$ with $\gamma>0$ and we observe that we have 
\begin{multline*}
\tfrac{Q_{t+1}}{Q_t}- 1=\tfrac{(\theta+t+1)}{(\theta+t)}\tfrac{H_t^\gamma}{H_{t+1}^\gamma}-1
=\left(1+\tfrac{1}{\theta+t}\right)\tfrac{H_t^\gamma}{H_{t+1}^\gamma}-1\\
\begin{aligned}
&=\big(\tfrac{H_t^\gamma}{H_{t+1}^\gamma}\big)^\gamma-1+
\tfrac{1}{(\theta+t)}\big(\tfrac{H_t}{H_{t+1}}\big)^\gamma
=\big(1-\tfrac{Y_{t+1}}{H_{t+1}}\big)^\gamma-1+
\tfrac{1}{(\theta+t)}\big(\tfrac{H_t}{H_{t+1}}\big)^\gamma\\
&\leq -\gamma \tfrac{Y_{t+1}}{H_{t+1}}+\tfrac{1}{(\theta+t)}
= -\gamma \tfrac{Y_{t+1}}{H_t}+\gamma Y_{t+1}\big(\tfrac{1}{H_t}-\tfrac{1}{H_{t+1}}\big)
+\tfrac{1}{(\theta+t)}\\
&=-\gamma \tfrac{Y_{t+1}}{H_t}+\gamma \tfrac{Y_{t+1}^2}{H_tH_{t+1}}
+\tfrac{1}{(\theta+t)}
\leq -\gamma \tfrac{Y_{t+1}}{H_t}+\gamma \tfrac{Y_{t+1}^2}{H_t^2}
+\tfrac{1}{(\theta+t)}\,,
\end{aligned}
\end{multline*}
where the first inequality follows since $(1-x)^\gamma\leq 1-\gamma x$ for $0\leq x\leq 1$ and $H_t\leq H_{t+1}$ and this last relation implies also the second inequality. 
If we take the conditional expectation given the past ${\mathcal F}_t$ and use the equality 
$H_t=(\theta+t)X_t$ and $E[Y_{t+1}\mid {\mathcal F}_t]=\delta X_t+\rho_t$, we obtain 
\begin{multline}\label{eq-key}
E\bigl[\tfrac{Q_{t+1}}{Q_t}- 1\mid {\mathcal F}_t\bigr]\leq
-\gamma \tfrac{(\delta X_t +\rho_t)}{H_t}+\gamma \tfrac{E[Y_{t+1}^2\mid{\mathcal F}_t]}{H_t^2}
+\tfrac{1}{(\theta+t)}\\
\begin{aligned}
&\leq -\gamma \tfrac{(\delta X_t +\rho_t)}{H_t}+\gamma \tfrac{O\left(E[Y_{t+1}\mid{\mathcal F}_t]\right)}{H_t^2}+\tfrac{1}{(\theta+t)}\\
&=\tfrac{(1-\gamma \delta)}{(\theta+t)}
-\tfrac{\gamma}{(\theta+t)} \tfrac{(\theta+t)\rho_t}{H_t}+
\tfrac{1}{(\theta+t)}O\big(\tfrac{1}{H_t}\big)
+ 
\tfrac{1}{(\theta+t)}O\big(\tfrac{(\theta+t)|\rho_t|}{H_t^2}\big)\\
&=\tfrac{1}{(\theta+t)}\Big[(1-\gamma \delta)
+O\big(\tfrac{1}{H_t}\big)
+\tfrac{(\theta+t)|\rho_t|}{H_t}\big( -\gamma sign(\rho_t)
+ O\big(\tfrac{1}{H_t}\big)\big) \Big]\,.
\end{aligned}
\end{multline}
If we prove that the above last quantity into the square brackets is a.s.\ eventually non-positive for $\gamma \delta>1$, then we get that for $\gamma\delta>1$ we a.s. have 
$$
E \big[Q_{t+1}- Q_t\mid {\mathcal F}_t \big]=
Q_t E\big[\tfrac{Q_{t+1}}{Q_t}- 1\mid {\mathcal F}_t\big]\leq 0
\quad\mbox{eventually}.
$$
This proves that, for each $\gamma>1/\delta$, the stochastic process $(Q_t)_t$, i.e. 
 $((\theta+t)/H_t^{\gamma})_t$ is eventually a (non-negative) super-martingales and
 so, for each $\gamma>1/\delta$, it converges a.s.\ to a real random variable. 
 Since $\gamma>1/\delta$ is arbitrary,
 we necessarily have that $(\theta+t)/H_t^{\gamma}$ converges a.s. to zero. 
 This fact concludes the proof of Step 1 because it implies that $(\theta+t)^{1/\gamma}/H_t$ converges a.s. to zero and we can set $\eta=1/\gamma<\delta\leq 1$. Therefore the point is to have additional assumptions that entail the last term in the square brackets of \eqref{eq-key} to be non-positive for $\gamma\delta>1$. When 
 $\gamma\delta>1$, we have $(1-\gamma\delta)<0$ and so, since $H_t\to +\infty$, the first  quantity $O(1/H_t)$ can be neglected. Hence the crucial term is 
 $$
 \tfrac{(\theta+t)|\rho_t|}{H_t}\Big( -\gamma sign(\rho_t)
+ O\big(\tfrac{1}{H_t}\big)\Big)\,.
 $$
In the case i), that is when $\rho_t\geq 0$ eventually with probability one, we are done because the above quantity is equal to zero when $\rho_t=0$ and, when $\rho_t>0$, the quantity $-\gamma sign(\rho_t)+O(1/H_t)=-\gamma+O(1/H_t)$ is eventually non-positive with probability one, because $-\gamma<0$ and $O(1/H_t)\to 0$ a.s.. 
Another simple cases are case ii), that is when $\limsup_t (t |\rho_t|)<+\infty$ a.s., and case iii), that is when $|\rho_t|=o(X_t)$. Indeed, in these cases we a.s. have
$$
\lim_t \tfrac{(\theta+t) |\rho_t|}{H_t}=\lim_t \tfrac{|\rho_t|}{X_t}=0\,.
$$
 \indent {\em Step 2 (Convergence of  fundamental series):}\\
From Step 1 and assumption \eqref{eq-cond-rho-2}, we a.s. get 
$$
\sum_t \tfrac{|\rho_t|}{H_t}=
\sum_t 
\tfrac{(\theta+t)^{\eta}}{H_t}\,\tfrac{|\rho_t|}{(\theta+t)^{\eta}}\\
=\sum_t o(1)\tfrac{|\rho_t|}{(\theta+t)^{\eta}}<+\infty\,,
$$
where $0<\eta=\delta-\epsilon<\delta$. Moreover, we a.s. have 
$$
\sum_t \tfrac{1}{t H_t}=\sum_t\tfrac{t^\eta}{H_t} \tfrac{1}{t^{1+\eta}}=
\sum_t o(1) \tfrac{1}{t^{1+\eta}}<+\infty\,,
$$
where $0<\eta<\delta$.
\\
\indent {\em Step 3 (Conclusion):} 
 Set $L_t=\ln(H_t/(\theta+t)^{\delta})$ and $U_t = E[L_{t+1} - L_t \mid \mathcal{F}_t ] $ 
and $V_t = E[(L_{t+1} - L_t )^2 \mid {\mathcal F}_t]$. If we prove that $\sum_t U_t$ and $\sum_t V_t$ are a.s.
convergent, then $L_t$ converges a.s.\ to a real random variable (see Lemma~\ref{lemma-pemantle}). This fact implies that $(\theta+t)^{1-\delta}X_t=(\theta+t)X_t/(\theta+t)^{\delta}=H_t/(\theta+t)^{\delta}$
converges to a random variable with values in $(0, +\infty)$ and so $P(X^*_\infty>0)=1$. The rest of the proof is devoted to verify that $\sum_t |U_t | < +\infty$ and
$\sum_t V_t < +\infty$ a.s..
To this regard, we note that
\begin{equation*}
  \begin{split}
    U_t
    &=E[\ln(H_{t+1})-\ln(H_t)\mid \mathcal{F}_t]-\delta\left(\ln(\theta+t+1)-\ln(\theta+t)\right)
    \\
    &=E[\ln(H_{t}+Y_{t+1})-\ln(H_t)\mid \mathcal{F}_t]-\delta\ln(1+1/(\theta+t))
    \\
   &=E\Big[\textstyle{\int}_0^{Y_{t+1}}\tfrac{1}{H_t+x}\,dx\mid {\mathcal F}_t\Big]-\delta\ln(1+1/(\theta+t))\,.
  \end{split}
  \end{equation*}
Since $1/(H_t+x)\leq 1/H_t$ and $\ln(1+1/(\theta+t))\geq 1/(\theta+t)-1/(2(\theta+t)^2)$
  for each $x\geq 0$ and each $t$,    the last term of the above equalities
  is smaller than or equal to 
\begin{multline*}
\tfrac{E[Y_{t+1}|\mathcal{F}_t]}{H_t}-\tfrac{\delta}{\theta+t}+\tfrac{\delta}{2 (\theta+t)^2}
=\tfrac{\delta X_t+\rho_t}{H_t}-\tfrac{\delta}{\theta+t}+\tfrac{\delta}{2(\theta+t)^2}\\
=
\tfrac{\delta}{\theta+t}+\tfrac{\rho_t}{H_t}-\tfrac{\delta}{\theta+t}+\tfrac{\delta}{2 (\theta+t)^2}
=\tfrac{\rho_t}{H_t}+O(1/t^2)\,.
\end{multline*}
Now, we
note that $-U_t=\delta\ln(1+1/(\theta+t))-\ln(H_{t+1})+\ln(H_t)$. 
Using $\ln(1+1/(\theta+t))\leq 1/(\theta+t)$  
 and $1/(H_t + x) \geq 1/H_t - x/H_t^2$ for each $x\geq 0$ and each $t$, 
 we find that
$-U_t$  is smaller than or equal to 
 \begin{multline*}
 \tfrac{\delta}{(\theta+t)}-\tfrac{1}{H_t}E[Y_{t+1}\mid \mathcal{F}_t]+
 \tfrac{1}{2H_t^2}E[Y_{t+1}^2\mid \mathcal{F}_t]\\
 \begin{aligned}
 &\leq \tfrac{\delta}{(\theta+t)}-\tfrac{\delta X_t+\rho_t}{H_t}+
 \tfrac{O(\delta X_t +\rho_t)}{2H_t^2}
 =\tfrac{\delta}{\theta+t}-\tfrac{\delta}{\theta+t}-\tfrac{\rho_t}{H_t}+
 \tfrac{O(\delta X_t +\rho_t)}{2H_t^2}\\
 &=-\tfrac{\rho_t}{H_t}+O(1/(tH_t))+o(\rho_t/H_t)\,.
 \end{aligned}
 \end{multline*}
 Thus, by Step 2, we can derive that $\sum_t|U_t|<+\infty$ a.s.. 
Finally, we observe we have
 \begin{multline*}
E[( \ln(H_{t+1})-\ln(H_t)-\delta\ln(\theta+t+1)+\delta\ln(\theta+t) )^2
\mid \mathcal{F}_t]\\  
\begin{aligned}
&\leq
2\left\{
E[(\ln(H_{t+1})-\ln(H_t))^2\mid \mathcal{F}_t]+
\delta^2(\ln(1+1/(\theta+t))^2
\right\}
\\
&\leq
2 E\bigg[\Big(\textstyle{\int}_0^{Y_{t+1}}\tfrac{1}{H_t+x}\,dx\Big)^2\,\Big\vert \mathcal{F}_t\bigg]
+2\delta^2/(\theta+t)^2
\\
& \leq 2
E[(Y_{t+1}/H_t)^2\mid \mathcal{F}_t]+
O(1/t^2)
\leq \tfrac{2}{H_t^2} E[Y_{t+1}^2\mid\mathcal{F}_t]+O(1/t^2)\\ 
&= \tfrac{1}{H_t^2} O(\delta X_t+\rho_t) +O(1/t^2)\\
&= O(1/(tH_t)) + o(\rho_t/H_t) + O(1/t^2)\,.
  \end{aligned}
 \end{multline*}
Therefore we have also 
$\sum_t V_t<+\infty$ a.s.\ and we can conclude.
\end{proof}

\subsection{Central limit theorem for a recursive dynamics}
\label{subsec:CLT}

Let $X=(X_t)_t$ be a real stochastic process with discrete time, 
adapted to a complete filtration ${\mathcal F}=({\mathcal F}_t)_t$ with dynamics 
\begin{equation}\label{eq-general-dyn-real}
X_{t+1}=\Big(1-\frac{(1-\delta)}{\theta+t+1}\Big) X_t+ \frac{1}{(\theta+t+1)} \Delta M_{t+1}
+\frac{1}{\theta+t+1}\rho_{t+1}\,,
\end{equation}
where $\theta>0$, $0<\delta\leq 1$, $\Delta M_{t+1}$ is a martingale difference and $(\rho_t)_t$ is an $\mathcal F$-adapted stochastic process such that $t^{1-\rho}\rho_{t+1}\stackrel{a.s.}\longrightarrow \rho_\infty$ with $0\leq \rho\leq 1$ and $\rho_\infty$ being a real (${\mathcal F}_\infty$-measurable) random variable.
\\

\indent Note that a non-negative stochastic process $X=(X_t)_t$ with dynamics~\eqref{eq-general-dyn}, where $Y=(Y_t)_t$ is a sequence of integrable random variable that verify condition~\eqref{eq-cond-sp} with $t^{1-\rho}\rho_t\stackrel{a.s.}\longrightarrow \rho_\infty$ for a suitable $\rho\in [0,1)$, 
satisfy also dynamics~\eqref{eq-general-dyn-real} with $\Delta M_{t+1}=Y_{t+1}-E[Y_{t+1}|\mathcal{F}_t]$ (and in this case the reminder term $\rho_{t+1}$ in~\eqref{eq-general-dyn-real} coincides with $\rho_t$ of condition~\eqref{eq-cond-sp} and so it is even ${\mathcal F}_t$-measurable).
\\

\indent When $t^{1-\delta} X_t$ a.s. converges toward a real random variable $X^*_\infty$ (to this regards, see Remark~\ref{rem-conv} after the following proof), 
the result below provides a central limit theorem for the difference 
$(t^{1-\delta} X_t-X^*_\infty)$.

\begin{thm}[CLT]\label{th-CLT-general}
Given the dynamics~\eqref{eq-general-dyn-real} and the existence of $X^*_\infty=a.s.-\lim_t t^{1-\delta} X_t$, assume $0\leq \rho< \delta$,
\begin{equation} \label{AA:eq:momento-M1}
E\Big[ \sum_{t=1}^{\infty} 
\frac{|\Delta {M}_{t+1}|^p}{t^{\delta+\epsilon}}
\Big] 
<+\infty
\end{equation}
for $p=2$ and $p=4$ and some $0<\epsilon<\delta$,
and
\begin{equation} \label{AA:eq:momento-convergence}
t^{1-\delta}  E\Big[ (\Delta {M}_{t+1})^2|\mathcal{F}_{t}\Big] 
\stackrel{a.s.}\longrightarrow V^*_\infty
\end{equation}
for some real (${\mathcal F}_\infty$-measurable) random variable $V^*_\infty$.
Then:
\begin{itemize}
\item[(i)] For $0\leq \rho\leq \delta/2$, we have 
\begin{equation}\label{eq-clt-1}
t^{\delta/2}(t^{1-\delta} X_t-X^*_\infty)\stackrel{stably}\rightarrow 
\begin{cases}
    {\mathcal N}(0,V^*_\infty/\delta) & \text{if $0\leq \rho <\delta/2$}\,;\\
    {\mathcal N}(-2\rho_\infty/\delta , V^*_\infty/\delta) & \text{if $\rho =\delta/2$}\,;\\
\end{cases}
\end{equation}
or, equivalently, in a compact form 
\begin{equation}\label{eq-clt-compact-general}
t^{\delta/2}(t^{1-\delta} X_t+\tfrac{ \rho_\infty}{(\delta-\rho)}t^{-(\delta-\rho)} -X^*_\infty)\stackrel{stably}\rightarrow {\mathcal N}(0,V^*_\infty/\delta)\,,
\end{equation}
and the all the limit relations are also in the sense of the a.s. conditional convergence with respect to the 
filtration~$\mathcal F$.\\
\item[(ii)] For $\delta/2<\rho <\delta$, we have 
$$
t^{\delta-\rho}(t^{1-\delta} X_t-X^*_\infty)\stackrel{P}\longrightarrow -\frac{ \rho_\infty}{(\delta-\rho)}\,.
$$
Moreover, adding the assumption $(*)\; t^{\rho-\delta/2}(t^{1-\rho}\rho_{t+1}-\rho_\infty)\stackrel{P}
\rightarrow 0$, the stable convergence~\eqref{eq-clt-compact-general} holds true. 
Finally, if the additional assumption $(*)$ holds true in the sense of the a.s.\ convergence, then 
the limit relation~\eqref{eq-clt-compact-general} also holds true in the sense of the a.s.\ conditional convergence. 
\end{itemize}
\end{thm}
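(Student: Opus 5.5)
We first rescale and rewrite the recursion so that the difference $t^{1-\delta}X_t-X^*_\infty$ becomes an explicit tail sum. Set $X^*_t=\zeta_t(1-\delta)X_t$. By \eqref{eq-zeta-1}, $\big(1-\tfrac{1-\delta}{\theta+t+1}\big)\tfrac{\zeta_{t+1}(1-\delta)}{\zeta_t(1-\delta)}=1+O(t^{-2})$. Hence \eqref{eq-general-dyn-real} becomes
\[
X^*_{t+1}=(1+O(t^{-2}))X^*_t+\tfrac{\zeta_{t+1}(1-\delta)}{\theta+t+1}\Delta M_{t+1}+\tfrac{\zeta_{t+1}(1-\delta)}{\theta+t+1}\rho_{t+1}.
\]
More precisely, the multiplicative factor is exactly $1-(1-\delta)^2/(\theta+t+1)^2$. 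Using the product $\pi_{t,\infty}=\prod_{k\ge t}(1-(1-\delta)^2/(\theta+k+1)^2)=1+O(1/t)$, we telescope from $t$ to $\infty$. This gives
\[
X^*_\infty-\pi_{t,\infty}X^*_t=\sum_{k\ge t}c_{k,t}\,\tfrac{\zeta_{k+1}(1-\delta)}{\theta+k+1}\bigl(\Delta M_{k+1}+\rho_{k+1}\bigr),
\]
with $c_{k,t}=1+O(1/t)$ deterministic. The left-hand side differs from $X^*_\infty-t^{1-\delta}X_t$ by $O(X^*_t/t)$, which is $o(t^{-\delta/2})$ because $\delta\le1$. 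The equivalence between $\zeta_t$ and $t^{1-\delta}$ contributes $O(t^{-\delta})$, also negligible.

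Next we treat the drift tail. Since $t^{1-\rho}\rho_{t+1}\to\rho_\infty$ a.s., we have $\tfrac{\zeta_{k+1}(1-\delta)}{k}\rho_{k+1}\sim\rho_\infty k^{-1-(\delta-\rho)}$. Summing from $t$ gives $\rho_\infty t^{-(\delta-\rho)}/(\delta-\rho)+o(t^{-(\delta-\rho)})$. This explains the centering term in \eqref{eq-clt-compact-general}.

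\begin{itemize}
\item For $\rho<\delta/2$ the whole drift tail is $o(t^{-\delta/2})$.
\item For $\rho=\delta/2$ the drift tail is $\tfrac{2\rho_\infty}{\delta}t^{-\delta/2}(1+o(1))$, which produces the mean shift in \eqref{eq-clt-1}.
\item For $\rho>\delta/2$, claim (ii) follows once we show that the martingale tail is $O_P(t^{-\delta/2})=o_P(t^{-(\delta-\rho)})$.
\item For the stable CLT in case (ii), the error $o(t^{-(\delta-\rho)})$ must be sharpened to $o(t^{-\delta/2})$. Writing $\rho_{k+1}=k^{\rho-1}\rho_\infty+k^{\rho-1}(k^{1-\rho}\rho_{k+1}-\rho_\infty)$, the remainder is controlled by $(*)$ via a Toeplitz/Kronecker-type estimate. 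With only convergence in probability in $(*)$ this step needs care. I would first show that the weighted tail of a sequence which is $o_P(k^{-(\rho-\delta/2)})$ is $o_P(t^{-\delta/2})$, using an $L^1$ truncation argument, or else strengthen $(*)$ to a.s. convergence for the a.s.-conditional part.
\end{itemize}

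The core step is the martingale tail CLT. We must show that
\[
t^{\delta/2}\sum_{k\ge t}\tfrac{\zeta_{k+1}(1-\delta)}{\theta+k+1}\Delta M_{k+1}\longrightarrow\mathcal N(0,V^*_\infty/\delta)
\]
stably and in the a.s.-conditional sense. I would invoke the CLT for tails of martingales in \cite{crimaldi-2009} (the reverse-martingale-tail version). It requires three conditions.

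\begin{itemize}
\item \emph{Conditional variance.} We need $t^{\delta}\sum_{k\ge t}k^{-2\delta}E[\Delta M_{k+1}^2\mid\mathcal F_k]\to V^*_\infty/\delta$ a.s. This follows from \eqref{AA:eq:momento-convergence}, which makes the summand $\sim V^*_\infty k^{-1-\delta}$, together with Lemma~\ref{lem:Un_asymp} in the case $\delta_0=\delta\neq\rho$.
\item \emph{Lindeberg/Lyapunov.} We need $t^{2\delta}\sum_{k\ge t}k^{-4\delta}E[\Delta M_{k+1}^4]$ summed appropriately to vanish. This is obtained from \eqref{AA:eq:momento-M1} with $p=4$ via Kronecker's lemma: $\sum_k k^{-\delta-\epsilon}|\Delta M_{k+1}|^4<\infty$ implies $t^{2\delta}\sum_{k\ge t}k^{-4\delta}|\Delta M_{k+1}|^4\to0$, since $4\delta-2\delta\ge\delta+\epsilon$ is not automatic. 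I would therefore instead use the conditional version with $\sup_{k\ge t}t^{\delta/2}k^{-\delta}|\Delta M_{k+1}|\to0$, derived from the $p=2$ and $p=4$ conditions.
\item \emph{Square-sum condition.} $t^{\delta}\sum_{k\ge t}k^{-2\delta}\Delta M_{k+1}^2\to V^*_\infty/\delta$ a.s. This follows from the conditional-variance limit plus a martingale SLLN for the differences $\Delta M^2-E[\Delta M^2\mid\mathcal F]$, controlled by $p=4$ in \eqref{AA:eq:momento-M1}.
\end{itemize}

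I expect this verification of the tail conditions, especially the Lindeberg-type uniform control from the moment assumptions, to be the main obstacle. I would handle it as in the standard urn-model proofs, for example \cite{Zhang2016} and \cite{cri-dai-lou-min}.
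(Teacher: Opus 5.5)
Your skeleton matches the paper's proof. You rescale by $\zeta_t(1-\delta)$, write $X^*_\infty-X^*_t$ as a tail sum, read off the drift tail from $t^{1-\rho}\rho_{t+1}\to\rho_\infty$, and get the martingale tail from the tail CLT of \cite{crimaldi-2009} (Theorem~\ref{fam_tri_vet_as_inf}), with the square-sum limit via Lemma~\ref{lem:Un_asymp} and Lemma~\ref{lemma-kro}. Treating the factor $1-(1-\delta)^2/(\theta+k+1)^2$ multiplicatively and absorbing the deterministic weights into the martingale increments is a harmless variant of the paper's $A_k$ and $C_{k+1}$ terms.

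Your Lyapunov bullet is confused, though. Since $\epsilon<\delta$, one has $2\delta>\delta+\epsilon$. So for $k\ge n$, $\bigl(n^{\delta/2}k^{-\delta}|\Delta M_{k+1}|\bigr)^4\le k^{-(\delta+\epsilon)}|\Delta M_{k+1}|^4$, and the $p=4$ case of \eqref{AA:eq:momento-M1} gives two things at once. First, $X_n^*\approx n^{\delta/2}\sup_{k\ge n}k^{-\delta}|\Delta M_{k+1}|\to0$ a.s. Second, $E[\sup_n (X_n^*)^4]<+\infty$. The latter is the $L^1$-domination hypothesis of Theorem~\ref{fam_tri_vet_as_inf}, which your list omits. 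Conversely, the conditional-variance limit is not a hypothesis there; it only feeds the square-sum limit.

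The genuine gap is the in-probability version of $(*)$ in (ii). Set $\eta_k=k^{\rho-\delta/2}(k^{1-\rho}\rho_{k+1}-\rho_\infty)$. What has to vanish is essentially $t^{\delta/2}\sum_{k\ge t}k^{-1-\delta/2}\eta_k$, and $\eta_k\stackrel{P}{\to}0$ gives no control of such a weighted tail. So the lemma you propose (weighted tail of an $o_P(k^{-(\rho-\delta/2)})$ sequence is $o_P(t^{-\delta/2})$) is false. For a counterexample, take:
\begin{itemize}
\item $\delta=1$, $1/2<\rho<1$, $\rho_\infty=0$;
\item $\Delta M_{k+1}$ i.i.d.\ standard Gaussian;
\item $\rho_{k+1}=k^{b-1/2}\xi_k$ (that is, $\eta_k=k^{b}\xi_k$) with $0<b<\rho-1/2$ and independent $\xi_k\sim\mathrm{Bernoulli}(1/\ln(k+2))$.
\end{itemize}
Every hypothesis of the theorem holds: $t^{1-\rho}\rho_{t+1}\to0$ a.s.\ and $(*)$ holds in probability. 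But $t^{1/2}\sum_{k\ge t}\rho_{k+1}/(\theta+k+1)$ has mean of order $t^{b}/\ln t$ and vanishing variance. Hence $t^{1/2}(X_t-X_\infty)\to-\infty$ in probability, and \eqref{eq-clt-compact-general} fails.

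The paper's proof makes the same unjustified step: it sums $o_P(k^{-(\rho-\delta/2)})$ terms into an $o_P(1)$. So this sub-claim genuinely needs a stronger hypothesis. Your fallback, $(*)$ almost surely, works. So does an $L^1$ form, $\sup_{k\ge t}E|\eta_k|\to0$, by Markov's inequality.
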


\begin{rem}[Comparison with the CLTs by L.-X. Zhang~\cite{Zhang2016}]
\rm 
The dynamics of the process $X^*_t=\zeta_t(1-\delta)X_t$, where $\zeta_t(x) \sim t^x$ is defined in~\eqref{eq-def-zeta}, 
is 
$$
X^*_{t+1}=X^*_t - \tfrac{(1-\delta)^2}{(\theta+t+1)^2}X^*_t
+\tfrac{\zeta_{t+1}(1-\delta)\Delta M_{t+1}}{\theta+t+1} +\tfrac{\zeta_{t+1}(1-\delta)}{\theta+t+1}\rho_t
$$
and, if we compare it with the dynamics (1.1) in~\cite{Zhang2016}, then, using the notation in that paper, we get $h(\cdot)=0$, 
a case which is excluded in~\cite{Zhang2016}. 
In addition, there is also another important difference. For $\delta<1$, we have by~\eqref{AA:eq:momento-convergence}
that $E[(\zeta_{t+1}(1-\delta)\Delta M_{t+1})^2|\mathcal{F}_t]$ tends a.s.\ to infinity and so assumption (2.4) in~\cite{Zhang2016} does not hold. 
Therefore Theorem~\ref{th-CLT-general} is not a consequence of the results in \cite{Zhang2016}.
\end{rem}

Before presenting the proof of Theorem~\ref{th-CLT-general}, we will report a  preliminary  result used in that proof. Indeed, a crucial point will be played by the following martingale: $\widehat{M}_0 = 0$ and, for any $t\geq0$,
\begin{equation}\label{eq:wideHat_M}
\widehat{M}_{t+1} = 
\sum_{k=0}^{t} \Delta \widehat{M}_{k+1},
\quad\hbox{with}\quad
\Delta \widehat{M}_{k+1}=\tfrac{r_k}{\theta + k + 1} \Delta M_{k+1}\,,
\end{equation}
where the sequence $(r_k)$ is defined as in \eqref{eq:r_t_appB}. The proof of the following  auxiliary result will be presented after the proof of the CLT (Theorem~\ref{th-CLT-general}).

\begin{thm}\label{AA:lem:Delta_nToGauss}
Given the dynamics~\eqref{eq-general-dyn-real}, let 
$\delta_0$ and $(b_t)_t$ defined as in~\eqref{eq:delta0_b_t} and 
$(\widehat{M}_t)_t$ as in \eqref{eq:wideHat_M}. Assume that,  
 for $p=2$ and $p=4$ and  some $0<\epsilon<\delta_0$, we have 
\begin{equation}\label{AA:eq:momento-p}
E\Big[ \sum_t t^{p\delta_0} 
\frac{|\Delta \widehat{M}_{t+1}|^p}{t^{\delta_0+\epsilon}}
\Big] 
<+\infty
\end{equation}
and 
\begin{equation} \label{AA:eq:momento-convergence-r}
\frac{(\theta+t+1)^2}{r_t}E\Big[ (\Delta \widehat{M}_{t+1})^2|\mathcal{F}_{t}\Big]= 
r_t E\Big[ (\Delta {M}_{t+1})^2|\mathcal{F}_{t}\Big] 
\stackrel{a.s.}\longrightarrow V^*_\infty
\end{equation}
for some real (${\mathcal F}_\infty$-measurable) random variable $V^*_\infty$. 
Then, $(\widehat{M}_t)_t$ is a martingale bounded in $L^4$ and so it converges a.s.\ and in 
$L^4$ to a real random variable $\widehat{M}_\infty$ with $E[\widehat{M}_\infty^4]<+\infty$.  Moreover, we have 
$$\sqrt{b_t}(\widehat{M}_t-\widehat{M}_\infty)\rightarrow {\mathcal N}(0,V^*_\infty/\delta_0)\,,
$$  where 
the convergence is in the sense of the a.s. conditional convergence with respect to the filtration~$\mathcal F$ (and so also in the sense of the stable convergence). 
\end{thm}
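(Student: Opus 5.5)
We plan to prove the statement in two stages: first the $L^4$-boundedness of $(\widehat{M}_t)_t$, then the CLT for the tail $\widehat{M}_\infty-\widehat{M}_t$. For the tail we will use the known criterion for the a.s.\ conditional convergence of tails of convergent martingales (see \cite{crimaldi-2009}). It states that if $b_t\uparrow+\infty$ and
\[
\text{(a)}\ E\Big[\sup_{k\geq t}\sqrt{b_t}\,|\Delta\widehat{M}_{k+1}|\Big]\to 0
\qquad\text{and}\qquad
\text{(b)}\ b_t\sum_{k\geq t}(\Delta\widehat{M}_{k+1})^2\stackrel{a.s.}\longrightarrow V^*_\infty/\delta_0 ,
\]
then $\sqrt{b_t}(\widehat{M}_t-\widehat{M}_\infty)\to\mathcal{N}(0,V^*_\infty/\delta_0)$ in the a.s.\ conditional sense, and hence stably. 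Throughout, $b_k=(\theta+k)/r_k$ behaves like $k^{\delta_0}$, possibly times $\ln k$ when $\delta=\rho$, by \eqref{eq-zeta-01}. So \eqref{AA:eq:momento-p} amounts to $\sum_t t^{(p-1)\delta_0-\epsilon}E|\Delta\widehat{M}_{t+1}|^p<+\infty$ for $p=2,4$; in the logarithmic case the extra $\ln$ factors are absorbed by shrinking $\epsilon$.

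\emph{Step 1 ($L^4$ bound).} By the Burkholder (Rosenthal-type) inequality,
\[
E[\widehat{M}_t^4]\leq C\Big(E\Big[\Big(\sum_k E[(\Delta\widehat{M}_{k+1})^2\mid\mathcal{F}_k]\Big)^2\Big]+\sum_k E[(\Delta\widehat{M}_{k+1})^4]\Big).
\]
The second sum is finite by \eqref{AA:eq:momento-p} with $p=4$, since $3\delta_0-\epsilon>0$. For the first sum we apply Cauchy--Schwarz with weights $k^{\pm(1+\epsilon')/2}$ and conditional Jensen, $E[(\Delta\widehat{M}_{k+1})^2\mid\mathcal{F}_k]^2\le E[(\Delta\widehat{M}_{k+1})^4\mid\mathcal{F}_k]$. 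This bounds it by a constant times $\sum_k k^{1+\epsilon'}E[(\Delta\widehat{M}_{k+1})^4]$. When $1+\epsilon'\le 3\delta_0-\epsilon$ this is finite. Otherwise we will instead split the square of the conditional-variance sum into diagonal and off-diagonal parts and use the $p=2$ bound on the off-diagonal terms; this is the point where the exact weights need care. Doob's theorem then gives a.s.\ and $L^4$ convergence to $\widehat{M}_\infty$.

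\emph{Step 2 (condition (a)).} Since $b_t\le b_k$ eventually for $k\ge t$, we have $E[\sup_{k\ge t}b_t^2(\Delta\widehat{M}_{k+1})^4]\le\sum_{k\ge t}b_k^2E[(\Delta\widehat{M}_{k+1})^4]$. This is the tail of a convergent series by \eqref{AA:eq:momento-p} with $p=4$, because $2\delta_0\le 3\delta_0-\epsilon$. Hence (a) follows by Lyapunov's inequality.

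\emph{Step 3 (condition (b), the main step).} Split $(\Delta\widehat{M}_{k+1})^2=E[(\Delta\widehat{M}_{k+1})^2\mid\mathcal{F}_k]+\xi_{k+1}$, where $\xi_{k+1}$ is a martingale difference.
\begin{itemize}
\item[(i)] For the predictable part, write $E[(\Delta\widehat{M}_{k+1})^2\mid\mathcal{F}_k]=\frac{r_k}{(\theta+k+1)^2}\,v_k$ with $v_k=r_kE[(\Delta M_{k+1})^2\mid\mathcal{F}_k]\to V^*_\infty$ by \eqref{AA:eq:momento-convergence-r}. By the first relation of Lemma~\ref{lem:Un_asymp}, the weights $b_t\,r_k/(\theta+k+1)^2$ over $k>t$ are nonnegative with total mass tending to $1/\delta_0$. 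A Toeplitz-type argument then gives $b_t\sum_{k>t}\frac{r_k}{(\theta+k+1)^2}v_k\to V^*_\infty/\delta_0$ a.s.
\item[(ii)] For the martingale part, $\sum_k b_k\xi_{k+1}$ converges a.s.\ because $\sum_k b_k^2E[\xi_{k+1}^2]\le\sum_k b_k^2E[(\Delta\widehat{M}_{k+1})^4]<+\infty$. Since $b_t$ is eventually increasing to $+\infty$, the tail version of Kronecker's lemma gives $b_t\sum_{k>t}\xi_{k+1}\to 0$ a.s.
\end{itemize}
Combining (i) and (ii) yields (b), and the criterion concludes.

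I expect the main obstacle to be the identification of the variance in Step 3(i) in the logarithmic case $\delta=\rho$. There the normalisation $r_k=\zeta_k(1-\delta_0)/\ln(\theta+k)$ makes the tail sums only asymptotically of order $1/b_t$, so the precise constant $1/\delta_0$ from Lemma~\ref{lem:Un_asymp} is essential.
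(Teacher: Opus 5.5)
Your Steps 2 and 3 follow the paper's argument. The paper applies Theorem~\ref{fam_tri_vet_as_inf} to $M_{n,t}=\sqrt{b_n}(\widehat M_n-\widehat M_{n+t-1})$ and controls $X_n^*$ through $\sum_t b_t^2|\Delta\widehat M_{t+1}|^4\in L^1$. The logarithm is absorbed by the slack $\delta_0-\epsilon>0$, i.e.\ $b_t^2=O(t^{3\delta_0-\epsilon})$, not by shrinking $\epsilon$. It then gets $U_n\to V^*_\infty/\delta_0$ from part b) of Lemma~\ref{lemma-kro} with $Y_k=r_k(\Delta M_{k+1})^2$, together with Lemma~\ref{lem:Un_asymp}. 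Your Toeplitz plus tail-Kronecker argument is just the proof of that lemma.

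One correction: for a.s.\ conditional convergence the criterion requires $X_n^*\to0$ a.s.\ and $(X_n^*)_n$ dominated in $L^1$, not only $E[X_n^*]\to0$. Your estimate gives both pathwise, since $\sup_{k\ge t}b_t^2|\Delta\widehat M_{k+1}|^4\le\sum_{k\ge t}b_k^2|\Delta\widehat M_{k+1}|^4$.

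The real gap is Step 1 when $3\delta_0-\epsilon\le 1$. Your fallback cannot work: the off-diagonal terms are $E[\sigma_j^2\sigma_k^2]$ with $\sigma_k^2=E[(\Delta\widehat M_{k+1})^2\mid\mathcal F_k]$. These are moments of products of dependent variables, while the $p=2$ hypothesis only controls $E[\sigma_k^2]$.

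In fact the $L^4$ claim itself can fail under the stated hypotheses. Take $\delta=0.1$, $\rho=0$, $\epsilon=0.05$ and an $\mathcal F_0$-measurable $N$ with $P(N=n)\propto n^{-2}$. Take Rademacher signs $\varepsilon_{k+1}$ with $\varepsilon_{k+1}$ independent of $\mathcal F_k$, and set $\Delta\widehat M_{k+1}=N^{-1/4}\ind_{\{N\le k\le 2N\}}\varepsilon_{k+1}$. Then:
\begin{itemize}
\item the $p=4$ and $p=2$ sums in \eqref{AA:eq:momento-p} are bounded by constant multiples of $E[N^{1/4}]$ and $E[N^{0.55}]$, both finite;
\item condition \eqref{AA:eq:momento-convergence-r} holds with $V^*_\infty=0$;
\item yet $E[\widehat M_\infty^4\mid N]\ge N$, so $E[\widehat M_\infty^4]=+\infty$.
\end{itemize}

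The paper's proof just asserts that the $p=4$ condition yields $L^4$-boundedness. Via Burkholder and Cauchy--Schwarz (your first case), this is justified only when $3\delta_0-\epsilon>1$.

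Nothing else depends on it. The $p=2$ condition gives $\sum_t E[(\Delta\widehat M_{t+1})^2]<+\infty$, hence a.s.\ and $L^2$ convergence of $(\widehat M_t)_t$. That is all Theorem~\ref{fam_tri_vet_as_inf} and the proof of Theorem~\ref{th-CLT-general} actually use. So prove the $L^4$ statement only under $3\delta_0-\epsilon>1$, or under an extra hypothesis such as $E[(\sum_k\sigma_k^2)^2]<+\infty$, and run the CLT part on $L^2$ alone.
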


We can now present the proof of the CLT.

\begin{proof}[Proof of Theorem~\ref{th-CLT-general}]
Let  us consider the process $X^*=(X^*_t)_t$ defined as 
$X^*_t = \zeta_t(1-\delta)X_t$, where $\zeta_t(x) \sim t^x$ is defined in~\eqref{eq-def-zeta}, 
so that $X^*_t\stackrel{a.s.}\sim t^{1-\delta}X_t\stackrel{a.s.}\to X^*_\infty$ by assumption. 
Starting from dynamics~\eqref{eq-general-dyn-real} and 
recalling that $r_k=\zeta_k(1-\delta)$ (by assumption $\delta>\rho$), we can follow the analogous calculations as in~\eqref{eq:X_star} to get 
\begin{equation}\label{eq-sum}
X^*_{k+1}-X^*_k = A_k+B_{k+1}+C_{k+1}+D_{k+1}\,,
\end{equation}
where
$$\left\{\begin{aligned}
A_k&=-\Big(\tfrac{1-\delta}{\theta+k+1}\Big)^2 X^*_k\quad\mbox{(which is $=0$ if $\delta=1$)}\,;\\
B_{k+1}&=\tfrac{\zeta_{k+1}(1-\delta)\rho_{k+1}}{\theta+k+1}\,;\\
C_{k+1}&=\Big(\tfrac{\zeta_{k+1}(1-\delta)}{\zeta_k(1-\delta)}-1\Big)\Delta \widehat{M}_{k+1}\quad\hbox{(which is $=0$ if $\delta=1$)}\,;\\
D_{k+1}&=\Delta \widehat{M}_{k+1}\,.
\end{aligned}\right.$$
Note that $\sum_{k\geq t}(X_{k+1}^*-X^*_k)=(X^*_\infty-X^*_t)$. Therefore, it is enough to study the asymptotic behavior of the four terms $\sum_{k\geq t}A_k,\,\sum_{k\geq t}B_{k+1}$, $\sum_{k\geq t}C_{k+1}$ and $\sum_{k\geq t} D_{k+1}$.

Let us start with the last term $\sum_{k\geq t}D_{k+1}$.
First of all we observe that, since $\delta>\rho$ by assumption, we have $r_t\sim t^{1-\delta}$ and so  
$$
E[\,|\Delta \widehat{M}_{t+1}|^p\,]  = 
E[\,|\Delta M_{t+1}|^p\,] \tfrac{r_t^p}{(\theta+t+1)^p} = 
E[\,|\Delta M_{t+1}|^p\,]O(t^{-p\delta})\,.
$$
Hence, relation \eqref{AA:eq:momento-p} with $\delta_0=\delta$ 
follows from~\eqref{AA:eq:momento-M1}. Moreover, condition~\eqref{AA:eq:momento-convergence} equals condition~\eqref{AA:eq:momento-convergence-r}.  
It follows by Theorem~\ref{AA:lem:Delta_nToGauss} and the fact that $b_t\sim t^\delta$, that we have $\widehat{M}_t\stackrel{a.s.}\longrightarrow \widehat{M}_\infty$ for a suitable real random variable $\widehat{M}_\infty$ and 
\[
t^{\delta/2} \sum_{k\geq t} D_{k+1}=t^{\delta/2}\sum_{k\geq t} \Delta \widehat{M}_{k+1} =
t^{\delta/2}(\widehat{M}_\infty-\widehat{M}_t)
\stackrel{stably}\longrightarrow {\mathcal N}\Big(0,V^*_\infty/\delta\Big)\,,
\]
where the convergence is also in the sense of the a.s. conditional convergence with respect to the filtration~$\mathcal F$.\\
\indent 
Let us now focus on the term $\sum_{k\geq t}C_{k+1}$ when $\delta<1$.
Let us define $Y_{k}=\zeta_{k}(1-\delta)\Delta M_{k+1}$ and $\mathcal{G}_k=\mathcal{F}_{k+1}$, then $E[Y_{k}\mid\mathcal{G}_{k-1}] = 0$.
Now, let $\tilde{a}_k=k^{-\delta}(\theta+k+1)^2$ and $\tilde{b}_k=k^{\delta/2}$. Then 
$\tilde{b}_k\uparrow +\infty$ and
\begin{equation*}
\begin{split}
\sum_{k=1}^{\infty}\tfrac{E[Y_{k}^2]}{\tilde{a}_k^2\tilde{b}_k^2}
&=
\sum_{k=1}^{\infty}\tfrac{\zeta_k(1-\delta)^2}{\tilde{a}_k^2\,\tilde{b}_k^2} E[(\Delta M_{k+1})^2]
=
\sum_{k=1}^{\infty}\tfrac{\zeta_k(1-\delta)^2(\theta+k+1)^2}{\tilde{a}_k^2\,\tilde{b}_k^2\,r_k^2}E[(\Delta \widehat{M}_{k+1})^2] \\
&=\sum_{k=1}^{\infty}\tfrac{\zeta_k(1-\delta)^2(\theta+k+1)^2}{\tilde{a}_k^2\,\tilde{b}_k^2\,\zeta_k(1-\delta)^2}E[(\Delta \widehat{M}_{k+1})^2] 
=\sum_{k=1}^{\infty}
\tfrac{k^{\delta}}{(\theta+k+1)^2}E[(\Delta \widehat{M}_{k+1})^2]\\
&\leq 
\sum_{k=1}^{\infty} k^{2\delta}\tfrac{E[(\Delta \widehat{M}_{k+1})^2]}{k^{\delta+\epsilon}}
\,,
\end{split}
\end{equation*}
which is finite, by \eqref{AA:eq:momento-p} with $p=2$, $\delta_0=\delta$  and some $0<\epsilon< \delta$.
Since $\tilde{b}_t\sum_{k\geq t} \tfrac{1}{\tilde{a}_k\tilde{b}_k^2} \to 0$, then by Lemma~\ref{lemma-kro}, we obtain 
\begin{multline*}
t^{\delta/2}\sum_{k\geq t} C_{k+1}=
t^{\delta/2} \sum_{k\geq t} \Big(\tfrac{\zeta_{k+1}(1-\delta)}{\zeta_{k}(1-\delta)}-1\Big) \Delta \widehat{M}_{k+1} =
t^{\delta/2} \sum_{k\geq t} \tfrac{1-\delta}{\theta+k+1} \Delta \widehat{M}_{k+1} \\
= (1-\delta) t^{\delta/2} \sum_{k\geq t} \tfrac{\zeta_{k}(1-\delta)\Delta M_{k+1}}{(\theta+k+1)^2}  
= (1-\delta) \tilde{b}_t\sum_{k\geq t} \tfrac{Y_{k}}{\tilde{a}_k\,\tilde{b}_k^2}\stackrel{a.s.}
\longrightarrow 0\,.
\end{multline*}
Let us now focus on the term $\sum_{k\geq t} A_k$ when $\delta<1$. 
Since $X^*_t$ converges a.s. to a real random variable $X^*_\infty$, we have 
\[
t^{\delta/2} \sum_{k\geq t} A_k=
t^{\delta/2} \sum_{k\geq t} \Big(\tfrac{1-\delta}{\theta+k+1}\Big)^2 X^*_k = 
 t^{\delta/2} \sum_{k\geq t} \tfrac{O(1)}{k^{1+\delta}}\stackrel{a.s.}\longrightarrow 0\,.
\]
Finally, regarding the term $\sum_{k\geq t}B_{k+1}$, since~\eqref{eq-zeta-01}, we have
\begin{equation}\label{eq:rho_t}
    {t}^{\delta-\rho}\zeta_{k+1}(1-\delta)\rho_{k+1} = \rho_\infty + o(1)\,,
\end{equation}
and hence
\begin{equation*}
\sum_{k\geq t} B_{k+1}=
\sum_{k\geq t} \tfrac{\zeta_{k+1}(1-\delta)\rho_{k+1}}{\theta+k+1} = 
\sum_{k\geq t} \tfrac{\rho_\infty}{k^{1+\delta-\rho}} + o\Big(\tfrac{1}{t^{\delta-\rho}}\Big) \sim \tfrac{\rho_{\infty}}{(\delta-\rho)} \tfrac{1}{t^{\delta-\rho}}\,,
\end{equation*}
so that
\[
\begin{aligned}
t^{\delta/2} \sum_{k\geq t}B_{k+1}
=
\tfrac{t^{\delta/2}}{t^{\delta-\rho}}\, t^{\delta-\rho} \sum_{k\geq t} B_{k+1}
& \stackrel{a.s.}\longrightarrow
\begin{cases}
    0 & \text{if $\rho<\delta/2$}\,;
    \\
    \rho_\infty/\rho=2\rho_\infty/\delta  & \text{if $\rho = \delta/2$}\,;
\end{cases}
\\
 t^{\delta-\rho} \sum_{k\geq t} B_{k+1} 
& \stackrel{a.s.}\longrightarrow
\tfrac{\rho_\infty}{\delta-\rho}\quad \text{if $\rho > \delta/2$}\,.
\end{aligned}
\]
Recalling that the stable convergence combines well with convergence in probability (e.g. Theorem 2.4.2 in \cite{crimaldi-libro}), and so with the a.s.\ convergence,  
and the a.s. conditional convergence combines well with the a.s. convergence (see Lemma~\ref{lem-new-as-cond-conv}), 
we are now ready to conclude:
\begin{itemize}
\item[(i)] if $\rho\leq \delta/2$, then, we have to combine the stable convergence  and the a.s.\ conditional convergence of the suitable rescaled sequence 
$\sum_{k\geq t} D_{k+1}=(\widehat{M}_\infty-\widehat{M}_t)$ with the a.s.\ convergences of the suitably rescaled sequences 
$\sum_{k\geq t} A_k$ (only when $\delta<1$), $\sum_{k\geq t} B_{k+1}$  and $\sum_{k\geq t} C_{k+1}$ (only when $\delta<1$) so that we obtain~\eqref{eq-clt-1} 
(with $X^*_t$ instead of $t^{1-\delta}X_t$), in the sense of the stable convergence and in the sense of  the a.s. conditional convergence with respect to $\mathcal F$. 
\item[(ii)] if $\delta/2 < \rho < \delta$, then the leading term is $\sum_{k\geq t} B_{k+1}$ and the a.s.\ convergence of $t^{\delta-\rho} \sum_{k\geq t} B_{k+1}$ 
has to be combined with the a.s.\ limit results for the other terms involving $A_k$ and $C_{k+1}$ (only when $\delta<1$) and, above all, with 
the convergence in probability to zero of 
$\tfrac{t^{\delta-\rho}}{t^{\delta/2}}t^{\delta/2}(\widehat{M}_\infty-\widehat{M}_t)$, where $\tfrac{t^{\delta-\rho}}{t^{\delta/2}}=t^{-(\rho-\delta/2)}\to 0$.
\end{itemize}
Furthermore, the \eqref{eq-clt-compact-general} (with $X^*_t$ instead of $t^{1-\delta}X_t$) is obvious when $\rho\leq \delta/2$, 
because the stable convergence and the a.s.\ conditional convergence combine well with the a.s.\  convergence of 
$t^{-(\delta/2-\rho)}\rho_\infty/(\delta-\rho)$ to zero when $\rho<\delta/2$ and to $2\rho_\infty/\delta$ when $\rho=\delta/2$. 
In the case $\delta/2<\rho<\delta$, if we assume $(*)\; t^{\rho-\delta/2}(t^{1-\rho}\rho_{t+1}-\rho_\infty)\stackrel{P}\longrightarrow 0$, then,
equation~\eqref{eq:rho_t} becomes
$${k}^{\delta-\rho}\zeta_{k+1}(1-\delta)\rho_{k+1} = \rho_\infty + o_P(k^{-(\rho-\delta/2)})\,,$$
so that 
$$
t^{\delta/2}
\Big(
 \sum_{k\geq t} B_{k+1}
-  \tfrac{\rho_\infty}{(\delta-\rho)}\tfrac{1}{t^{\delta-\rho}}
\Big)=o_P(1) \stackrel{P}\longrightarrow 0
$$
and this convergence in probability combines well with the stable convergence of $t^{\delta/2}\sum_{k\geq t} D_{k+1}=t^{\delta/2}(\widehat{M}_\infty-\widehat{M}_t)$. 
If the additional assumed convergence $(*)$ is a.s., then we can replace $o_P()$ with $o()$ and the above limit relation is in the sense of a.s. convergence and it combines well with the a.s. conditional converge of $t^{\delta/2}\sum_{k\geq t} D_{k+1}$.\\

Finally, we can obtain the results for $t^{1-\delta} X_t$ from those proven for $X^*_t$, because of~\eqref{eq-zeta-01} with $x=1-\delta$.
\end{proof}

\begin{rem}[Sufficient conditions for the a.s.\ convergence of $t^{1-\delta} X_t$] \label{rem-conv}
\rm Consider $X^*_t = \zeta_t(1-\delta)X_t$, where $\zeta_t(x) \sim t^x$ is defined in~\eqref{eq-def-zeta}. 
From \eqref{eq-sum}, if we can take the conditional expectation, we get
$$
E[X_{t+1}^*\mid{\mathcal F}_t] -X_t^*=A_t+\widetilde{B}_t\,,
$$
with $\widetilde{B}_t=E[B_{t+1}|{\mathcal F}_t]$. 
Then, if the process $X$ (and so $X^*$) is non-negative, we get that $A_t\leq 0$ for each $t$ and  $X^*$ results to be a non-negative almost martingale which converges a.s.\ when $\sum_t |\widetilde{B}_t|<+\infty$ a.s.\ (as it is, for instance, when $\rho<\delta$ and $\rho_{t+1}$ is ${\mathcal F}_t$-measurable so that $\widetilde{B}_t=B_{t+1}=O(1/t^{1+\delta-\rho})$). \\ 
\indent In general, if  
$\sum_t E[\, |A_t|\, ]+\sum_t E[\,|\widetilde{B}_t|\,]<+\infty$, the process $X^*$ is a quasi-martingale and a sufficient condition for its a.s. convergence is $\sup_t E[\,|X^*_t|\,]<+\infty$ (which implies $\sum_t E[\,|A_t|\,]<+\infty$). Summing up, $\sup_t E[\,|X^*_t|\,]<+\infty$ and $\sum_t E[\,|B_{t+1}|\,]<+\infty$ (which implies $\sum_t E[\,|\widetilde{B}_t|\,]<+\infty$) are sufficient conditions for the a.s.\ convergence of $X^*_t$. 
\end{rem}

\begin{proof}[Proof of Theorem~\ref{AA:lem:Delta_nToGauss}] 
First of all we observe that relation \eqref{AA:eq:momento-p}  with $p=4$ implies that 
$( \widehat{M}_t)_t$ is a martingale bounded in $L^4$ and so it converges
a.s.\ and in $L^4$ (in particular, in mean) to a real (${\mathcal F}_\infty$-measurable) random variable $\widehat{M}_\infty$ 
with $E[\widehat{M}_\infty^4]<+\infty$. 
The sequel of the proof is based on Theorem~\ref{fam_tri_vet_as_inf}. 
To this aim, 
for any $n\geq 1$, define 
$M_{n,0}=0$ and 
$M_{n,t} = \sqrt{b_n} (\widehat{M}_{n}-\widehat{M}_{n+t-1})$ for $t\geq 1$ 
(which is $\mathcal{F}_{n+t-1}$-measurable). We have $M_{n,t}\stackrel{L^1}\longrightarrow \sqrt{b_n}(\widehat{M}_n-\widehat{M}_\infty)$ as $t\to +\infty$.   
Moreover, we have  $X_{n,1}=M_{n,1}-M_{n,0}=0$ and, for each $t\geq 2$ 
\[
X_{n,t} = M_{n,t}-M_{n,t-1}=
\sqrt{b_n} (\widehat{M}_{n+t-2}-\widehat{M}_{n+t-1})\,, 
\]
so that 
\[
U_{n} = b_n \sum_{t\geq n} (\widehat{M}_{t+1}-\widehat{M}_t)^2 , \quad 
X_{n}^* = \sqrt{b_n} \sup_{t\geq n} |\widehat{M}_{t+1}-\widehat{M}_t|\,.
\]
\emph{First step: $ X_n^* \stackrel{a.s.}\longrightarrow 0$: } From \eqref{AA:eq:momento-p}, with 
$p=4$ and some $0<\epsilon < \delta_0$, we have 
\begin{equation}\label{AA:eq:inProof_mbound}
\begin{split}
E\Big[\sum_{t} (\sqrt{b_t})^4 |\Delta \widehat{M}_{t+1}|^{4} \Big]
&\leq E\Big[\sum_{t} t^{2\delta_0+(\delta_0-\epsilon)} |\Delta \widehat{M}_{t+1}|^{4} \Big]\\
&=
E\Big[\sum_{t} t^{p\delta_0} \tfrac{|\Delta \widehat{M}_{t+1}|^{p}}{t^{\delta_0+\epsilon}} \Big]
<+\infty\,,
\end{split}
\end{equation}
which implies $\sum_{t} (\sqrt{b_t})^4 (\Delta \widehat{M}_{t+1})^{4}<+\infty$ a.s.. Then we have 
$$
(X_n^*)^{4}\leq \sup_{t\geq n} (\sqrt{b_t})^4 |\Delta \widehat{M}_{t+1}|^{4}\leq 
\sum_{t\geq n} (\sqrt{b_t})^4 |\Delta \widehat{M}_{t+1}|^{4}\stackrel{a.s.}\to 0\quad\mbox{as } n\to +\infty\,.
$$
\emph{Second step: $(X_n^*)_n$ dominated in $L^1$:} 
We are going to show that $(X_n^*)_n$ is dominated in $L^4$. In fact, with $p=4$ and some $0<\epsilon<\delta_0$, we have 
\begin{multline*}
E\Big[(\sup_n X_n^*)^{4}\Big]\leq E\Big[\sup_n \sup_{t\geq n} (\sqrt{b_t})^4 |\Delta \widehat{M}_{t+1}|^{4}\Big]
\\
\leq E\Big[ \sum_t (\sqrt{b_t})^4 |\Delta \widehat{M}_{t+1}|^{4}\Big] < +\infty
\end{multline*}
as shown in \eqref{AA:eq:inProof_mbound} in the first step.
\\
\indent \emph{Third step: $U_n \stackrel{a.s.}\longrightarrow V^*_\infty/\delta_0$:} Let $a_k$ as defined in~\eqref{eq:delta0_b_t} (so that $a_kb_k^2=(\theta+k+1)^2/r_k$) and set 
$Y_{k} = a_kb_k^2 (\Delta \widehat{M}_{k+1})^2=r_k(\Delta M_{k+1})^2$. Then,  we have  $E[Y_{k}\mid\mathcal{F}_{k-1}]\to V^*_\infty$ by assumption~\eqref{AA:eq:momento-convergence-r}.
Moreover, we have $b_k\uparrow +\infty$ and, by \eqref{AA:eq:momento-p} with $p=4$ and some $0<\epsilon<\delta_0$, we get 
$$ 
\sum_{k=1}^{\infty}\tfrac{E[Y_k^2]}{a_k^2b_k^2}=
\sum_{k=1}^{\infty}b_k^2E[(\Delta\widehat{M}_{k+1})^4]
\leq C \sum_{k=1}^{\infty} k^{4\delta_0} \tfrac{E[(\Delta \widehat{M}_{k+1})^4]}{k^{\delta_0+\epsilon}}<+\infty\,.
$$
Since $b_t\sum_{k\geq t} \tfrac{1}{a_kb_k^2}\to \tfrac{1}{\delta_0}$ by the  technical Lemma~\ref{lem:Un_asymp}, then 
$
U_t = b_t\sum_{k\geq t} \tfrac{Y_k}{a_kb_k^2}\stackrel{a.s.}
\longrightarrow V^*_\infty/\delta_0
$
by Lemma~\ref{lemma-kro}. 
\end{proof}

\section{Proof of the results in Section~\ref{results-mean}}

We here collect the proofs for the results presented in Section~\ref{results-mean} regarding the predictive mean $Z_t$ and the averaged quantities $\overline{T}_t$, 
$\overline{P}_t$ and $\overline{K}_t$.

\subsection{First-order asymptotic results}\label{sec:proofs_first_order_average}

\begin{proof}[Proof of Theorem~\ref{th-Z-T-medio}] In the case $w > \beta$ the term involving $\lambda_t$ in the dynamics~\eqref{eq-diff-Z} of $(Z_t)_t$ behaves essentially as a remainder and hence we can focus on the process $S_t=\sum_{j\in {\mathcal O}_t} P_{t,j}=Z_t-\lambda_t$, so that
from \eqref{eq-diff-Z} we get  
  \begin{equation*}
  \begin{split}
      S_{t+1}-S_t&=Z_{t+1}-Z_t-(\lambda_{t+1}-\lambda_t)\\
  &=w\tfrac{T_{t+1}-Z_t}{\theta+t+1}-(1-w)\tfrac{Z_t}{\theta+t+1}
+  \tfrac{\lambda_t}{\theta+t+1}\\
  &=-\tfrac{Z_t}{\theta+t+1}+w\tfrac{T_{t+1}}{\theta+t+1}
  +\tfrac{\lambda_t}{\theta+t+1}
  =-\tfrac{S_t}{\theta+t+1}+\tfrac{wT_{t+1}}{\theta+t+1}\,.
  \end{split}
  \end{equation*}
  The above dynamics for $S_t$ can be written as
\begin{equation}\label{eq-dyn-S}
  S_{t+1}
  =\big(1-\tfrac{1}{\theta+t+1}\big)S_t+w\tfrac{T_{t+1}}{\theta+t+1}
  =\big(1-\tfrac{1-w}{\theta+t+1}\big)S_t+\tfrac{\Delta M_{t+1}}{\theta+t+1}+
  \tfrac{w\lambda_t}{\theta+t+1}\,,
\end{equation}
  with $\Delta M_{t+1}=w(T_{t+1}-Z_t)$.
  Denoting $S^{*}_{t}=\zeta_{t}(1-w)S_t$, where $\zeta_t(x)\sim t^x$ is defined in~\eqref{eq-def-zeta}, we can obtain
 \begin{equation*}
  S^{*}_{t+1}=\big(1-\tfrac{(1-w)^2}{(\theta+t+1)^2}\big)S^{*}_t
  +\tfrac{\zeta_{t+1}(1-w)\Delta M_{t+1}}{\theta+t+1} +\tfrac{w\zeta_{t+1}(1-w)\lambda_t}{\theta+t+1}\,,
  \end{equation*}
  and so, by the classical Theorem~\ref{th-almost-supermart}, the process $(S^{*}_t)_t$ is a non-negative almost super-martingale that converges a.s.\ to a finite r.v. $S^{*}_\infty$. However, 
  this is not enough for our scopes. Indeed, in that way, we do not know that  such random limit $S^{*}_\infty$ is a.s.\ strictly positive, and this is crucial for proving the right rate of convergence. 
  For this reason we are going to apply the more sophisticated Theorem~\ref{th-general} with $X_t=S_t$, $Y_{t+1}=wT_{t+1}$ and $\delta=w\in (0,1]$. 
  To this end, we observe that we have 
  $$
  \sum_t Y_{t}=w\sum_t T_t\geq w \sum_t N_t=w\sup_t \sum_{n=1}^t N_n=
  w\sup_t D_t=+\infty\quad\mbox{a.s.}\,
  $$
  (by~\eqref{eq-D-as}) and 
  $$
E[Y_{t+1}\mid {\mathcal F}_t] = E[w T_{t+1}\mid {\mathcal F}_t]=w Z_t=w S_t + 
w \lambda_t=\delta X_t +\rho_t\,,
  $$
  with $\rho_t=w\lambda_t>0$ and 
$$
  \sum_t \tfrac{\rho_t}{(\theta+t)^{\delta-\epsilon}}=
  O\big( \textstyle{\sum_t} \tfrac{1}{(\theta+t)^{1+w-(\beta+\epsilon)}}\big)<+\infty
  $$ 
  for some $\epsilon>0$ such that $w-(\beta+\epsilon)>0$ (note that this $\epsilon$ exists  because $w>\beta$ by assumption). 
  Therefore, by Theorem~\ref{th-general}, we firstly obtain that $S_t=O(1/t^{1-w})$ and so 
  $Z_t=S_t+\lambda_t=O(1/t^{1-w})+O(1/t^{1-\beta})=O(1/t^{1-w})$ because $\beta<w$. 
Hence,   by~\eqref{eq-sp-quad-T}, we get  
  $$
  E[Y_{t+1}^2\mid{\mathcal F}_t]=w^2(Z_t+Z_t^2 -R_t) 
  =O(Z_t)
  =O(E[Y_{t+1}\mid {\mathcal F}_t])\,.
  $$ 
Therefore, we can apply Theorem~\ref{th-general} (case $i)$) and we obtain that 
  $$
  t^{1-w}S_t\stackrel{a.s.}\longrightarrow S_\infty^*\in (0,+\infty)\,.
  $$
  As before, since $Z_t=S_t+\lambda_t$ and $\beta<w$, we get 
  $$
  t^{1-w} Z_t= t^{1-w} S_t +t^{1-w}\lambda_t \stackrel{a.s.}\longrightarrow S_\infty^*
  $$
  and so the first convergence  holds true with $Z_\infty^*=S^*_\infty$. 
\end{proof}

\begin{proof}[Proof of Theorem~\ref{th-Z-T-medio-altro-caso}] 
In the case $w \leq \beta$ the term involving $\lambda_t$ in the dynamics~\eqref{eq-diff-Z} of $(Z_t)_t$ 
cannot be considered as a remainder term as in the proof of Theorem~\ref{th-Z-T-medio}. Indeed, letting 
\begin{equation*}
Z^*_t = r_t Z_t
\qquad\mbox{with}\qquad
r_t=\begin{cases}
\zeta_t(1-\beta)\quad &\mbox{when } w<\beta\,,\\ 
\tfrac{\zeta_t(1-\beta)}{\ln(\theta+t)} \quad &\mbox{when } w=\beta\,,
\end{cases}
\end{equation*}
where $\zeta_t(x) \sim t^x$ is defined in~\eqref{eq-def-zeta}.
From \eqref{eq-diff-Z}, taking into account \eqref{eq-zeta-1} and~\eqref{eq-zeta-3},  we get 
\begin{equation}\label{eq:dynamics_Z_star}
    Z^*_{t+1} - Z^*_{t} = \eta_{t+1}(\alpha\beta-bZ^*_{t}) +
\eta_{t+1}\Delta M^*_{t+1}+ \rho_{t+1}\,,
\end{equation} 
where $\rho_{t+1}=Z^*_t O(\tfrac{1}{t^2}) + O(\tfrac{1}{t^{1+\beta}})$ (here $O(\tfrac{1}{t^2})$ and $O(\tfrac{1}{t^{1+\beta}})$ are deterministic terms and $\beta>0$), $\Delta M^*_{t+1}=w\,\zeta_{t+1}(1-\beta)(T_{t+1}-Z_t)$ and
\begin{equation*}
\begin{cases}
b=\beta-w\quad\mbox{and}\quad \eta_{t+1}=\tfrac{1}{(\theta+t+1)}    
    \quad&\mbox{if } w< \beta\,;
\\
b=1\quad\mbox{and}\quad \eta_{t+1}=\tfrac{1}{(\theta+t+1)\ln(\theta+t+1)}  
    \quad & \mbox{if } w= \beta\,.
\end{cases}
\end{equation*}
This suggests that $(Z^{*}_t)_t$ is not a non-negative almost super-martingale, as in the case $w>\beta$. Instead, it evolves according to the stochastic approximation dynamics in~\eqref{eq:dynamics_Z_star}, which suggests convergence toward a deterministic limit given by $\alpha\beta/b$. For this reason, we apply Theorem~\ref{thm:app-cor} to the process $(Z^{*}_t)_t$ governed by~\eqref{eq:dynamics_Z_star}.
To this end, note that from Lemma~\ref{lem-unif-int-new} we have
that $\sup_t E[r_t Z_t]<+\infty$ 
and hence, since by \eqref{eq-sp-quad-T},
\begin{equation*}
\begin{split}
E[(\Delta M^*_{t+1})^2\mid{\mathcal F}_t]&=
w^2\zeta_{t+1}(1-\beta)^2\left(E[T_{t+1}^2\mid\mathcal{F}_t]-Z_t^2\right)\\
&=w^2\zeta_{t+1}(1-\beta)^2(Z_t-R_t)\\
&\leq w^2\zeta_{t+1}(1-\beta)^2 Z_t\,,
\end{split}
\end{equation*}
we get, for any $w\leq \beta$, that
\begin{equation*}
\begin{split}
\sum_t \eta_{t+1}^2
E[(\Delta M_{t+1}^*)^2] \leq
\sum_t \tfrac{w^2 r_t}{(\theta+t+1)^2}
E[r_t Z_t]
<+\infty\,,
\end{split}
\end{equation*}
which implies 
$$
\sum_t \eta_{t+1}^2 E[(\Delta M^*_{t+1})^2\mid{\mathcal F}_t]<+\infty
\quad\hbox{a.s.}\,.
$$ 
In addition, since  $\sup_t E[Z_t^*]=\sup_t E[r_t Z_t]<+\infty$ (by Lemma~\ref{lem-unif-int-new}), the expression for $\rho_{t+1}$  implies  that  $\sum_t E[\,|\rho_{t+1}|\,]= 
\sum_t O(E[Z^*_t]/t^2) + O(1/t^{1+\beta})$ is convergent, and so 
$\sum_t |\rho_{t+1}|<+\infty$ with probability one. 
Then, we can apply Theorem~\ref{thm:app-cor} to the dynamics~\eqref{eq:dynamics_Z_star} so obtaining
\begin{equation*}
Z^*_{t} \stackrel{a.s.}\to \tfrac{\alpha\beta}{b} =
\begin{cases}
\tfrac{\alpha\beta}{\beta-w}
    \quad & \mbox{if } w< \beta\,;
\\
\alpha\beta
    \quad & \mbox{if } w= \beta\,.
\end{cases}
\end{equation*}
\end{proof}

\subsection{Second-order asymptotic results}\label{sec:proofs_second_order_average}

\begin{proof}[Proof of Theorem~\ref{thm:second_order-Z}]
We will first prove the CLT for the process $S_t=\sum_{j\in\mathcal{O}_t}P_{t,j}$ and then, since $S_t=Z_t-\lambda_t$, we will derive from it the one for $Z_t$. 
Indeed, since the dynamics of $S_t$ in~\eqref{eq-dyn-S}, i.e.
$$
  S_{t+1}=\left(1-\tfrac{1-w}{\theta+t+1}\right)S_t+\tfrac{w(T_{t+1}-Z_t)}{\theta+t+1}+
  \tfrac{w\lambda_t}{\theta+t+1},
$$
is equal to the general dynamics in~\eqref{eq-general-dyn-real} with 
$\delta=w\in (0,1]$, $\Delta M_{t+1}=w(T_{t+1}-Z_t)$, $\rho_{t+1}=w\lambda_t$, $\rho=\beta\in [0,1]$ 
and  $\rho_\infty=w\alpha$ (because $t^{1-\beta}\lambda_t$ trivially converges to $\alpha$), 
and since we are assuming $\beta<w$, that is~$\rho<\delta$, 
the CLT for $S_t$ can be obtained by applying Theorem~\ref{th-CLT-general} with $X_t=S_t$ and $X^*_\infty=Z^*_\infty$
(recall that, by Theorem~\ref{th-Z-T-medio}, $t^{1-w}S_t\stackrel{a.s.}\to Z_\infty^*$).  
Indeed, condition~\eqref{AA:eq:momento-M1} follows by~\eqref{eq:boundT_t} in Lemma~\ref{lem-unif-int-new} as for any $p\geq1 $ and $\epsilon>0$ we have 
$$
E\Big[ \sum_{t=1}^{\infty} 
\tfrac{|\Delta {M}_{t+1}|^p}{t^{w+\epsilon}}
\Big] =
\sum_{t=1}^{\infty} \tfrac{1}{t^{1+\epsilon}}
E[ t^{1-w}|\Delta {M}_{t+1}|^p] 
<+\infty\,.
$$
Moreover,
condition~\eqref{AA:eq:momento-convergence} follows by~\eqref{eq-sp-quad-T}, Theorem~\ref{th-Z-T-medio} and Lemma~\ref{lem-RsuS} (in the case $\beta<w$):
$$
 t^{1-w} E[(\Delta M_{t+1})^2|\mathcal{F}_t]=
w^2 t^{1-w}(Z_t-R_t)\stackrel{a.s.}\longrightarrow V_\infty^{*} = w^2 \Sigma_\infty\,.
$$ 
Then, since 
$(t^{1-\beta}\lambda_t-\alpha)=\alpha[1-1/(t+1)]^{1-\beta}-\alpha=O(t^{-1})=o(t^{-(\beta-w/2)})$,  
also condition $(*)$ of Theorem~\ref{th-CLT-general} is verified with a.s.\ convergence and so, from that theorem, we get
\[
t^{w/2}(t^{1-w}S_t+\tfrac{ w\alpha }{(w-\beta)}t^{-(w-\beta)} -Z^*_\infty)\stackrel{stably}\longrightarrow {\mathcal N}(0, w\Sigma_\infty)\,.
\] 
For what concerns the process $(Z_t)_t$ recall that $Z_t=S_t+\lambda_t$ and so 
$t^{1-w}Z_t=t^{1-w}S_t + t^{1-w}\lambda_t$, 
and hence the thesis follows by the fact that $t^{1-w}\lambda_t=\alpha t^{\beta-w}+O(t^{-(1+w-\beta)})= \alpha t^{\beta-w}+o(t^{-w/2})$.  
In addition, note that, from \eqref{eq:compact_tcl_Z_t}, for $w/2<\beta<w$ we have
\[\begin{aligned}
{t}^{w-\beta}(t^{1-w}Z_t -Z^*_\infty) &+\tfrac{\alpha\beta}{w-\beta} \\
&=
\tfrac{1}{t^{\beta-w/2}}\, t^{w/2}\, t^{\beta-w} \,[t^{w-\beta}(t^{1-w}Z_t-Z^*_\infty)+\tfrac{\alpha\beta}{w-\beta} ]\\
&= \tfrac{1}{t^{\beta-w/2}}\, t^{w/2}\,[(t^{1-w}Z_t-Z^*_\infty)+\tfrac{\alpha\beta}{w-\beta} t^{\beta-w} ]\stackrel{P}\rightarrow 0.
\end{aligned}
\]
\end{proof}

\begin{proof}[Proof of Corollary~\ref{cor-clt}] 
From from~\eqref{eq:compact_tcl_Z_t}, we can derive the corresponding result for $\overline{T}_t$. Indeed, by \eqref{eq-S}, we have
\(\overline{T}_t=(1+\tfrac{\theta}{t})S_t/w\) and, in addition, we have $\lambda_t=\alpha/(t+1)^{1-\beta}=\alpha/t^{1-\beta}+(1-\beta)O(1/t^{2-\beta})$, 
 so that we obtain
\begin{equation*}
\begin{split}
&t^{w/2}(t^{1-w}\overline{T}_t+\tfrac{\alpha}{(w-\beta)}t^{-(w-\beta)}-\tfrac{1}{w}Z^*_\infty)=\\
&t^{w/2}(t^{1-w}\overline{T}_t+\tfrac{\alpha\beta}{w(w-\beta)}t^{-(w-\beta)}+\tfrac{t^{1-w}}{w}\tfrac{\alpha}{t^{1-\beta}}-\tfrac{1}{w}Z^*_\infty)
=\\
&t^{w/2}(t^{1-w}\overline{T}_t+\tfrac{\alpha\beta}{w(w-\beta)}t^{-(w-\beta)}+\tfrac{t^{1-w}}{w}\lambda_t-\tfrac{1}{w}Z^*_\infty\Big)+O(1/t^{1+w/2-\beta})=\\
&\tfrac{t^{w/2}}{w}(t^{1-w}Z_t+\tfrac{\alpha\beta}{(w-\beta)}t^{\beta-w}-Z^*_\infty)+
\tfrac{\theta}{w} \tfrac{S_t}{t^{w/2}}+O(\tfrac{1}{t^{1+w/2-\beta}})\stackrel{stably}\longrightarrow 
{\mathcal N}(0, \Sigma_\infty/w)
\end{split}
\end{equation*}
and this convergence also holds true in the sense of the a.s. conditional convergence with respect to $\mathcal F$. 
Consequently, when $(w-\beta)<w/2$, the quantity  
\begin{equation*}
\begin{split}
t^{w-\beta}(t^{1-w}\overline{T}_t-\tfrac{1}{w}Z^*_\infty)+\tfrac{\alpha}{(w-\beta)}=
\tfrac{t^{w-\beta}}{t^{w/2}} t^{w/2}
(t^{1-w}\overline{T}_t+\tfrac{\alpha}{(w-\beta)}t^{-(w-\beta)}-\tfrac{1}{w}Z^*_\infty)
\end{split}
\end{equation*}
converges in probability toward zero. \\
\indent Now, remember that $\overline{P}_t=S_t/D_t=(Z_t-\lambda_t)/D_t$ so that, using the notation and the CLT for $(D_t)_t$ provided in Remark~\ref{rem-clt-D}, we can write
\begin{equation*}
\begin{split}
\overline{P}_t&=
\tfrac{1}{\lambda(\beta)a_t(\beta)} S_t
(
1 - \tfrac{D_t-\lambda(\beta)a_t(\beta)}{D_t}
)
\\
&=\tfrac{1}{\lambda(\beta)a_t(\beta)} S_t 
\Big[1-\tfrac{1}{\tfrac{D_t}{a_t(\beta)\lambda(\beta)}}\tfrac{1}{\sqrt{a_t(\beta)}} \cdot\tfrac{\sqrt{a_t(\beta)}}{\lambda(\beta)}\big(\tfrac{D_t}{a_t(\beta)}-\lambda(\beta)\big)\Big]\\
&=\tfrac{1}{\lambda(\beta)a_t(\beta)} S_t [1+\tfrac{1}{\sqrt{a_t(\beta)}} W_t]\,,
\end{split}
\end{equation*}
where $W_t\stackrel{stably}\longrightarrow {\mathcal N}(0,1/\lambda(\beta))$. 
Hence,  since $\lambda_t=\alpha/t^{1-\beta}+(1-\beta)O(1/t^{2-\beta})$, we get 
\begin{equation*}
\begin{split}
&t^{1-w}\lambda(\beta)a_t(\beta)\overline{P}_t+\tfrac{\alpha w}{w-\beta}t^{-(w-\beta)}-Z^*_\infty =\\
&t^{1-w}\lambda(\beta)a_t(\beta)\overline{P}_t + \alpha t^{-(w-\beta)} +\tfrac{\alpha\beta}{w-\beta}t^{-(w-\beta)}  -Z^*_\infty =\\
&\left(t^{1-w}Z_t+\tfrac{\alpha\beta}{w-\beta}t^{-(w-\beta)}-Z_\infty^*\right)+O(\tfrac{1}{t^{1+w-\beta}})+\tfrac{t^{1-w}S_t}{\sqrt{a_t(\beta)}}W_t\,.
\end{split}
\end{equation*}
Hence, taking into account~\eqref{eq:compact_tcl_Z_t}, since $\sqrt{a_t(\beta)}=o(t^{w/2})$, we obtain~\eqref{clt-Pmedio}. 
As a consequence, when $(w-\beta)<\beta/2$, we have $\sqrt{a_t(\beta)}=t^{\beta/2}$ and  
\begin{equation*}
\begin{split}
&t^{w-\beta}\left(t^{1-w}\lambda(\beta)a_t(\beta)\overline{P}_t-Z^*_\infty\right) +\tfrac{\alpha w}{w-\beta}=\\
&\tfrac{t^{w-\beta}}{\sqrt{a_t(\beta)}} \sqrt{a_t(\beta)}\Bigl(t^{1-w}\lambda(\beta)a_t(\beta)\overline{P}_t+\tfrac{\alpha w}{w-\beta}t^{-(w-\beta)}-Z^*_\infty\Bigr) 
\stackrel{P}\longrightarrow 0\,.
\end{split}
\end{equation*}
Finally, recalling  that  $\overline{K}_t=(\theta+t)\overline{P}_t/w$ (see~\eqref{eq-P-medio-2}), we have 
\begin{multline*}
w\lambda(\beta)\tfrac{a_t(\beta)}{t^w}\overline{K}_t+\tfrac{\alpha w}{w-\beta}t^{-(w-\beta)}-Z^*_\infty \\
= (t^{1-w}\lambda(\beta)a_t(\beta)\overline{P}_t+\tfrac{\alpha w}{w-\beta}t^{-(w-\beta)}-Z^*_\infty )+\theta\lambda(\beta)\tfrac{a_t(\beta)}{t^w}\overline{P}_t
\end{multline*}
so that, from~\eqref{clt-Pmedio}, we obtain~\eqref{clt-Kmedio}  
and, as a consequence, when $(w-\beta)<\beta/2$, we have (with the same arguments used above)
$$
t^{w-\beta}\left( \lambda(\beta)\tfrac{a_t(\beta)}{t^w}\overline{K}_t-\tfrac{Z^*_\infty}{w} \right)+\tfrac{\alpha}{w-\beta}
\stackrel{P}\longrightarrow 0\,.
$$
\end{proof}

\section{Technical results related to the model}\label{sec:technical_results_model}

In this section we present some technical lemmas used in the paper. To this end, let us take 
 the sequence $(r_t)$ as in \eqref{eq:r_t_appB} with $\delta=w$ e $\rho=\beta$.
\\
We prove a lemma regarding a condition of uniform integrability. 
\begin{lem}\label{lem-unif-int-new}
We have 
\begin{equation}\label{eq-tesi1-lemma}
\sup_{t\geq 1} E[e^{z\,r_t\,\overline{T}_t}]<+\infty,\quad 
\sup_{t\geq1} E\Big[\,e^{z\,r_t\,Z_t}\,\Big]<+\infty
\end{equation}
for some $z>0$. 
For the sequence $(T_t)_t$ and any $p\geq 1$, we have 
\begin{equation}\label{eq:boundT_t}
\begin{cases}
    \sup_t E[\,(r_tT_{t+1})^p\,] < \infty  & \text{for $w=1$ or $\beta=1$}\,;\\
    \sup_t E[\,r_t(T_{t+1})^p\,] < \infty  & \text{for $w<1$ and $\beta<1$}\,.\\
\end{cases}
\end{equation}
\end{lem}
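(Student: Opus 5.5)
The plan is to bound an exponential moment of $r_tS_t$, where $S_t=Z_t-\lambda_t=w\sum_{n\le t}T_n/(\theta+t)$, by a recursion on its logarithm. Everything else will follow from that bound. The recursion uses the dynamics \eqref{eq-dyn-S},
$$S_{t+1}=\big(1-\tfrac{1}{\theta+t+1}\big)S_t+\tfrac{wT_{t+1}}{\theta+t+1}.$$
Conditionally on ${\mathcal F}_t$, $T_{t+1}$ is a sum of independent Bernoulli$(P_{t,j})$ variables plus an independent Poi$(\lambda_t)$ variable. Since $1+p(e^u-1)\le e^{p(e^u-1)}$, we get
$$E[e^{uT_{t+1}}\mid{\mathcal F}_t]\le \exp\{(e^u-1)Z_t\}\qquad\text{for every } u\ge 0.$$
This Poisson domination is the only property of the model used.

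\emph{Key step.} Fix a small $z>0$ and set $\phi_t=E[e^{z r_tS_t}]$. Apply the bound above with $u_t=zwr_{t+1}/(\theta+t+1)$. Since $r_t\sim t^{1-\delta_0}$ (up to a logarithm), with $\delta_0=\max\{w,\beta\}>0$, we have $u_t\to0$, so $e^{u_t}-1\le u_t+u_t^2$. This gives
$$E\big[e^{zr_{t+1}S_{t+1}}\mid{\mathcal F}_t\big]\le \exp\Big\{z r_{t+1}S_t\Big(1-\tfrac{1-w}{\theta+t+1}+O(\tfrac{r_t}{t^2})\Big)+z\,\tfrac{wr_{t+1}\lambda_t}{\theta+t+1}(1+o(1))\Big\}.$$
By \eqref{eq-zeta-1} and \eqref{eq-zeta-3}, the coefficient of $zr_tS_t$ equals $1-\varepsilon_t+O(t^{-1-\delta_0})$. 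Here $\varepsilon_t=0$ when $w>\beta$, $\varepsilon_t=(\beta-w)/(\theta+t+1)$ when $w<\beta$, and $\varepsilon_t=1/((\theta+t+1)\ln(\theta+t+1))$ when $w=\beta$. The forcing term $zwr_{t+1}\lambda_t/(\theta+t+1)$ is of order $t^{\beta-\delta_0-1}$, which is summable when $w>\beta$; it is $O(\varepsilon_t)$ in the other two cases.

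Next I take expectations and use Hölder/Jensen in the form $E[e^{aX(1-\varepsilon)}]\le E[e^{aX}]^{1-\varepsilon}$. The summable factor $1+O(t^{-1-\delta_0})$ can be absorbed by letting $z$ be a slowly decreasing sequence $z_t$ with $\inf_t z_t>0$. These two devices give
$$\log\phi_{t+1}\le(1-\varepsilon_t)\log\phi_t+C\varepsilon_t+c_t,\qquad \textstyle\sum_t c_t<\infty.$$
This yields $\sup_t\phi_t<\infty$: it is immediate when $\varepsilon_t\equiv0$, and otherwise $\log\phi_t$ stays below $\max\{\log\phi_{t_0},C\}+\sum c_t$. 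Checking that this logarithmic recursion closes in the critical case $w=\beta$ is the step I expect to be the main obstacle. There the forcing term decays only like $1/(t\ln t)$ and is not summable, so it must be compensated exactly by the negative drift $-\varepsilon_t$ created by the $1/\ln$ factor in $r_t$.

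\emph{Consequences.} Because $r_t\lambda_t=O(1)$, the bound on $\phi_t$ transfers to $\sup_tE[e^{zr_tZ_t}]<\infty$. Because $\overline T_t=(\theta+t)S_t/(wt)$, it also gives $\sup_tE[e^{zr_t\overline T_t}]<\infty$ after shrinking $z$, which proves \eqref{eq-tesi1-lemma}. For \eqref{eq:boundT_t}, the Poisson domination gives $E[T_{t+1}^p\mid{\mathcal F}_t]\le C_p(Z_t+Z_t^p)$. If $w=1$ or $\beta=1$, then $\delta_0=1$ and $r_t$ is bounded (it equals $1$ or $1/\ln(\theta+t)$), so $r_t^pE[T_{t+1}^p]\le C_p(r_t^{p-1}E[r_tZ_t]+E[(r_tZ_t)^p])$ is bounded by \eqref{eq-tesi1-lemma}. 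If $w<1$ and $\beta<1$, then $r_t\to\infty$ and
$$r_tE[T_{t+1}^p]\le C_p\big(E[r_tZ_t]+r_t^{1-p}E[(r_tZ_t)^p]\big),$$
which is again bounded because all moments of $r_tZ_t$ are uniformly bounded by the exponential bound.
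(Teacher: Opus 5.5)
Your proof is correct, but its key step differs from the paper's.

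\textbf{The paper's route.} The paper works with $\overline{T}^*_t=r_t\overline{T}_t$ and never keeps the exponent fixed. After conditioning, its bound is $E[e^{z\overline{T}^*_{t+1}}]\le e^{2z\lambda_t u_t}E[e^{z\pi_t\overline{T}^*_t}]$, which is an exponential moment at the new deterministic parameter $z\pi_t$. Iterating back to $t_0$ gives the parameter $z_0\prod_{j}\pi_j$, kept $\le 1$ by choosing $z_0$ small, together with an accumulated forcing $\sum_n\lambda_n u_n\prod_{j>n}\pi_j$. This sum is then evaluated through the telescoping formula for $\prod\pi_j$ and the asymptotics of $u_t\widetilde{\zeta}_t(w)$, separately for $w>\beta$, $w=\beta$ and $w<\beta$.

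\textbf{Your route.} You hold the parameter essentially fixed. You turn the contraction $1-\varepsilon_t$ of the parameter into a contraction of $\log\phi_t$ via Jensen's inequality, $E[Y^{1-\varepsilon}]\le (E[Y])^{1-\varepsilon}$. You absorb the summable excess with a convergent product in $z_t$, and close with the scalar recursion $y_{t+1}\le(1-\varepsilon_t)y_t+C\varepsilon_t+c_t$.

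\textbf{What each buys.} Your approach treats the three regimes uniformly: you only need the forcing to be $O(\varepsilon_t)$ plus a summable term, with no weighted sums to evaluate. The paper's approach is linear and fully explicit and needs no convexity.

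\textbf{The critical case.} This is not the obstacle you expected. There $r_{t+1}\lambda_t\sim\alpha/\ln(\theta+t+1)$, so the forcing is asymptotic to $zw\alpha\,\varepsilon_t$. Your invariant bound $y_t\le\max\{y_{t_0},C\}+\sum_s c_s$ then applies verbatim for any $\varepsilon_t\in[0,1]$; no exact cancellation is needed.

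\textbf{Two points to add.}
\begin{itemize}
\item Your recursion presupposes $\phi_{t_0}<\infty$, and the supremum over $t\ge1$ also needs $\phi_t<\infty$ for $t<t_0$. Both follow by iterating the crude one-step bound, without the small-$u$ expansion, starting from $S_1=wN_1/(\theta+1)$ with $N_1$ Poisson. This is the paper's Step~2.
\item The bound $E[T_{t+1}^p\mid\mathcal F_t]\le C_p(Z_t+Z_t^p)$ needs one line of justification. For instance, the conditional generating function $\prod_j(1+P_{t,j}s)\,e^{\lambda_t s}$ is coefficientwise dominated by $e^{Z_t s}$, so the conditional factorial moments are at most $Z_t^k$. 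The paper instead expands $r_t(E[e^{z_0T_{t+1}/2}]-1)\le r_t(E[e^{z_0 Z_t}]-1)$ as a power series and compares term by term.
\end{itemize}
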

As a consequence, 
we note that the sequences $(r_t\,Z_t)_t$ and $(r_t\,\overline{T}_t)_t$, 
are bounded in $L^p$ for any $p\geq 1$.

\begin{proof} We split the proof in some steps.\\
\indent {\em Step 1): } First notice that, for any $z>0$,
$$\begin{aligned}
E\bigl[e^{z\,T_{t+1}}\mid\mathcal{F}_t\bigr]& = 
E\bigl[e^{z\,\sum_{j\in\mathcal{O}_t}X_{t+1,j}+z\,N_{t+1}}\mid\mathcal{F}_t\bigr]\\&=
E\bigl[e^{z\,N_{t+1}}\mid\mathcal{F}_t\bigr]
\prod_{j\in\mathcal{O}_t}E\bigl[e^{z\,X_{t+1,j}}\mid\mathcal{F}_t\bigr]\\
&=e^{(e^z-1)\,\lambda_t}\prod_{j\in\mathcal{O}_t} \bigl(
e^z P_{t,j} + 1-P_{t,j}
\bigr)\,.
\end{aligned}$$
Then, denoting $g(z)=e^z-1$, since $g(z)\geq 0$ and $1+x\leq e^x$ for $x\geq 0$ and 
using~\eqref{eq-S} we have
 \begin{equation}\label{eq-proof-1}
   \begin{split}
  E\bigl[e^{z\,T_{t+1}}\mid\mathcal{F}_t\bigr]
  &=e^{g(z)\,\lambda_t}\,\prod_{j\in {\mathcal O}_t}\bigl(1+g(z)\,P_{t,j}\bigr)\\
  &\leq\exp{\Bigl(g(z)\,\lambda_t+g(z)\sum_{j\in {\mathcal O}_t}P_{t,j}\Bigr)}
 \\
 &= \exp \Bigl( g(z)\,\lambda_t + g(z)\, \tfrac{w\sum_{n=1}^t T_n}{\theta+t}
   \Bigr) 
    \\
    &= \exp \Bigl( g(z)\,\lambda_t + g(z)\,w\,\tfrac{t}{\theta+t}\overline{T}_t
      \Bigr) \quad\text{a.s.}\,.
   \end{split}
   \end{equation}
\indent {\em Step 2):}  
Define $\overline{T}_t^*=r_t\overline{T}_t$ and note that
\begin{equation}\label{eq:u_v}
\begin{split}
\overline{T}_{t+1}^*&= r_{t+1}\tfrac{\sum_{n=1}^t T_n + T_{t+1}}{t+1}
=
\tfrac{r_{t+1}}{r_t}\tfrac{t}{t+1}\overline{T}_t^*
+ T_{t+1}\tfrac{r_{t+1}}{t+1}\\
&= (1-v_t)\,\overline{T}_t^*+ u_t\, T_{t+1}\,,
\end{split}
\end{equation}
where by~\eqref{eq-zeta-1} and~\eqref{eq-zeta-3}
$$
u_t=\tfrac{r_{t+1}}{t+1}\rightarrow 0
\qquad\text{and}\qquad
v_t=1-\tfrac{u_{t}}{u_{t-1}}\geq 0\,.
$$
Hence, by \eqref{eq-proof-1} and \eqref{eq:u_v}, we get
\begin{equation}\label{eq-proof-2}
\begin{aligned}
  E\bigl[& e^{z\,\overline{T}^*_{t+1}}\bigr]
   = E\Bigl[\,\exp{\Bigl(z\,\overline{T}^*_t\,(1-v_t)\Bigr)}
\,E\bigl[e^{z\,u_t\,T_{t+1}}\,\mid\,\mathcal{F}_t\bigr]\,\Bigr]
  \\
&  \leq \exp{\Bigl(\lambda_t\,g(z\,u_t)\Bigr)}
  E\Bigl[\,\exp{\Bigl( g(z\,u_t)w\overline{T}_t+z\,(1-v_t)\,\overline{T}^*_t\Bigr) } \Bigr]
    \\
 &=\exp{\Bigl(\lambda_t\,g(z\,u_t)\Bigr)}
  E\Bigl[\,\exp{\Bigl( g(z\,u_t)\tfrac{w}{r_t}\overline{T}^*_t+z\,(1-v_t)\,\overline{T}^*_t\Bigr) } \Bigr]\,.
\end{aligned}\end{equation}
Since $v_t\geq 0$ and $w\leq 1$, it follows
\begin{gather*}
  E\bigl[ e^{z\,\overline{T}^*_{t+1}}\bigr]
  \leq \exp{\Bigl(\lambda_t\,g(zu_t)\Bigr)}\,\,
  E\Bigl[\exp{\Bigl(\Big(z+\tfrac{g(zu_t)}{r_t}\Big)\,\overline{T}^*_t\Bigr)}\Bigr].
\end{gather*}
Iterating this last inequality, we obtain
\begin{gather*}
  E\bigl[ e^{z\,\overline{T}^*_{t+1}}\bigr]\leq a_t(z)\,E\bigl[e^{b_t(z)\,
  \overline{T}^*_1}\bigr]
\end{gather*}
for suitable deterministic quantities $a_t(z)$ and $b_t(z)$. 
Since we have $\overline{T}^*_1=r_1T_1$ with $T_1\stackrel{d}=\,$Poi$(\alpha)$, then
$E\bigl[e^{b_t(z)r_1\,T_1}\bigr]<+\infty$ so that  
$E\bigl[e^{z\,\overline{T}^*_t}\bigr]<+\infty$
for all fixed $t\geq 1$ and $z>0$. 
This also implies $E[e^{z'\,\overline{T}_t}]<+\infty$ for all fixed $t\geq 1$ and $z'>0$ (take in the above relation $z=z'/r_t$), 
which by~\eqref{eq-proof-1} also entails 
$E[e^{zT_t}]<+\infty$ for all fixed $t\geq 1$ and $z>0$.
\\
\indent 
{\em Step 3):} Observe now that $g(x)\leq 2\,x$ and $g(x)-x\leq x^2$ for $x\in [0,\,1/2]$. Moreover, since $u_t\to0$, we know
$z u_t\leq u_t\leq 1/2$ for $z\in (0,1]$ and $t$ large enough.
These facts are used for $z\in(0,1]$ to write $g(z u_t)\leq 2 z u_t$ and $g(z u_t)\leq (z u_t)+(z u_t)^2$ when $t$ is large enough. Hence, by~\eqref{eq-proof-2}, 
we get for $z\in (0,1]$ and $t$ large enough:
\begin{equation*}
\begin{aligned}
  E\bigl[e^{z\,\overline{T}^*_{t+1}}\bigr]
   &\leq
  \exp{\Bigl(2z\,\lambda_t\,u_t\Bigr)}\,
    E\Bigl[\exp \Bigl(
    (z\,u_t)^2 \tfrac{w}{r_t}\overline{T}^*_t
     +\Big(1-v_t+u_t\tfrac{w}{r_t}\Big)z\overline{T}^*_t
    \Bigr) \Bigr]\\
    &\leq
  \exp{\Bigl(2z\,\lambda_t\,u_t\Bigr)}\,
    E\Bigl[\Big(1-v_t+(u_t+u_t^2)\tfrac{w}{r_t}\Big)z\overline{T}^*_t
    \Bigr) \Bigr],\\
\end{aligned}
\end{equation*}
where we have used that $z^2\leq z$ as $z\in (0,1]$. Recalling that $v_t=1-u_{t}/u_{t-1}$ and $u_{t-1}=r_{t}/t$, we have 
$$\begin{aligned}
 \pi_t&=   1-v_t+(u_t+u_t^2)\tfrac{w}{r_t} =
\tfrac{u_{t}}{u_{t-1}} + (1+u_t)\tfrac{u_{t}}{u_{t-1}}\tfrac{w}{t} \\
&= \tfrac{u_{t}}{u_{t-1}}\Big(1+ \tfrac{w}{t} +\tfrac{w}{t} u_t \Big) 
= \tfrac{u_{t}}{u_{t-1}}\Big(1+ \tfrac{w}{t}\Big)\Big(1 +\tfrac{w/t}{1+w/t} u_t \Big)\\
&= \tfrac{u_{t}}{u_{t-1}}\tfrac{\widetilde{\zeta}_t(w)}{\widetilde{\zeta}_{t-1}(w)}\Big(1 +\tfrac{w}{t+w} u_t \Big)\,,
\end{aligned}
$$
where $\widetilde{\zeta}_t(x)$ is defined as $\zeta_t(x)$ in~\eqref{eq-def-zeta} with $\theta=0$. 
This leads for $z\in (0,1]$ to
\begin{equation}\label{eq:sum_prod_0}
E\bigl[e^{z\,\overline{T}^*_{t+1}}\bigr]\leq
\exp{\Bigl(2z\,\lambda_t\,u_t\Bigr)}\,
    E\Bigl[\exp \Bigl(
    z\,\pi_t\,\overline{T}^*_t
    \Bigr) \Bigr].
\end{equation}
Now, note that for any $n<t$ 
\begin{equation}\label{eq:prod_telescopic}
    \prod_{j=n+1}^t \pi_j = \prod_{j=n+1}^t \tfrac{u_{j}}{u_{j-1}}\tfrac{\widetilde{\zeta}_j(w)}{\widetilde{\zeta}_{j-1}(w)}\big(1 +\tfrac{w}{j+w} u_j \big)= 
\tfrac{u_{t}}{u_{n}}\tfrac{\widetilde{\zeta}_t(w)}{\widetilde{\zeta}_{n}(w)}\prod_{j=n+1}^t\Big(1 +\tfrac{w}{j+w} u_j \Big)
\end{equation}
and since 
\begin{itemize}
    \item[(i)] $\limsup_{t\rightarrow\infty}u_t\widetilde{\zeta}_t(w)\leq 1$ and
    \item[(ii)] $C=\prod_{t\geq 1}
\Big(1 +\tfrac{w}{t+w} u_t \Big)<+\infty$ as $\sum_{t\geq 1} \tfrac{w}{t+w} u_t<+\infty$
\end{itemize}
we can choose $z_0\in(0,1)$ and $t_0$ large enough such that 
$$\sup_{t\geq t_0}\Big(\prod_{j=t_0}^t \pi_j\Big)z_0\leq 1\,.$$
This ensures that we can iterate the inequality~\eqref{eq:sum_prod_0}, so obtaining
\begin{multline}\label{eq:sum_prod_1}
\begin{aligned}
    E\bigl[e^{ z_0\,\overline{T}^*_{t+1}}\bigr]& \leq
\exp{\Bigl( 2z_0\,\sum_{n= t_0}^t \lambda_n\, u_n \prod_{j=n+1}^t \pi_j\Bigr)}
E\bigl[e^{ (\prod_{j=t_0}^{t} \pi_j)z_0\,\overline{T}_{t_0}^*}\bigr]
\\
& \leq
\exp{\Bigl( 2z_0\,\sum_{n= t_0}^t \lambda_n\, u_n \prod_{j=n+1}^t \pi_j\Bigr)}
E\bigl[e^{\overline{T}_{t_0}^*}\bigr]
\end{aligned}
\end{multline}
for each $t\geq t_0$.
Then, the second term in the right-hand product in~\eqref{eq:sum_prod_1} is always bounded by  Step 2) and so we can now focus on the first term, which by~\eqref{eq:prod_telescopic} becomes less or equal than
\begin{equation*}
\exp{\Bigl(2z_0\,C\,u_t \widetilde{\zeta}_t(w) \sum_{n= t_0}^t \tfrac{\lambda_n}{\widetilde{\zeta}_{n}(w)}\Bigr)}\sim
\exp{\Bigl( 2z_0\,C\,\tfrac{r_t}{t^{1-w}}\sum_{n= t_0}^t \tfrac{\alpha}{n^{1+w-\beta}}\Bigr)}\,,
\end{equation*}
that is finite in all cases as
\begin{itemize}
    \item[(a)] in the case $w>\beta$ we have $\tfrac{r_t}{t^{1-w}}\sim 1$ and $\sum_{n= t_0}^t \tfrac{\alpha}{n^{1+w-\beta}}<+\infty$;
    \item[(b)] in the case $w=\beta$ we have  $\tfrac{r_t}{t^{1-w}}\sim 1/\ln(t)$ and $\sum_{n= t_0}^t \tfrac{\alpha}{n^{1+w-\beta}}=O(\ln(t))$;
    \item[(c)] in the case $w<\beta$ we have $\tfrac{r_t}{t^{1-w}}\sim 1/t^{\beta-w}$ and $\sum_{n= t_0}^t \tfrac{\alpha}{n^{1+w-\beta}}=O( t^{\beta-w})$.
\end{itemize}
Therefore from~\eqref{eq:sum_prod_1} we can conclude that, for any $z\leq z_0$,  
$\sup_{t\geq 1} E[e^{z\,\overline{T}_t^*}]\leq
\sup_{t\geq 1} E[e^{z_0\,\overline{T}_t^*}]<+\infty$.
\\
\indent Finally, since by~\eqref{eq-S} we have $r_t Z_t=w\tfrac{t}{t+\theta}\overline{T}_t^*+r_t\lambda_t\leq \overline{T}_t^*+c$ for some suitable $c>0$, 
we also obtain $\sup_{t\geq 1} E[e^{z\,r_t Z_t}]<+\infty$ for any $z\leq z_0$. \\
The $L^p$-boundness of the sequences $(r_t\,Z_t)_t$ and $(r_t\,\overline{T}_t)_t$ follows immediately, as for non-negative random variables we have 
\begin{gather*}
\sup_t E( e^{z_0 r_t Z_t}) <\infty  
\iff \sup_t \sum_{p\geq 0} \tfrac{z_0^p E[\,(r_t Z_t)^p\,]}{p!} < +\infty \\
\Longrightarrow 
\sup_t E[\,(r_t Z_t)^p\,] < +\infty\; \forall p\geq 1.
\end{gather*}
and the same for $(r_t\,\overline{T}_t)_t$. For what concerns $({T}_t)_t$, from \eqref{eq-proof-1}, \eqref{eq-S}, \eqref{eq-Z} and 
the fact that $g(z)\leq 2z$ when $z\leq 1/2$,
 we obtain for any $t\geq 0$ and $z\leq 1/2$
\begin{equation}\label{eq-lemma-new}
E\bigl[e^{z\,T_{t+1}}\mid\mathcal{F}_t\bigr]\leq \exp ( g(z)\, (\lambda_t + S_t)) = \exp ( g(z)\, Z_t)
\leq 
\exp ( 2z\, Z_t)\,.
\end{equation}
Now, note that $r_t \equiv 1$ when $w<\beta=1$ or $\beta<w=1$, and in this case the two expressions in \eqref{eq:boundT_t} coincide. 
More generally, we can consider the following cases.
\\
\noindent\emph{Case $w=1$ or $\beta=1$:} we have $r_t \leq 1$ and so, by~\eqref{eq-lemma-new}, for each $t\geq 0$, we have 
 \begin{equation*}
   \begin{split}
  E[ e^{\tfrac{z_0}{2} r_t\,T_{t+1}}]
 &\leq E[e^{z_0\, r_t Z_t}]\,.
   \end{split}
   \end{equation*}
Then, by~\eqref{eq-tesi1-lemma},  we get $\sup_t E[e^{\tfrac{z_0}{2} r_t\,T_{t+1}}] <+\infty$, that implies as above 
 $\sup_t E[\,(r_tT_{t+1})^p\,] < +\infty$ for all $p\geq 1$.
\\
\noindent\emph{Case $w<1$ or $\beta<1$:} we have $r_t \geq 1$ for $t$ large enough, say $t\geq \bar{t}$. 
Hence, by~\eqref{eq-lemma-new}, for $t\geq \bar{t}$, we can write\\
 \begin{equation*}
   \begin{split}
  r_t(E[e^{\tfrac{z_0}{2}\,T_{t+1}}]-1)&\leq r_t ( E[e^{z_0\, Z_t}] -1) 
 =r_t ( E[e^{z_0\, \tfrac{ Z_t^*}{r_t}}] -1) \\
   &= \sum_{p\geq 1} \tfrac{z_0^p E[(Z^*_t)^p]}{(r_t)^{p-1}\,p!}
    \leq \sum_{p\geq 1} \tfrac{z_0^pE[(Z^*_t)^p]}{p!}
    = (E[e^{z_0\, Z^*_t}] -1)\,.
   \end{split}
   \end{equation*}
Hence, we can conclude that $\sup_{t\geq \bar{t}} \,r_t(E[e^{\tfrac{z_0}{2}\,T_{t+1}}]-1) <+\infty$, that reads
\[
\sup_{t\geq \bar{t}}\, \sum_{p\geq 1} \tfrac{(z_0)^p E[r_t(T_{t+1})^p]}{2^p \, p!} < +\infty\,,
\]
and so we can get  $\sup_{t\geq\bar{t}} \, E[\,r_t(T_{t+1})^p\,] < +\infty$, for all $p\geq 1$. 
Since, by Step 2), for all $p\geq 1$ and each fixed $t$ we have $E[(T_{t+1})^p]<+\infty$, and so $E[r_t (T_{t+1})^p]<+\infty$,  we can also affirm that 
$\sup_{t} \, E[\,r_t(T_{t+1})^p\,] < +\infty$ for all $p\geq 1$.
\end{proof}

\begin{lem} \label{lem-RsuS} 
We have
$$
r_t R_t \stackrel{a.s.}\longrightarrow
\begin{cases}
    \Rstar_\infty & \text{if $\iota=0$, $w=1$ and $\beta<1$}\,;
    \\
    0 & \text{otherwise}\,.
\end{cases}
$$
where $\Rstar_\infty$ is a real random variable such that $0<\Rstar_\infty <Z^*_\infty$. 
\end{lem}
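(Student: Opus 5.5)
The plan is to work with $Q_t=\sum_{j\in{\mathcal O}_t}B_{t,j}^2$, where $B_{t,j}=K_{t,j}/(\theta+t)$. By \eqref{eq-P-2}, $R_t$ is comparable to $Q_t$ plus terms involving $\overline{P}_t$. I split into the cases according to whether $\max\{w,\beta\}<1$ (so that $r_t\to+\infty$), or $\max\{w,\beta\}=1$ (so that $r_t$ equals $1$, or $1/\ln(\theta+t)$ when $w=\beta=1$).

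\emph{Case $\max\{w,\beta\}<1$.} Here $0\le R_t\le(\max_j P_{t,j})\,S_t\le S_t^2$. By Theorems~\ref{th-Z-T-medio} and~\ref{th-Z-T-medio-altro-caso}, $r_tS_t\le r_tZ_t$ converges a.s. to a finite limit, and $S_t\to0$ a.s. Hence $r_tR_t\le (r_tS_t)S_t\to0$ a.s.

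\emph{Case $w=\beta=1$.} Here $r_t=1/\ln(\theta+t)$ and $S_t=O(\ln t)$, but $R_t\le S_t$ is not enough. I will use the same argument as in the next case, which gives $R_t\to0$ directly, since either $w<1$-type contraction or $\iota>0$ is present, or else $w=1,\iota=0$ and then $R_t=O(1)$, which is killed by $r_t\to0$.

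\emph{Cases with $\max\{w,\beta\}=1$ but not ($\iota=0$, $w=1$).} Here $r_t\equiv1$ and $S_t$ converges a.s. I compute the conditional dynamics of $Q_t$ from \eqref{eq:dynamics_B_tj_2}. Writing $c=1-(1-\iota)w>0$, each old dish gives
\[
E[B_{t+1,j}^2\mid{\mathcal F}_t]=\Big(1-\tfrac{2c}{\theta+t+1}\Big)B_{t,j}^2+\tfrac{2\iota\overline{P}_tB_{t,j}}{\theta+t+1}+O\big(\tfrac{P_{t,j}+B_{t,j}^2}{t^2}\big).
\]
The $N_{t+1}$ new dishes contribute $N_{t+1}/(\theta+t+1)^2$. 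Summing over $j$ gives
\[
E[Q_{t+1}\mid{\mathcal F}_t]\le\Big(1-\tfrac{2c}{\theta+t+1}\Big)Q_t+\delta_t,
\qquad
\delta_t=O\big(\tfrac{\overline{P}_tS_t+Z_t}{t}+\tfrac{S_t}{t^2}\big).
\]
Using $\overline{P}_t=O(\ln t/t)$ (Table~\ref{table-Z-T-P-K-medio}) and the a.s. boundedness of $S_t,Z_t$ (or $O(\ln t)$ when $w=\beta=1$), the terms $\overline{P}_tS_t/t$ and $S_t/t^2$ are summable. The term $Z_t/t$ only comes from the new-dish contribution $\lambda_t/t^2$ together with $P_{t,j}/t^2$, so it is in fact $O(Z_t/t^2)$ and is summable as well. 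By Theorem~\ref{th-almost-supermart}, $Q_t$ converges a.s. and $\sum_t Q_t/t<\infty$, which forces $Q_t\to0$; alternatively one can argue pathwise via Lemma~\ref{lem-to-zero}. Finally, $R_t\le 2(1-\iota)^2w^2Q_t+2\iota^2D_t\overline{P}_t^2$, and $D_t\overline{P}_t^2=S_t\overline{P}_t\to0$. Hence $R_t\to0$.

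\emph{Case $\iota=0$, $w=1$, $\beta<1$.} Now $P_{t,j}=B_{t,j}$, so $R_t=Q_t$ and $r_t\equiv1$. The computation above with $c=0$ gives exactly
\[
E[B_{t+1,j}^2\mid{\mathcal F}_t]=B_{t,j}^2\Big(1-\tfrac{1}{(\theta+t+1)^2}\Big)+\tfrac{B_{t,j}}{(\theta+t+1)^2}.
\]
So $R_t$ is a non-negative almost supermartingale with summable perturbation $O((S_t+\lambda_t)/t^2)$. Theorem~\ref{th-almost-supermart} then gives $R_t\to\Rstar_\infty$ a.s. For positivity, fix an observed dish $j_0$. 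Then $R_t\ge B_{t,j_0}^2\to(K^*_{\infty,j_0})^2>0$ by Theorem~\ref{th-P-K-low_interaction}, with $\delta=1$.

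The main obstacle is the strict inequality $\Rstar_\infty<Z^*_\infty$. From $Z_t-R_t\ge S_t-R_t=\sum_jB_{t,j}(1-B_{t,j})\ge B_{t,j_0}(1-B_{t,j_0})$ we get $Z^*_\infty-\Rstar_\infty\ge K^*_{\infty,j_0}(1-K^*_{\infty,j_0})$, so it suffices to show $P(K^*_{\infty,j_0}=1)=0$. In this regime $B_{t,j}$ is a bounded martingale, and conditionally on $\tau_{j_0}$ the counts $K_{t,j_0}$ evolve exactly as a two-colour Pólya urn with initial weights $(1,\theta+\tau_{j_0}-1)$. Its limit is therefore Beta-distributed, hence a.s. in $(0,1)$. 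I would verify this urn identification carefully, since it is the step where the specific form $P_{t,j}=K_{t,j}/(\theta+t)$ is essential.
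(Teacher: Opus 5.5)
Most of your argument coincides with the paper's. Your $Q_t$ is the paper's $\Rstar_t$, and the following steps are all the same as there: the bound $R_t\le S_t^2$ when $\max\{w,\beta\}<1$, the almost-supermartingale argument for $\iota=0$, $w=1$, and the P\'olya-urn/Beta argument for $\Rstar_\infty<Z^*_\infty$. Your initial urn composition $(1,\theta+\tau_{j_0}-1)$ is the correct one.

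The gap is in the case $\max\{w,\beta\}=1$ with $c=1-(1-\iota)w>0$. There you assert $\overline{P}_t=O(\ln t/t)$, but Table~\ref{table-Z-T-P-K-medio} gives this only for $\beta=1$. When $w=1>\beta$ one has $S_t\to Z^*_\infty>0$, with $\overline{P}_t\asymp t^{-\beta}$ if $\beta>0$ and $\overline{P}_t\sim Z^*_\infty/(\alpha\ln t)$ if $\beta=0$. For $0<\beta<1$ the forcing term $\overline{P}_tS_t/t\asymp t^{-1-\beta}$ is still summable. For $\beta=0$, $w=1$, $0<\iota<1$, however, it behaves like $(Z^*_\infty)^2/(\alpha\,t\ln t)$, whose series diverges. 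In that sub-case neither tool you cite applies:
\begin{itemize}
\item Theorem~\ref{th-almost-supermart} needs $\sum_t\rho_{1,t}<+\infty$.
\item Lemma~\ref{lem-to-zero} needs $\sum_t\delta_t<+\infty$ and a pathwise recursion, whereas you only have one in conditional expectation.
\end{itemize}
So $Q_t\to0$, and hence $R_t\to0$, is not established for $\beta=0$, $w=1$, $0<\iota<1$. For $\iota=1$ your final bound already gives $R_t\le 2\overline{P}_tS_t\to0$ directly.

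The missing point is that you do not need the forcing to be summable, only that it vanishes relative to the contraction. Write the exact dynamics
\[
Q_{t+1}=Q_t+\tfrac{1}{\theta+t+1}\bigl(A_t-2c\,Q_t\bigr)+\tfrac{\Delta M^{Q}_{t+1}}{\theta+t+1}+\rho_t,
\qquad A_t=\tfrac{2\iota}{w}\,\overline{P}_tS_t\stackrel{a.s.}\longrightarrow 0,
\]
where $\rho_t=O((S_t+1)/t^2)$ is summable. Then Theorem~\ref{thm:app-cor}, with $b=2c>0$ and $\eta_t=1/(\theta+t+1)$, gives $Q_t\to0$ a.s.; this is the paper's route. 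To use it you must also compute the conditional variance of the martingale difference, not just the conditional mean:
\[
E[(\Delta M^{Q}_{t+1})^2\mid\mathcal{F}_t]=\sum_{j\in\mathcal{O}_t}P_{t,j}(1-P_{t,j})\bigl(\tfrac{1+2K_{t,j}}{\theta+t+1}\bigr)^2+\tfrac{\lambda_t}{(\theta+t+1)^2}\le 4S_t+\lambda_t .
\]
This bound ensures $\sum_t E[(\Delta M^{Q}_{t+1})^2\mid\mathcal{F}_t]/(\theta+t+1)^2<+\infty$ a.s., which is the remaining hypothesis of Theorem~\ref{thm:app-cor}.
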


\begin{proof}
From \eqref{eq-P-2}, since $D_t \overline{P}_t^2=\overline{P}_t S_t$ we get 
\begin{equation*}
\begin{split}
R_t&=\sum_{j\in {\mathcal O}_t} P_{t,j}^2
=\sum_{j\in {\mathcal O}_t}
\big(w(1-\iota)\tfrac{K_{t,j}}{(\theta+t)}+\iota\overline{P}_t\big)^2\\
&=(w(1-\iota))^2 \Rstar_t+ \iota^2 D_t \overline{P}_t^2 + 2 \iota(1-\iota) \overline{P}_t S_t\\
&= (w(1-\iota))^2 \Rstar_t+ \iota(2-\iota)\overline{P}_t S_t \,, 
\end{split}
\end{equation*}
where, by~\eqref{eq-S}, 
\begin{equation*}
0 \leq \Rstar_t=\tfrac{\sum_{j\in {\mathcal O}_t}K_{t,j}^2}{(\theta+t)^2}\leq 
\left(\tfrac{\sum_{j\in {\mathcal O}_t}K_{t,j}}{\theta+t}\right)^2=
S_t^2/w^2\,.
\end{equation*}
Since $\overline{P}_t\to 0$ a.s.\ and $r_t S_t = O(1)$ from~\eqref{eq-S} and Table~\ref{table-Z-T-P-K-medio}, then $r_t R_t \to 0$ a.s.\ when $\iota=1$, 
otherwise if and only if $ r_t \Rstar_{t} \to 0$ a.s.\ and this is always the case when $\max\{w,\beta\}<1$, because $r_t \Rstar_{t} \leq r_t S_t^2/w^2\to 0$ a.s..

From now on, we assume $\iota<1$ and $\max\{w,\beta\}=1$ (and so $r_t=1$ when $\beta\neq w$ and $r_t\to 0$ when $\beta=w$) and we are going to prove that 
$r_t \Rstar_{t}\to 0$ a.s.\ unless $\iota=0$, $w=1$ and $\beta<1$.
We observe that 
\begin{equation*}
\begin{split}
\Rstar_{t+1}=
\sum_{j\in {\mathcal O}_t}\big(\tfrac{K_{t+1,j}}{\theta+t+1}\big)^2+
\sum_{j\in {\mathcal O}_{t+1}\setminus{\mathcal O}_t}
\big(\tfrac{1}{\theta+t}\big)^2
=\sum_{j\in {\mathcal O}_t}\big(\tfrac{K_{t,j}+\Delta K_{t+1,j}}{\theta+t+1}\big)^2
+
\tfrac{N_{t+1}}{(\theta+t+1)^2}
\end{split}
\end{equation*}
and so, since $(\Delta K_{t+1,j})^2=\Delta K_{t+1,j}$, we have 
\begin{equation}\label{eq:R*pred}
\begin{split}
E[\Rstar_{t+1}\mid{\mathcal F}_t]
&=\sum_{j\in {\mathcal O}_t}
\tfrac{K_{t,j}^2+P_{t,j}+2K_{t,j}P_{t,j}}{(\theta+t+1)^2}
+
\tfrac{\lambda_t}{(\theta+t+1)^2} \,,
\end{split}
\end{equation}
while,  the martingale difference reads 
\begin{multline*}
\Rstar_{t+1}-E[\Rstar_{t+1}\mid{\mathcal F}_t]
\\
= \tfrac{1}{\theta+t+1}\Big( \sum_{j\in {\mathcal O}_t}
\big(\Delta K_{t+1,j}-P_{t,j}\big)\tfrac{1+2K_{t,j}}{\theta+t+1}
+\tfrac{N_{t+1}-\lambda_t}{(\theta+t+1)}\Big)
= 
\tfrac{\Delta M^{(\Rstar)}_{t+1}}{\theta+t+1}\,.
\end{multline*}
Since the random variables $(\Delta K_{t+1,j})_{j\in\mathcal{O}_t}$ are,   conditionally on $\mathcal{F}_t$, independent among each other, and $N_{t+1}$ is independent of them and ${\mathcal F}_t$
(see the description of the  model dynamics), we have
\begin{equation}\label{eq:R*martComp}
\begin{split}
E[(\Delta M^{(\Rstar)}_{t+1})^2\mid{\mathcal F}_t] 
&=\sum_{j\in {\mathcal O}_t}P_{t,j}(1-P_{t,j})\Big(\tfrac{1+2K_{t,j}}{\theta+t+1}\Big)^2
+\tfrac{\lambda_t}{(\theta+t+1)^2}\,.
\end{split}
\end{equation}
Now, we go on with the computations for $E[\Rstar_{t+1}\mid {\mathcal F}_t]$. From~\eqref{eq:R*pred}, recalling that $\Rstar_t=\sum_{j\in{\mathcal O}_t} K_{t,j}^2/(\theta+t)^2$ 
and using~\eqref{eq-S} and~\eqref{eq-P-2}, 
we get 
\begin{align*}
E[\Rstar_{t+1}\mid{\mathcal F}_t]
&=\Rstar_t+
\sum_{j\in {\mathcal O}_t}K_{t,j}^2\left(\tfrac{1}{(\theta+t+1)^2}-\tfrac{1}{(\theta+t)^2}\right)+\tfrac{S_t}{(\theta+t+1)^2}\\
&\qquad +
\tfrac{2}{(\theta+t+1)^2}\sum_{j\in{\mathcal O}_t} K_{t,j}\left((1-\iota)w\tfrac{K_{t,j}}{\theta+t}+\iota \overline{P}_t\right)
+\tfrac{\lambda_t}{(\theta+t+1)^2} \\
&=\Rstar_t+
\sum_{j\in {\mathcal O}_t}K_{t,j}^2\left(\tfrac{1}{(\theta+t+1)^2}-\tfrac{1}{(\theta+t)^2} +
\tfrac{2(1-\iota)w}{(\theta+t)(\theta+t+1)^2}\right)\\
&\qquad +\tfrac{S_t}{(\theta+t+1)^2}+
\tfrac{2\iota}{(\theta+t+1)^2}\overline{P}_t\sum_{j\in{\mathcal O}_t} K_{t,j}
+\tfrac{\lambda_t}{(\theta+t+1)^2} \\
&=\Rstar_t-\tfrac{2(\iota+1-w)}{\theta+t+1}\Rstar_t  
+
\tfrac{2\iota}{(\theta+t+1)^2}\overline{P}_t\sum_{j\in{\mathcal O}_t} K_{t,j}
+ \rho_t
\,.
\end{align*}
with $\rho_t$ being ${\mathcal F}_t$-measurable and such that 
$\sum_t|\rho_t|<+\infty$ since 
$\rho_t=O((\Rstar_t+S_t+1)/t^2)=O((S_t+1)/t^2)$. 
Using \eqref{eq-P-medio-2}, we finally obtain
\begin{equation*}
\begin{split}
E[\Rstar_{t+1}\mid{\mathcal F}_t]
&= \Rstar_t-\tfrac{2(\iota+1-w)}{\theta+t+1}R_t^*+
\tfrac{2\iota (\theta+t)D_t}{(\theta+t+1)^2{w}}\overline{P}_t^2
+\rho_t
\end{split}
\end{equation*}
and hence
\begin{multline}\label{eq:R*_in_proof}
\Rstar_{t+1}
= \Rstar_t + \tfrac{1}{\theta+t+1}\Big( \tfrac{2\iota (\theta+t)D_t\overline{P}_t^2}{(\theta+t+1)w} -2(\iota+1-w) \Rstar_t\Big) +
\tfrac{\Delta M^{(\Rstar)}_{t+1}}{\theta+t+1}
+\rho_t
\,.
\end{multline}
Now, we consider two cases. \\
{\em Case $\iota > 0$ or $w<1$ (and $\beta=1$)}: in this case we can apply Theorem~\ref{thm:app-cor} to \eqref{eq:R*_in_proof} with 
$X_t=\Rstar_t$, $A_t= \tfrac{2\iota(\theta+t) D_t\overline{P}^2_t}{(\theta+t+1)w}$, $b=2(\iota+1-w)>0$,
$\eta_t=(\theta+t+1)^{-1}$ and $\Delta M_{t+1}=\Delta M^{(\Rstar)}_{t+1}$ (and in the present case the reminder term $\rho_{t+1}$ in Theorem~\ref{thm:app-cor} coincides here with $\rho_t$ and so it is ${\mathcal F}_t$-measurable). Indeed, 
by \eqref{eq:R*martComp}, we have 
\begin{align*}
    \sum_t& \eta_t^2 E[(\Delta M_{t+1})^2\mid{\mathcal F}_t]\\ & = 
    \sum_t \tfrac{1}{(\theta+t+1)^2}\bigg( \sum_{j\in {\mathcal O}_t}P_{t,j}(1-P_{t,j})\Big(\tfrac{1+2K_{t,j}}{\theta+t+1}\Big)^2+\tfrac{\lambda_t}{(\theta+t+1)^2}\bigg) \\
    & \leq \sum_t \tfrac{1}{(\theta+t+1)^2} \bigg(\sum_{j\in {\mathcal O}_t} P_{t,j} 
    \Big(\tfrac{1+2t}{\theta+t+1}\Big)^2 +\tfrac{\lambda_t}{(\theta+t+1)^2}\bigg) \\
    & \leq \sum_t \Big(\tfrac{4S_{t}}{(\theta+t+1)^2} + \tfrac{\lambda_t}{(\theta+t+1)^4} \Big)<+\infty\,.
\end{align*}
In addition, we have  
\[
A_t = \tfrac{2\iota}{w}\tfrac{(\theta+t)}{(\theta+t+1)}D_t\overline{P}^2_t \stackrel{a.s.}\longrightarrow 0\,.
\]
Hence,  by Theorem~\ref{thm:app-cor}, we obtain $\Rstar_t\to 0$ a.s.\ and, since in this case $r_t=1$, this means $r_t\Rstar_t\to 0$ a.s..
\\
{\em Case $\iota = 0$ and $w=1$ (and $\beta<1$ or $=1$):} in this case $R_t=\Rstar_t$ and  $\Rstar_t$ is a non-negative almost super-martingale converging a.s. to a non-negative real random variable $\Rstar_\infty$ 
(see Theorem~\ref{th-almost-supermart}). Then, when $w=\beta=1$, since $r_t\to 0$ in this case, we trivially get  
$r_tR_t=r_t \Rstar_t\to 0$ a.s..\\
\indent 
Let us focus on the remaining sub-case, that is when $\iota=0$, $w=1$ and $\beta<1$ 
(and so $r_t=1$).  First, since $P_{t,j}^2\leq P_{t,j}$, we have 
 $\Rstar_t=R_t\leq S_t=(Z_t-\lambda_t)\to Z_\infty^*$ and so we obtain $0\leq\Rstar_\infty\leq Z^*_\infty$. Now, let $\tau_1=\inf\{t\colon N_t>0\}$ and note that in this case \eqref{inclusion-probab} reads
\[
P_{\tau_1+n}(1) = \tfrac{K_{\tau_1+n}}{\tau_1+n+\theta}\,,
\]
that evolves, conditionally on the value taken by $\tau_1$ (note that $\tau_1<+\infty$ a.s.), 
as the proportion of black balls in a Pòlya urn that starts with $1$ black ball and $(\tau_1+\theta)$ white balls. 
Then, conditionally on the value taken by $\tau_1$, we have  $P_{\tau_1+n}(1)\stackrel{a.s.}\to P_\infty(1) \stackrel{d}= Beta(1,\tau_1+\theta)$, so that we have $P_\infty(1)(1-P_\infty(1))>0$. 
Note that, since $\beta<w=1$ and $\iota=0$, we have 
\[
\Rstar_\infty = P_\infty(1)^2 + \text{a.s.-}\lim_t \sum_{j>1,j\in{\mathcal O}_t} P_{t,j}^2
\]
and
\[
Z^*_\infty = S^*_\infty= P_\infty(1) + \text{a.s.-}\lim_t \sum_{j>1,j\in{\mathcal O}_t} P_{t,j}
\]
and hence $$0 <P_\infty(1)^2\leq \Rstar_\infty < \Rstar_\infty + P_\infty(1)(1-P_\infty(1))\leq Z^*_\infty.
$$
Hence, we can conclude because in this last sub-case $r_t=1$ and so $r_t\Rstar_t=\Rstar_t$.
\end{proof}

\begin{lem}\label{lem:Delta K}
  When $0\leq w\iota < \beta$ we have 
\begin{equation}\label{eq:bound Delta K}
   \sup_{t\geq1} E[\,t^{1-w(1-\iota)}\Delta K_{t+1,j}\,] < + \infty\,.
\end{equation}
\end{lem}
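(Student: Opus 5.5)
Since $\Delta K_{t+1,j}\in\{0,1\}$ and $E[\Delta K_{t+1,j}\mid{\mathcal F}_t]=P_{t,j}$, we have $E[\Delta K_{t+1,j}]=E[P_{t,j}]$. The statement is therefore equivalent to $\sup_t E[t^{1-w(1-\iota)}P_{t,j}]<+\infty$. By \eqref{eq-P-2},
$$P_{t,j}=(1-\iota)wB_{t,j}+\iota\overline{P}_t\qquad(B_{t,j}=K_{t,j}/(\theta+t)).$$
So it suffices to bound $\sup_t E[t^{1-\delta}B_{t,j}]$ and $\sup_t E[t^{1-\delta}\overline{P}_t]$, where $\delta=w(1-\iota)$.

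\emph{The term $\overline{P}_t$.} We have $\overline{P}_t\le S_t\le Z_t$. By Lemma~\ref{lem-unif-int-new}, $\sup_t E[r_tZ_t]<+\infty$, where $r_t\sim t^{1-\max\{w,\beta\}}$, possibly with a $1/\ln t$ factor when $w=\beta$. The hypothesis $w\iota<\beta$ gives $1-\delta=1-w+w\iota<1-w+\beta$. When $w\ge\beta$ this is not directly enough, since $r_t\sim t^{1-w}$ (up to logs) and we need to absorb $t^{w\iota}$. I would therefore use the extra decay from $D_t$, namely $\overline{P}_t=S_t/D_t$. This requires a moment bound of the form $E[S_t/D_t]\lesssim E[S_t^2]^{1/2}E[D_t^{-2}\mathbf 1_{D_t\ge1}]^{1/2}$. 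Here $D_t$ is Poisson with mean $\Lambda_t\sim (\alpha/\beta)t^\beta$, so $E[D_t^{-2};D_t\ge1]=O(t^{-2\beta})$ (or $O(\ln^{-2}t)$ when $\beta=0$; that case is excluded when $\iota>0$, since $w\iota<\beta$). Combined with the $L^2$ bound for $r_tS_t$, this gives
$$E[\overline{P}_t]=O\big(t^{-(1-w+\beta)}\ln t\big)=O\big(t^{-(1-\delta)}\big)$$
because $\beta>w\iota$ is strict. When $w<\beta$ the same argument gives $O(t^{-1})$. If $\iota=0$ this term is absent.

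\emph{The term $B_{t,j}$.} Take expectations in \eqref{eq:dynamics_B_tj_2} (valid from $\tau_j$; before $\tau_j$, $B_{t,j}=0$ and its first value is $\le 1/(\theta+\tau_j)$). With $b_t=E[B_{t,j}]$ this gives
$$b_{t+1}\le\Big(1-\tfrac{1-\delta}{\theta+t+1}\Big)b_t+\tfrac{\iota E[\overline{P}_t]}{\theta+t+1}+\tfrac{E[\mathbf 1_{\{\tau_j=t+1\}}]}{\theta+t+1}.$$
Multiply by $\zeta_{t+1}(1-\delta)$ and use \eqref{eq-zeta-1}, as in \eqref{eq:X_star}: the product of $(1-x/(\theta+t+1))$ and $(1+x/(\theta+t+1))$ is at most $1$. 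This yields
$$\zeta_{t+1}(1-\delta)b_{t+1}\le \zeta_t(1-\delta)b_t+O\big(t^{-\delta}E[\overline{P}_t]\big)+O(t^{-\delta}P(\tau_j=t+1)).$$
By the previous step the middle term is $O(t^{-1-(w-\beta)-w\iota+\dots})$; more precisely it is $O(t^{-\delta-(1-\delta)-\eta})$ with $\eta=\beta-w\iota>0$ up to logs, so it is summable. The last term is summable since $\sum_t P(\tau_j=t+1)\le1$. Hence $\zeta_t(1-\delta)b_t$ is bounded, i.e.\ $\sup_tE[t^{1-\delta}B_{t,j}]<\infty$.

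\emph{Main obstacle.} The delicate step is the moment bound on $\overline{P}_t$. It needs both the $L^2$ control of $r_tZ_t$ from Lemma~\ref{lem-unif-int-new} and the inverse-moment control of the Poisson variable $D_t$, and the ratio must be handled by Cauchy–Schwarz, since $S_t$ and $D_t$ are dependent. The strict inequality $w\iota<\beta$ is exactly what absorbs the logarithmic losses. A secondary point is to make precise the meaning of $j$ as the dish appearing at the random time $\tau_j$: either work conditionally on $\{\tau_j=s\}$ uniformly in $s$, or use the indicator bookkeeping above.
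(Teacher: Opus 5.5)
Your argument is correct and is essentially the paper's proof. The paper also splits $E[\Delta K_{t+1,j}]=w(1-\iota)E[B_{t,j}]+\iota E[\overline{P}_t]$ and bounds $E[\overline{P}_t]$ through $S_t/D_t$, using the moment bounds on $r_tS_t$ from Lemma~\ref{lem-unif-int-new} and Poisson concentration of $D_t$ (a Chernoff split plus H\"older, where you use an inverse-moment Cauchy--Schwarz bound). It then shows $\zeta_t(1-w+w\iota)E[B_{t,j}]$ stays bounded via the same rescaled mean recursion, which you additionally make precise with respect to the appearance time $\tau_j$.

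One small slip: for $w<\beta$ the gain is $t^{1-\delta}E[\overline{P}_t]=O(t^{-w(1-\iota)})$, not $O(t^{-(\beta-w\iota)})$; the paper writes the exponent as $\min\{\beta,w\}-w\iota$. This is still summable against $1/t$ when $\iota<1$, and for $\iota=1$ the $B_{t,j}$ term has coefficient zero, so $E[t\,\overline{P}_t]=O(1)$ suffices.
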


\begin{proof}
    First, recall from Section~\ref{subsec-D} that $D_t=\sum_{n=1}^t N_{n}\stackrel{d}=$Poi$(\Lambda_t)$ with 
    $\Lambda_t=\sum_{n=1}^t \lambda_{n-1}\sim (\alpha/\beta) t^\beta$ when $\beta>0$.
    Then, consider the sequence $(r_t)$ as defined in \eqref{eq:r_t_appB} with $\delta=w$ e $\rho=\beta$.
   Since $E[\Delta K_{t+1,j}]=w(1-\iota) E[B_{t,j}]+ \iota E[\overline{P}_t]$, 
    the proof will be divided in the proof of the following two facts:
\begin{itemize}
    \item[(1)] $E[t^{1-w+w\iota}\overline{P}_t]=O(t^{-(\min\{\beta,w\}-w\iota)}\ln(t))\to 0$,
    \item[(2)] $\sup_{t\geq1} t^{1-w+w\iota} E[B_{t,j}] < +\infty$.
\end{itemize}
\noindent \emph{Proof of (1):}
Consider
$$
   E[t^{1-w+w\iota}\overline{P}_t] = \tfrac{t^{1-w}}{r_t}
     E\Big[\tfrac{r_t S_t}{t^{-w\iota}D_t}\Big]\,.
$$
Since $D_t\stackrel{d}=$Poi$(\Lambda_t)$, by Chernoff's bound (see~\cite{MitUpf17}), we have 
$$
P\big(D_t < (1-\epsilon)\Lambda_t\big) \leq e^{-\tfrac{\epsilon^2}{2}\Lambda_t}
$$
and so, using also Holder's inequality, we get
$$\begin{aligned}
   E\Big[\tfrac{r_t S_t}{t^{-w\iota}D_t}\Big]
   &= E\Big[\tfrac{r_tS_t}{t^{-w\iota}D_t}\ind_{\{D_t \geq (1-\epsilon)\Lambda_t\}}\Big] + 
      E\Big[\tfrac{r_tS_t}{t^{-w\iota}D_t}\ind_{\{D_t < (1-\epsilon)\Lambda_t\}}\Big]\\
   &\leq E\Big[\tfrac{r_tS_t}{t^{-w\iota}(1-\epsilon)\Lambda_t}\Big] + 
      E[(r_tS_t)^{2}]^{\frac{1}{2}}\,t^{w\iota} P\big(D_t < (1-\epsilon)\Lambda_t\big)^{\frac{1}{2}}\\
 &\leq E[r_tS_t] \tfrac{t^{w\iota}}{(1-\epsilon)\Lambda_t} + 
      E[(r_tS_t)^{2}]^{\frac{1}{2}} \,
      t^{w\iota}e^{-\tfrac{\epsilon^2}{4}\Lambda_t}\,.
\end{aligned}
$$
Now, we observe that, by Lemma~\ref{lem-unif-int-new} and the fact that $0\leq S_t=Z_t-\lambda_t\leq Z_t$, we have 
$\sup_{t} E[(r_tS_t)^p]\leq \sup_{t} E[(r_tZ_t)^p] <+\infty$ for $p=1,2$ and so the above inequality 
implies that 
$$E[t^{1-w+w\iota}\overline{P}_t]=\tfrac{t^{1-w}}{r_t}O\big(\tfrac{t^{w\iota}}{\Lambda_t}\big)=
O\big(t^{-(\min\{\beta,w\}-w\iota)}\ln(t)\big)\rightarrow 0\,.$$
\noindent \emph{Proof of (2):} Setting $m_{t}=E[B_{t}]$,  by~\eqref{eq:dynamics_B_tj} we have
\begin{equation*}
\begin{aligned}
    m_{t+1}=
\big(1-\tfrac{1}{\theta+t+1}\big) m_t +\tfrac{1}{\theta+t+1} E[P_{t,j}]
=\Big(1-\tfrac{1-w(1-\iota)}{\theta+t+1}\Big) m_t
+\tfrac{\iota}{\theta+t+1} E[\overline{P_t}]\,.
\end{aligned}
\end{equation*}
Then, setting $m^{*}_t=\zeta_t(1-w+w\iota)m_t$ and using~\eqref{eq-zeta-1}, we get
\begin{equation*}
\begin{aligned}
m^{*}_{t+1}&=\Big(1-\tfrac{(1-w(1-\iota))^2}{(\theta+t+1)^2}\Big) m^{*}_t
+\tfrac{\iota}{\theta+t+1}E[t^{1-w+w\iota}\overline{P}_t]\\
&\leq  m^{*}_t+\tfrac{\iota}{\theta+t+1}E[t^{1-w+w\iota}\overline{P}_t]\,,
\end{aligned}
\end{equation*}
which implies that $(m^*_t)_t$ is bounded because, by fact~$(1)$, we have  
$$
\sum_tE[t^{1-w+w\iota}\overline{P}_t]/(\theta+t+1)=\sum_t O(1/t^{1+\min\{\beta,w\}-w\iota}) < +\infty\,.
$$
\end{proof}

\subsection{Proof of the law of the iterated logarithm for $D_t$}\label{app-LIL}
We here provide the proof of the result for $D_t$ given in~Remark~\ref{LIL-D}.
\begin{proof}
Recall that $D_t - \Lambda_t = \sum_{n=1}^t (N_n -\lambda_{n-1})$, where $(N_n -\lambda_{n-1})_n$ is a sequence of zero-mean independent random variables and 
$Var[D_t] = \sum_{n=1}^t Var[N_n] = \sum_{n=1}^t\lambda_{n-1} = \Lambda_t$. Set $c_n = \sqrt{\tfrac{\Lambda_n}{{\ln (\Lambda_n)}}}$. 
For $n$ large enough, we have $\lambda_{n-1} < c_n$ and consequently  $P(N_n-\lambda_{n-1} \leq - c_n) = P(N_n\leq \lambda_{n-1}-c_n)=0$. 
Then, by the bounds for the tail probabilities of a Poisson random variable (e.g. \cite[Theorem~5.4]{Mitzenmacher_Upfal_2005}), we obtain for $n$ large enough 
\begin{equation*}
\begin{split}
P(|N_n-\lambda_{n-1}|\geq c_n)&= P(N_n \geq \lambda_{n-1} + c_n)
\leq
e^{-\lambda_{n-1}} \tfrac{ (e\lambda_{n-1})^{\lambda_{n-1}+c_n} }{(\lambda_{n-1}+c_n)^{\lambda_{n-1}+c_n}}
\\
&= \big(\tfrac{e\lambda_{n-1}}{c_n}\big)^{c_n} \big(\tfrac{\lambda_{n-1}}{c_n}\big)^{\lambda_{n-1}} 
\tfrac{ 1 }{(\tfrac{\lambda_{n-1}}{c_n}+1)^{c_n}(\tfrac{\lambda_{n-1}}{c_n}+1)^{\lambda_{n-1}}}
\\
&\leq C \big(\tfrac{e\lambda_{n-1}}{c_n}\big)^{c_n}\,,
\end{split}
\end{equation*}
where the last inequality is due to the fact that $\lambda_{n-1} = o(c_n)$. 
Moreover, we have  
\(
\sum_n \big(\tfrac{e\lambda_{n-1}}{c_n}\big)^{c_n} < +\infty
\). In fact, for $\beta=0$, we have $\lambda_{n-1} = \alpha / n $ and hence $\big(\tfrac{e\lambda_{n-1}}{c_n}\big)^{c_n} = \big(\tfrac{e\alpha}{nc_n}\big)^{c_n} = o(n^{-2})$
and for $\beta>0$, we have $\lambda_{n-1}=\alpha/n^{1-\beta}$ and so $c_n \geq n^{\beta/(2+\epsilon)}$ eventually, which, together with $\lambda_{n-1}/c_n\to 0$, implies 
$ \big(\tfrac{e\lambda_{n-1}}{c_n}\big)^{c_n}\leq 2^{- n^{\beta/(2+\epsilon)}}$ eventually. 
By Borel-Cantelli Lemma, we finally obtain 
$$
P\Big(\limsup_n\{|N_n-\lambda_{n-1}|\geq c_n\}\Big)=0
$$
and so 
\[
|N_n-\lambda_{n-1}| = O (c_n) = o \left(\sqrt{\tfrac{\Lambda_n}{\ln\ln\Lambda_n}}\right)\qquad 
\hbox{a.s.}\,.
\]
Therefore, \eqref{eq-lil} is a consequence of Kolmogorov's law of the iterated logarithm \cite{fisher} applied to the sequence $(N_n -\lambda_{n-1})_n$ 
(see \cite[Theorem~1, p.~343]{chow_teicher}). 
\end{proof}

\section{Useful recalls}

For the reader's convenience, 
we here recall some technical results for discrete-time real stochastic processes, used in the previous proofs.

\begin{lem}[{\cite[Theorem~46, p.~40]{del-mey}}]\label{del-mey-result-app}
  Let $\mathcal F=(\mathcal{F}_t)_t$ be a filtration and $(Y_t)$ be an $\mathcal
F$-adapted non-negative stochastic process. \\  
 \indent Then the set $\{\sum_t
  E[Y_{t+1}\vert \mathcal{F}_{t}]<+\infty\}$ is a.s. contained in
  the set $\{\sum_t Y_t<+\infty\}$. If, in addition, the random variables $Y_t$ are
  uniformly bounded by a constant, then these two sets are a.s. equal.
\end{lem}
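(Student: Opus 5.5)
We prove the two inclusions separately, both by the same localization device: stop the relevant increasing process before it exceeds a level $c$, compare expectations of the stopped sums, and let $c$ run through the integers. Throughout, conditional expectations of non-negative variables are taken in the generalized sense (values in $[0,+\infty]$). Hence no integrability of $Y_t$ is needed.

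\emph{First inclusion.} Set $A_0=0$ and $A_t=\sum_{n=0}^{t-1}E[Y_{n+1}\mid\mathcal F_n]$. This is a non-decreasing, predictable process, since $A_{t+1}$ is $\mathcal F_t$-measurable. Put $A_\infty=\lim_t A_t$. For $c>0$ define
\[
\sigma_c=\inf\{t\geq 0:\ A_{t+1}>c\}.
\]
Because $A_{t+1}\in\mathcal F_t$, this is a stopping time, and $\{n<\sigma_c\}\in\mathcal F_n$. By monotone convergence and the tower property,
\[
E\Big[\sum_{n<\sigma_c}Y_{n+1}\Big]=\sum_n E\big[\ind_{\{n<\sigma_c\}}E[Y_{n+1}\mid\mathcal F_n]\big]=E[A_{\sigma_c}]\leq c .
\]
The last inequality holds by the definition of $\sigma_c$. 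Hence $\sum_{n<\sigma_c}Y_{n+1}<+\infty$ a.s. On $\{A_\infty\leq c\}$ we have $\sigma_c=+\infty$, so $\sum_t Y_t<+\infty$ a.s. on that event. Taking the union over $c\in\mathbb N$ gives
\[
\{A_\infty<+\infty\}\subseteq\{\textstyle\sum_t Y_t<+\infty\}\quad\text{a.s.}
\]

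\emph{Reverse inclusion when $Y_t\leq K$.} Let $\tau_c=\inf\{t:\ \sum_{m=1}^t Y_m>c\}$. This is a stopping time because $Y$ is adapted. By the same monotone convergence and tower computation,
\[
E[A_{\tau_c}]=E\Big[\sum_{n<\tau_c}Y_{n+1}\Big]=E\Big[\sum_{m=1}^{\tau_c}Y_m\Big]\leq c+K .
\]
The bound holds because the sum stays at most $c$ before $\tau_c$, and the overshoot at time $\tau_c$ is a single term $Y_{\tau_c}\leq K$. Thus $A_{\tau_c}<+\infty$ a.s. On $\{\sum_t Y_t\leq c\}$ we have $\tau_c=+\infty$, so $A_\infty<+\infty$ there. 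A union over $c$ again concludes.

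\emph{Main obstacle.} The delicate step is the index bookkeeping that makes $\{n<\sigma\}$ lie in $\mathcal F_n$, which is what licenses the exchange $E[\ind_{\{n<\sigma\}}Y_{n+1}]=E[\ind_{\{n<\sigma\}}E[Y_{n+1}\mid\mathcal F_n]]$. A second point of care is the overshoot bound in the reverse inclusion. That bound is exactly where uniform boundedness of the $Y_t$ is needed, and it explains why the reverse inclusion can fail without it.
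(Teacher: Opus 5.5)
Your proof is correct and complete. The paper gives no argument of its own for this lemma; it is simply imported from Dellacherie--Meyer, so there is no route in the paper to compare yours against. What you supply is the standard localization proof, and it is self-contained: it uses only stopping times, Tonelli, and the tower property for $[0,+\infty]$-valued conditional expectations, so no integrability of the $Y_t$ is needed.

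The bookkeeping is right at each point:
\begin{itemize}
\item $\{n<\sigma_c\}\in\mathcal F_n$ because $A_{t+1}$ is $\mathcal F_t$-measurable.
\item $A_{\sigma_c}\le c$ on $\{\sigma_c<\infty\}$, using $A_0=0$ when $\sigma_c=0$, and also on $\{\sigma_c=+\infty\}$.
\item For $c\ge 0$ one has $\tau_c\ge 1$, so $\sum_{n<\tau_c}Y_{n+1}=\sum_{m=1}^{\tau_c}Y_m\le c+K$.
\end{itemize}

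A side benefit of your direct comparison of stopped expectations is that it shows exactly what the reverse inclusion needs. This is clearer than quoting the result, or than the alternative proof via the Doob decomposition and convergence of a stopped martingale. Since $\sum_{m=1}^{\tau_c}Y_m\le c+\sup_t Y_t$, the same computation gives $E[A_{\tau_c}]\le c+E[\sup_t Y_t]$. So the reverse inclusion already holds under $E[\sup_t Y_t]<+\infty$, which is weaker than uniform boundedness.
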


\begin{lem}[{L\'evy's extension of Borel–Cantelli Lemma \cite[Sec.~12.15]{williams}}]\label{williams-lemma}
Let $(Y_t)_t$ be a sequence of Bernoulli random variables, adapted to a filtration $\mathcal{F}=(\mathcal{F}_t)_t$ 
and  such that $Z_t=P(Y_{t+1}=1\mid\mathcal{F}_t)$ and $\sum_t Z_t=+\infty$ a.s.. 
Then $\sum_{n=0}^t Y_{n+1} / \sum_{n=0}^{t} Z_n \stackrel{a.s.}\longrightarrow 1$.
\end{lem}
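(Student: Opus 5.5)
The plan is to reduce the statement to a martingale convergence result followed by a pathwise Kronecker argument. Set $A_t=\sum_{n=0}^{t} Z_n$, which is $\mathcal F_t$-measurable, nondecreasing, and diverges to $+\infty$ a.s. by assumption; put $A_{-1}=0$. Consider the weighted martingale
$$
L_t=\sum_{n=0}^{t}\frac{Y_{n+1}-Z_n}{1+A_n}\,.
$$
Its increments are $\mathcal F_{t+1}$-measurable and have zero $\mathcal F_t$-conditional mean, because $1+A_n$ is $\mathcal F_n$-measurable and $E[Y_{n+1}\mid\mathcal F_n]=Z_n$. So $(L_t)_t$ is an $\mathcal F$-martingale.

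The key step is to show that $(L_t)_t$ is bounded in $L^2$, and in fact that its predictable quadratic variation is bounded by a constant path by path. Since $Y_{n+1}$ is Bernoulli, the conditional variance of the $n$-th increment is
$$
\frac{Z_n(1-Z_n)}{(1+A_n)^2}\le \frac{Z_n}{(1+A_{n-1})(1+A_n)}\,,
$$
where the inequality uses $A_{n-1}\le A_n$. Because $A_n-A_{n-1}=Z_n$, the right-hand side equals the telescoping difference $\frac{1}{1+A_{n-1}}-\frac{1}{1+A_n}$. Summing over $n$ gives at most $1/(1+A_{-1})=1$. Hence $\sup_t E[L_t^2]\le 1$, and $L_t$ converges a.s. to a finite limit by the $L^2$ martingale convergence theorem.

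The final step applies Kronecker's lemma pathwise with the random weights $1+A_n$. These are increasing and tend to $+\infty$ on an event of probability one, and the series $\sum_n (Y_{n+1}-Z_n)/(1+A_n)$ converges on that event. Therefore
$$
\frac{1}{1+A_t}\sum_{n=0}^t (Y_{n+1}-Z_n)\stackrel{a.s.}\longrightarrow 0\,,
$$
which means $\sum_{n=0}^t Y_{n+1}/(1+A_t)\to 1$ a.s. Since $A_t\to\infty$, we have $(1+A_t)/A_t\to 1$, and this yields the stated ratio limit.

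The only delicate points are two. First, the variance bound must hold without any deterministic control on the $Z_n$; the telescoping trick above handles this. Second, Kronecker's lemma is being used with random normalizers; this is legitimate because the lemma is a deterministic statement applied $\omega$ by $\omega$.
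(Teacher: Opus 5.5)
Your proof is correct. The paper gives no argument of its own for this lemma; it only cites \cite[Sec.~12.15]{williams}. Williams proves it through the Doob decomposition $M_t=\sum_{n=0}^{t}(Y_{n+1}-Z_n)$, the bound $\langle M\rangle_t=\sum_{n=0}^{t}Z_n(1-Z_n)\le A_t$, and the ``trivial strong law'' of Sec.~12.14, namely $M_t/\langle M\rangle_t\to 0$ a.s.\ on $\{\langle M\rangle_\infty=\infty\}$. That strong law is proved with exactly your device (weighted martingale, telescoping bound, Kronecker), but with weights $1+\langle M\rangle_n$. The event $\{\langle M\rangle_\infty<\infty\}$ is then handled as a separate case, since there $M_t$ simply converges.

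You weight by $1+A_n$ directly. Since $Z_n(1-Z_n)\le Z_n=A_n-A_{n-1}$, the telescoping estimate still holds, and the case split disappears into a single argument. What Williams' route buys instead is a strong law valid for arbitrary $L^2$ martingales, with the Bernoulli statement as a corollary.

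One cosmetic point: $L_t$ is $\mathcal F_{t+1}$-measurable, so it is a martingale with respect to the shifted filtration $(\mathcal F_{t+1})_t$ rather than $\mathcal F$ itself. This changes nothing in the argument.
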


\begin{lem}[{\cite[Lemma~3.2]{pemantle-volkov-1999}}]\label{lemma-pemantle} 
Let $\mathcal{F}=({\mathcal F}_t)_t$ be a filtration and 
  let $(L_t)_t$ be an $\mathcal F$-adapted  stochastic process. Set
  $U_t=E[L_{t+1}-L_t\mid \mathcal{F}_t]$ and
  $V_t=E[(L_{t+1}-L_t)^2\mid \mathcal{F}_t]$. If $\sum_t U_t$ and
  $\sum_t V_t$ are a.s. convergent, then $(L_t)_t$ converges
  a.s. to a real random variable.
\end{lem}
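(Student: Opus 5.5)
The plan is to use the Doob decomposition of $(L_t)_t$ into a predictable drift plus a martingale. The predictable part converges by the assumption on $\sum_t U_t$, and the martingale part converges by a localized $L^2$ argument driven by the assumption on $\sum_t V_t$.

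\emph{Step 1 (decomposition).} Since $V_t<+\infty$ a.s., the increment $L_{t+1}-L_t$ is conditionally square integrable, so $U_t$ is well defined and $|U_t|\leq \sqrt{V_t}$. Set $\Delta M_{t+1}=L_{t+1}-L_t-U_t$ and $M_t=\sum_{n=0}^{t-1}\Delta M_{n+1}$, so that
\[
L_t=L_0+\sum_{n=0}^{t-1}U_n+M_t .
\]
By hypothesis, the partial sums $\sum_{n<t}U_n$ converge a.s. It therefore remains to show that $(M_t)_t$ converges a.s.

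\emph{Step 2 (conditional variance bound).} We have $E[\Delta M_{n+1}\mid\mathcal F_n]=0$. Moreover
\[
E[(\Delta M_{n+1})^2\mid\mathcal F_n]=V_n-U_n^2\leq V_n .
\]

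\emph{Step 3 (localization).} For $K>0$, define $\tau_K=\inf\{t\colon \sum_{n=0}^{t}V_n>K\}$. This is a stopping time, because $\sum_{n\le t}V_n$ is $\mathcal F_t$-measurable. By construction $\sum_{n<\tau_K}V_n\leq K$.

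Consider the stopped process $M^{K}_t=M_{t\wedge\tau_K}=\sum_{n<t}\ind_{\{n<\tau_K\}}\Delta M_{n+1}$. Its increments are martingale differences, since $\{n<\tau_K\}\in\mathcal F_n$. Its conditional variances are $\ind_{\{n<\tau_K\}}E[(\Delta M_{n+1})^2\mid\mathcal F_n]$, and their sum over $n$ is at most $K$ by Step 2.

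Hence $E[(M^K_t)^2]\leq K$ for all $t$. So $(M^K_t)_t$ is a martingale bounded in $L^2$, and it converges a.s. If $L_0$ or the increments lack unconditional integrability, one first multiplies by $\ind_{A}$ with $A\in\mathcal F_0$, for instance $A=\{|L_0|\le c\}$; this does not affect the argument, which only uses the conditional second moments.

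\emph{Step 4 (conclusion).} On the event $\{\tau_K=+\infty\}$ we have $M_t=M^K_t$ for all $t$, so $M_t$ converges a.s. there. Since $\sum_t V_t<+\infty$ a.s., we get $\bigcup_{K\in\mathbb N}\{\tau_K=+\infty\}=\{\sum_tV_t<+\infty\}$, which has probability one. Thus $M_t$ converges a.s. to a real random variable, and together with Step 1 so does $L_t$.

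The only delicate point is the localization in Step 3. One must check that the stopping time is defined through the predictable quantities $V_n$, so that the stopped conditional variances are deterministically bounded by $K$. This is exactly what makes the $L^2$ martingale convergence theorem applicable without any a priori integrability of $\sum_t V_t$.
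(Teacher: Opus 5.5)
Your proof is correct. The paper gives no proof of this lemma (it only recalls it from Pemantle--Volkov), and yours is the standard argument behind it: subtract the predictable drift $\sum_{n<t}U_n$, then show that the remaining martingale, whose conditional variances are bounded by $V_n$, converges a.s.\ on $\{\sum_t V_t<+\infty\}$; your stopping-time localization is exactly the usual proof of that last fact. The side remark about multiplying by $\ind_A$ with $A=\{|L_0|\le c\}$ is superfluous, since $M$ does not involve $L_0$ and the stopping already makes each increment square integrable: $E[\ind_{\{n<\tau_K\}}(\Delta M_{n+1})^2]\le E[\ind_{\{n<\tau_K\}}V_n]\le K$.
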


\begin{thm} [{\cite[Non-negative almost super-martingale]{rob}}]
\label{th-almost-supermart}
Let $(Y_t)_t$ be a non-negative real almost super-martingale with respect to a filtration 
 $\mathcal{F}=(\mathcal{F}_t)_t$, that is an $\mathcal{F}$-adapted non-negative real 
 stochastic process  satisfying
$$
E[Y_{t+1}|\mathcal{F}_t]\leq (1+\Delta_t)Y_t+\rho_{1,t}-\rho_{2,t}\,,
$$
where $\Delta_t$, $\rho_{1,t}$, $\rho_{2,t}$ are all  $\mathcal{F}$-adapted  non-negative real  stochastic processes.
Then $(Y_t)_t$ a.s. converges to a real random variable 
and $\sum_t \rho_{2,t}<+\infty$ a.s. on the event 
$\{\sum_t\Delta_t<+\infty\,,\sum_t \rho_{1,t}<+\infty\}$.
\end{thm}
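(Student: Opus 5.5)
The statement is Robbins--Siegmund's theorem on non-negative almost super-martingales, quoted as Theorem~\ref{th-almost-supermart}. I would prove it by reducing to a genuine non-negative super-martingale and then localizing on the good event.

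\emph{Normalization.} Set $\pi_t=\prod_{s<t}(1+\Delta_s)$, which is $\mathcal F_{t-1}$-measurable and non-decreasing. Define
\[
Y'_t=\frac{Y_t}{\pi_t},\qquad \rho'_{1,t}=\frac{\rho_{1,t}}{\pi_{t+1}},\qquad \rho'_{2,t}=\frac{\rho_{2,t}}{\pi_{t+1}}.
\]
Dividing the hypothesis by $\pi_{t+1}$ gives
\[
E[Y'_{t+1}\mid\mathcal F_t]\le Y'_t+\rho'_{1,t}-\rho'_{2,t}.
\]
Next set
\[
U_t=Y'_t-\sum_{s<t}\bigl(\rho'_{1,s}-\rho'_{2,s}\bigr).
\]
Then $E[U_{t+1}\mid\mathcal F_t]\le U_t$, so $(U_t)$ is a super-martingale. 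It is not bounded below, however.

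\emph{Localization.} For $a>0$, let $\tau_a=\inf\{t:\sum_{s\le t}\rho'_{1,s}>a\}$. This is a stopping time, because the partial sums are $\mathcal F_t$-adapted. For $t\le\tau_a$ we have
\[
U_t\ge -\sum_{s<t}\rho'_{1,s}\ge -a,
\]
since $Y'_t\ge0$ and $\rho'_{2}\ge0$. Hence the stopped process $U_{t\wedge\tau_a}+a$ is a non-negative super-martingale and converges a.s. by the super-martingale convergence theorem. On $\{\tau_a=\infty\}$ this means $U_t$ converges to a finite limit.

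\emph{Covering the good event.} The event $\{\sum\Delta_t<\infty,\ \sum\rho_{1,t}<\infty\}$ is contained in $\{\sum_t\rho'_{1,t}<\infty\}$, because $\pi_t\ge1$. The latter set equals $\bigcup_{a\in\mathbb N}\{\tau_a=\infty\}$. So $U_t$ converges a.s. on the good event. There, $\sum_{s<t}\rho'_{1,s}$ converges, hence
\[
Y'_t+\sum_{s<t}\rho'_{2,s}=U_t+\sum_{s<t}\rho'_{1,s}
\]
converges. Since $Y'_t\ge0$ and the partial sums of $\rho'_{2}$ are non-decreasing, the limit being finite forces $\sum\rho'_{2,s}<\infty$. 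It then follows that $Y'_t$ converges, as a difference of convergent sequences.

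\emph{Undoing the normalization.} On $\{\sum\Delta_t<\infty\}$ we have $\pi_t\to\pi_\infty\in[1,\infty)$, using $\pi_t\le e^{\sum\Delta_s}$. So $Y_t=\pi_tY'_t$ converges to a real limit, and $\sum\rho_{2,t}\le\pi_\infty\sum\rho'_{2,t}<\infty$. The main care point is the localization step. The super-martingale $U$ is not bounded below, so one must stop at $\tau_a$ and recover the good event as the countable union of the sets $\{\tau_a=\infty\}$. The required lower bound $-a$ holds precisely because $\rho'_1$ is adapted, which makes $\tau_a$ a stopping time.
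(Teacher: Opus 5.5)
Your argument is correct and is exactly the original Robbins--Siegmund proof: normalize by $\pi_t=\prod_{s<t}(1+\Delta_s)$, compensate to obtain $U_t$, localize with the stopping time $\tau_a$, and recover the good event as $\bigcup_a\{\tau_a=\infty\}$. The paper itself gives no proof of this theorem and simply cites \cite{rob}. The one point to tidy is integrability: $Y_0=U_0$, and hence $U_{t\wedge\tau_a}+a$, need not be integrable, so it is not a super-martingale in the usual sense. You should therefore either invoke the convergence theorem for non-negative super-martingales in the generalized (extended conditional expectation) sense, or additionally localize on the $\mathcal{F}_0$-sets $\{Y_0\le k\}$, on which the stopped process has expectation at most $k+a$.
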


\begin{lem}[{\cite[Lemma 4.1]{cri-dai-min}}]\label{lemma-kro}
Let $\mathcal F$ be a filtration and $(Y_t)$ be an $\mathcal
F$-adapted  stochastic process such that each $Y_t$ is square-integrable and
$E[Y_{t+1}|{\mathcal F}_{t}]\to Y_\infty$ a.s.\ for some real random variable
$Y_\infty$. Moreover, let $(a_t)$ and $(b_t)$ be two sequences of strictly
positive real numbers such that
$$ b_t\uparrow +\infty,\quad
\sum_{t=1}^{\infty}\tfrac{E[Y_t^2]}{a_t^2\,b_t^2}<+\infty\,.
$$
Then we have:
\begin{itemize}
\item[a)] if $\tfrac{1}{b_t}\sum_{n=1}^t\tfrac{1}{a_n}\to \gamma$
for some constant $\gamma$, then 
$
\tfrac{1}{b_t}\sum_{n=1}^t \tfrac{Y_n}{a_n}\stackrel{a.s.}
\longrightarrow \gamma Y_\infty;
$
\item[b)] If $b_t\sum_{n\geq t} \tfrac{1}{a_n\,b_n^2}\to \gamma$
for some constant $\gamma$, then 
$
b_t\sum_{n\geq t} \tfrac{Y_n}{a_n\,b_n^2}\stackrel{a.s.}
\longrightarrow \gamma Y_\infty.
$
\end{itemize}
\end{lem}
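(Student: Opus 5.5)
The plan is to split each $Y_n$ into a martingale increment and a predictable part, and treat the two pieces separately in both a) and b). Set $\Delta N_n=Y_n-E[Y_n\mid\mathcal F_{n-1}]$ and $c_{n-1}=E[Y_n\mid\mathcal F_{n-1}]$. The hypothesis says $c_n\to Y_\infty$ a.s. Since $E[(\Delta N_n)^2]\le E[Y_n^2]$, the process $M_t=\sum_{n\le t}\Delta N_n/(a_nb_n)$ is a martingale bounded in $L^2$, because $\sum_n E[Y_n^2]/(a_n^2b_n^2)<+\infty$. Hence $M_t$ converges a.s. to a finite limit $M_\infty$; set $R_t=M_\infty-M_t\to 0$ a.s.

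\emph{Part a).} Write
\[
\tfrac1{b_t}\sum_{n\le t}\tfrac{Y_n}{a_n}=\tfrac1{b_t}\sum_{n\le t} b_n\,(M_n-M_{n-1})+\tfrac1{b_t}\sum_{n\le t}\tfrac{c_{n-1}}{a_n}.
\]
The first term tends to $0$ a.s. by Kronecker's lemma, since $b_t\uparrow+\infty$ and $M_t$ converges. For the second term, split $c_{n-1}=Y_\infty+(c_{n-1}-Y_\infty)$. The $Y_\infty$ part gives $Y_\infty\cdot\tfrac1{b_t}\sum_{n\le t}1/a_n\to\gamma Y_\infty$. The remainder is handled by a Toeplitz argument. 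The weights $1/(a_nb_t)$ are positive, their total $\tfrac1{b_t}\sum_{n\le t}1/a_n$ stays bounded (it tends to $\gamma$), and each fixed weight vanishes because $b_t\to\infty$. So the finitely many early terms vanish, and the tail is bounded by $\sup_{n\ge n_0}|c_{n-1}-Y_\infty|$ times a bounded constant, which can be made small.

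\emph{Part b).} Write
\[
b_t\sum_{n\ge t}\tfrac{Y_n}{a_nb_n^2}=b_t\sum_{n\ge t}\tfrac{M_n-M_{n-1}}{b_n}+b_t\sum_{n\ge t}\tfrac{c_{n-1}}{a_nb_n^2}.
\]
For the martingale part, sum by parts using $M_n-M_{n-1}=R_{n-1}-R_n$:
\[
\sum_{n\ge t}\tfrac{R_{n-1}-R_n}{b_n}=\tfrac{R_{t-1}}{b_t}-\sum_{n\ge t}R_n\Big(\tfrac1{b_n}-\tfrac1{b_{n+1}}\Big).
\]
The boundary term at infinity vanishes because $R_n\to0$ and $b_n\to\infty$. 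Since $b_n$ is nondecreasing, the telescoping weights satisfy $\sum_{n\ge t}(1/b_n-1/b_{n+1})\le 1/b_t$. Multiplying by $b_t$ therefore gives a bound $2\sup_{n\ge t-1}|R_n|\to0$ a.s. For the predictable part, the $Y_\infty$ component gives $Y_\infty\, b_t\sum_{n\ge t}1/(a_nb_n^2)\to\gamma Y_\infty$ by hypothesis. The error is at most $\sup_{n\ge t}|c_{n-1}-Y_\infty|\cdot b_t\sum_{n\ge t}1/(a_nb_n^2)$, which is a vanishing factor times a convergent one, so it tends to $0$.

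No step looks like a genuine obstacle. The points needing care are the summation by parts in b), namely the vanishing boundary term and the use of monotonicity of $b_n$ to control the tail weights, and keeping the index shift between $E[Y_n\mid\mathcal F_{n-1}]$ and the hypothesis $E[Y_{t+1}\mid\mathcal F_t]\to Y_\infty$ consistent.
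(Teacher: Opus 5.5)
Your proof is correct. The decomposition $Y_n=\Delta N_n+E[Y_n\mid\mathcal F_{n-1}]$ gives an $L^2$-bounded martingale $M_t=\sum_{n\le t}\Delta N_n/(a_nb_n)$; you then use Kronecker's lemma for a), its tail analogue via summation by parts (with the bound $b_t\sum_{n\ge t}(1/b_n-1/b_{n+1})\le 1$) for b), and a Toeplitz argument for the predictable part, and all of these steps check out. The paper does not prove this lemma itself; it only recalls it from \cite{cri-dai-min}, and your martingale-decomposition argument is the standard route for results of this kind.
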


\begin{thm}[{\cite[Theorem A.1]{crimaldi-2009}}]
\label{fam_tri_vet_as_inf}
 Let $(\mathcal{F}_{t})_{t\geq 0}$ be a complete 
  filtration and, for each $n\geq 1$, let  $(M_{n,t})_{t\geq 0}$ be 
  a martingale with respect
  to the filtration $({\mathcal{F}}_{n+(t-1)})_{t\geq 0}$, with $M_{n,0}=0$, and 
  converging in $L^1$ to a real (${\mathcal F}_\infty$-measurable) random variable $M_{n,\infty}$. Moreover, set
\begin{equation*}
X_{n,t}=M_{n,t}-M_{n,t-1}\quad\hbox{for } t\geq 1,\quad
U_n=\sum_{t\geq 1} X_{n,t}^2,\quad
X_n^*=\sup_{t\geq 1} |X_{n,t}|
\end{equation*}
and assume that the following conditions are satisfied:
\begin{itemize}
\item[i)] $X_n^*\stackrel{a.s.}\longrightarrow 0$; 
\item[ii)] $(X_n^*)_n$ is dominated in $L^1$;
\item[iii)] $(U_n)_n\stackrel{a.s.}\longrightarrow U$ for a real (${\mathcal F}_\infty$-measurable) random variable~$U$.
\end{itemize}  
Then, the sequence $(M_{n,\infty})_{n}$ converges to the
Gaussian kernel ${\mathcal{N}}(0,U)$  
in the sense of the a.s.
conditional convergence with respect to~$({\mathcal F}_{n})_n$ (and so also stably),  that is 
the a.s. convergence of the conditional distributions holds true: 
for almost every $\omega$, the conditional distribution $\mu_n(\omega,\cdot)$ of
$M_{n,\infty}$ given $\mathcal{F}_n$ converges weakly to the Gaussian distribution 
${\mathcal N}(0,U(\omega))$.
\end{thm}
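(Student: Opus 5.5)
The plan is to prove the statement through conditional characteristic functions. Fix a countable dense set of $u\in\mathbb{R}$. For each such $u$, I show that almost surely
$$
E\big[e^{iuM_{n,\infty}}\mid\mathcal F_n\big]\longrightarrow e^{-u^2U/2}.
$$
Outside a single null set this then holds for all $u$ in the countable set. Since characteristic functions of probability measures are equicontinuous as soon as their limit is continuous at $0$, the L\'evy continuity theorem applied $\omega$ by $\omega$ gives weak convergence of $\mu_n(\omega,\cdot)$ to $\mathcal N(0,U(\omega))$. Stable convergence then follows by integrating the conditional distributions.

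The core device is the Hall--Heyde exponential expansion
$$
e^{ix}=(1+ix)\exp\big(-x^2/2+r(x)\big),\qquad |r(x)|\le |x|^3 .
$$
Set $T_n(u)=\prod_{t\ge1}(1+iuX_{n,t})$. Then
$$
e^{iuM_{n,\infty}}=T_n(u)\exp\Big(-\tfrac{u^2}{2}U_n+\sum_t r(uX_{n,t})\Big),
$$
and $\sum_t|r(uX_{n,t})|\le|u|^3X_n^*U_n\to0$ a.s. by i) and iii). Two facts are needed to make this usable.

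\emph{Boundedness of $T_n$.} I localize with the stopping time $\sigma_n=\inf\{t:\sum_{s\le t}X_{n,s}^2>C\}$, computed with respect to $(\mathcal F_{n+t-1})_t$. Stopping at $\sigma_n$ makes the stopped product bounded by $e^{u^2C/2}(1+|u|X_n^*)$, which is dominated in $L^1$ by ii). On $\{U<C\}$ the event $\{\sigma_n<\infty\}$ eventually does not occur a.s.; here I use that conditional probabilities of events whose indicators tend to $0$ a.s. and are dominated also tend to $0$, by a conditional dominated-convergence (Lévy--Doob type) argument. Letting $C\uparrow\infty$ along integers removes the localization.

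\emph{Martingale property of $T_n$.} Because $(X_{n,t})_{t\ge2}$ are martingale differences with respect to $(\mathcal F_{n+t-1})$, the stopped product satisfies
$$
E\big[T_n^{\sigma_n}\mid\mathcal F_n\big]=1+iuX_{n,1}=1,
$$
using $X_{n,1}=M_{n,1}-M_{n,0}=0$ (indeed $\sqrt{b_n}(\widehat M_n-\widehat M_n)=0$ in the application).

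The main obstacle is that the limit variance $U$, and hence $e^{-u^2U/2}$, is only $\mathcal F_\infty$-measurable, so it cannot be pulled out of $E[\,\cdot\mid\mathcal F_n]$. I would first replace $e^{-u^2U_n/2+\sum r}$ by $e^{-u^2U/2}$. The error is bounded by $|T_n^{\sigma_n}|\cdot|\cdots|\to0$ a.s. with an $L^1$-dominated bound, so its conditional expectation given $\mathcal F_n$ tends to $0$ a.s. by the conditional dominated convergence lemma ($E[Y_n\mid\mathcal F_n]\to0$ a.s. when $Y_n\to0$ a.s. and $|Y_n|\le Y\in L^1$).

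Next I approximate $G=e^{-u^2U/2}$ by an $\mathcal F_m$-measurable bounded $G_m$ with $E|G-G_m|$ small. For $n\ge m$,
$$
E\big[T_n^{\sigma_n}G_m\mid\mathcal F_n\big]=G_m\,E\big[T_n^{\sigma_n}\mid\mathcal F_n\big]=G_m .
$$
The remaining term $E\big[T_n^{\sigma_n}(G-G_m)\mid\mathcal F_n\big]$ is controlled by $E\big[\sup_k|T_k^{\sigma_k}|\,|G-G_m|\mid\mathcal F_n\big]$. By L\'evy's upward theorem this converges a.s. to $\sup_k|T_k^{\sigma_k}|\,|G-G_m|$; I need this supremum integrable, which again comes from ii). Finally, $G_m\to G$ a.s. along a suitable choice such as $G_m=E[G\mid\mathcal F_m]$. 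Letting $n\to\infty$ and then $m\to\infty$ yields $E\big[e^{iuM_{n,\infty}}\mid\mathcal F_n\big]\to G$ a.s., which completes the argument.
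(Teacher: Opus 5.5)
\textbf{The gap: the reduction to a countable dense set.} This step in your first paragraph does not work as stated. Convergence of characteristic functions on a countable dense set of $u$ to a function continuous at $0$ gives neither equicontinuity nor weak convergence. Take $\mu_n=\delta_{2\pi\,n!}$. Then $\hat\mu_n(p/q)=e^{2\pi i\,p\,n!/q}=1$ for all $n\geq q$, so $\hat\mu_n\to 1$ on $\mathbb{Q}$, and $1$ is the characteristic function of $\delta_0$. Yet $(\mu_n)$ is not tight, and $\hat\mu_n(1/(2\,n!))=-1$, so there is no equicontinuity at $0$. Equicontinuity comes from tightness, and the usual way to get tightness from characteristic functions needs convergence for Lebesgue-a.e.\ $u$ near $0$. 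A countable set cannot give that.

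\textbf{How to repair it.} Your argument actually works for \emph{every} fixed $u\in\mathbb{R}$, each with its own null set, and that is enough.
\begin{enumerate}
\item Take a regular version $\mu_n(\omega,\cdot)$ of the conditional law of $M_{n,\infty}$ given $\mathcal{F}_n$. Then $(\omega,u)\mapsto\hat\mu_n(\omega,u)$ is jointly measurable, being measurable in $\omega$ and continuous in $u$.
\item By Fubini, for a.e.\ $\omega$ one has $\hat\mu_n(\omega,u)\to e^{-u^2U(\omega)/2}$ for Lebesgue-a.e.\ $u$.
\item The inequality $\mu_n(\omega,\{|x|>2/\delta\})\leq\delta^{-1}\int_{-\delta}^{\delta}(1-\mathrm{Re}\,\hat\mu_n(\omega,u))\,du$, together with dominated convergence on $[-\delta,\delta]$, gives tightness of $(\mu_n(\omega,\cdot))_n$.
\item Any weak limit point then has characteristic function equal to $e^{-u^2U(\omega)/2}$ for a.e.\ $u$, hence for all $u$ by continuity.
\end{enumerate}
This is the form of reduction the paper itself invokes, via Lemma A.3 of Crimaldi (2009), in the proof of Lemma~\ref{lem-new-as-cond-conv}: a.s.\ convergence for each fixed real argument, not convergence on a countable set plus equicontinuity.

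\textbf{The rest of the argument.} The paper gives no proof of this statement, only the citation, so your argument stands on its own. Its fixed-$u$ core is sound: the predictably stopped Hall--Heyde product, conditional dominated convergence in the Hunt/Blackwell--Dubins form, and the approximation of $G=e^{-u^2U/2}$ by $E[G\mid\mathcal{F}_m]$. It needs three small repairs.
\begin{enumerate}
\item In the general theorem $X_{n,1}=M_{n,1}$ is only $\mathcal{F}_n$-measurable, not zero. So $E[T_n^{\sigma_n}\mid\mathcal{F}_n]=1+iuX_{n,1}$, which tends to $1$ a.s.\ because $|X_{n,1}|\leq X_n^*$.
\item The indicators $I_{\{U_n>C\}}$ do not tend to $0$ a.s. You need the $\limsup$ form, $\limsup_n P(U_n>C\mid\mathcal{F}_n)\leq I_{\{U\geq C\}}$ a.s., which uses the $\mathcal{F}_\infty$-measurability of $U$.
\item The $L^1$-domination of the replacement error comes from $|T_n^{\sigma_n}\exp(-u^2U_n/2+\sum_t r(uX_{n,t}))|=1$ on $\{\sigma_n=\infty\}$, not from a bound on $e^{\sum_t r}$. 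Also, $|r(x)|\leq|x|^3$ holds only for $|x|<1$; this is harmless since $X_n^*\to0$.
\end{enumerate}
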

The a.s.\ conditional convergence is a variant of the stable convergence. 
For more information on stable convergence and its variants, we refer the reader 
to~\cite{Ald-Eag-1978, crimaldi-2009, crimaldi-libro, CriLetPra, HH, Ren}. 
An useful summary can be found in the appendix of \cite{ale-cri-ghi}. 
We here provide only a lemma, regarding the fact that 
 the a.s. conditional convergence combines well with the a.s. convergence, that we need in the proof of Theorem~\ref{th-CLT-general}.

\begin{lem}\label{lem-new-as-cond-conv}
Let ${\mathcal F}=({\mathcal F})_t$ be a filtration, $K$ an ${\mathcal F}_\infty$-measurable kernel and $Y$ an ${\mathcal F}_\infty$-measurable real random variable. 
If $X_t\longrightarrow K$ in the sense of the a.s. conditional convergence with respect to a filtration $\mathcal F=({\mathcal F}_t)_{t\geq 0}$ and 
$Y_t\stackrel{a.s.}\longrightarrow Y$,  then
$$
[X_t,Y_t]\longrightarrow K\otimes \delta_Y
$$
in the sense of the a.s. conditional convergence with respect to ${\mathcal F}$.\\
\indent In particular, when $K$ is a Gaussian kernel ${\mathcal N}(M,U)$, we have 
$$
X_t+Y_t\longrightarrow {\mathcal N}(M+Y,U)\,.
$$
in the sense of the a.s. conditional convergence with respect to ${\mathcal F}$.
\end{lem}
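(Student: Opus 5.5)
The plan is to verify the definition of a.s.\ conditional convergence directly. For almost every $\omega$, I would show that the conditional distribution $\nu_t(\omega,\cdot)$ of the pair $[X_t,Y_t]$ given ${\mathcal F}_t$ converges weakly to $K(\omega,\cdot)\otimes\delta_{Y(\omega)}$. The Gaussian statement then follows from the continuous mapping theorem applied to $(x,y)\mapsto x+y$, since the image of ${\mathcal N}(M,U)\otimes\delta_Y$ under this map is ${\mathcal N}(M+Y,U)$.

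The key identity concerns test functions of product form $f(x)g(y)$, with $f,g$ bounded and continuous. In that case I would aim to prove
\begin{equation*}
E[f(X_t)g(Y_t)\mid{\mathcal F}_t]\stackrel{a.s.}\longrightarrow g(Y)\textstyle{\int} f\,dK\,.
\end{equation*}
If $Y_t$ were ${\mathcal F}_t$-measurable this would be immediate, but the statement does not assume it, and $Y$ is only ${\mathcal F}_\infty$-measurable. I would therefore set $G_t=E[g(Y)\mid{\mathcal F}_t]$ and split
\begin{equation*}
E[f(X_t)g(Y_t)\mid{\mathcal F}_t]=G_t\,E[f(X_t)\mid{\mathcal F}_t]+E[f(X_t)(g(Y_t)-G_t)\mid{\mathcal F}_t]\,.
\end{equation*}
The first term is handled by two facts. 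By L\'evy's martingale convergence theorem, $G_t\to g(Y)$ a.s. By hypothesis, $E[f(X_t)\mid{\mathcal F}_t]\to\int f\,dK$ a.s.

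The second term is bounded in absolute value by
\begin{equation*}
\|f\|_\infty\big(E[|g(Y_t)-g(Y)|\mid{\mathcal F}_t]+E[|g(Y)-G_t|\mid{\mathcal F}_t]\big)\,.
\end{equation*}
Both integrands are uniformly bounded by $2\|g\|_\infty$ and tend to $0$ almost surely. For the first integrand this uses $Y_t\to Y$ a.s. and the continuity of $g$; for the second it uses $G_t\to g(Y)$ a.s. Hunt's lemma (conditional dominated convergence) then says that $E[V_t\mid{\mathcal F}_t]\to E[V\mid{\mathcal F}_\infty]$ whenever $V_t\to V$ a.s. with uniform domination. Applying it with $V=0$ shows that both conditional expectations vanish a.s.

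The main obstacle is passing from convergence for each fixed test function, which holds off a null set depending on $(f,g)$, to weak convergence of $\nu_t(\omega,\cdot)$ for a.e.\ $\omega$. I would use a countable family of test functions: $f(x)=e^{iux}$ and $g(y)=e^{ivy}$ with $u,v$ rational, treating real and imaginary parts separately. This gives a single null set outside which the conditional characteristic functions of $\nu_t(\omega,\cdot)$ converge at all rational points $(u,v)$ to that of $K(\omega,\cdot)\otimes\delta_{Y(\omega)}$.

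To upgrade convergence at rational points to weak convergence, I also need tightness of $(\nu_t(\omega,\cdot))_t$. I would obtain it from tightness of the two marginals. The first marginal converges weakly to $K(\omega,\cdot)$ by hypothesis. The second converges to $\delta_{Y(\omega)}$, because taking $f\equiv1$ in the identity above (on the same null set, by the same countable argument) gives convergence of its characteristic function at rational points. A tight sequence whose characteristic functions converge on the dense set ${\mathbb Q}^2$ to those of a probability measure converges weakly to that measure, since every subsequential limit has the same characteristic function on ${\mathbb Q}^2$, hence everywhere by continuity. This concludes the argument.
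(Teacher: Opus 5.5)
Your decomposition is sound, and slightly cleaner than the paper's. You use the bounded martingale $G_t=E[g(Y)\mid\mathcal F_t]$ instead of $E[Y\mid\mathcal F_t]$, which removes the need to assume $Y$ integrable and then truncate. The tools are the same as in the paper: L\'evy's martingale convergence theorem and Hunt's lemma, which the paper quotes as \cite[Lemma~A.2(d)]{crimaldi-2009}.

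\textbf{The gap is in the final upgrade to weak convergence.} You deduce that the $Y$-marginal of $\nu_t(\omega,\cdot)$ converges to $\delta_{Y(\omega)}$ from convergence of its characteristic function at rational points only, and this is your only source of tightness for that marginal. The inference is false. For $\mu_n=\delta_{2\pi n!}$ one has $\hat\mu_n(p/q)=e^{2\pi i\,p\,(n!/q)}=1$ for all $n\geq q$, so $\hat\mu_n\to\hat\delta_0$ on $\mathbb{Q}$, yet $(\mu_n)_n$ is not tight. Hence tightness of $(\nu_t(\omega,\cdot))_t$ is not established, and your (correct) closing principle about tight sequences cannot be invoked.

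\textbf{The claim is true and easy to repair with what you already have.} Your identity with $f\equiv 1$ (Hunt's lemma) gives $E[h(Y_t)\mid\mathcal F_t]\to h(Y)$ a.s.\ for every bounded continuous $h$, not only for exponentials. Apply it to the countable family $h_k(y)=\min\{1,(|y|-k)^+\}$, $k\in\mathbb{N}$. Off a single null set, the $Y$-marginal $\nu^{(2)}_t(\omega,\cdot)$ then satisfies $\limsup_t \nu^{(2)}_t(\omega,\{|y|\geq k+1\})\leq h_k(Y(\omega))=0$ for every $k\geq |Y(\omega)|$. Together with tightness of each single measure, this gives tightness of $(\nu^{(2)}_t(\omega,\cdot))_t$, and your argument then closes.

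Alternatively, keep all $(u,v)\in\mathbb{R}^2$, with null sets depending on $(u,v)$. Use Fubini on the jointly measurable map $(\omega,u,v)\mapsto\int e^{i(ux+vy)}\nu_t(\omega,dx\,dy)$ to get, for a.e.\ $\omega$, convergence for Lebesgue-a.e.\ $(u,v)$. Then conclude with the bound $\mu(\{\|x\|_\infty>2/\delta\})\leq\frac{2}{(2\delta)^2}\int_{[-\delta,\delta]^2}(1-\mathrm{Re}\,\hat\mu(u))\,du$, which passes to the limit by dominated convergence.

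The paper avoids the issue by checking the convergence for every real $(a_1,a_2)$ and delegating the upgrade to the Cram\'er--Wold device and \cite[Lemma~A.3]{crimaldi-2009}.
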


\begin{proof} 
Since Cram\'er-Wold device and \cite[Lemma A.3]{crimaldi-2009}, it is enough to prove that, for each $a_1,a_2\in\mathbb{R}$, we have 
$$
E[e^{i(a_1X_t+a_2Y_t)}\mid{\mathcal F}_t]\stackrel{a.s.}\longrightarrow 
\left(\textstyle{\int} e^{ia_1x}K(\cdot)(dx)\right)e^{ia_2Y}\,.
$$
We first prove the above convergence under the additional assumption that $Y$ is integrable and then we will show how to get the result in the general case.  
Suppose $Y$ to be integrable and set $Z_{t}=E[Y\mid{\mathcal F}_{t}]$ so that 
we have $Z_{t}\stackrel{a.s.}\to Y$ (by the martingale convergence theorem for martingales bounded in $L^1$). Moreover, we have  
\begin{equation*}
\begin{split}
|D_t|&=\big|E[e^{i(a_1X_t+a_2Y_t)}\mid{\mathcal F}_t]-
\left(\textstyle{\int} e^{ia_1x}K(\cdot)(dx)\right)e^{ia_2Y}\big|\\
&\leq 
|T_{1,t}|+|T_{2,t}|\,,
\end{split}
\end{equation*}
where 
\begin{itemize}
\item $|T_{1,t}|=|E[e^{ia_1X_t}(e^{ia_2Y_t}-e^{ia_2Z_t})\mid {\mathcal F}_t]|$, which is smaller than or equal to 
$E[\,|e^{ia_2Y_t}-e^{ia_2Z_t}|\mid {\mathcal F}_t]$, that converges a.s.\ to zero,  because of 
 \cite[Lemma A.2(d)]{crimaldi-2009} (see also \cite[Th.~2]{blackwell-dubins}) and the a.s. convergence of both $Y_t$ and $Z_t$ to the same limit $Y$;
\item $|T_{2,t}|=|E[e^{i(a_1X_t+a_2Z_{t})}\mid{\mathcal F}_t]-
\left(\int e^{ia_1x}K(\cdot)(dx)\right)e^{ia_2Y}|$, which is equal to the difference 
$$
|e^{ia_2Z_{t}}E[e^{ia_1X_t}\mid{\mathcal F}_t]-
\left(\textstyle{\int} e^{ia_1x}K(\cdot)(dx)\right)e^{ia_2Y}|
$$
and converges a.s. to zero by the assumption on $X_t$ and the a.s. converge of 
$Z_t$ to $Y$. 
\end{itemize}
Therefore, 
we can conclude that $D_t\stackrel{a.s.}\to 0$ and so the result is proven when $Y$ is integrable.\\
\indent If $Y$ is not integrable, then, for any $N\in \mathbb{N}$, we can define 
$Y^{(N)}_t=(-N)\vee Y_t\wedge N$ and 
$Y^{(N)}=(-N)\vee Y\wedge N$ so that $|Y^{(N)}_t|\leq N$, $|Y^{(N)}|\leq N$, $Y^{(N)}_t\stackrel{a.s.}\to Y^{(N)}$ as $t\to +\infty$, $Y^{(N)}_t\stackrel{a.s.}\to Y_t$ 
and $Y^{(N)}\stackrel{a.s.}\to Y$ as $N\to +\infty$. For each fixed $N$, 
applying the result to $X_t\longrightarrow K$ and $Y^{(N)}_t\stackrel{a.s.}\longrightarrow Y^{(N)}$ (which is trivially integrable), we find for $t\to +\infty$
$$
D^{(N)}_t= E[ e^{ i (a_1X_t+a_2Y^{(N)}_t)} \mid{\mathcal F}_t]-
\left(\textstyle{\int} e^{ia_1x}K(\cdot)(dx)\right)e^{ia_2 Y^{(N)}}\stackrel{a.s.}\longrightarrow 0\,.
$$
Moreover, since \cite[Lemma A.2(d)]{crimaldi-2009} and the a.s.\ convergence of the difference 
$|e^{ia_2 Y_t}-e^{ia_2Y^{(N)}_t}|$ to 
$|e^{ia_2 Y}-e^{ia_2Y^{(N)}}|$ for $t\to +\infty$ (and each fixed $N$), we 
have $E[\,|e^{ia_2 Y_t}-e^{ia_2Y^{(N)}_t}|\,|{\mathcal F}_t]$ to 
$|e^{ia_2 Y}-e^{ia_2Y^{(N)}}|$ for $t\to +\infty$ (and each fixed $N$) and we get 
$$
\limsup_t |D_t-D_t^{(N)}|\leq 2|e^{ia_2Y}-e^{ia_2 Y^{(N)}}|\stackrel{a.s.}\longrightarrow 0\quad\mbox{as } N\to +\infty\,.
$$
Hence, $D_t\stackrel{a.s.}\to 0$ and the proof is so concluded.
\end{proof}



\begin{thebibliography}{10}

\bibitem{AS}
M.~Abramowitz and I.~A. Stegun, editors.
\newblock {\em Handbook of Mathematical Functions with Formulas, Graphs, and Mathematical Tables}.
\newblock Dover, New York, 1972.

\bibitem{Ald-Eag-1978}
D.~J. Aldous and G.~K. Eagleson.
\newblock On mixing and stability of limit theorems.
\newblock {\em Ann. Probab.}, 6(2):325--331, 1978.

\bibitem{ale-cri-ghi}
G.~Aletti, I.~Crimaldi, and A.~Ghiglietti.
\newblock Synchronization of reinforced stochastic processes with a network-based interaction.
\newblock {\em Ann. Appl. Probab.}, 27(6):3787--3844, 2017.

\bibitem{ale-cri-ghi-complete}
G.~Aletti, I.~Crimaldi, and A.~Ghiglietti.
\newblock Networks of reinforced stochastic processes: A complete description of the first-order asymptotics.
\newblock {\em Stochastic Process. Appl.}, 176:104427, 2024.

\bibitem{BAN-THA-2022}
A.~Bandyopadhyay and D.~Thacker.
\newblock A new approach to {P}\'olya urn schemes and its infinite color generalization.
\newblock {\em Ann. Appl. Probab.}, 32(1):46--79, 2022.

\bibitem{BCPR-IBP}
P.~Berti, I.~Crimaldi, L.~Pratelli, and P.~Rigo.
\newblock {Central Limit Theorems for an Indian Buffet Model with Random Weights}.
\newblock {\em Ann. Appl. Probab.}, 25(2):523--547, 2015.

\bibitem{blackwell-dubins}
D.~Blackwell and L.~Dubins.
\newblock Merging of opinions with increasing information.
\newblock {\em Ann. Math. Statist.}, 33(3):882--886, 1962.

\bibitem{bol-cri-mon-2016}
P.~Boldi, I.~Crimaldi, and C.~Monti.
\newblock A network model characterized by a latent attribute structure with competition.
\newblock {\em Inf. Sci.}, 354(C):236--256, 2016.

\bibitem{broderick2012beta}
T.~Broderick, M.~I. Jordan, and J.~Pitman.
\newblock Beta processes, stick-breaking, and power laws.
\newblock {\em Bayesian Anal.}, 7(2):439--476, 2012.

\bibitem{CFMB2024}
Federico Camerlenghi, Stefano Favaro, Lorenzo Masoero, and Tamara Broderick.
\newblock Scaled process priors for bayesian nonparametric estimation of the unseen genetic variation.
\newblock {\em Journal of the American Statistical Association}, 119(545):320--331, 2024.

\bibitem{chow_teicher}
Yuan~Shih Chow and Henry Teicher.
\newblock {\em Limit Theorems for Independent Random Variables}, pages 354--403.
\newblock Springer New York, New York, NY, 1997.

\bibitem{crimaldi-2009}
I.~Crimaldi.
\newblock An almost sure conditional convergence result and an application to a generalized {P}\'{o}lya urn.
\newblock {\em Int. Math. Forum}, 4(23):1139--1156, 2009.

\bibitem{crimaldi-libro}
I.~Crimaldi.
\newblock {\em Introduzione alla nozione di convergenza stabile e sue varianti (Introduction to the notion of stable convergence and its variants)}, volume~57.
\newblock Unione Matematica Italiana, Monograf s.r.l., Bologna, Italy, 2016.
\newblock Book written in Italian.

\bibitem{cri-dai-min}
I.~Crimaldi, P.~Dai~Pra, and I.~G. Minelli.
\newblock Fluctuation theorems for synchronization of interacting {P}\'{o}lya's urns.
\newblock {\em Stochastic Process. Appl.}, 126(3):930--947, 2016.

\bibitem{CriLetPra}
I.~Crimaldi, G.~Letta, and L.~Pratelli.
\newblock {\em A Strong Form of Stable Convergence}, volume 1899, pages 203--225.
\newblock Springer, 2007.

\bibitem{cri-dai-lou-min}
I.~Crimaldi, P.~Dai Pra, P.-Y. Louis, and I.~G. Minelli.
\newblock Synchronization and functional central limit theorems for interacting reinforced random walks.
\newblock {\em Stochastic Process. Appl.}, 129(1):70 -- 101, 2019.

\bibitem{del-mey}
C.~Dellacherie and P.-A. Meyer.
\newblock {\em Probabilities and potential. {B}}, volume~72 of {\em North-Holland Mathematics Studies}.
\newblock North-Holland Publishing Co., Amsterdam, 1982.
\newblock Theory of martingales, Translated from the French by J. P. Wilson.

\bibitem{dos-gha-correlated}
F.~Doshi-Velez and Z.~Ghahramani.
\newblock Correlated non-parametric latent feature models.
\newblock In {\em Proceedings of the 25th Conference on Uncertainty in Artificial Intelligence (UAI)}, UAI '09, pages 143--150, Arlington, Virginia, USA, 2009. AUAI Press.

\bibitem{DW-restricted}
F.~Doshi-Velez and S.~A. Williamson.
\newblock Restricted {Indian Buffet Processes}.
\newblock {\em Stat. Comput.}, 27(5):1205--1223, 2017.

\bibitem{dur-res}
R.~Durrett and S.~I. Resnick.
\newblock Functional limit theorems for dependent variables.
\newblock {\em Ann. Probab.}, 6(5):829--846, 1978.

\bibitem{fisher}
E.~Fisher.
\newblock {On the Law of the Iterated Logarithm for Martingales}.
\newblock {\em Ann. Probab.}, 20(2):675--680, 1992.

\bibitem{fortini-petrone-2020}
S.~Fortini and S.~Petrone.
\newblock Quasi-bayes properties of a procedure for sequential learning in mixture models.
\newblock {\em J. R. Stat. Soc. Ser. B Stat. Methodol.}, 82(4):1087--1114, 2020.

\bibitem{gershman2015distance}
S.~J. Gershman, P.~I. Frazier, and D.~M. Blei.
\newblock Distance-dependent infinite latent feature models.
\newblock {\em IEEE Trans. Pattern Anal. Mach. Intell.}, 37(2):334--345, 2015.

\bibitem{Gouet93}
R.~Gouet.
\newblock {Martingale functional central limit theorems for a generalized Polya urn}.
\newblock {\em Ann. Probab.}, 21(3):1624 -- 1639, 1993.

\bibitem{GG06}
T.~L. Griffiths and Z.~Ghahramani.
\newblock Infinite latent feature models and the {Indian Buffet Process}.
\newblock In {\em Advances in Neural Information Processing Systems (NIPS)}, volume~18, 2005.

\bibitem{GG11}
T.~L. Griffiths and Z.~Ghahramani.
\newblock {The Indian Buffet Process}: An introduction and review.
\newblock {\em J. Mach. Learn. Res.}, 12:1185--1224, 2011.

\bibitem{GJR}
D.~Görür, F.~Jäkel, and C.~E. Rasmussen.
\newblock A choice model with infinitely many latent features.
\newblock In {\em Proceedings of the International Conference on Machine Learning (ICML)}, volume~23, pages 361--368, 2006.

\bibitem{HH}
P.~Hall and C.~C. Heyde.
\newblock {\em Martingale Limit Theory and Its Application}.
\newblock Academic Press, New York, 1980.

\bibitem{heaukulani2020gibbs}
C.~Heaukulani and D.~M. Roy.
\newblock Gibbs-type {Indian Buffet Processes}.
\newblock {\em Bayesian Anal.}, 15(3):683--710, 2020.

\bibitem{KG}
D.~A. Knowles and Z.~Ghahramani.
\newblock Nonparametric {B}ayesian sparse factor models with application to gene expression modeling.
\newblock {\em Ann. Appl. Stat.}, 5(2B):1534--1552, 2011.

\bibitem{lijoi07}
A.~Lijoi, R.~H. Mena, and I.~Prünster.
\newblock Bayesian nonparametric estimation of the probability of discovering new species.
\newblock {\em Biometrika}, 94(4):769--786, 2007.

\bibitem{lijoi2010}
A.~Lijoi and I.~Prünster.
\newblock {\em Models beyond the Dirichlet process}.
\newblock Cambridge Series in Statistical and Probabilistic Mathematics. Cambridge University Press, 2010.

\bibitem{MGJ}
K.~T. Miller, T.~L. Griffiths, and M.~I. Jordan.
\newblock The phylogenetic {Indian Buffet Process}: A non-exchangeable nonparametric prior for latent features.
\newblock In {\em Proceedings of the Conference on Uncertainty in Artificial Intelligence (UAI)}, volume~24, pages 403--410, 2008.

\bibitem{MGJ2009}
K.~T. Miller, T.~L. Griffiths, and M.~I. Jordan.
\newblock Nonparametric latent feature models for link prediction.
\newblock In {\em Advances in Neural Information Processing Systems (NIPS)}, 2009.

\bibitem{Mitzenmacher_Upfal_2005}
M.~Mitzenmacher and E.~Upfal.
\newblock {\em Probability and Computing: Randomized Algorithms and Probabilistic Analysis}.
\newblock Cambridge University Press, 2005.

\bibitem{MitUpf17}
M.~Mitzenmacher and E.~Upfal.
\newblock {\em Probability and Computing: Randomization and Probabilistic Techniques in Algorithms and Data Analysis}.
\newblock Cambridge University Press, Cambridge, 2nd edition, 2017.

\bibitem{NG}
D.~J. Navarro and T.~L. Griffiths.
\newblock Latent features in similarity judgment: A nonparametric bayesian approach.
\newblock {\em Neural Comput.}, 20(11):2597--2628, 2008.

\bibitem{pemantle-volkov-1999}
R.~Pemantle and S.~Volkov.
\newblock Vertex-reinforced random walk on $\mathbf{Z}$ has finite range.
\newblock {\em Ann. Probab.}, 27(3):1368--1388, 1999.

\bibitem{pitman_1996}
J.~Pitman.
\newblock Some developments of the {B}lackwell-{M}acqueen urn scheme.
\newblock {\em Lecture Notes Monogr. Ser.}, 30:245--267, 1996.

\bibitem{rai-dau}
P.~Rai and H.~Daume.
\newblock The infinite hierarchical factor regression model.
\newblock In D.~Koller, D.~Schuurmans, Y.~Bengio, and L.~Bottou, editors, {\em Advances in Neural Information Processing Systems}, volume~21. Curran Associates, Inc., 2008.

\bibitem{Ren}
A.~R{\'e}nyi.
\newblock On stable sequences of events.
\newblock {\em Sankhy\=a Ser. A}, 25:293--302, 1963.

\bibitem{rob}
H.~Robbins and D.~Siegmund.
\newblock A convergence theorem for non negative almost supermartingales and some applications.
\newblock In J.~S. Rustagi, editor, {\em Optimizing Methods in Statistics}, pages 233--257. Academic Press, New York, 1971.

\bibitem{Sariev23}
H.~Sariev, S.~Fortini, and S.~Petrone.
\newblock {Infinite-color randomly reinforced urns with dominant colors}.
\newblock {\em Bernoulli}, 29(1):132 -- 152, 2023.

\bibitem{SCJ}
P.~Sarkar, D.~Chakrabarti, and M.~I. Jordan.
\newblock Nonparametric link prediction in dynamic networks.
\newblock In {\em Proceedings of the 29th International Conference on Machine Learning (ICML)}, 2012.

\bibitem{IBP2018hawkes}
X.~Tan, V.~Rao, and J.~Neville.
\newblock The {Indian Buffet Hawkes Process} to model evolving latent influence.
\newblock In {\em Proceedings of the 34th Conference on Proceedings of the Conference on Uncertainty in Artificial Intelligence (UAI)}, 2018.

\bibitem{TG}
Y.~W. Teh and D.~G\"or\"ur.
\newblock {Indian Buffet Processes} with power-law behaviour.
\newblock In {\em Advances in Neural Information Processing Systems (NIPS)}, volume~22, pages 1838--1846, 2009.

\bibitem{TJ}
R.~Thibaux and M.~I. Jordan.
\newblock Hierarchical beta processes and the {Indian Buffet Process}.
\newblock In {\em Proceedings of the International Conference on Artificial Intelligence and Statistics (AISTATS)}, volume~11, pages 564--571, 2007.

\bibitem{warr2021attraction}
R.~L. Warr, D.~B. Dahl, J.~M. Meyer, and A.~Lui.
\newblock {The Attraction Indian Buffet Distribution}.
\newblock {\em Bayesian Anal.}, 17(3):931 -- 967, 2022.

\bibitem{williams}
D.~Williams.
\newblock {\em Probability with Martingales}.
\newblock Cambridge University Press, 1991.

\bibitem{WOG}
S.~Williamson, P.~Orbanz, and Z.~Ghahramani.
\newblock Dependent {Indian Buffet Processes}.
\newblock In Yee~Whye Teh and Mike Titterington, editors, {\em Proceedings of the 13th International Conference on Artificial Intelligence and Statistics (AISTATS)}, volume~9 of {\em Proceedings of Machine Learning Research}, pages 924--931. PMLR, 2010.

\bibitem{WGG}
F.~Wood, T.~L. Griffiths, and Z.~Ghahramani.
\newblock A non-parametric bayesian method for inferring hidden causes.
\newblock In {\em Proceedings of the Conference on Uncertainty in Artificial Intelligence (UAI)}, volume~22, pages 536--543, 2006.

\bibitem{Zhang-2014}
L.-X. Zhang.
\newblock {A Gaussian process approximation for two-color randomly reinforced urns}.
\newblock {\em Electron. J. Probab.}, 19:1--19, 2014.

\bibitem{Zhang2016}
L.-X. Zhang.
\newblock Central limit theorems of a recursive stochastic algorithm with applications to adaptive designs.
\newblock {\em Ann. Appl. Probab.}, 26(6):3630--3658, 2016.

\end{thebibliography}

\end{document}